\documentclass{article}
\usepackage{graphicx} 
\usepackage{bbm}
\usepackage{mathrsfs}
\usepackage{amsfonts}
\usepackage{tipa}
\usepackage{amssymb}
\usepackage{amsmath}
\usepackage{amsthm}
\usepackage{amstext}
\usepackage{amsopn}
\usepackage{texdraw}
\usepackage{yhmath}
\usepackage{indentfirst}
\usepackage{mathtools}
\usepackage{enumitem}
\usepackage{pgfplots}
\usepackage{chngpage}
\usepackage{tikz}
\usepackage{epsfig}
\usepackage{verbatim}
\usepackage{cases}
\usepackage{appendix}
\usepackage{hyperref}
\usepackage{booktabs}
\usepackage{makecell}
\usepackage{subcaption}
\usepackage{xcolor}
\hypersetup{hypertex=true,
	colorlinks=true,
	linkcolor=blue,
	anchorcolor=blue,
	citecolor=blue,
	urlcolor=black}
\usetikzlibrary{arrows}
\usetikzlibrary{patterns}
\newtheorem{theorem}{Theorem}[section]
\newtheorem{lemma}[theorem]{Lemma}

\newtheorem{proposition}[theorem]{Proposition}
\newtheorem{corollary}[theorem]{Corollary}
\theoremstyle{definition}
\newtheorem{definition}[theorem]{Definition}

\theoremstyle{remark}
\newtheorem{remark}[theorem]{Remark}
\numberwithin{equation}{section}

\newcommand{\ep}{\epsilon}
\newcommand{\la}{\lambda}

\newcommand{\R}{{\mathbf R}}
\newcommand{\C}{{\mathbf C}}

\newcommand{\Po}{\partial \Omega}

\newcommand{\pa}{\partial}
\newcommand{\al}{\alpha}

\newcommand{\ds}{\displaystyle}

\newcommand\wh{\widehat }
\newcommand\wt{\widetilde }

\newcommand\px{\frac{\pa X}{\pa\omega^2}}
\newcommand\py{\frac{\pa Y}{\pa\omega^2}}
\newcommand\ol{\overline }
\newcommand\ul{\underline }

\begin{document}
	\title{Stability and bifurcations of the synchronized layered steady states in membrane-coupled delayed RD systems: The SLEP approach}
	\author{Qidong Wu, Fengqi Yi\footnote{Corresponding Author (F. Yi). Email: yi@dlut.edu.cn}\footnote{The authors were partially supported by National Key R\;$\&$\;D Program of China (2023YFA1009200), the National Natural Science Foundation of China (11971088, 12371160, 12271144) and the Fundamental Research Funds for the Central Universities (DUT25LAB303, DUT23LAB304, DUT18RC(3)070). } \\
		\\
		{\small School of Mathematical Sciences, Dalian University of Technology, Dalian, Liaoning, 116024, China.\hfill{\ }}
	}
	
	\date{}
	\maketitle	
	
	\begin{abstract}
		Inter-layer exchange in biological and chemical systems may involve transport delays and memory effects. In this paper, we mainly study the stability and bifurcations of the synchronized layered steady states in singularly perturbed membrane-coupled reaction–diffusion (RD) systems, where discrete or exponentially distributed delays are introduced into the activator coupling, while the inhibitor coupling remains instantaneous, leading to six representative configurations. By extending the singular limit eigenvalue problem (SLEP) method originally developed by [Nishiura and Fujii, SIMA, 18(6), 1987; Nishiura and Mimura, SIAP, 49(2), 1989] for the non-delayed case, we develop a unified spectral framework that incorporates the symmetric and asymmetric sub-eigenvalue problems arising from all six delayed configurations, demonstrating how the SLEP approach can be adapted to capture the limiting behavior of critical eigenvalues when time delays are present. We then establish sufficient conditions to determine the delay-independent stability, the persistence of Turing instability, as well as delay-induced Hopf bifurcations of the synchronized layered steady states. In particular, we focus on analyzing how time delays could induce in-phase and anti-phase breathers via Hopf bifurcations, leading to spatiotemporal oscillatory instabilities that were absent in the delay-free setting.  As an application, we also supply a two-layer membrane-coupled delayed FitzHugh–Nagumo system to illustrate how our results can be applied.
		
		\noindent\textbf{Keywords:} Membrane-coupled reaction-diffusion systems; Delayed inter-layer coupling; The SLEP method; Hopf bifurcations; in-phase and anti-phase breathers\\
		\noindent\textbf{MSC2020:} 35K57 $\cdot$ 35B25 $\cdot$ 35B32 $\cdot$ 35R10\\
	\end{abstract}

	\section{Introduction}
	Pattern formation in reaction-diffusion (RD) systems, first elucidated by Turing's 1952 paper on morphogenesis \cite{Tu}, provides the fundamental mathematical framework for understanding spatiotemporal pattern emergence in biological, chemical, and physical settings (\cite{CDBD,LE1,NT,SY0,SY1,SY2,SY3,WY1}). Among the diverse array of RD models proposed in the literature, the following two-component singularly perturbed system has garnered considerable attention in the research community (see for instance \cite{MTH, NF, NM, WY2}):
	\begin{equation}\label{eq1}
		\begin{cases}
			\begin{aligned}
				\alpha\dfrac{\partial u}{\partial t}&=\ep\Delta u+\dfrac{1}{\ep}f(u,v),&&\text{in}\;\Omega\times(0,\infty),\\
				\dfrac{\partial v}{\partial t}&=d\Delta v+g(u,v),&&\text{in}\;\Omega\times(0,\infty),\\
				\partial_\nu u&=\partial_\nu v=0,&&\text{on}\;\partial\Omega\times(0,\infty),
			\end{aligned}
		\end{cases}
	\end{equation}
	where $u$ and $v$ denote the concentrations of the activator and inhibitor at time $t$ and position $x\in\Omega$, respectively; $\Omega$ is a bounded open domain in $\mathbb{R}^N$ with smooth boundary $\partial\Omega$; $\nu$ represents the unit outward normal vector on $\partial\Omega$, with $\partial_\nu u:=\nabla u(x)\cdot\nu$ for $x\in\partial\Omega$ (and analogously for $\partial_\nu v$); $0<\ep\ll 1$ is a sufficiently small parameter (relative to $d$) that quantifies the strength of lateral inhibition, and $\alpha$ is the relaxation coefficient governing the relative reaction rates of $u$ and $v$. The reaction terms $f(u,v)$ and $g(u,v)$ are assumed to satisfy the structural hypotheses $(H_1)$-$(H_6)$, which are given in Section 2. System \eqref{eq1} typically governs the dynamics in the biologically relevant regime where the activator $u$ possesses a sharp internal transition layer due to slow diffusion $(\ep \ll d)$ yet undergoes rapid kinetics $(\alpha \ll 1/\ep)$, whereas the inhibitor $v$ spreads diffusively with sluggish reaction dynamics. The foundational work of Mimura, Tabata, and Hosono \cite{MTH} established the existence of layered steady states of system \eqref{eq1} with sharp internal interfaces. Subsequently, Nishiura and Fujii \cite{NF}  developed the singular limit eigenvalue problem (SLEP) method, reducing infinite-dimensional eigenvalue problems to algebraically tractable scalar equations in the singular limit $\ep\to0^+$. This powerful framework enabled Nishiura and Mimura \cite{NM} to reveal a striking stability reversal phenomenon: as the relaxation parameter $\alpha$ decreases below a critical threshold, the steady state loses stability through a Hopf bifurcation, giving rise to time-periodic oscillatory solutions.
	
	While isolated RD systems have been extensively studied, many biological and chemical processes inherently involve coupled multi-layer architectures, such as stratified epithelial tissues, multi-compartment chemical reactors, and layered neural networks \cite{Bar,KS,LE2,LWO,YDZE,YE}. Such coupling can generate symmetry-breaking rhythms \cite{AEKV}, square and oscillatory Turing patterns \cite{LWO,YE}, and spatiotemporal antiresonance \cite{PPR}. For the singularly perturbed coupled systems, the pioneering theoretical study was conducted by Takaishi, Mimura, and Nishiura \cite{TMN}. The governing 4-component system takes the form:
	\begin{equation}\label{eq2}
		\begin{cases}
			\begin{split}
				\alpha\dfrac{\partial u_i}{\partial t}&=\ep\Delta u_i+\dfrac{1}{\ep}f(u_i,v_i)+\delta k_1\left(u_j-u_i\right),&&\text{in}\;\Omega\times(0,\infty),\\
				\dfrac{\partial v_i}{\partial t}&=d\Delta v_i+g(u_i,v_i)+\delta k_2(v_j-v_i),&&\text{in}\;\Omega\times(0,\infty),\\
				\partial_\nu u_i&=\partial_\nu v_i=0,&&\text{on}\;\Po\times(0,\infty),
			\end{split}
		\end{cases}
	\end{equation}
	where $u_i$ and $v_i$ denote the concentrations of the activator and inhibitor in reactor $i\in\{1,2\}$, respectively; $j\in\{1,2\}\backslash\{i\}$; $\epsilon$ and $d$ are the intra-layer diffusion coefficients; $k_1$ and $k_2$ are the inter-layer diffusion coefficients; and $\delta$ is the inter-layer coupling strength \cite{LE2}. In the special case when $k_1=0$ and $k_2>0$, Takaishi, Mimura, and Nishiura \cite{TMN} studied Turing instabilities of synchronized layered steady states (analytically) and those of in-phase breathers (numerically), but left the open problem of analyzing Turing instability of these solutions in the extended case when $k_1>0$ and $k_2>0$. This problem was recently studied in \cite{WY2} through the SLEP method (\cite{NF, NM}), Floquet theory, and regular perturbation arguments (\cite{Ma, Yi}).
	
	However, these studies assume that inter-layer couplings are instantaneous. In realistic biological, chemical, and physical settings, finite signal propagation speeds, transport lags, and memory effects in regulatory feedback introduce non-negligible time delays into the inter-layer exchange, thereby qualitatively modifying the stability and spatiotemporal dynamics of the coupled system. In particular, time delay can be incorporated into the inter-layer exchange terms through \emph{transmission} parts, acting on signals received from neighboring units, or \emph{self-feedback} parts, acting on the local-state contribution to the coupling term \cite{CZL,ST}.
	
	Regarding time delays involved in the inter-layer coupling mechanism, one usually examines how these delays are introduced from the following two aspects: 
	\begin{enumerate}
		\item[(i)] Temporal nature of delays:
		\begin{enumerate}
			\item Discrete delay \cite{RF,ST,ZSKK}. It represents the temporal lag by one or more prescribed delay times, so that the current dynamics depend on the state at specific past instants, such as $t-\tau$ with $\tau>0$.
			
			\item Distributed delay \cite{JL,LYGYW,ZLLX}. It models the lag through a weighted integral over the past history. Rather than selecting a single past instant, the distributed-delay term incorporates a continuum of past states according to a prescribed delay kernel. A commonly used family of delay kernels is given by
			\begin{equation}\label{gn}
				g_n(t)=\frac{1}{\Gamma(n)}\left(\frac{n}{\tau}\right)^{n} t^{n-1} e^{-\frac{nt}{\tau}},\; t\ge 0,
			\end{equation}
			where $\tau>0$ is the delay parameter; $n\in\{1,2,\cdots\}$ is the shape parameter \cite{S}.
		\end{enumerate}
		\item[(ii)] Placement of delays within the inter-layer coupling:
		\begin{enumerate}
			\item Transmission-and-self-feedback delayed coupling \cite{CZL}: 
			both the cross-layer transmission term and the local self-feedback term are delayed;
			
			\item Transmission-delayed coupling \cite{JL}: 
			the cross-layer transmission term is delayed, while the local self-feedback term remains instantaneous;
			
			\item Self-feedback-delayed coupling \cite{ZSKK}: 
			the local self-feedback term is delayed, whereas the cross-layer transmission term remains instantaneous.
		\end{enumerate}
		For brevity, we shall refer to these three configurations as the {\bf fully delayed}, {\bf cross-delayed}, and {\bf self-delayed} couplings, respectively.
	\end{enumerate}
	
	Time-delayed coupling has received considerable attention in coupled oscillators, RD systems, neural networks, and complex dynamical networks \cite{CZL,JL,RF,RSJ,ZLLX}. The introduction of delays can profoundly influence collective dynamics, leading to phenomena such as amplitude death \cite{Atay,RSJ}, the revival of suppressed oscillations \cite{ZSKK}, phase locking and synchronization \cite{ST}, as well as delay-induced oscillatory instabilities \cite{RF}. In particular, as an important inspiration of present paper, Ji and Li \cite{JL} investigated Turing pattern formation in a two-layer Lengyel--Epstein model with distributed delay in the cross-layer coupling. The authors showed numerically that weak coupling accelerates Turing pattern formation and enhances its spatial amplitude, whereas strong coupling prolongs the process or even inhibits the pattern. That is, the cross-delayed coupling can influence the formation, amplitude, and even persistence of Turing patterns. These studies demonstrate that both the temporal nature of the delay and its position within the coupling mechanism can play an essential role in determining the collective dynamics. 
	
	Nevertheless, to the best of our knowledge, a systematic study that treats the temporal type and placement of delayed coupling within a unified spectral framework remains lacking for singularly perturbed membrane-coupled systems. A major difficulty lies in the spectral analysis. The SLEP method, originally developed by Nishiura and Fujii \cite{NF}, Nishiura and Mimura \cite{NM}, provides a powerful framework for the spectral stability analysis of singularly perturbed layered solutions. However, the introduction of delay operators makes the SLEP systems markedly more complicated than their delay-free counterparts considered in \cite{NM,TMN,WY2}. This naturally raises several questions: How can the SLEP method be extended to delay-coupled systems? What roles do the inter-layer coupling strengths and time delays play in determining stability/instability and pattern formation? What general stability and mode-selection principles can be identified across different delayed coupling configurations?
	
	Motivated by these questions and building on the above taxonomy, we shall consider the following general four-component two-layer RD system:
	\begin{equation}\label{eq-oo}
		\begin{cases}
			\begin{aligned}
				\alpha\dfrac{\partial u_i}{\partial t}&=\ep\Delta u_i+\dfrac{1}{\ep}f(u_i,v_i)+\delta k_1\left(L_1[u_j]-L_2[u_i]\right),&&\text{in}\;\Omega\times(0,\infty),\\
				\dfrac{\partial v_i}{\partial t}&=d\Delta v_i+g(u_i,v_i)+\delta k_2(v_j-v_i),&&\text{in}\;\Omega\times(0,\infty),\\
				\partial_\nu u_i&=\partial_\nu v_i=0,&&\text{on}\;\pa\Omega\times(0,\infty),
			\end{aligned}
		\end{cases}
	\end{equation}
	where $\Omega:=(0,1)$, $i\in\{1,2\}$, $j\in\{1,2\}\backslash\{i\}$; $L_1$ and $L_2$ denote the cross-layer transmission operator and the local self-feedback operator in activator coupling, respectively, which are specified as follows.
	\begin{enumerate}
		\item[(i)]  For discrete delay, we shall consider the following 3 cases:
		\begin{enumerate}[leftmargin=4.5em]
			\item [Case 1] (fully discrete-delayed coupling): $L_1[u]=L_2[u]:=u(x,t-\tau)$.
			\item [Case 2] (cross-discrete-delayed coupling): $L_1[u]:=u(x,t-\tau),\;L_2[u]:=u$. 
			\item [Case 3] (self-discrete-delayed coupling): $L_1[u]:=u,\;L_2[u]:=u(x,t-\tau)$.
		\end{enumerate}
		\item[(ii)] For distributed delay, we shall consider the following 3 cases:
		\begin{enumerate}[leftmargin=4.9em]
			\item[Case 4] (fully distributed-delayed coupling): $L_1[u]=L_2[u]:=\int_{-\infty}^{t}\frac{1}{\tau} e^{-\frac{t-s}{\tau} }u(x,s)ds$.
			\item [Case 5] (cross-distributed-delayed coupling): $L_1[u]:=\int_{-\infty}^{t}\frac{1}{\tau} e^{-\frac{t-s}{\tau} }u(x,s)ds,\;L_2[u]:=u$. 
			\item [Case 6] (self-distributed-delayed coupling): $L_1[u]:=u, \;L_2[u]:=\int_{-\infty}^{t}\frac{1}{\tau} e^{-\frac{t-s}{\tau} }u(x,s)ds$. 
		\end{enumerate}
	\end{enumerate}
	
	In particular, for Cases 1--3, the couplings will be directly reduced to the delay-free case if we send $\tau=0$. For Cases 4--6, the distribution kernel $g_1(s)=\frac{1}{\tau}e^{-s/\tau}$ in $s\in[0,\infty)$ is well known as the weak kernel. It is exponentially decreasing in $s$ to simulate the fading-memory effect with $\int_{0}^{\infty}\frac{1}{\tau}e^{-s/\tau}ds=1$, where $\tau>0$ denotes the mean delay. Thus, the delay-free case is understood in the limiting sense $\tau\to0^+$ (\cite{S}), when system \eqref{eq-oo} reduces to system \eqref{eq2}.
	
	\begin{definition}
		\begin{enumerate}
			\item	If $U(x,\ep):=(u(x,\ep), v(x,\ep))$ is the layered steady state solution of system \eqref{eq1}, then $(U(x,\ep), U(x,\ep)):=(u(x,\ep), v(x,\ep),u(x,\ep), v(x,\ep))$ is called a {\bf synchronized layered steady state} of the delayed membrane-coupled RD system \eqref{eq-oo}. 
			\item If $(U_{1,p}(x,t,\ep),U_{2,p}(x,t,\ep)):=(u_{1,p}(x,t,\ep),v_{1,p}(x,t,\ep),u_{2,p}(x,t,\ep),v_{2,p}(x,t,\ep))$
			is the periodic solution of the delayed membrane-coupled RD system \eqref{eq-oo} bifurcating from $(U(x,\ep), U(x,\ep))$ with minimal positive period $T$, then it is called an {\bf in-phase breather} if $U_{1,p}(x,t,\ep)=U_{2,p}(x,t,\ep)$, while an {\bf anti-phase breather} if $U_{1,p}(x,t,\ep)=U_{2,p}(x,t+\frac{T}{2},\ep)$, for any $x\in\Omega$ and $t>0$.
		\end{enumerate}
	\end{definition} 
	
	Our primary aim of the present paper is to understand how time delays in inter-layer coupling affect the stability of the synchronized layered steady state $(U(x,\ep),U(x,\ep))$ and how they can induce in-phase or anti-phase breathers through Hopf bifurcations. A key feature of our analysis is that, despite the different forms of discrete and distributed delays, the eigenvalue problems for all six cases can be treated within a unified SLEP framework and decomposed into symmetric and asymmetric sub-problems. These reduced eigenvalue problems exhibit closely related algebraic structures, while their detailed analysis requires different arguments depending on the type and placement of delays. It turns out that the presence of delay operators considerably complicates the characteristic equations by introducing the transcendental factor $e^{-\lambda\tau}$ for discrete delays or the rational factor $(1+\lambda\tau)^{-1}$ for exponentially distributed delays, making the existence of purely imaginary eigenvalues and the verification of transversality conditions substantially more challenging than in the delay-free setting. Nevertheless, by exploiting the specific properties of different delay mechanisms, we establish explicit conditions for Hopf bifurcation and identify several general structural conclusions shared by the six coupling configurations.
	
	To describe the main results, we first note that $\delta>0$ can be absorbed into the coupling coefficients $k_1$ and $k_2$, thus we set $\delta=1$ throughout the paper without loss of generality. We shall also fix $\alpha>\alpha_0$ so that $U(x,\ep)$ is the stable layered steady state solution of the one-reactor based system \eqref{eq1}. Under this condition, in the absence of time delays, the stability of the synchronized layered steady state $(U(x,\ep),U(x,\ep))$ with respect to the delay-free coupled system \eqref{eq2} is well known. More precisely, by Theorem \ref{th.2-1}, for sufficiently small $\ep>0$, $(U(x,\ep),U(x,\ep))$ is unstable with respect to the delay-free coupled system \eqref{eq2} when $(k_1,k_2)\in\Gamma_1$, while stable when $(k_1,k_2)\in\Gamma_2$. In particular, the critical curve $\{(\xi(k_2),k_2):k_2\in(k_2^*,\infty)\}$  separates the Turing instability region $\Gamma_1$ from the stability region $\Gamma_2$. As we will see later, this critical curve also plays an important role in the dynamics of the delayed coupled RD system \eqref{eq-oo}.
	
	When time delays are introduced into the membrane-coupled system, our main results are summarized in the following way  (see Theorems \ref{th.mode} and \ref{th.assembly}):	
	\begin{enumerate}
		\item {\bf Delay-independent stability for $k_2\in(0,k_2^*)$ and small $k_1$.} In any of Cases 1--6, when $k_2\in(0,k_2^*)$ and $k_1>0$ is sufficiently small ($(k_1,k_2)\in\Gamma_2$), the synchronized layered steady state $(U(x,\ep),U(x,\ep))$ remains stable with respect to \eqref{eq-oo} for every fixed $\tau\in(0,\infty)$ and sufficiently small $\ep>0$. Thus, sufficiently weak inter-layer coupling cannot generate delay-induced instability, regardless of the type of delay (discrete or distributed) and the placement within the coupling operator (full, cross, or self). Certainly, the specific thresholds depend on the individual case. This common stability property is established by analyzing the corresponding reduced characteristic equations, which possess closely related algebraic structures after the frequency and delay parameters are separated. Surprisingly, the asymmetric sub-problems of Cases 2, 3, and 4 share the same delay-independent stability threshold in the singular limit $\ep\to0^+$, despite the different types and placements of delays (see Lemma \ref{le.e3} and Remark \ref{re.K234}). It reveals a notable connection among seemingly different delayed coupling mechanisms. 
		
		\item {\bf Delay-independent Turing instability for $k_2\in(k_2^*,\infty)$ and small $k_1$.} In any of Cases 1--6, when $(k_1,k_2)\in\Gamma_1$, or say $k_2\in(k_2^*,\infty)$ and $k_1\in(0,\xi(k_2))$, the synchronized layered steady state is already Turing unstable in the delay-free system \cite{WY2}. We show that the introduction of time delays cannot reverse this diffusion-driven instability: for every $\tau>0$, the corresponding eigenvalue problem maintains an eigenvalue in the open right half-plane. Consequently, $(U(x,\ep),U(x,\ep))$ remains Turing unstable with respect to \eqref{eq-oo} for any fixed $\tau\in(0,\infty)$ and sufficiently small $\ep>0$. Hence, the Turing instability generated by strong inhibitor coupling is robust with respect to the introduction of time delays. In particular, the eigenvalue problem may admit purely imaginary eigenvalues for suitable parameter values in Cases 3 and 6, leading to the delicate interaction of Turing and Hopf instabilities. It is numerically explored in Section \ref{NS}.  
		
		\item {\bf Delay-induced Hopf bifurcation for $k_2\in(k_2^*,\infty)$ and intermediate $k_1$.} In any of Cases 1--6, for $k_2\in(k_2^*,\infty)$, once $k_1$ surpasses the delay-free Turing threshold $\xi(k_2)$ ($(k_1,k_2)\in\Gamma_2$), $(U(x,\ep),U(x,\ep))$ is stable with respect to \eqref{eq-oo} in the limiting sense as $\tau\to0^+$ for sufficiently small $\ep>0$. However, this stability can be destroyed by time delays via a Hopf mechanism. More precisely, for Case $j$ ($j\in\{1,\cdots,6\}$), there exists a threshold $H_j(k_2)>\xi(k_2)$, such that for $k_2\in(k_2^*,\infty)$,  $k_1\in(\xi(k_2),H_j(k_2))$, and sufficiently small $\ep>0$, the asymmetric sub-problem admits a simple conjugate pair of eigenvalues crossing the imaginary axis transversely from left to right at a positive critical delay. Consequently, the synchronized layered steady state becomes unstable as delay increases. In Cases 1, 4, and 5, the symmetric sub-problem remains stable, so the first asymmetric crossing yields a primary Hopf bifurcation and a family of anti-phase breathers. In Cases 2, 3, and 6, both the symmetric and asymmetric sub-problems may undergo Hopf instabilities, where the first destabilizing mode can be determined under some additional conditions (see Remark \ref{re.pri}). Moreover, if the first destabilizing mode is symmetric, then the primary Hopf bifurcation triggers a family of in-phase breathers.
		
		\item {\bf Pattern selection and stabilization for arbitrary $k_2\in(0,\infty)$} and large $k_1$. A sharper distinction among the six delayed cases emerges when $k_1$ is sufficiently large. Recall that strong activator coupling has a stabilizing effect in the delay-free system \cite{WY2}. In contrast, delayed coupling can either induce instability via Hopf bifurcation or preserve stability of the synchronized layered steady state $(U(x,\ep),U(x,\ep))$. More precisely, 
		\begin{enumerate}
			\item For the fully delayed configurations (Cases 1 and 4), Hopf bifurcations arise exclusively from the asymmetric sub-problem, leading to anti-phase breathers (Theorems \ref{th-1} and \ref{th-4}).
			\item For the self-delayed configurations (Cases 3 and 6), the first Hopf bifurcation is triggered by the symmetric sub-problem, generating in-phase breathers. In particular, although both symmetric and asymmetric sub-problems may become critical in Case 3, the first critical delay is determined by the symmetric sub-problem (Theorems \ref{th-3} and \ref{th-6}).
			\item For the cross-discrete-delayed configuration (Case 2), in contrast to Case 3, the first critical delay comes from the asymmetric  sub-problem, inducing anti-phase breathers. Remarkably, the cross-distributed-delayed configuration (Case 5) behaves like the delay-free case: once $k_1$ exceeds a suitable threshold, $(U(x,\ep),U(x,\ep))$ remains stable for every fixed $\tau>0$ and sufficiently small $\ep>0$, leading to strong-coupling-induced delay-independent stability (Theorems \ref{th-2} and \ref{th-5}).
		\end{enumerate}
	\end{enumerate}
	
	Our results reveal that the type and placement of delays jointly determine the dominant oscillatory modes and the stability of the synchronized layered steady state. In particular, fully delayed and self-delayed coupling exhibit similar mode-selection mechanisms across the discrete and distributed settings, while cross-delayed coupling exhibits a significant difference between the two delay types. As an application, the two-layer membrane-coupled FitzHugh-Nagumo delayed model is employed to illustrate and complement our analytical conclusions. Finally, we remark that the present work introduces time delays only in the inter-layer activator coupling, while the inhibitor coupling remains instantaneous. This provides a benchmark toward a more general theory of delayed membrane coupling. Introducing delays simultaneously into both coupling channels would result in more complicated SLEP characteristic equations and interacting delay terms. It requires the new analytical tools or additional structural assumptions and is thus left for future study. 
	
	The remainder of this paper is organized as follows. Section 2 reviews preliminary results on the uncoupled subsystem and summarizes the stability benchmark of delay-free coupled system. Section 3 constitutes the core analysis of the paper: we first establish a unified framework for the symmetric and asymmetric sub-problems of Cases 1--6, and then state the stability criteria and Hopf bifurcation results for each case. In Section 4, numerical simulations are performed to illustrate and complement the theoretical results, including the interaction of Turing instability and delay-induced oscillation. Finally, Section 5 compares the six delay mechanisms and discusses their biological implications and several open problems. Technical lemmas are collected in the Appendix.
	
	\section{Preliminaries: the SLEP method in non-delayed systems}
	\subsection{The non-delayed uncoupled subsystem}
	We first consider the following non-delayed uncoupled subsystem
	\begin{equation}\label{eq-s}
		\begin{cases}
			\begin{split}
				\alpha\frac{\partial u}{\partial t}&=\ep\Delta u+\frac{1}{\ep}f(u,v), ~&\text{in}\;\Omega\times(0,\infty),\\
				\dfrac{\partial v}{\partial t}&=d\Delta v+g(u,v),~&\text{in}\;\Omega\times(0,\infty),\\
				\partial_\nu u&=\partial_\nu v=0,~&\text{on}\;\Po\times(0,\infty),
			\end{split}
		\end{cases}
	\end{equation}
	where $\Omega=(0,1)$ throughout this paper. Moreover, $f(u,v)$ and $g(u,v)$ are always assumed to satisfy the conditions $(H_1)$-$(H_6)$ (see \cite{MTH, NF, TMN, WY2}):
	\begin{enumerate}[leftmargin=4.5em]
		\item[($H_1$).] $f$ and $g$ are smooth functions of $u$ and $v$ defined on some open set in $\R^2$.
		\item[($H_2$).] The nullcline $f(u,v)=0$ is $S$-shaped, comprising three continuous branches $u = h_{-}(v)$, $h_0(v)$, and $h_{+}(v)$ defined on intervals $I_{-}$, $I_0$, and $I_{+}$, respectively. Setting $\min I_{-} = \underline{v}$ and $\max I_{+} = \bar{v}$, the strict ordering $h_{-}(v) < h_0(v) < h_{+}(v)$ holds for all $v \in I^* = (\underline{v}, \bar{v})$. Moreover, $h_{+}(v)$ meets $h_0(v)$ only at $v = \bar{v}$, while $h_{-}(v)$ meets $h_0(v)$ only at $v = \underline{v}$.
		
		\item[($H_3$).] The integral $J(v) = \int_{h_{-}(v)}^{h_{+}(v)} f(s,v)\,ds$ possesses an isolated root $v^* \in I^*$ with $J'(v^*) < 0$.
		
		\item[($H_4$).] The nullclines of $f(u,v)=0$ and  $g(u,v)=0$ intersect transversally at either one or three points. Any equilibrium on $u = h_{-}(v)$, $h_{+}(v)$, or $h_0(v)$ is denoted by $(u_{-},v_{-}) = (h_{-}(v_{-}),v_{-})$, $(u_{+},v_{+}) = (h_{+}(v_{+}),v_{+})$, or $R = (u_0,v_0) = (h_0(v_0),v_0)$, respectively. We assume $v_{-} < v^* < v_{+}$, where $v_-$ (resp., $v_+$) is replaced by $\ul v$ (resp., $\ol v$) if no equilibrium lies on the branch $h_-(v)$ (resp., $h_+(v)$).
		
		\item[($H_5$).] The partial derivative satisfies $f_u < 0$ on $\mathcal{R}_{-} \cup \mathcal{R}_{+}$, where
		\begin{equation*}
			\mathcal{R}_{-} := \{(u,v) : u = h_{-}(v),\, v_{-} < v \leq v^*\}, \;
			\mathcal{R}_{+} := \{(u,v) : u = h_{+}(v),\, v^* \leq v < v_{+}\}.
		\end{equation*}
		
		\item[($H_6$).]  $g|_{\mathcal{R}_{-}} < 0 < g|_{\mathcal{R}_{+}}$, $\left. (f_u g_v - f_v g_u)/f_u \right|_{\mathcal{R}_{\pm}}<0$, $\left. g_v \right|_{\mathcal{R}_{\pm}} \leq 0$.
	\end{enumerate}
	
	\begin{theorem}[\cite{MTH, NF, NM}]\label{th.1}  Suppose that $(H_1)$-$(H_6)$ hold. Then, the following conclusions hold true:
		\begin{enumerate}			
			\item There exist positive constants $d_0>0$ and $\ep_0>0$, such that for any $0<\ep<\ep_0$ and $d>d_0$, system \eqref{eq-s} has an $\ep$-family of mono-layer non-constant steady state solutions $U(x,\ep):=(u(x,\ep),v(x,\ep))\in C_{\ep}^{2}([0,1])\times C^{2}([0,1])$, where 
			\begin{equation*}
				C_{\ep}^{2}([0,1]):=\{u\in C^2\left( [0,1]\right): \sum_{k = 0}^2 \max\limits_{x\in[0,1]}\big|\big(\ep d/dx\big)^ku(x)\big|<+\infty\}.
			\end{equation*}
			In particular, for any sufficiently small $\kappa>0$,
			\begin{equation*}
				\begin{split}
					&\lim\limits_{\ep\to 0^+} u(x,\ep)=u^\star(x)\;\text{uniformly on $(0,1)\setminus (x^*-\kappa,x^*+\kappa)$},\\
					&\lim\limits_{\ep\to 0^+} v(x,\ep)=v^\star(x)\;\text{uniformly on $(0,1)$},
				\end{split}
			\end{equation*}
			where $x^*\in(0,1)$ is the layer position of $(u^\star(x), v^\star(x))$, which is the weak solution of the reduced system $($$\ep=0$$)$ of \eqref{eq-s} given by 
			\begin{equation}\label{eq.reduced}
				\begin{cases}
					\begin{split}
						&f(u,v)=0,\;dv_{xx}+g(u,v)=0,&&x\in(0,1),\\
						&u'(0)=u'(1)=v'(0)=v'(1)=0.&&
					\end{split}
				\end{cases}
			\end{equation}
			\item There exists a unique critical value $\alpha_c^\ep > 0$, with $\ep\in(0,\ep_0)$, such that the steady state $(u(x,\ep), v(x,\ep))$ is asymptotically stable when $\alpha > \alpha_c^\ep$, while unstable when $\alpha < \alpha_c^\ep$. Furthermore, system \eqref{eq-s} undergoes a Hopf bifurcation bifurcating from $(u(x,\ep), v(x,\ep))$ at $\alpha = \alpha_c^\ep$.
		\end{enumerate}
	\end{theorem}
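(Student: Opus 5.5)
Theorem \ref{th.1} is the classical result of \cite{MTH, NF, NM}, and I would reproduce its proof in two stages: existence by matched asymptotics plus an implicit-function argument, then stability via the SLEP reduction.

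\textbf{Existence.} First I construct the reduced solution $(u^\star,v^\star)$ of \eqref{eq.reduced}. Take $u^\star=h_-(v)$ on $(0,x^*)$ and $u^\star=h_+(v)$ on $(x^*,1)$, or the mirror arrangement. Then solve the outer problems
\[
d v_{xx}+g(h_\mp(v),v)=0
\]
on the two subintervals, with Neumann data at $x=0,1$ and the matching condition $v(x^*)=v^*$. By $(H_6)$ we have $g|_{\mathcal R_-}<0<g|_{\mathcal R_+}$, which gives monotone solutions by a shooting or phase-plane argument. Requiring $C^1$ continuity of $v$ at $x^*$ then fixes $x^*\in(0,1)$; here I use $d>d_0$ so that $v$ stays inside $(v_-,v_+)$.

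Next I build the inner layer. In the stretched variable $y=(x-x^*)/\ep$, consider the heteroclinic orbit of $\phi''+f(\phi,v^*)=0$ connecting $h_-(v^*)$ to $h_+(v^*)$. It exists because $J(v^*)=0$ by $(H_3)$.

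Finally I glue inner and outer approximations into an approximate solution. The true solution follows from the implicit function theorem in $C^2_\ep\times C^2$. The key input is a uniform bound on the inverse of the linearized operator. In the layer, the operator $\ep^2\partial_{xx}+f_u$ has a single small eigenvalue, near the translation mode $\phi'$. That eigenvalue is compensated through the $v$-equation precisely by the nondegeneracy $J'(v^*)<0$ together with $(H_5)$–$(H_6)$.

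\textbf{Stability (SLEP).} Linearize \eqref{eq-s} at $U(x,\ep)$ and write the eigenvalue problem. In the $u$-equation, invert $L^\ep=\ep^2\partial_{xx}+f_u$ shifted by $\alpha\ep\lambda$, and substitute into the $v$-equation. This yields a nonlocal problem for $v$ alone. The principal eigenfunction of $L^\ep$ is concentrated at $x^*$, with eigenvalue $\zeta^\ep\sim\zeta^*\ep$. As $\ep\to0^+$, the term $f_v\,(L^\ep-\alpha\ep\lambda)^{-1}f_v$ converges to
\[
-\frac{\tau^*}{\zeta^*+\alpha\lambda}\,\delta_{x^*}+\frac{f_v}{f_u},
\]
with $\tau^*>0$ determined by the jump $h_+(v^*)-h_-(v^*)$ and $J'(v^*)$. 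This gives the SLEP equation
\[
\frac{\zeta^*+\alpha\lambda}{\tau^*}=\big\langle (-d\partial_{xx}-g_v^*+\lambda)^{-1}\delta_{x^*},\delta_{x^*}\big\rangle=:G(\lambda),
\]
where $g_v^*=(f_ug_v-f_vg_u)/f_u$, which is negative by $(H_6)$.

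I then analyze the roots of this equation. For real $\lambda\ge0$, $G$ is positive and decreasing while the left side is increasing, so a real positive root is excluded, provided the signs give $\zeta^*/\tau^*>G(0)$. For complex roots, set $\lambda=i\omega$. Then $G(i\omega)$ has negative imaginary part and modulus decreasing in $\omega$. Matching imaginary parts determines a curve $\alpha(\omega)$, and matching real parts selects a unique $\omega_c$, hence a unique $\alpha_c^0$. For $\alpha>\alpha_c^0$, an argument-principle count shows no roots in $\mathrm{Re}\,\lambda\ge0$, and transversality $d\,\mathrm{Re}\,\lambda/d\alpha<0$ at $\alpha_c^0$ gives the crossing.

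\textbf{Finishing.} I lift the result to small $\ep>0$ by showing that the $\ep$-characteristic function converges to the limit one uniformly on compact sets, and then applying Rouché's theorem. Eigenvalues far out are controlled by sectorial estimates. This yields $\alpha_c^\ep\to\alpha_c^0$ with a simple crossing pair, and the Hopf theorem for analytic semigroups completes part 2.

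\textbf{Main obstacle.} The hardest step is the rigorous singular limit of the resolvent term: the weak convergence to a $\delta_{x^*}$ multiple must hold uniformly in $\lambda$, including near $\lambda=-\zeta^*/\alpha$. I would handle this by the spectral decomposition of $L^\ep$ into its principal part plus a uniformly invertible complement, as in \cite{NF}.
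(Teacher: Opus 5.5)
The paper gives no proof of its own here; it defers to \cite{MTH,NF,NM}, and your outline follows that classical route. The existence sketch is acceptable at that level. The stability half, however, contains a sign error that, as written, makes the Hopf conclusion impossible.

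In the eigenvalue problem the $u$-equation reads $(L^\ep-\ep\alpha\lambda)\phi=-f_v^\ep\psi$. The principal eigenvalue of $L^\ep$ is positive, $\mu_0^\ep=\ep\rho(\ep)$ with $\rho_0^*>0$ (Lemma \ref{le.AA1}). So the singular part of $(L^\ep-\ep\alpha\lambda)^{-1}$ is $[\ep(\rho(\ep)-\alpha\lambda)]^{-1}$, and its pole sits at $\lambda=\rho/\alpha$ on the positive real axis, not at $-\zeta^*/\alpha$.

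The coupling term is $g_u^\ep(L^\ep-\ep\alpha\lambda)^{-1}f_v^\ep$, not $f_v(\cdot)^{-1}f_v$. Its regular part tends to $g_uf_v/f_u$, and its singular part tends to $-c_1^*c_2^*\psi(x^*)(\rho_0^*-\alpha\lambda)^{-1}\delta_{x^*}$, where $c_2^*$ is $\kappa^*$ times the jump of $g$ (not of $h$) across the layer. The limit equation is therefore
\[
\rho_0^*-\alpha\lambda=c_1^*c_2^*\,G(\lambda),\qquad G(\lambda)=\sum_{n\ge0}\frac{\psi_n^2(x^*)}{\gamma_n+\lambda},
\]
not $\zeta^*+\alpha\lambda=\tau^*G(\lambda)$. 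This has two consequences.
\begin{itemize}
\item With your sign, the imaginary part at $\lambda=i\omega$ gives $\alpha\omega=\tau^*\operatorname{Im}G(i\omega)<0$. So no $\alpha>0$ yields a purely imaginary root.
\item Independently of that sign, the real part at $\lambda=i\omega$ is $\zeta^*/\tau^*=\operatorname{Re}G(i\omega)<G(0)$. So $\omega_c$ exists only if $G(0)>\zeta^*/\tau^*$, which is exactly the reverse of the proviso you use to exclude positive real roots.
\end{itemize}
Your own hypotheses thus rule out $\alpha_c^0$.

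The missing ingredient is the inequality $X(0,0,0)=c_1^*c_2^*G(0)>\rho_0^*$ (Proposition \ref{prop.AB}, proved in the appendix of \cite{NM}). It must be established, not assumed. It is what makes $\rho_0^*=X(0,\omega^2,0)$ have a unique root $\omega_0$, since $X$ is strictly decreasing in $\omega^2$, and this gives $\alpha_c^0=Y(0,\omega_0^2,0)<\alpha_0:=Y(0,0,0)$.

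With the correct sign, both sides of the equation decrease on $[0,\infty)$, so a monotonicity comparison cannot exclude real roots. Argue instead as follows.
\begin{itemize}
\item Take $\alpha\ge\alpha_0$. The function $h(\lambda):=\rho_0^*-\alpha\lambda-X(\lambda,0,0)$ satisfies $h(0)<0$ and $h'(\lambda)=-\alpha+Y(\lambda,0,0)<0$ for $\lambda>0$, so there is no nonnegative real root.
\item A nonreal root forces $\alpha=Y(\lambda_R,\lambda_I^2,0)$, and this is $<\alpha_0$ whenever $\lambda_R\ge0$. Hence for $\alpha\ge\alpha_0$ there are no roots in $\operatorname{Re}\lambda\ge0$.
\item $\lambda=0$ is never a root, and $\pm i\omega_0$ at $\alpha=\alpha_c^0$ is the only imaginary-axis crossing. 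Roots in $\operatorname{Re}\lambda\ge0$ are bounded by $\alpha|\lambda|\le\rho_0^*+X(0,0,0)$. Your argument-principle continuation then gives stability on $(\alpha_c^0,\infty)$, and $d\operatorname{Re}\lambda/d\alpha<0$ gives instability below.
\end{itemize}
The lifting to small $\ep$ works as you describe, provided you keep the principal coefficient as a separate scalar unknown. That is, work with the function $\rho(\ep)-\alpha\lambda-\langle (T^\ep_\lambda)^{-1}g_u^\ep\varphi_0^\ep/\sqrt{\ep},\,-f_v^\ep\varphi_0^\ep/\sqrt{\ep}\rangle$ of \cite{NF}, which removes the pole at $\lambda=\rho/\alpha$.
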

	\begin{remark}
		By \cite{MTH, NF, NM}, it is well-known that the eigenvalue problem associated with the linearized system of \eqref{eq-s} evaluated at $U(x,\ep)$ is given by
		\begin{equation}\label{P-L-2-0}
			\begin{pmatrix}
				\ep^2&0\\
				0&d
			\end{pmatrix}
			\begin{pmatrix}
				\phi''(x)\\
				\psi''(x)
			\end{pmatrix}+
			\begin{pmatrix}
				f_u^\ep&f_v^\ep\\
				g_u^\ep&g_v^\ep\\
			\end{pmatrix}\begin{pmatrix}
				\phi(x)\\
				\psi(x)
			\end{pmatrix}=\begin{pmatrix}
				\ep\al\la&0\\
				0&\la
			\end{pmatrix}
			\begin{pmatrix}
				\phi(x)\\
				\psi(x)
			\end{pmatrix},
		\end{equation}
		where $f_u^\ep:=f_u(u(x,\ep),v(x,\ep))$, etc. Throughout this paper, all the systems are subject to the homogeneous Neumann boundary conditions, unless otherwise specified.
	\end{remark} 
	
	\subsection{The non-delayed membrane-coupled RD system}
	Consider the following membrane-coupled RD system of \eqref{eq2} with $\delta=1$, or equivalently
	\begin{equation}\label{eq.d-f}
		\begin{cases}
			\begin{split}
				\alpha\frac{\partial u_i}{\partial t}&=\ep\Delta u_i+\frac{1}{\ep}f(u_i,v_i)+ k_1\left( u_j-u_i\right),~&x\in(0,1),\;t>0,\\
				\dfrac{\partial v_i}{\partial t}&=d\Delta v_i+g(u_i,v_i)+ k_2(v_j-v_i),~&x\in(0,1),\;t>0,
			\end{split}
		\end{cases}
	\end{equation}
	where $i,j\in\{1,2\}$ and $i\neq j$.
	
	We first state a technical conclusion.
	
	\begin{proposition}[\cite{NM,WY2}]\label{prop.AB}
		Let $x^*, \rho_0^*,c_1^*,c_2^*,\gamma_n,\psi_n(x)$ be given by Lemma \ref{le.AA1} in the Appendix, and define
		\begin{equation}\label{AB}
			\begin{split}
				X(\la_R^*,(\la_I^*)^2, K)&:=\sum\limits_{n \ge 0} \dfrac{(\gamma _n + \lambda _R^* + 2     K)c_1^*c_2^*\psi_n ^2(x^*)}{(\gamma _n + \lambda _R^* + 2     K)^2 + (\lambda _I^*)^2},\\
				Y(\la_R^*,(\la_I^*)^2, K)&:=\sum\limits_{n \ge 0} \dfrac{c_1^*c_2^*\psi_n ^2(x^*)}{(\gamma _n + \lambda _R^* +2     K)^2 + (\lambda _I^*)^2},
			\end{split}    
		\end{equation}
		where $\la_R^*>-\mu^*$ for $0<\mu^*<\inf\limits_{x\in(0,1)}\frac{ f_u^\star g_v^\star-f_v^\star g_u^\star}{- f_u^\star}$, $\la_I^*\in\R$, and $K\in[0,\infty)$. Then, it holds true that
		\begin{equation}\label{eq.XY1}
			\begin{split}
				&X(0,0,0)=\sum\limits_{n \ge 0} \dfrac{c_1^*c_2^*}{\gamma _n }\psi_n ^2(x^*)>\rho_0^*,\;\;Y(0,0,0)=\sum\limits_{n \ge 0} \dfrac{c_1^*c_2^*}{\gamma _n^2 }\psi_n ^2(x^*)>0,\\
				&\lim\limits_{  K\to+\infty}X(\la_R^*,(\la_I^*)^2,   K)=\lim\limits_{  K\to+\infty}Y(\la_R^*,(\la_I^*)^2, K)=0,\\
				&\lim\limits_{(\la_I^*)^2\to+\infty}X(\la_R^*,(\la_I^*)^2,   K)=\lim\limits_{(\la_I^*)^2\to+\infty}Y(\la_R^*,(\la_I^*)^2,   K)=0,\\
				&\dfrac{\pa X}{\pa (\la_I^*)^2}(0,(\la_I^*)^2,K)<0,\;\dfrac{\pa Y}{\pa (\la_I^*)^2}(0,(\la_I^*)^2,K)<0,\;
				\dfrac{\pa X}{\pa K}(0,0,K)<0,\;\dfrac{\pa Y}{\pa K}(0,(\la_I^*)^2,K)<0.
			\end{split}
		\end{equation}
		Moreover, for $K\in[0,\infty)$ and $(\la_I^*)^2\in[0,\infty)$, we have
		\begin{equation}\label{eq.XY2}
			0\le \left\vert (\la_I^*)^2\frac{\pa X(0,(\la_I^*)^2,K)}{\pa (\la_I^*)^2} \right\vert \le X(0,(\la_I^*)^2,K),\;0\le \left\vert (\la_I^*)^2\frac{\pa Y(0,(\la_I^*)^2,K)}{\pa (\la_I^*)^2} \right\vert \le Y(0,(\la_I^*)^2,K).
		\end{equation}
	\end{proposition}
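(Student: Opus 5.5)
The plan is to work term by term in the two series defining $X$ and $Y$, using only what Lemma \ref{le.AA1} supplies. We take the following facts from it. The $\gamma_n$ are the eigenvalues of the Sturm--Liouville operator $-d\,\partial_{xx}-g_v^\star+(f_vg_u/f_u)^\star$ under Neumann conditions, with orthonormal eigenfunctions $\psi_n$. All $\gamma_n$ are positive, since $(f_ug_v-f_vg_u)/f_u<0$ by $(H_6)$. More precisely $\gamma_n\ge \mu^*>0$, and $\gamma_n\sim d n^2\pi^2$. The constants satisfy $c_1^*c_2^*>0$, and $\psi_n(x^*)$ is uniformly bounded.

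The first step is convergence. With $\la_R^*>-\mu^*$, each denominator $\gamma_n+\la_R^*+2K$ is bounded below by the positive constant $\mu^*+\la_R^*$. Each term of $Y$ is therefore $O(\gamma_n^{-2})$ and each term of $X$ is $O(\gamma_n^{-1})=O(n^{-2})$, uniformly in $K\ge0$ and $\la_I^*$. Both series converge absolutely and uniformly, so we may differentiate term by term in $K$ and in $(\la_I^*)^2$. We may also pass limits inside by dominated convergence, with dominating sequence $C\gamma_n^{-1}$ or $C\gamma_n^{-2}$.

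The second step handles the values and limits. $Y(0,0,0)>0$ holds because every term is nonnegative and $\psi_0(x^*)\neq0$; at least one $\psi_n(x^*)\ne0$ by completeness. For the two limits, each term tends to $0$ as $K\to\infty$ or $(\la_I^*)^2\to\infty$, and dominated convergence gives the result. The inequality $X(0,0,0)>\rho_0^*$ is not a series fact. It is the SLEP stability condition of the single-layer solution from \cite{NF,NM}, namely that the singular limit eigenvalue $\la=0$ is excluded. We will quote it from Lemma \ref{le.AA1}, where $\rho_0^*$ and the identity $X(0,0,0)=c_1^*c_2^*\langle (L^\star)^{-1}\delta_{x^*},\delta_{x^*}\rangle$ are recorded. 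Its strict inequality comes from $J'(v^*)<0$ together with the stability of the layer. I expect this to be the only step not settled by elementary calculus.

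The third step handles monotonicity and the bounds \eqref{eq.XY2}. Write $a_n=\gamma_n+2K>0$ and $s=(\la_I^*)^2$. Then
\begin{equation*}
\frac{\partial}{\partial s}\frac{a_n}{a_n^2+s}=-\frac{a_n}{(a_n^2+s)^2}<0,\qquad \frac{\partial}{\partial s}\frac{1}{a_n^2+s}=-\frac{1}{(a_n^2+s)^2}<0,
\end{equation*}
so both $s$-derivatives are negative. In $K$ we get $\partial_K(1/a_n)=-2/a_n^2<0$ and $\partial_K(a_n^2+s)^{-1}=-4a_n(a_n^2+s)^{-2}<0$. Finally,
\begin{equation*}
\left|s\,\frac{\partial}{\partial s}\frac{a_n}{a_n^2+s}\right|=\frac{a_n}{a_n^2+s}\cdot\frac{s}{a_n^2+s}\le \frac{a_n}{a_n^2+s},
\end{equation*}
and the analogous estimate holds for $Y$. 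Summing with the nonnegative weights $c_1^*c_2^*\psi_n^2(x^*)$ yields \eqref{eq.XY2} and strict negativity, the latter because some weight is positive.
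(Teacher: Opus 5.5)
Your convergence, limit, monotonicity and \eqref{eq.XY2} arguments are correct. For \eqref{eq.XY2} they are exactly the paper's computation, the termwise bound $s\,a_n/(a_n^2+s)^2\le a_n/(a_n^2+s)$. For the elementary parts of \eqref{eq.XY1} you are more explicit than the paper, which simply cites the Appendix of \cite{NM}. You also rightly keep $\partial_K X$ at $(\la_I^*)^2=0$, since $\partial_K\frac{a_n}{a_n^2+s}=\frac{2(s-a_n^2)}{(a_n^2+s)^2}$ changes sign for $s>0$.

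The genuine gap is $X(0,0,0)>\rho_0^*$, the only nontrivial step.

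\textbf{The cited source does not contain it.} Lemma \ref{le.AA1} gives only $\rho_0^*>0$, the limits \eqref{fvgu}, and the spectral data $(\gamma_n,\psi_n)$. So there is nothing there to quote.

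\textbf{Stability of the layer does not supply it.} That stability is essentially equivalent to the inequality, so invoking it is circular. Even as a black box, the real-root argument only excludes $\rho_0^*>X(0,0,0)$: the function $\rho_0^*-\alpha\la-X(\la,0,0)$ would be positive at $\la=0$ and tend to $-\infty$. The degenerate case $\rho_0^*=X(0,0,0)$ stays open, because a limiting root $\la^*=0$ is compatible with negative eigenvalues for every $\ep>0$.

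\textbf{How to close it.} Either cite the Appendix of \cite{NM} directly, as the paper does, or prove it. Your identity $X(0,0,0)=c_1^*c_2^*G_N(x^*,x^*)$ is half of a proof, with $G_N(x,y)=\sum_n\psi_n(x)\psi_n(y)/\gamma_n$ the Neumann Green's function of $L^\star$. The missing half is a Dirichlet representation of $\rho_0^*$, which I sketch as follows.
\begin{enumerate}
\item Test the $x$-derivative of $\ep^2u_{xx}+f(u,v)=0$ against $\varphi_0^\ep$, using $\mu_0^\ep=\ep\rho(\ep)$ and parts 3--4 of Lemma \ref{le.AA1}. This gives $\rho_0^*=\kappa^*c_1^*\,(v^\star)'(x^*)$.
\item Differentiate the reduced $v$-equation on each side of $x^*$, noting $h_\pm'=-f_v/f_u$. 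Then $w=(v^\star)'$ solves $-dw''+Vw=(c_2^*/\kappa^*)\,\delta(x-x^*)$, with $V$ the potential in \eqref{gamman}. Crucially, $w$ carries \emph{Dirichlet} data $w(0)=w(1)=0$, inherited from the Neumann condition on $v$.
\item Hence $\rho_0^*=c_1^*c_2^*G_D(x^*,x^*)$, with $G_D$ the Dirichlet Green's function.
\item Since $V>0$ by $(H_6)$, the difference $G_N(\cdot,x^*)-G_D(\cdot,x^*)$ solves the homogeneous equation with positive boundary values. The maximum principle then gives $G_N(x^*,x^*)>G_D(x^*,x^*)$.
\item Finally $c_1^*c_2^*>0$ yields the strict inequality; this is the only place $J'(v^*)<0$ enters.
\end{enumerate}
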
 
	\begin{proof}
		See the Appendix of \cite{NM} for a detailed proof of \eqref{eq.XY1}. By direct calculations, we have
		\begin{equation*}
			\begin{split}
				0\le -(\la_I^*)^2\frac{\pa X(0,(\la_I^*)^2,K)}{\pa (\la_I^*)^2}=\sum_{n\ge0}\frac{c_1^*c_2^*\psi_n^2(x^*)(\la_I^*)^2(\gamma_n+2K)}{((\gamma_n+2K)^2+(\la_I^*)^2)^2}\le\sum_{n\ge0}\frac{c_1^*c_2^*\psi_n^2(x^*)(\gamma_n+2K)}{(\gamma_n+2K)^2+(\la_I^*)^2}= X(0,(\la_I^*)^2,K),
			\end{split}
		\end{equation*}
		leading to the first argument in \eqref{eq.XY2}. The second estimate follows similarly.
	\end{proof}
	
	The stability of synchronized layered steady state $(U(x,\ep),U(x,\ep))$ with respect to \eqref{eq.d-f} for small $\ep>0$ was investigated in \cite{WY2}.
	
	\begin{figure}[htbp]
		\centering
		\begin{tikzpicture}
			\fill [fill=green!20]  (0,4) --  (0,1) parabola (2.4,4)--cycle;
			\fill [fill=magenta!15]  (0,0) --  (0,1) parabola (2.4,4)--(2.5,4)--(2.5,0)--cycle;
			\fill [fill=magenta!15]  (2.5,0) rectangle (7.5,4);
			\draw[->] (0,0) --(7.55,0) node[right] {$k_1$};
			\draw[->] (0,0) --(0,4.05) node[above] {$k_2$};
			\draw  (0,0) node[left] {$O$};
			\draw[densely dashed] (2.5,4)--(2.5,0) node [below]{$\frac{\rho_0^*}{2}$};
			\draw (0,1) parabola (2.4,4);
			\draw (0,1) node [left] {$k_2^*$};
			\draw [densely dashed] (1.2,1.75)--(1.2,0) node [below] {$\xi(k_2)$};
			\draw [densely dashed] (1.2,1.75)--(0,1.75) node [left]{$k_2$};
			\draw node at (4,2){$\Gamma_2$};
			\draw node at (1.1,2.9){$\Gamma_1$};
		\end{tikzpicture}
		\caption{Stability diagram of the synchronized layered steady state $(U(x,\ep),U(x,\ep))$ with respect to \eqref{eq.d-f}.  $\Gamma_1:=\{(k_1,k_2): k_2\in(k_2^*,+\infty),\; k_1\in(0,\xi(k_2))\}$, $\Gamma_{2}:=\{(k_1,k_2): k_2\in(0,k_2^*),\; k_1\in(0,+\infty),\;\text{or}\; k_2\in[k_2^*,+\infty),\; k_1\in(\xi(k_2),+\infty)\}$. In $\Gamma_1$, $(U(x,\ep),U(x,\ep))$ is unstable with respect to \eqref{eq.d-f} for sufficiently small $\ep>0$. In $\Gamma_2$, $(U(x,\ep),U(x,\ep))$ is stable with respect to \eqref{eq.d-f} for sufficiently small $\ep>0$.}
		\label{Fig.1}
	\end{figure}
	
	\begin{theorem}[\cite{WY2}]\label{th.2-1}	
		Suppose that $\alpha\in(\alpha_0,+\infty)$, where $\alpha_0:=\sum\limits_{n \ge 0} \frac{c_1^*c_2^*}{\gamma _n^2 }\psi_n ^2(x^*)=Y(0,0,0)$. Denote
		\begin{equation*}
			\Gamma:=\{(k_1,k_2): k_2\in(k_2^*,+\infty), k_1=\xi(k_2)\}=\{(k_1,k_2): k_1\in(0,\rho_0^*/2), k_2=\zeta(k_1)\},
		\end{equation*}
		where $k_2=k_2^*$ satisfies $X(0,0,k_2^*)=\rho_0^*$, $\xi(k_2):=\frac{\rho_0^*-X(0,0,k_2)}{2}$ is increasing in $k_2\in(k_2^*,\infty)$, $\zeta(k_1)$ is the inverse function of $\xi(k_2)$ with $\xi(k_2^*)=0,\;\lim\limits_{k_2\to\infty}\xi(k_2)=\frac{\rho_0^*}{2},\;\zeta(0)=k_2^*,\;\lim\limits_{k_1\to(\rho_0^*/2)^-}\zeta\left( k_1\right) =\infty$. Define
		\begin{equation*}
			\begin{split}
				\Gamma_1&:=\{(k_1,k_2): k_2\in(k_2^*,+\infty), k_1\in(0,\xi(    k_2))\},\\
				\Gamma_2&:=\{(k_1,k_2): k_2\in(0,k_2^*),\;  k_1\in(0,\infty),\;\text{or}\; k_2\in[k_2^*,+\infty),\; k_1\in(\xi(k_2),\infty)\}.
			\end{split}
		\end{equation*}
		Then, the following conclusions hold true $($see Figure \ref{Fig.1}$)$:
		\begin{enumerate}[label=$\arabic*$.]
			\item If $(k_1,k_2)\in\Gamma_1$, then the synchronized layered steady state $(U(x,\ep), U(x,\ep))$ is unstable with respect to system \eqref{eq.d-f} for sufficiently small $\ep>0$.
			\item  If $(k_1,k_2)\in\Gamma_2$, the synchronized layered steady state $(U(x,\ep), U(x,\ep))$ is stable with respect to system \eqref{eq.d-f} for sufficiently small $\ep>0$.
		\end{enumerate}	
	\end{theorem}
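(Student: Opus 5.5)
The plan is to linearize \eqref{eq.d-f} at $(U(x,\ep),U(x,\ep))$ and decouple the four-component eigenvalue problem by the change of variables $\Phi_\pm=\phi_1\pm\phi_2$, $\Psi_\pm=\psi_1\pm\psi_2$. The symmetric part $(\Phi_+,\Psi_+)$ satisfies exactly \eqref{P-L-2-0}, since the coupling terms cancel. Because $\alpha>\alpha_0$ and Theorem \ref{th.1} applies, this sub-problem has all eigenvalues in a left half-plane, uniformly for small $\ep$. The asymmetric part $(\Phi_-,\Psi_-)$ satisfies \eqref{P-L-2-0} with $f_u^\ep$ replaced by $f_u^\ep-2\ep k_1$ and $g_v^\ep$ by $g_v^\ep-2k_2$. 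All instability information therefore lives in this shifted sub-problem.

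Next I would apply the SLEP reduction of \cite{NF,NM} to the asymmetric problem, for $\la$ in a region $\mathrm{Re}\,\la>-\mu^*$.

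1. Solve the first equation for $\Phi_-$ via the resolvent of $\ep^2\partial_x^2+f_u^\ep-\ep(\al\la+2k_1)$. The principal eigenvalue of $\ep^2\partial_x^2+f_u^\ep$ is of order $\ep$, with coefficient $\rho_0^*$ (Lemma \ref{le.AA1}) and an eigenfunction concentrating at $x^*$.
2. Pass to the limit $\ep\to0^+$. The $\Phi_-$-contribution in the $\Psi_-$ equation converges, in the dual sense, to a Dirac mass at $x^*$ weighted by $(\rho_0^*-\al\la-2k_1)^{-1}$ and the constants $c_1^*,c_2^*$.
3. Expand $\Psi_-$ in the eigenfunctions $\psi_n$ (eigenvalues $\gamma_n$) of the limiting operator $-d\partial_x^2-g_v^\star+f_v^\star g_u^\star/f_u^\star$. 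This operator is shifted by $2k_2$, which is exactly the $K=k_2$ entry in \eqref{AB}.

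These steps yield the limiting SLEP equation
\[
\rho_0^*-2k_1-\al\la^*=\sum_{n\ge0}\frac{c_1^*c_2^*\psi_n^2(x^*)}{\gamma_n+\la^*+2k_2}.
\]
Writing $\la^*=\la_R^*+i\la_I^*$ and separating real and imaginary parts gives
\[
\rho_0^*-2k_1-\al\la_R^*=X(\la_R^*,(\la_I^*)^2,k_2),\qquad \la_I^*\bigl(\al-Y(\la_R^*,(\la_I^*)^2,k_2)\bigr)=0 .
\]
Rigorous justification follows the standard SLEP scheme: uniform bounds on the resolvent away from the critical eigenvalue, then Rouché or implicit-function arguments to transfer roots of the limit equation to true eigenvalues for small $\ep$.

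Using Proposition \ref{prop.AB}, I would split into real and complex roots.

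\emph{Complex roots.} At $\la_R^*=0$, $Y$ is decreasing in both $(\la_I^*)^2$ and $K$. Hence $Y(0,(\la_I^*)^2,k_2)\le Y(0,0,0)=\alpha_0<\al$, so no purely imaginary root exists. A continuity and monotonicity argument in $\la_R^*\ge0$ excludes complex roots in the closed right half-plane.

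\emph{Real roots.} The function $\la\mapsto X(\la,0,k_2)+\al\la$ is strictly increasing and tends to $+\infty$. A nonnegative real root therefore exists exactly when $\rho_0^*-2k_1>X(0,0,k_2)$, i.e.\ $k_1<\xi(k_2)$. Since $X(0,0,\cdot)$ is decreasing with $X(0,0,k_2^*)=\rho_0^*$, this requires $k_2>k_2^*$.

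Consequently, in $\Gamma_1$ a positive real eigenvalue persists for small $\ep$, so the state is unstable. In $\Gamma_2$ all critical eigenvalues lie in the left half-plane. Noncritical eigenvalues are controlled by the $O(1)$ spectrum as in \cite{NF}, which gives stability.

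The main obstacle I expect is the rigorous SLEP limit for the shifted operator. The $-2\ep k_1$ shift is of the same order as the critical eigenvalue, so it must be tracked exactly rather than dropped. This is the only point where the coupling strength $k_1$ enters, through $\rho_0^*-2k_1$.
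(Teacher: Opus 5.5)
Your proposal is correct and matches the argument this paper takes from \cite{WY2} and reuses in its delayed framework (Proposition~\ref{p.pzp}, Theorem~\ref{p.con}). That argument splits the problem into symmetric and asymmetric modes, reduces the asymmetric problem by SLEP to $\mathcal F^*$ with $p+q\theta^*_\tau=-2$ and $K=k_2$, uses $\alpha>\alpha_0=Y(0,0,0)$ to rule out the imaginary-part equation, and lets the sign of $\rho_0^*-2k_1-X(0,0,k_2)$ decide the real roots. Two small points to tighten: termwise monotonicity gives $Y(\lambda_R^*,(\lambda_I^*)^2,k_2)\le\alpha_0<\alpha$ directly for every $\lambda_R^*\ge0$, so no continuity argument is needed, and the same bound is what makes $\lambda\mapsto X(\lambda,0,k_2)+\alpha\lambda$ strictly increasing (its derivative is $\alpha-Y(\lambda,0,k_2)>0$); also, the real root is positive exactly when $k_1<\xi(k_2)$, while a zero root occurs only on $\Gamma$.
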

	\begin{remark}
		Let $\alpha_c^\ep$ be given by Theorem \ref{th.1}. By \cite{TMN,WY2}, it holds true that $\alpha_0>\lim\limits_{\ep\to0^+}\alpha_c^\ep$. Then, for $\alpha\in(\alpha_0,\infty)$ and small $\ep>0$, the layered steady state $U(x,\ep):=(u(x,\ep),v(x,\ep))$ is stable with respect to \eqref{eq-s} and all the eigenvalues of \eqref{P-L-2-0} lie in the open left half-plane.
	\end{remark}
	
	\section{The membrane-coupled RD system with delayed couplings}
	Consider the following membrane-coupled RD system with delayed coupling:
	\begin{equation}\label{eq-o}
		\begin{cases}
			\begin{split}
				\alpha\frac{\partial u_i}{\partial t}&=\ep\dfrac{\partial ^2u_i}{\partial x^2}+\frac{1}{\ep}f(u_i,v_i)+k_1 \left( L_1[u_j]-L_2[u_i]\right),~&x\in(0,1),\;t>0,\\
				\dfrac{\partial v_i}{\partial t}&=d\dfrac{\partial ^2v_i}{\partial x^2}+g(u_i,v_i)+ k_2(v_j-v_i),~&x\in(0,1),\;t>0,\\
			\end{split}
		\end{cases}
	\end{equation}
	where $i,j\in\{1,2\}$ and $i\neq j$; $x\in(0,1)$ and $t>0$. $L_i[u]$ can be either $u$ (non-delayed case), or $\int_{-\infty}^{t} \frac{1}{\tau}e^{-\frac{t-s}{\tau}} u(x,s)ds$ (the case of the distributed delay), or $u(x,t-\tau)$ (the case of discrete delay).

	We shall treat system \eqref{eq-o} for all the cases in a unified framework. 
	
	{\bf Step 1 (Linearization)}:
	The linearized system of \eqref{eq-o} evaluated at the synchronized layered steady state solution $(U(x,\ep), U(x,\ep))$ is given by
	\begin{equation}\label{eq-o-L}
		\begin{cases}
			\begin{split}
				\alpha\frac{\partial \phi_1}{\partial t}&=\ep\dfrac{\partial ^2\phi_1}{\partial x^2}+\dfrac{1}{\ep}(f_u^\ep\phi_1+f_v^\ep\psi_1)+ k_1(L_1[\phi_2]-L_2[\phi_1]),\\
				\dfrac{\partial \psi_1}{\partial t}&=d\dfrac{\partial ^2\psi_1}{\partial x^2}+g_u^\ep\phi_1+g_v^\ep\psi_1+ k_2(\psi_2-\psi_1),\\
				\alpha\frac{\partial \phi_2}{\partial t}&=\ep\dfrac{\partial ^2\phi_2}{\partial x^2}+\dfrac{1}{\ep}(f_u^\ep\phi_2+f_v^\ep\psi_2)+ k_1(L_1[\phi_1]-L_2[\phi_2]),\\
				\dfrac{\partial \psi_2}{\partial t}&=d\dfrac{\partial ^2\psi_2}{\partial x^2}+g_u^\ep\phi_2+g_v^\ep\psi_2+ k_2(\psi_1-\psi_2).
			\end{split}
		\end{cases}
	\end{equation}
	
	{\bf Step 2 (Decomposition)}: For $i=1,2$, we rewrite $\phi_i(x,t)$ and $\psi_i(x,t)$ in the following form
	\begin{equation}\label{W w Z z}
		\phi_1(x,t):=w_1(x)e^{\la t},\; \phi_2(x,t):=w_2(x)e^{\la t},\;\psi_1(x,t):=z_1(x)e^{\la t},\;\psi_2(x,t):=z_2(x)e^{\la t},
	\end{equation}
	and set $L_i[\phi_j]$ to be either $w_j(x)e^{\la t}$ (the non-delayed case), or $\frac{w_j(x)}{1+\la\tau}e^{\la t}$ (the case of distributed delay), or $w_j(x)e^{\la (t-\tau)}$ (the case of discrete delay).
	
	Substituting \eqref{W w Z z} into \eqref{eq-o-L}, we obtain the following eigenvalue problem 	
	\begin{equation}\label{D-L-2-a}
		\begin{cases}
			\begin{split}
				&\ep^2 w_1''(x)+f_u^{\ep} w_1+f_v^{\ep} z_1+\ep k_1 (\ell_1 w_2-\ell_2 w_1)=\ep\alpha\la w_1,\\
				&d z_1''(x)+g_u^{\ep} w_1+g_v^{\ep} z_1+k_2(z_2-z_1)=\la z_1,\\
				&\ep^2 w_2''(x)+f_u^{\ep} w_2+f_v^{\ep} z_2+\ep k_1 (\ell_1 w_1-\ell_2 w_2)=\ep\alpha\la w_2,\\
				&d z_2''(x)+g_u^{\ep} w_2+g_v^{\ep} z_2+k_2(z_1-z_2)=\la z_2,
			\end{split}
		\end{cases}
	\end{equation}
	where $\ell_i$ ($i=1,2$) can be either $1$ (when $L_i$ has no delay), or $\theta_\tau(\la):=e^{-\la\tau}$ (when $L_i$ has discrete delay), or $\theta_\tau(\la):=\frac{1}{1+\la\tau}$ (when $L_i$ has distributed delay).
	
	Following \cite{LE2, TMN}, we define
	\begin{equation*}
		\begin{split}
			w_s(x):=&\frac{1}{2}(w_1(x)+w_2(x)),\; w_a(x):=\frac{1}{2}(w_1(x)-w_2(x)),\\
			z_s(x):=&\frac{1}{2}(z_1(x)+z_2(x)),\;\;\;\;z_a(x):=\frac{1}{2}(z_1(x)-z_2(x)).
		\end{split}
	\end{equation*}
	
	Then, the eigenvalue problem \eqref{D-L-2-a} can be decomposed into the following two eigenvalue problems:
	\begin{enumerate}
		\item One is the {\bf symmetric} sub-eigenvalue problem
		\begin{equation}\label{EP-o-1}
			\begin{pmatrix}
				\ep^2&0\\
				0&d
			\end{pmatrix}
			\begin{pmatrix}
				w_s''(x)\\
				z_s''(x)
			\end{pmatrix}+
			\begin{pmatrix}
				f_u^\ep + \ep k_1(p+q\theta_\tau(\la))&f_v^\ep\\
				g_u^\ep&g_v^\ep\\
			\end{pmatrix}\begin{pmatrix}
				w_s(x)\\
				z_s(x)
			\end{pmatrix}=\begin{pmatrix}
				\ep\al\la&0\\
				0&\la
			\end{pmatrix}
			\begin{pmatrix}
				w_s(x)\\
				z_s(x)
			\end{pmatrix},
		\end{equation}
		\item The other is the {\bf asymmetric} sub-eigenvalue problem
		\begin{equation}\label{EP-o-2}
			\begin{pmatrix}
				\ep^2&0\\
				0&d
			\end{pmatrix}
			\begin{pmatrix}
				w_a''(x)\\
				z_a''(x)
			\end{pmatrix}+
			\begin{pmatrix}
				f_u^\ep+\ep k_1(p+q\theta_\tau(\la))&f_v^\ep\\
				g_u^\ep&g_v^\ep-2k_2\\
			\end{pmatrix}\begin{pmatrix}
				w_a(x)\\
				z_a(x)
			\end{pmatrix}=\begin{pmatrix}
				\ep\al\la&0\\
				0&\la
			\end{pmatrix}
			\begin{pmatrix}
				w_a(x)\\
				z_a(x)
			\end{pmatrix},
		\end{equation}
		where $(p,q)$ and $\theta_\tau(\la)$ are determined in the following Proposition \ref{Pthetasa}.
	\end{enumerate}
	\begin{proposition}\label{Pthetasa}
		The symmetric and asymmetric eigenvalue problems \eqref{EP-o-1} and \eqref{EP-o-2} can be written in the following unified form
		\begin{equation}\label{EP-g}
			\begin{pmatrix}
				\ep^2&0\\
				0&d
			\end{pmatrix}
			\begin{pmatrix}
				w''(x)\\
				z''(x)
			\end{pmatrix}+
			\begin{pmatrix}
				f_u^\ep + \ep k_1(p+q\theta_\tau(\la))&f_v^\ep\\
				g_u^\ep&g_v^\ep-2K\\
			\end{pmatrix}\begin{pmatrix}
				w(x)\\
				z(x)
			\end{pmatrix}=\begin{pmatrix}
				\ep\al\la&0\\
				0&\la
			\end{pmatrix}
			\begin{pmatrix}
				w(x)\\
				z(x)
			\end{pmatrix},
		\end{equation}
		where
		\begin{equation}\label{thetau}
			\theta_\tau(\la):=
			\begin{cases}
				e^{-\lambda\tau},&\text{for all of Case 1, Case 2 and Case 3},\\
				\dfrac{1}{1+\lambda\tau},&\text{for all of Case 4, Case 5 and Case 6}.
			\end{cases}
		\end{equation}
		Moreover,
		\begin{enumerate}
			\item For symmetric sub-eigenvalue problem \eqref{EP-o-1}, 
			\begin{equation}\label{eq:pqk-sym}
				(p,q,K)=
				\begin{cases}
					(0,0,0), & \text{in Cases 1 and 4},\\
					(-1,1,0), & \text{in Cases 2 and 5},\\
					(1,-1,0), & \text{in Cases 3 and 6.}
				\end{cases}
			\end{equation}
			\item For asymmetric sub-eigenvalue problem \eqref{EP-o-2}, 
			\begin{equation}\label{eq:pqk-asym}
				(p,q,K)=
				\begin{cases}
					(0,-2,k_2), & \text{in Cases 1 and 4},\\
					(-1,-1,k_2), & \text{in Cases 2 and 5},\\
					(-1,-1,k_2), & \text{in Cases 3 and 6.}
				\end{cases}
			\end{equation}
		\end{enumerate}
		In particular, as $\tau\to0^+$, for any $\la\in\C$, we have $p+q\theta_0(\la)=0$ for the symmetric sub-problem \eqref{EP-o-1} and $p+q\theta_0(\la)=-2$ for the asymmetric sub-problem \eqref{EP-o-2}.
	\end{proposition}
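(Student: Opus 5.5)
The proof is a direct computation: substitute the symmetric/asymmetric combinations into \eqref{D-L-2-a} and read off the coefficient of $w$ in the first equation for each case. We add and subtract the first and third equations of \eqref{D-L-2-a}, and likewise the second and fourth, then divide by $2$. The $v$-equations do not involve $\ell_1,\ell_2$. Adding gives $k_2(z_2-z_1)+k_2(z_1-z_2)=0$, so $K=0$ for $(w_s,z_s)$. Subtracting gives $k_2[(z_2-z_1)-(z_1-z_2)]/2=-2k_2 z_a$, so $K=k_2$ for $(w_a,z_a)$. This holds independently of the case and yields the $(2,2)$ entry $g_v^\ep-2K$ in \eqref{EP-g}.

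For the $u$-equations, the coupling terms are $\ep k_1(\ell_1w_2-\ell_2w_1)$ and $\ep k_1(\ell_1w_1-\ell_2w_2)$. Their half-sum is $\ep k_1(\ell_1-\ell_2)w_s$. Their half-difference is $\ep k_1(-\ell_1-\ell_2)w_a$, since $(w_2-w_1)/2=-w_a$. Hence the symmetric problem has $p+q\theta_\tau(\la)=\ell_1-\ell_2$ and the asymmetric problem has $p+q\theta_\tau(\la)=-(\ell_1+\ell_2)$. The remaining terms are linear with the same coefficients in both layers, so they pass unchanged to $(w_s,z_s)$ and $(w_a,z_a)$. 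This reproduces \eqref{EP-o-1} and \eqref{EP-o-2}, and both are of the form \eqref{EP-g}.

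Next we insert the case-specific $\ell_i$. From Step 2 and \eqref{W w Z z}, $\ell_i=\theta_\tau(\la)$ whenever $L_i$ is delayed, with $\theta_\tau$ as in \eqref{thetau}, and $\ell_i=1$ otherwise.
\begin{itemize}
\item Cases 1 and 4 have $\ell_1=\ell_2=\theta_\tau$. This gives $\ell_1-\ell_2=0$, so $(p,q)=(0,0)$, and $-(\ell_1+\ell_2)=-2\theta_\tau$, so $(p,q)=(0,-2)$.
\item Cases 2 and 5 have $\ell_1=\theta_\tau$ and $\ell_2=1$. This gives $\theta_\tau-1$, so $(p,q)=(-1,1)$, and $-1-\theta_\tau$, so $(p,q)=(-1,-1)$.
\item Cases 3 and 6 have $\ell_1=1$ and $\ell_2=\theta_\tau$. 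This gives $1-\theta_\tau$, so $(p,q)=(1,-1)$, and again $(p,q)=(-1,-1)$.
\end{itemize}
These are exactly \eqref{eq:pqk-sym} and \eqref{eq:pqk-asym}.

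For the limit statement, note that $\theta_\tau(\la)\to1$ as $\tau\to0^+$ for each fixed $\la\in\C$, in both the discrete and distributed forms. Then $p+q\to p+q$, which equals $0$ in every symmetric case and $-2$ in every asymmetric case.

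There is no real obstacle; the only point needing care is the sign convention $(w_2-w_1)/2=-w_a$ in the asymmetric coupling term. One should also note that the decomposition is an equivalence. Since $(w_1,w_2)\mapsto(w_s,w_a)$ is invertible, $\la$ is an eigenvalue of \eqref{D-L-2-a} exactly when it is an eigenvalue of \eqref{EP-o-1} or of \eqref{EP-o-2}.
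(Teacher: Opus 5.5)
Your proposal is correct and is the direct computation the paper has in mind when it calls the proof trivial and omits it. Taking half-sums and half-differences of the layer equations gives the coefficients $\ell_1-\ell_2$ and $-(\ell_1+\ell_2)$ together with $K\in\{0,k_2\}$, and inserting the case-specific $\ell_i$ yields the stated $(p,q,K)$. One small slip: in the limit step you mean $p+q\theta_\tau(\la)\to p+q$, not ``$p+q\to p+q$''.
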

	\begin{proof}
		The proof is trivial and we shall omit it.
	\end{proof}
	
	{\bf Step 3 (SLEP reduction)}: 
	Define
	\begin{equation*}
		\C_{\mu^*}:=\left\{(\la,\tau)\in\C\times(0,\infty): \operatorname{Re}\la>-\mu^*,\; \tau\operatorname{Re}\la>-\frac{1}{2},  \;0<\mu^*<\inf\limits_{x\in(0,1)}\frac{ f_u^\star g_v^\star-f_v^\star g_u^\star}{- f_u^\star}\right\},
	\end{equation*}
	where $\tau\operatorname{Re}\la>-1/2$ is additionally imposed for the uniform boundedness of $\vert\theta_{\tau}(\la)\vert$ for $(\la,\tau)\in\C_{\mu^*}$ which is shown later. 
	
	For any $(\la,\tau)\in\C_{\mu^*}$, as $\ep\to0^+$, we denote $\la^*=\la_R^*+i\la_I^*:=\lim\limits_{\ep\to0^+}\la$, and
	\begin{equation}\label{Thetau*}
		\theta_\tau^*=\theta_R^*+i\theta_I^*:=\lim\limits_{\ep\to0^+}\theta_\tau(\la)=
		\begin{cases}
			e^{-\lambda^*\tau},&\text{for all of Case 1, Case 2 and Case 3},\\
			\dfrac{1}{1+\lambda^*\tau},&\text{for all of Case 4, Case 5 and Case 6}.
		\end{cases}
	\end{equation}
	
	Then, as $\ep\to0^+$, by using the SLEP reduction method (for instance, page 490-494 of \cite{NM}), we can obtain the complex SLEP system of \eqref{EP-g}:
	\begin{equation}\label{F*}
		\mathcal{F}^*(\la^*,\tau):=\rho_0^*-\alpha\la^*+k_1\left(p+q\theta_\tau^*\right)-\sum_{n=0}^{\infty}\dfrac{c_1^*c_2^*}{\gamma_n+\la^*+2K}\psi_n^2(x^*)=0,
	\end{equation}
	or equivalently (by separating the real and the imaginary parts of \eqref{F*}), the real SLEP system
	\begin{equation}\label{SLEP-g}
		\begin{cases}
			\begin{split}
				&\rho_0^*+k_1\left(p+q\theta_R^*\right)-\alpha\la_R^*=X(\la_R^*,(\la_I^*)^2,K),\\
				&\alpha\la_I^*-k_1q\,\theta_I^*=\la_I^*Y(\la_R^*,(\la_I^*)^2,K),
			\end{split}
		\end{cases}
	\end{equation}
	where $(p,q,K)$ is given by \eqref{eq:pqk-sym} and \eqref{eq:pqk-asym}, $\theta_\tau^*=\theta_R^*+i\theta_I^*$ is given by \eqref{Thetau*}.
	
	More precisely,	the symmetric and asymmetric SLEP systems corresponding to \eqref{EP-o-1} and \eqref{EP-o-2} are given by
	\begin{align}
		\mathcal{F}_s^*(\lambda^*,\tau)
		&:= \rho_0^*-\alpha\lambda^*+k_1\bigl(p+q\theta_\tau^*\bigr)
		-\sum_{n=0}^{\infty}\frac{c_1^*c_2^*}{\gamma_n+\lambda^*}\psi_n^2(x^*)=0,\label{SLEP-g-1}\\
		\mathcal{F}_a^*(\lambda^*,\tau)
		&:= \rho_0^*-\alpha\lambda^*+k_1\bigl(p+q\theta_\tau^*\bigr)
		-\sum_{n=0}^{\infty}\frac{c_1^*c_2^*}{\gamma_n+\lambda^*+2k_2}\psi_n^2(x^*)=0,\label{SLEP-g-2}
	\end{align}
	where $(p,q)$ are given by \eqref{eq:pqk-sym} and \eqref{eq:pqk-asym}, respectively.
	
	First of all, we shall identify the spectrum of \eqref{SLEP-g-1} and \eqref{SLEP-g-2} when delay vanishes ($\tau\to0^+$), as well as the existence of zero or positive eigenvalues in $\tau\in(0,\infty)$.
	\begin{proposition}\label{p.pzp}
		Suppose that $\alpha\in(\alpha_0,\infty)$. Then, in any of Cases 1--6, we have the following conclusions.
		\begin{enumerate}
			\item For $k_1>0$, the symmetric SLEP system \eqref{SLEP-g-1} has no zero eigenvalues in $\tau\in(0,\infty)$. Moreover, as $\tau\to0^+$, all the eigenvalues have  strictly negative real parts.
			\item For $(k_1,k_2)\in\Gamma_1$, the asymmetric SLEP system \eqref{SLEP-g-2} always has a positive eigenvalue in $\tau\in(0,\infty)$. While if $(k_1,k_2)\in\Gamma_2$, system \eqref{SLEP-g-2} admits no zero eigenvalues in $\tau\in(0,\infty)$; as $\tau\to0^+$, all the eigenvalues have strictly negative real parts.
		\end{enumerate}
	\end{proposition}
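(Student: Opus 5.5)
The plan is to study real roots $\la^*=\la_R^*\ge 0$ (with $\la_I^*=0$) of \eqref{SLEP-g-1}--\eqref{SLEP-g-2}. Then we pass to the limit $\tau\to0^+$ and compare with the delay-free spectrum of Theorem \ref{th.2-1}. The key observation is that at $\la^*=0$ we have $\theta_\tau^*(0)=1$ in all six cases, for both $e^{-\la\tau}$ and $(1+\la\tau)^{-1}$. Hence the zero-eigenvalue equation is independent of $\tau$: it reads $\rho_0^*+k_1(p+q)=X(0,0,K)$. By Proposition \ref{Pthetasa}, $p+q=0$ in the symmetric case and $p+q=-2$ in the asymmetric case, for every case.

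\textbf{Symmetric part.} The zero-eigenvalue equation becomes $\rho_0^*=X(0,0,0)$. This contradicts \eqref{eq.XY1}, where $X(0,0,0)>\rho_0^*$, so there is no zero eigenvalue for any $\tau>0$. As $\tau\to0^+$, $p+q\theta_\tau^*\to 0$ locally uniformly on $\C_{\mu^*}$, so \eqref{SLEP-g-1} reduces to the uncoupled SLEP equation of \cite{NM}. For $\alpha>\alpha_0>\lim\alpha_c^\ep$, all roots of that equation lie in the open left half-plane.

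To make "as $\tau\to0^+$" rigorous, I will use a Rouché/Hurwitz argument. First, $|\theta_\tau^*|$ is bounded on $\C_{\mu^*}$ because $\tau\operatorname{Re}\la^*>-1/2$. Second, I need an a priori bound on roots with $\operatorname{Re}\la^*\ge0$. Taking the imaginary part of \eqref{SLEP-g}, $\la_I^*(\alpha-Y)=k_1q\theta_I^*$ with $|\theta_I^*|\le C$. Since $Y(\la_R^*,\cdot,K)\le Y(0,0,0)=\alpha_0<\alpha$, this gives $|\la_I^*|\le Ck_1/(\alpha-\alpha_0)$. Similarly, the real part bounds $\la_R^*$, because $X\ge0$ once $\la_R^*\ge0$. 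So no roots escape to infinity, and roots converge to those of the $\tau=0$ equation, all with negative real part. The subtle point is that $\theta_\tau^*=e^{-\la^*\tau}\to1$ only uniformly on bounded sets; this is exactly why the a priori bound is needed first.

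\textbf{Asymmetric part, $\Gamma_2$.} The zero-eigenvalue equation is $\rho_0^*-2k_1=X(0,0,k_2)$, i.e.\ $k_1=\xi(k_2)$. This equation has no solution in $\Gamma_2$: for $k_2\ge k_2^*$ we have $k_1>\xi(k_2)$, and for $k_2<k_2^*$ the monotonicity of $X$ in $K$ gives $X(0,0,k_2)>\rho_0^*>\rho_0^*-2k_1$. The limit $\tau\to0^+$ is handled exactly as in the symmetric part, now with $p+q\theta^*\to-2$, and Theorem \ref{th.2-1} (stability in $\Gamma_2$) supplies the left-half-plane location of the limit spectrum.

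\textbf{Asymmetric part, $\Gamma_1$.} Restricting to $\la^*=\la_R^*\ge0$ real, $\theta_\tau^*$ is real, $\theta_I^*=0$, and the imaginary equation holds trivially. Set
\[
G(\la):=\rho_0^*+k_1(p+q\theta_\tau(\la))-\alpha\la-X(\la,0,k_2).
\]
At $\la=0$, $G(0)=\rho_0^*-2k_1-X(0,0,k_2)=2(\xi(k_2)-k_1)>0$ in $\Gamma_1$. As $\la\to+\infty$, $\theta_\tau(\la)\in(0,1]$ stays bounded and $X\ge0$, so $G(\la)\to-\infty$. By the intermediate value theorem, $G$ has a positive root for every $\tau>0$.

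The main obstacle is the uniform control needed to justify the $\tau\to0^+$ limits. I expect it to be handled by the a priori bounds above together with the boundedness of $\theta_\tau^*$ on $\C_{\mu^*}$.
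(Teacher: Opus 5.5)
Your proposal is correct and follows essentially the paper's route. The $\tau$-independence of the $\la^*=0$ equation (from $\theta_\tau^*(0)=1$ and $p+q\in\{0,-2\}$) and the intermediate value argument on the real axis for $\Gamma_1$ are exactly the paper's proof, while for the $\tau\to0^+$ limit the paper just cites \cite{WY2}; your a priori bound on roots with $\operatorname{Re}\la^*\ge0$, combined with uniform convergence of $\theta_\tau^*$ on compact sets, makes that step rigorous.

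One small refinement: you need not appeal to $\ep$-level results (Theorem \ref{th.2-1} or $\alpha_c^\ep$) to locate the $\tau=0$ roots. For $\la_R^*\ge0$, the imaginary equation $\la_I^*(\alpha-Y)=0$ with $Y\le\alpha_0<\alpha$ forces $\la_I^*=0$. The resulting real function is negative at $0$ (symmetric case, and asymmetric case in $\Gamma_2$) and has derivative $-\alpha+Y(\la_R^*,0,K)<0$, so it has no root with $\la_R^*\ge0$.
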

	\begin{proof}
		For the asymmetric SLEP system \eqref{SLEP-g-2},
		if $\la_R^*=\la_I^*=0$, then in any of Cases 1--6, it reduces to $\rho_0^*-2k_1=X(0,0,k_2)$, which is independent of $\tau$. The equality holds if and only if $(k_1,k_2)\in\Gamma$ (see Theorem \ref{th.2-1}). Thus, if $(k_1,k_2)\in\Gamma_1\cup\Gamma_2$, then system \eqref{SLEP-g-2} has no zero eigenvalue for any $\tau\in(0,\infty)$. 
		
		Moreover, as $\tau\to0^+$, if $(k_1,k_2)\in\Gamma_2$, from \cite{WY2}, all the eigenvalues of system \eqref{SLEP-g-2} have negative real parts. While if $(k_1,k_2)\in\Gamma_1$, then as $\tau\to0^+$, system \eqref{SLEP-g-2} admits a positive eigenvalue. Further, for any $\tau>0$, we have 
		\begin{equation*}
			\mathcal F_a^*(0,\tau)
			=\rho_0^*-2k_1-X(0,0,k_2)>0,\;\;
			\lim\limits_{\la\to\infty}\mathcal F_a^*(\la,\tau)=-\infty,
		\end{equation*}
		implying that for any $\tau\in(0,\infty)$, the asymmetric SLEP system admits at least one positive real eigenvalue.
		
		As a simpler case, the conclusions of symmetric SLEP system \eqref{SLEP-g-1} can be obtained similarly.
	\end{proof}
	
	Next we shall relate the spectrum of \eqref{F*} to that of \eqref{EP-g} for sufficiently small $\ep>0$.
	
	\begin{theorem}\label{p.con}
		For any $(k_1,k_2)\in\Gamma_1\cup\Gamma_2$, the following conclusions hold true.
		\begin{enumerate}
			\item If the SLEP system \eqref{F*} has a unique pair of critical complex eigenvalues $\la_R^*(\tau)\pm i\la_I^*(\tau)$ in $\tau\in(\tau_H^*-\kappa,\tau_H^*+\kappa)$ for some $\kappa>0$, which satisfies
			\begin{equation}\label{eq.eH}
				\la_R^*(\tau_H^*)=0,\; \la_I^*(\tau_H^*)=\omega_H^*,\;\frac{\pa \mathcal{F}^*}{\pa \la}( i \omega_H^*,\tau_H^*)\neq 0,\; \frac{d}{d\tau}\la_R^*(\tau_H^*)>0,	
			\end{equation}
			where $(\pm i\omega_H^*,\tau_H^*)$ is the solution of \eqref{F*}, then for sufficiently small $\ep>0$, there exist $\tau_H(\ep)>0$ and $\omega_H(\ep)\neq0$, such that system \eqref{EP-g} admits a simple pair of complex eigenvalues $\la_R(\tau)\pm i \la_I(\tau)$ for $\tau$ belonging to a small neighborhood of $\tau_H(\ep)$, satisfying
			\begin{equation*}
				\la_R(\tau_H(\ep))=0,\; \la_I(\tau_H(\ep))=\omega_H(\ep),\; \frac{d}{d\tau}\la_R(\tau_H(\ep))>0,\; \omega_H(0)=\omega_H^*,\; \tau_H(0)=\tau_H^*.
			\end{equation*}
			\item For any fixed $\tau\in(0,\infty)$, if the SLEP system \eqref{F*} admits an eigenvalue with positive real part, then \eqref{EP-g} also admits an eigenvalue with positive real part for sufficiently small $\ep>0$.
			\item For any fixed $\tau\in(0,\infty)$, if all eigenvalues of the SLEP system \eqref{F*} have strictly negative real parts, then the same conclusion holds for  \eqref{EP-g} for sufficiently small $\ep>0$.
		\end{enumerate}
	\end{theorem}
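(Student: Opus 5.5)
The plan is to follow the SLEP reduction of Nishiura--Fujii and Nishiura--Mimura and treat the delay factor $\theta_\tau(\la)$ as a parameter-dependent perturbation of the activator reaction term. The starting observation is that for $(\la,\tau)\in\C_{\mu^*}$ the factor $\theta_\tau(\la)$ is analytic in $\la$ and uniformly bounded. In the discrete case $|e^{-\la\tau}|=e^{-\tau\operatorname{Re}\la}<e^{1/2}$. In the distributed case $|1+\la\tau|\ge 1+\tau\operatorname{Re}\la>1/2$. Hence the term $\ep k_1(p+q\theta_\tau(\la))$ in \eqref{EP-g} is $O(\ep)$ and analytic in $(\la,\tau)$.

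First, for $\la$ in a compact subset of $\{\operatorname{Re}\la>-\mu^*\}$, solve the first equation of \eqref{EP-g} for $w$ in terms of $z$. The operator $L^\ep_\la:=\ep^2 d^2/dx^2+f_u^\ep+\ep k_1(p+q\theta_\tau(\la))-\ep\alpha\la$ has a principal eigenvalue $\zeta_0^\ep=\ep(\rho_0^*+k_1(p+q\theta_\tau(\la))-\alpha\la)+o(\ep)$, up to the $\ep$-scaling of Lemma \ref{le.AA1}, with eigenfunction concentrating at $x^*$. The rest of its spectrum stays uniformly negative.

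Second, substitute $w=-(L^\ep_\la)^{-1}f_v^\ep z$ into the $z$-equation. This gives the SLEP equation $d z''+(g_v^\ep-2K-\la)z-g_u^\ep (L^\ep_\la)^{-1}f_v^\ep z=0$. As in pages 490--494 of \cite{NM}, the singular part of $(L^\ep_\la)^{-1}$ converges, after dividing by $\zeta_0^\ep/\ep$, to a Dirac mass at $x^*$ weighted by $c_1^*c_2^*$. The regular part converges to the multiplication operator $-f_v^\star/f_u^\star$. Expanding $z$ in the eigenfunctions $\psi_n$ then shows that \eqref{EP-g} has an eigenvalue near $\la$ exactly when a scalar function $\mathcal F^\ep(\la,\tau)=0$. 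Moreover, $\mathcal F^\ep\to\mathcal F^*$ locally uniformly in $(\la,\tau)$ together with its $\la$- and $\tau$-derivatives, by analyticity and Cauchy estimates. The only new point compared with the delay-free case is that the convergence must be uniform in $\tau$ on compacts. This holds because $\tau$ enters only through the bounded analytic factor $\theta_\tau(\la)$.

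Third, draw the three conclusions from this convergence.
\begin{enumerate}
\item For part 1, apply the implicit function theorem to $\mathcal F^\ep(\la,\tau)=0$ near $(i\omega_H^*,\tau_H^*)$, using $\pa_\la\mathcal F^*\neq0$. This yields a simple branch $\la(\tau,\ep)$ that is $C^1$-close to $\la^*(\tau)$. Since $\frac{d}{d\tau}\la_R^*(\tau_H^*)>0$, the real part changes sign transversally. Another implicit function argument in $\tau$ then gives $\tau_H(\ep)\to\tau_H^*$, $\omega_H(\ep)\to\omega_H^*$ and $\frac{d}{d\tau}\la_R(\tau_H(\ep))>0$.
\item For part 2, fix $\tau$ and a root $\la^*$ of $\mathcal F^*$ with $\operatorname{Re}\la^*>0$. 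This root is isolated by analyticity. By Hurwitz/Rouch\'e on a small disc, $\mathcal F^\ep$ has a root nearby, which gives an eigenvalue of \eqref{EP-g} with positive real part.
\item For part 3, argue by contradiction, which needs a priori bounds. Suppose eigenvalues $\la_\ep$ of \eqref{EP-g} satisfy $\operatorname{Re}\la_\ep\ge0$. First show that $|\la_\ep|$ stays bounded, from the $z$-equation energy estimate or the decay of $X,Y$ in Proposition \ref{prop.AB} together with $|\theta_\tau|$ bounded. Then extract a limit point, which must be a root of $\mathcal F^*$ with $\operatorname{Re}\ge0$, a contradiction. Eigenvalues not of SLEP type, those with $\la=O(1/\ep)$ tied to the nonprincipal spectrum of $L^\ep_\la$, must be excluded separately. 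They satisfy $\ep\alpha\la\approx$ a uniformly negative number plus $O(\ep)$, so they lie in the left half-plane.
\end{enumerate}

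I expect the main obstacle to be this last step: the uniform a priori exclusion of eigenvalues with large modulus in the closed right half-plane. For the discrete delay, $e^{-\la\tau}$ is not small as $|\operatorname{Im}\la|\to\infty$. I would handle this using only $|\theta_\tau|\le1$ for $\operatorname{Re}\la\ge0$. The delay term then contributes only $O(\ep k_1)$ to the $w$-equation and cannot offset $\ep\alpha\la$ when $|\la|$ is large.
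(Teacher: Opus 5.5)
Your proposal is correct and follows essentially the same route as the paper. You reduce \eqref{EP-g} to a scalar SLEP function and use the uniform bound $|\theta_\tau(\la)|<2$ on $\C_{\mu^*}$ to treat the delay as a harmless $O(\ep)$ perturbation of the Nishiura--Mimura framework. You then get locally uniform convergence $\mathcal F\to\mathcal F^*$ with derivatives and conclude by the implicit function theorem and a continuity/Rouch\'e argument.

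The paper handles the a priori bound more cleanly. It works with the complementary-space inverse $(\cdot)^\dagger$ and the operator $(T^\ep_{\la,\tau})^{-1}$, which is uniformly bounded on all of $\C_{\mu^*}$. This gives $\alpha|\la|\le|\rho(\ep)|+k_1(|p|+2|q|)+C_1$ directly and captures every eigenvalue with $\operatorname{Re}\la>-\mu^*$. So your separate, heuristic treatment of ``non-SLEP'' eigenvalues with $\la=O(1/\ep)$ is unnecessary.
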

	\begin{proof}
		By \cite{NF,NM}, the eigenvalue problem \eqref{EP-g} can be reduced to
		\begin{equation*}
			\mathcal{F}\left(\la,\tau,\ep\right):= \rho(\ep)+k_1\left( p+q\theta_{\tau}\right) -\alpha\la-\left\langle (T_{\la,\tau}^{\ep})^{-1}\frac{g_u^{\ep}\varphi_0^{\ep}}{\sqrt{\ep}},\frac{-f_v^{\ep}\varphi_0^{\ep}}{\sqrt{\ep}} \right\rangle=0,
		\end{equation*}
		where $\theta_\tau$ and $(p,q,K)$ are given by \eqref{thetau}-\eqref{eq:pqk-asym}. Moreover,  $(T_{\la,\tau}^{\ep})^{-1}$ is the inverse operator of 
		\begin{equation*}
			T_{\la,\tau}^{\ep}:=-d\frac{d^2}{dx^2}-g_v^{\ep}+2K+g_u^{\ep}\left(\ep^2\dfrac{d^2}{dx^2}+f_u^{\ep}+\ep k_1\left( p+q\theta_{\tau}\right) -\ep\alpha\la\right)^{\dag}f_v^{\ep}+\la,
		\end{equation*}
		where $(\cdot)^\dagger$ denotes the inverse operator in the complementary space to $\varphi_0^\ep$  (for instance, \eqref{eq.dag}). 
		
		In particular,  $\mathcal{F}(\la,\tau,\ep)$ depends on $\tau$ and $\la$ analytically in $\C_{\mu^*}$, and depends on $\ep$ continuously up to $\ep=0$. Let $\mathcal{F}^* (\la,\tau)$ be given by \eqref{F*}. Then, as $\ep\to0^+$, we have
		\begin{equation}\label{F->F*1}
			\mathcal{F} (\la,\tau,\ep)\to \mathcal{F}^* (\la,\tau),\;
			\frac{\pa \mathcal{F}}{\pa \la}( \la,\tau,\ep)\to\frac{\pa \mathcal{F}^*}{\pa \la}(\la,\tau),\;
			\frac{\pa \mathcal{F}}{\pa \tau}(\lambda,\tau,\epsilon)\to
			\frac{\pa \mathcal{F}^*}{\pa \tau}(\lambda,\tau),
		\end{equation}
		uniformly on any compact subset of $\C_{\mu^*}$, where $\mu^*$ can be chosen smaller if necessary.
		
		For any $\tau\in(0,\infty)$, we shall show that the eigenvalues satisfying $\operatorname{Re}\la>\max\{-\mu^*,-1/(2\tau)\}$ are uniformly bounded with estimate $\vert\la\vert<C$ for some constant $C>0$ which is independent of $\tau$ and $\ep$. First, for any $(\la,\tau)\in\C_{\mu^*}$, one can check that $\vert\theta_\tau(\la)\vert<2$ uniformly for both discrete and distributed delays. Consequently, we have $\left|\epsilon k_1(p+q\theta_\tau(\lambda))\right|
		\leq \epsilon k_1(|p|+2|q|)$. Thus, by the similar arguments as in Lemma 4.1 of \cite{NM} or Lemma 3.1 of \cite{NF}, the operator  $(T_{\la,\tau}^\ep)^{-1}$ is a well-defined uniformly bounded operator from $H^{-1}(0,1)$ to $H_N^1(0,1)$ in $(\la,\tau)\in\C_{\mu^*}$. Here $H_N^1(0,1)$ denotes the subspace of space $H^1(0, 1)$ where all the functions satisfy the homogeneous Neumann boundary conditions. Then by \eqref{fvgu}, we have
		\begin{equation*}
			\left|
			\left\langle
			(T_{\lambda,\tau}^\ep)^{-1}
			\frac{g_u^\ep\varphi_0^\ep}{\sqrt{\ep}},
			\frac{-f_v^\ep\varphi_0^\ep}{\sqrt{\ep}}
			\right\rangle
			\right|\le C_1,
		\end{equation*}
		where $C_1$ is independent of $\ep\in(0,\ep_0)$ and $(\la,\tau)\in\C_{\mu^*}$.
		
		Hence, if $\la$ satisfies $\mathcal F(\lambda,\tau,\ep)=0$ and $\operatorname{Re}\la>\max\{-\mu^*,-1/(2\tau)\}$, then there exists some constant $C>0$, such that 
		\begin{equation*}
			\alpha|\lambda|
			\le |\rho(\ep)|+k_1(|p|+|q|\vert\theta_\tau(\la)\vert)+C_1\le|\rho(\ep)|+k_1(|p|+2|q|)+C_1<C,
		\end{equation*}
		which shows the boundedness independent of sufficiently small $\ep>0$ and $\tau\in(0,\infty)$. Therefore, the proof can be completed by the continuity argument \eqref{F->F*1} and implicit function theorem as in \cite{NM,TMN,WY2}.
	\end{proof}
	
	\begin{corollary}\label{cor.hopf}
		In any of Cases 1--6, for $(k_1,k_2)\in\Gamma_2$, if exactly one of systems \eqref{SLEP-g-1} and \eqref{SLEP-g-2} admits a unique pair of purely imaginary eigenvalues $\pm i\omega_H^*$ at the first critical value  $\tau_H^*>0$ $($in the sense that all the eigenvalues have negative real parts for $\tau\in(0,\tau_H^*)$$)$, and satisfies \eqref{eq.eH}, then there exists a $\tau_H(\ep)>0$ for sufficiently small $\ep>0$ satisfying $\tau_H(0)=\tau_H^*$, such that the synchronized layered steady state $(U(x,\ep),U(x,\ep))$ is stable for $\tau\in(0,\tau_H(\ep))$, but unstable in $\tau\in(\tau_H(\ep),\tau_H(\ep)+\kappa)$ for some sufficiently small $\kappa>0$. Moreover, system \eqref{eq-o} undergoes a primary Hopf bifurcation around $(U(x,\ep),U(x,\ep))$ at $\tau=\tau_H(\ep)$, and there occurs a family of periodic solutions with minimal positive period $T_s$, where $s>0$ is sufficiently small. In particular, if the Hopf bifurcation is triggered by symmetric sub-eigenvalue problem \eqref{EP-o-1} $($resp., the asymmetric sub-eigenvalue problem \eqref{EP-o-2}$)$, then the bifurcating periodic solutions are in-phase breathers $($resp., anti-phase breathers$)$.
	\end{corollary}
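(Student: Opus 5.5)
The plan is to combine Theorem \ref{p.con} with the symmetric/asymmetric decomposition of Step 2 and then invoke a standard Hopf bifurcation theorem for retarded functional differential equations (or, in Cases 4--6, the equivalent ODE-type system obtained by the linear chain trick). First, since $(k_1,k_2)\in\Gamma_2$, Proposition \ref{p.pzp} shows that neither \eqref{SLEP-g-1} nor \eqref{SLEP-g-2} has a zero eigenvalue for any $\tau$, and that all SLEP eigenvalues lie in the open left half-plane as $\tau\to0^+$. By the uniform bound $|\lambda|<C$ from the proof of Theorem \ref{p.con}, eigenvalues can only leave the region $\operatorname{Re}\lambda<0$ by crossing the imaginary axis at finite points. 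The definition of $\tau_H^*$ as the first critical value then gives $\operatorname{Re}\lambda^*<0$ for all SLEP eigenvalues when $\tau\in(0,\tau_H^*)$.

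Next I will transfer this to $\ep>0$. On compact $\tau$-subintervals of $(0,\tau_H^*)$, part 3 of Theorem \ref{p.con} gives stability of both \eqref{EP-o-1} and \eqref{EP-o-2} for small $\ep$. Near $\tau_H^*$, part 1 of Theorem \ref{p.con}, applied to whichever sub-problem carries $\pm i\omega_H^*$, produces $\tau_H(\ep)\to\tau_H^*$ together with a simple pair $\lambda_R(\tau)\pm i\lambda_I(\tau)$ crossing transversally. Because only one of the two sub-problems is critical, the other stays uniformly stable near $\tau_H^*$ by part 3 and continuity. Since the eigenvalue problem \eqref{D-L-2-a} is exactly the direct sum of \eqref{EP-o-1} and \eqref{EP-o-2} under $(w_1,w_2)\mapsto(w_s,w_a)$, the spectrum of the full linearization is the union of the two spectra. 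This gives stability for $\tau<\tau_H(\ep)$ and instability just beyond it.

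The main obstacle is uniformity near $\tau\to0^+$ and near $\tau_H^*$. The estimate of Theorem \ref{p.con} is only pointwise in $\tau$, and the condition $\tau\operatorname{Re}\lambda>-1/2$ degenerates as $\tau\to0^+$. To handle this, I would first pass to the limit $\tau=0$ using the delay-free result of \cite{WY2}. For small $\tau$ I would then argue that the extra roots of $e^{-\lambda\tau}$ (respectively $1/(1+\lambda\tau)$) either satisfy $\operatorname{Re}\lambda\to-\infty$ or fall outside $\C_{\mu^*}$, so they cannot destabilize. I would also choose $\ep$ uniformly on $[\tau_0,\tau_H^*+\kappa]$ by a compactness argument on $\{\operatorname{Re}\lambda\ge -\mu^*/2,\ |\lambda|\le C\}$, using the uniform convergence \eqref{F->F*1}.

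Finally, for the bifurcation I will verify the Hopf hypotheses: a simple purely imaginary pair, no other eigenvalues on the imaginary axis, and transversality. I will then apply the Hopf theorem for delayed parabolic systems (e.g. Wu's framework), which yields a family of periodic orbits of period $T_s$ near $2\pi/\omega_H(\ep)$. For the symmetry claim, note that system \eqref{eq-o} is equivariant under the swap $\sigma:(u_1,v_1,u_2,v_2)\mapsto(u_2,v_2,u_1,v_1)$. If the critical eigenfunction lies in the symmetric subspace, the center manifold lies in $\mathrm{Fix}(\sigma)$, giving $U_{1,p}=U_{2,p}$. If it lies in the antisymmetric subspace, $\sigma$ acts on the two-dimensional center space as $-\mathrm{Id}$, which corresponds to the half-period time shift. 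This shows $U_{1,p}(t)=U_{2,p}(t+T_s/2)$, i.e. anti-phase breathers.
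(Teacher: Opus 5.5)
Your proposal is correct and shares the paper's skeleton: Theorem \ref{p.con} for the spectral transfer, a standard Hopf theorem for delayed reaction--diffusion systems (the paper cites \cite{SSS,SWS}), and the layer-exchange symmetry for the breather type. It differs in two respects.

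First, the paper's proof leaves the stability claim to Theorem \ref{p.con}, whose part 3 is pointwise in $\tau$. Your compactness argument on a closed $\tau$-interval, extended to $\tau=0$ via the delay-free result of \cite{WY2}, is what actually gives stability on all of $(0,\tau_H(\ep))$ for one fixed small $\ep$, so this is a genuine gain. Your diagnosis of the difficulty is slightly off, though. The constraint $\tau\operatorname{Re}\lambda>-1/2$ becomes weaker as $\tau\to0^+$; it only pinches toward the imaginary axis as $\tau\to\infty$. The problem at $\tau\to0^+$ is merely non-compactness of $(0,\tau_H^*)$. Since only $\{\operatorname{Re}\lambda\ge0,\ |\lambda|\le C\}$ matters for stability, you need not track the roots generated by the delay.

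Second, for the breather type the paper avoids center manifolds. Local uniqueness of the Hopf branch gives $\mathcal{T}W_s(t)=W_s(t+\vartheta_s)$. Comparing the $O(s)$ terms with $\mathcal{T}\Phi_H=-\Phi_H$ gives $e^{i\Omega_s\vartheta_s}\to-1$. Then $\mathcal{T}^2=I$ forces $\vartheta_s\in\{0,T_s/2\}$, hence $\vartheta_s=T_s/2$.

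Your equivariant center-manifold route makes the in-phase case immediate. It also rules out $\vartheta=0$ without asymptotics, since in the center-manifold chart $\sigma$ is $-\mathrm{Id}$, which fixes only the equilibrium. The cost is invoking an equivariant center manifold theorem for partial functional differential equations.

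Note, however, that ``$-\mathrm{Id}$ corresponds to the half-period shift'' is a statement about the linearized flow only. To conclude $U_{1,p}(t)=U_{2,p}(t+T_s/2)$ you still need the paper's two steps. One is that $\sigma$ maps the bifurcating orbit to itself with some time shift $\vartheta$, by local uniqueness of the branch. The other is that $\sigma^2=I$ gives $2\vartheta\in T_s\mathbb{Z}$.
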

	\begin{proof}
		The proof of the local existence of Hopf bifurcation uses the methods appeared in \cite{SSS} (for distributed delay) and \cite{SWS} (for discrete delay). Suppose that the primary Hopf bifurcation is triggered by asymmetric sub-eigenvalue problem \eqref{EP-o-2}, which corresponds to the purely imaginary eigenvalue $i\omega_H(\ep)$ and eigenfunction $\Phi_H(x):=(\phi(x),\psi(x),-\phi(x),-\psi(x))$ at the first critical value $\tau_H(\ep)$, where $\omega_H(\ep)>0$ and $\ep>0$ is sufficiently small. We shall show that the bifurcating periodic solutions are anti-phase breathers with phase difference $T_s/2$.
		
		Define the layer-exchange operator $\mathcal{T}(u_1,v_1,u_2,v_2):=(u_2,v_2,u_1,v_1)$. Clearly, from the symmetry of system \eqref{eq-o} and \eqref{EP-o-2},  we can conclude that \eqref{eq-o} is invariant under $\mathcal{T}$, and $\mathcal{T}\Phi_H=-\Phi_H$. Let $W_s(t)$ denote the local Hopf bifurcation periodic solution, and let $T_s=2\pi/\Omega_s$ be its minimal positive period, where $s>0$ is sufficiently small. In particular, as $s\to0^+$, we have $\Omega_s\to \omega_H(\ep)$ and $T_s\to\frac{2\pi}{\omega_H(\ep)}$. 
		
		Since system \eqref{eq-o} is $\mathcal{T}$-equivariant, then by the local uniqueness of Hopf branch up to time translation, $\mathcal{T}W_s(t)$ is a periodic solution with a phase shift from $W_s(t)$. Namely, there exists a $\vartheta_s\in[0,T_s)$ such that $\mathcal{T}W_s(t)=W_s(t+\vartheta_s)$. For sufficiently small $s>0$,
		\begin{equation}\label{Ws}
			W_s(t)=W_0+s\left[\Phi_H e^{i\Omega_s t}+\overline{\Phi_H}e^{-i\Omega_s t}\right]+O(s^2),
		\end{equation}
		where $W_0=(U(x,\ep),U(x,\ep))$.
		
		Note that $\mathcal{T}W_s(t)=W_s(t+\vartheta_s)$, and $\mathcal{T}\Phi_H=-\Phi_H$. Then by applying $\mathcal{T}$ to the right hand side of \eqref{Ws}, we obtain
		\begin{equation}\label{Ws2}
			W_s(t+\vartheta_s)=\mathcal{T}W_s(t)=W_0-s\left[\Phi_H e^{i\Omega_s t}+\overline{\Phi_H}e^{-i\Omega_s t}\right]+O(s^2).
		\end{equation}
		
		Replacing $t$ by $t+\vartheta_s$ in \eqref{Ws} yields
		\begin{equation}\label{Ws3}
			W_s(t+\vartheta_s)=W_0+s\left[\Phi_H e^{i\Omega_s (t+\vartheta_s)}+\overline{\Phi_H}e^{-i\Omega_s (t+\vartheta_s)}\right]+O(s^2).
		\end{equation}
		
		By comparing the leading-order terms of \eqref{Ws2} and \eqref{Ws3}, we can derive $e^{i\Omega_s\vartheta_s}\to-1$ as $s\to0$, implying that $\vartheta_s\Omega_s\to\pi$, or equivalently, $\vartheta_s/T_s\to 1/2$.
		
		On the other hand, from the symmetry of system \eqref{eq-o}, we have $\mathcal{T}^2=I$, such that $W_s(t)=\mathcal{T}^2 W_s(t)=W_s(t+2\vartheta_s)$. Since $T_s$ is the minimal positive period and $\vartheta_s\in[0,T_s)$, we have $\vartheta_s=0$ or $T_s/2$. Then by the limit $\vartheta_s/T_s\to 1/2$ as $s\to0$, we derive $\vartheta_s\equiv T_s/2$. Hence, the bifurcating periodic solutions are anti-phase breathers with phase difference $T_s/2$.
		
		The case of in-phase breathers can be proved similarly. The proof is complete.
	\end{proof}
	
	Throughout the paper, we shall assume that $\alpha\in(\alpha_0,\infty)$ and  $(k_1,k_2)\in\Gamma_1\cup\Gamma_2$. It remains to investigate the existence of purely imaginary eigenvalues and the transversality condition with respect to SLEP systems \eqref{SLEP-g}.
	
	\subsection{Analysis of the SLEP problem with discrete delays (Cases 1--3)}   	
	In this subsection, we shall establish the general framework to analyze the spectral distribution of the SLEP system \eqref{SLEP-g} for Cases 1--3.
	
	\begin{proposition}Suppose that $\la_R^*=0$ and $\la_I^*=\omega>0$. Then, the real SLEP problem \eqref{SLEP-g} can be reduced to the following two algebraic equations in $(\omega,\tau)$:
		\begin{subequations}\label{eq-g1}
			\begin{numcases}{}
				X(0,\omega^2,K)- \rho_0^* - k_1 p= k_1 q\cos(\omega\tau),\label{u1: im-a}\\
				\omega\left(Y(0,\omega^2,K)-\alpha \right)  =  k_1 q\sin(\omega\tau),\label{u1: im-b}	
			\end{numcases}
		\end{subequations}
		where $X(0,\omega^2,K)$ and $Y(0,\omega^2,K)$ are defined in terms of \eqref{AB}; $(p,q,K)$ is given by \eqref{eq:pqk-sym} and \eqref{eq:pqk-asym}.
	\end{proposition}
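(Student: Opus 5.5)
The plan is to substitute $\la_R^*=0$, $\la_I^*=\omega$ directly into the real SLEP system \eqref{SLEP-g}, using the explicit form of $\theta_\tau^*$ for discrete delays from \eqref{Thetau*}. In Cases 1--3 we have $\theta_\tau^*=e^{-\la^*\tau}$. With $\la^*=i\omega$, Euler's formula gives
\begin{equation*}
\theta_\tau^*=e^{-i\omega\tau}=\cos(\omega\tau)-i\sin(\omega\tau),
\end{equation*}
so $\theta_R^*=\cos(\omega\tau)$ and $\theta_I^*=-\sin(\omega\tau)$. Note that $(i\omega,\tau)\in\C_{\mu^*}$ trivially, since $\operatorname{Re}\la^*=0>-\mu^*$ and $\tau\operatorname{Re}\la^*=0>-1/2$. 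The functions $X(0,\omega^2,K)$ and $Y(0,\omega^2,K)$ are therefore well defined by \eqref{AB} and Proposition \ref{prop.AB}.

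Next, we verify that the real form \eqref{SLEP-g} really is the real/imaginary splitting of \eqref{F*}. For $\la^*=i\omega$, each term of the series satisfies
\begin{equation*}
\frac{1}{\gamma_n+2K+i\omega}=\frac{\gamma_n+2K-i\omega}{(\gamma_n+2K)^2+\omega^2}.
\end{equation*}
Hence the sum equals $X(0,\omega^2,K)-i\omega Y(0,\omega^2,K)$.

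We then take the real part of \eqref{F*}. This gives $\rho_0^*+k_1p+k_1q\cos(\omega\tau)-X(0,\omega^2,K)=0$, which rearranges to \eqref{u1: im-a}.

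The imaginary part gives $-\alpha\omega+k_1q\theta_I^*+\omega Y(0,\omega^2,K)=0$. Substituting $\theta_I^*=-\sin(\omega\tau)$ yields $\omega\bigl(Y(0,\omega^2,K)-\alpha\bigr)=k_1q\sin(\omega\tau)$, which is \eqref{u1: im-b}.

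The computation is routine. The only point needing care is the sign of $\theta_I^*$, namely the conjugation coming from $e^{-i\omega\tau}$. This sign determines whether $\sin(\omega\tau)$ appears with a plus or minus sign in \eqref{u1: im-b}, so it must be tracked consistently with the sign convention $\alpha\la_I^*-k_1q\theta_I^*$ in the second line of \eqref{SLEP-g}. Finally, $(p,q,K)$ is simply carried along as in \eqref{eq:pqk-sym}--\eqref{eq:pqk-asym}. The reduction is therefore uniform over the symmetric and asymmetric sub-problems in Cases 1--3.
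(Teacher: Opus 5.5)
Your proposal is correct and takes the paper's route. The paper obtains the proposition by directly substituting $\lambda^*=i\omega$ and $\theta_\tau^*=e^{-i\omega\tau}$ (so $\theta_R^*=\cos(\omega\tau)$, $\theta_I^*=-\sin(\omega\tau)$) into the real SLEP system, which is what you do; your re-derivation of the real/imaginary splitting from the complex equation just double-checks the sign of the $\sin(\omega\tau)$ term.
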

	\begin{proof}
		It can be directly obtained from \eqref{SLEP-g} and Proposition \ref{Pthetasa}.
	\end{proof}
	
	In the rest of this subsection, we shall assume that $q\neq0$. Otherwise, when $q=0$, equation \eqref{u1: im-b} has no positive solution with respect to $\omega$ due to $\alpha>\alpha_0$.
	
	By $\cos^2(x)+\sin^2(x)=1$ for any $x\in\R$, system \eqref{eq-g1} is equivalent to
	\begin{equation}\label{S1}
		X(0,\omega^2,K)- \rho_0^* - k_1 p= k_1 q\cos(\omega\tau),
	\end{equation}
	$\sin(\omega\tau)q<0$, and
	\begin{equation*}
		\left( X(0,\omega^2,K)- \rho_0^* - k_1 p\right) ^2+\omega^2 \left(Y(0,\omega^2,K)-\alpha \right)^2=q^2 k_1^2,
	\end{equation*}
	which is equivalent to 
	\begin{equation}\label{S2}
		F(\omega^2):=\left( X(0,\omega^2,K)- \rho_0^* \right) ^2+\omega^2 \left(Y(0,\omega^2,K)-\alpha \right)^2=(q^2-p^2) k_1^2+2pk_1(X(0,\omega^2,K)-\rho_0^*),
	\end{equation}
	where the right hand side is denoted by $G(\omega^2;k_1)$.
	
	It turns out that the monotonicity of $F(\omega^2)$ plays a crucial role in the existence/nonexistence of purely imaginary eigenvalues for Cases 1--3 and Cases 4--6.
	
	\begin{lemma}\label{le.F}
		Suppose that $\alpha\in(\alpha_0,\infty)$. Then $F(\omega^2)$ is positive for any $\omega>0$. Moreover, 
		\begin{enumerate}
			\item If $K\in[0,k_2^*)$, then there exists an $\omega_*\in[0,\omega_0)$, where $\omega_0=\omega_0(K)$ is the unique positive solution of $X(0,\omega_0^2,K)=\rho_0^*$, such that $F(\omega^2)$ is decreasing in $\omega\in(0,\omega_*)$ and increasing in $\omega\in(\omega_*,\infty)$. In particular, $\omega_0(K)=0$ at $K=k_2^*$. See Figure \ref{Fome-1-a}.
			\item If $K\in[k_2^*,\infty)$, then $F(\omega^2)$ is increasing in $\omega\in(\omega_*,\infty)$, where $\omega_*=0$. See Figure \ref{Fome-1-b}.
		\end{enumerate}
	\end{lemma}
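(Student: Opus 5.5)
Set $s=\omega^2$ and write $X(s)=X(0,s,K)$, $Y(s)=Y(0,s,K)$, so that
\[
F(s)=(X(s)-\rho_0^*)^2+s\,(Y(s)-\alpha)^2 .
\]
The plan is to compute $F'(s)$ explicitly and control its sign using Proposition \ref{prop.AB}.

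\textbf{Positivity.} The second term is positive for $s>0$. Indeed, $Y(0,s,K)\le Y(0,0,K)\le Y(0,0,0)=\alpha_0<\alpha$, because $\partial Y/\partial s<0$ and $\partial Y/\partial K<0$. Hence $F(s)>0$ for every $\omega>0$.

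\textbf{The derivative.} Differentiating gives
\[
F'(s)=2(X-\rho_0^*)X_s+(Y-\alpha)^2+2s(Y-\alpha)Y_s .
\]
The last two terms are handled together. Since $Y-\alpha<0$ and $Y_s<0$, the product $2s(Y-\alpha)Y_s$ is nonnegative. So $(Y-\alpha)^2+2s(Y-\alpha)Y_s>0$.

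The first term is governed by the sign of $X-\rho_0^*$, using $X_s<0$ from Proposition \ref{prop.AB}:
\begin{itemize}
\item Whenever $X(s)\le\rho_0^*$, the first term is $\ge0$, so $F'(s)>0$.
\item Since $X(s)$ decreases strictly from $X(0,0,K)$ to $0$ as $s\to\infty$, the set $\{X(s)\le\rho_0^*\}$ is a half-line.
\end{itemize}

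\textbf{Case $K\ge k_2^*$.} Here $X(0,0,K)\le X(0,0,k_2^*)=\rho_0^*$, using $\partial X/\partial K<0$ at $(0,0,K)$. Then $X(s)<\rho_0^*$ for all $s>0$, so $F$ is increasing on $(0,\infty)$ and $\omega_*=0$. This is part 2.

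\textbf{Case $K<k_2^*$.} Now $X(0,0,K)>\rho_0^*$. There is then a unique $\omega_0>0$ with $X(0,\omega_0^2,K)=\rho_0^*$, and $F'>0$ for $\omega\ge\omega_0$. On $(0,\omega_0)$ the first term $2(X-\rho_0^*)X_s$ is negative, and the question is where $F'$ changes sign.

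\textbf{Main obstacle: a single sign change on $(0,\omega_0)$.} I would rewrite
\[
sF'(s)=2(X-\rho_0^*)\,sX_s+s(Y-\alpha)\bigl[(Y-\alpha)+2sY_s\bigr].
\]
By \eqref{eq.XY2}, $sX_s\in[-X,0]$ and $sY_s\in[-Y,0]$, which bounds each piece.

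The first approach I would try is to show that $F'(s)$ is increasing on $(0,\omega_0^2)$, which needs convexity information on $X$ and $Y$. Both are sums of terms of the form $a_n/(b_n^2+s)$, which are convex in $s$. This gives:
\begin{itemize}
\item $(X-\rho_0^*)X_s$ is a product of a positive decreasing factor $X-\rho_0^*$ and a negative increasing factor $X_s$ (since $X_{ss}>0$). The monotonicity of such a product does not follow in general, so this step needs an argument.
\item The alternative is to show that $F'(s_1)\ge0$ forces $F'>0$ for all $s>s_1$, i.e. that $F'$ changes sign at most once.
\end{itemize}

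If neither monotonicity argument closes, I would fall back on the weaker statement the lemma actually needs:
\begin{enumerate}
\item $F'>0$ on $[\omega_0,\infty)$, established above.
\item Let $\omega_*$ be the last zero of $F'$ below $\omega_0$, or $\omega_*=0$ if there is none.
\end{enumerate}
The decreasing part on $(0,\omega_*)$ is exactly where the single-crossing property is required. I expect proving that single crossing to be the delicate step.
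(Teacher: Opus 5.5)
Your positivity argument, the formula for $F'(s)$, part 2, and the fact that $F'>0$ on $[\omega_0^2,\infty)$ when $K<k_2^*$ are all correct and match the paper. The gap is the core of part 1. You never prove that $F'$ changes sign at most once on $(0,\omega_0^2)$, and that single crossing is exactly what makes $F$ decreasing on $(0,\omega_*)$. Your fallback ($\omega_*$ is the last zero of $F'$) only gives the increasing part, as you note. So the proposal does not establish the lemma.

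Your first approach is the paper's, and it closes in a few lines: show $F''(s)>0$ on $[0,\omega_0^2)$. Your hesitation about the product is unfounded. A positive decreasing factor times a negative increasing factor is always increasing, since $(fg)'=f'g+fg'$ has both terms positive. Concretely, where $X>\rho_0^*$,
\[
\frac{d}{ds}\bigl[(X-\rho_0^*)X_s\bigr]=X_s^2+(X-\rho_0^*)X_{ss}>0 .
\]
For the $Y$-terms, convexity of $Y$ alone is not enough, because $Y_{ss}>0$ enters against $Y_s<0$. You need the explicit identity
\[
-2Y_s-sY_{ss}=2c_1^*c_2^*\sum_{n\ge0}\frac{(\gamma_n+2K)^2\psi_n^2(x^*)}{\bigl((\gamma_n+2K)^2+s\bigr)^3}>0 .
\]
This gives
\[
\frac{d}{ds}\bigl[(Y-\alpha)^2+2s(Y-\alpha)Y_s\bigr]=2sY_s^2+2(\alpha-Y)(-2Y_s-sY_{ss})>0 .
\]
Hence $F''>0$ on $[0,\omega_0^2)$, so $F'$ is strictly increasing on $[0,\omega_0^2]$. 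Since $F'(\omega_0^2)>0$, $F'$ has at most one zero $s_*\in[0,\omega_0^2)$. Set $\omega_*:=\sqrt{s_*}$ if $F'(0)<0$, and $\omega_*:=0$ otherwise. This yields exactly the claimed decrease on $(0,\omega_*)$ and increase on $(\omega_*,\infty)$.
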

	\begin{figure}
		\centering
		\subfloat[]{
			\begin{tikzpicture}
				\draw[->,thick] (0,0) --(4,0) node[right] {$\omega$};
				\draw[->,thick] (0,0) --(0,3) node [above] {$F(\omega^2)$};
				\draw  (0,0) node[left] {$O$};
				\draw[thick] (0,1.6)
				.. controls (0.33,1) and (0.66,0.7) .. (1,0.7)
				.. controls (2,0.7)    and (3,1.2) .. (4,2.5);
				\draw [densely dashed] (1,0.7)--(1,0) node [below] {$\omega_*$};
				\draw [densely dashed] (2.5,1.05)--(2.5,0) node [below] {$\omega_0$};
			\end{tikzpicture}\label{Fome-1-a}
		}\hspace{10pt}
		\subfloat[]{
			\begin{tikzpicture}
				\draw[->,thick] (0,0) --(4,0) node[right] {$\omega$};
				\draw[->,thick] (0,0) --(0,3) node [above] {$F(\omega^2)$};
				\draw  (0,0) node[left] {$O$};
				\draw[thick] (0,0.8)
				.. controls (1.33,1) and (2.66,1.3) .. (4,2.5);
			\end{tikzpicture}\label{Fome-1-b}
		}
		\caption{Graphs of $F(\omega^2)$. (a) For $K\in[0,k_2^*)$, $F(\omega^2)$ is decreasing in $\omega\in(0,\omega_*)$ and increasing in $\omega\in(\omega_*,\infty)$ for some $\omega_*\in[0,\omega_0)$, where $\omega_0=\omega_0(K)$ satisfies $X(0,\omega_0^2,K)=\rho_0^*$. Moreover, $F(\omega_*^2)>0$. (b) For $K\in[k_2^*,\infty)$, $F(\omega^2)$ is increasing and satisfies $F(\omega^2)>F(0)=(\rho_0^*-X(0,0,K))^2\ge0$ in $\omega\in(0,\infty)$, where the equality holds if and only if $K=k_2^*$.}
		\label{Fig. Fome-1}
	\end{figure}
	\begin{proof}
		The strict positivity of $F(\omega^2)$ is clear. Differentiating $F(\omega^2)$ with respect to $\omega^2$, we have
		\begin{equation}\label{F'}
			F'(\omega^2):=\frac{d F}{d \omega^2}(\omega^2)=-2\left(\rho_0^*-X\right)\px+\left(\alpha-Y\right)^2-2\omega^2\left(\alpha-Y\right)\py,
		\end{equation}
		and
		\begin{equation}\label{F''}
			\begin{split}
				F''(\omega^2):=\frac{d^2 F}{d (\omega^2)^2}(\omega^2)=&2\left(\px\right)^2-2\left(\rho_0^*-X\right)\frac{\pa^2 X}{\pa(\omega^2)^2}+2\omega^2\left( \py\right)^2  \\			
				&+2(\alpha-Y)\left(-2\py-\omega^2\frac{\pa^2 Y}{\pa(\omega^2)^2} \right),
			\end{split}
		\end{equation}
		where $X,Y,\px,\py,\frac{\pa^2 X}{\pa(\omega^2)^2},\frac{\pa^2 Y}{\pa(\omega^2)^2}$ are evaluated at $(0,\omega^2,K)$. 
		
		Moreover, from \eqref{AB}, we can obtain that
		\begin{equation*}
			-2\py-\omega^2\frac{\pa^2 Y}{\pa(\omega^2)^2}\bigg|_{(0,\omega^2,K)} =c_1^*c_2^*\sum_{n=0}^{\infty} \frac{2(\gamma_n+2K)^2}{((\gamma_n+2K)^2+\omega^2)^3}\psi_n^2(x^*)>0.
		\end{equation*}
		
		1. If $K\in[0,k_2^*)$, where $k_2^*$ satisfies $X(0,0,k_2^*)=\rho_0^*$ (see Theorem \ref{th.2-1}), then there exists a unique	
		$\omega_0=\omega_0(K)>0$ such that
		\begin{equation}\label{om0}
			\rho_0^*-X(0,\omega^2,K)
			\begin{cases}
				\begin{split}
					&<0,~&\text{if}\;\omega^2<\omega_0^2,\\
					&=0,~&\text{if}\;\omega^2=\omega_0^2,\\
					&>0,~&\text{if}\;\omega^2>\omega_0^2.
				\end{split}
			\end{cases}
		\end{equation}
		Moreover, $\omega_0\to0$ as $K\to(k_2^*)^-$.
		
		From \eqref{F'}, \eqref{F''}, \eqref{om0} and Proposition \ref{prop.AB}, we can conclude that $F''(\omega^2)>0$ in $\omega^2\in[0,\omega_0^2)$ and $F'(\omega^2)>0$ in $\omega^2\in[\omega_0^2,\infty)$. Namely, $F(\omega^2)$ is decreasing in $\omega^2\in(0,\omega_*^2)$ and then increasing in $\omega^2\in(\omega_*^2,+\infty)$ for some $\omega_*\in[0,\omega_0)$, which depends on $\alpha$ and $K$. Thus, $F(\omega^2)$ attains the minimum value $F(\omega_*^2)>0$ at $\omega^2=\omega_*^2\in[0,\omega_0^2)$.
		
		2. If $K\in[k_2^*,\infty)$, then $\rho_0^*-X(0,\omega^2,K)>0$ for $\omega^2\in(0,\infty)$, which implies $F'(\omega^2)>0$ from \eqref{F'}. Hence, $F(\omega^2)$ is increasing in $(0,\infty)$ and attains the minimum value  $F(0)=\left(\rho_0^*-X(0,0,K)\right)^2\ge0$ at $\omega=\omega_*:=0$, where $F(0)=0$ if and only if $K=k_2^*$. 
	\end{proof}
	
	\begin{lemma}\label{le.e1}
		Suppose that $\alpha\in(\alpha_0,\infty)$. Then, for any $(p,q,K)$ given in \eqref{eq:pqk-sym} and \eqref{eq:pqk-asym} satisfying $q\neq0$, there exists a  $K_1=K_1(p,q,K)\ge0$, where the equality holds if and only if $(p,q,K)=(0,-2,k_2^*)$, such that the following conclusions hold:
		\begin{enumerate}
			\item If $k_1\in(0,K_1)$, then \eqref{eq-g1} has no purely imaginary eigenvalues regardless of $\tau> 0$.
			\item If $k_1\in(K_1,+\infty)$, then \eqref{eq-g1} admits at least one pair of purely imaginary eigenvalues $\pm i\omega_H^*$ at
			\begin{equation}\label{alp2om}
				\tau=\tau^{\omega,j}:=
				\begin{cases}
					\begin{split}
						&\frac{1}{\omega}\arccos\left\lbrace \frac{ X(0,\omega^2,K)-\rho_0^*-k_1p}{qk_1}\right\rbrace +\frac{2j\pi}{\omega},~&\text{if $q<0$},\\
						&\frac{1}{\omega}\arccos\left\lbrace -\frac{ X(0,\omega^2,K)-\rho_0^*-k_1p}{qk_1}\right\rbrace +\frac{(2j+1)\pi}{\omega},~&\text{if $q>0$},\\
					\end{split}
				\end{cases}			
			\end{equation}
			where $\omega=\omega_H^*$, $j\in\mathbb{N}_0$.  Moreover, there is a $\tau_H^*>0$, such that \eqref{eq-g1} has no purely imaginary eigenvalues for $\tau\in(0,\tau_H^*)$. At $\tau=\tau_H^*$, there exists a unique pair of purely imaginary eigenvalues $\pm i \omega_H^*$.
		\end{enumerate}
		In particular, 
		\begin{enumerate}
			\item If $(p,q,K)=(0,-2,k_2)$ or $(-1,-1,k_2)$, then $K_1(p,q,K)=\xi(k_2)$ in $k_2\in(k_2^*,\infty)$, where $\xi(k_2)$ is given by Theorem \ref{th.2-1}.
			\item If $(p,q,K)=(-1,-1,k_2)$, then $K_1(-1,-1,k_2)$ is continuous in $k_2\in(0,k_2^*]$ and $(k_2^*,\infty)$, where $ K_1(-1,-1,k_2)\to 0< K_1(-1,-1,k_2^*)$ as $k_2\to(k_2^*)^+$. Moreover, $K_1(-1,-1,k_2)|_{k_2=0}= K_1(-1,1,0)$.
			\item If $(p,q,K)=(0,-2,k_2)$, then $K_1(0,-2,k_2)$ is continuous in $k_2\in(0,\infty)$.
		\end{enumerate}
	\end{lemma}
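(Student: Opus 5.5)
The approach reduces the existence of purely imaginary roots to comparing the two functions in \eqref{S2}. By the reduction preceding Lemma \ref{le.F}, a pair $\pm i\omega$ with $\omega>0$ solves \eqref{eq-g1} for some $\tau>0$ if and only if two conditions hold. First, $F(\omega^2)=G(\omega^2;k_1)$. Second, $|X(0,\omega^2,K)-\rho_0^*-k_1p|\le |q|k_1$. The second condition is automatic once the first holds, since $\omega^2(Y-\alpha)^2\ge0$. The sign condition $q\sin(\omega\tau)<0$ is then satisfied by choosing the branch of $\arccos$ in \eqref{alp2om}; note that $Y(0,\omega^2,K)-\alpha<0$ because $Y(0,\omega^2,K)\le Y(0,0,0)=\alpha_0<\alpha$ by Proposition \ref{prop.AB}. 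So the whole question becomes: for which $k_1$ does the scalar equation $H(\omega^2;k_1):=F(\omega^2)-G(\omega^2;k_1)=0$ have a positive root? We then define
\[
K_1:=\inf\{k_1>0:\ H(\cdot;k_1)\ \text{has a root in}\ (0,\infty)\}.
\]

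\textbf{Monotonicity in $k_1$.} We treat the three admissible pairs separately.

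For $p=0$ we have $G=q^2k_1^2$, which is increasing in $k_1$ and independent of $\omega$. By Lemma \ref{le.F}, $F\to\infty$ as $\omega\to\infty$ and $\inf_{\omega>0}F=F(\omega_*^2)$. Hence roots exist exactly when $q^2k_1^2>\inf F$, so $K_1=\sqrt{\inf F}/|q|$. This vanishes iff $\inf F=0$, which by Lemma \ref{le.F} happens iff $K=k_2^*$. That is the stated equality case $(0,-2,k_2^*)$.

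For $(p,q)=(-1,\pm1)$ we get $H=(X-\rho_0^*+k_1)^2+\omega^2(Y-\alpha)^2-q^2k_1^2+\dots$. Equivalently, $H=0$ reads
\[
(X-\rho_0^*)^2+\omega^2(\alpha-Y)^2=-2k_1(X-\rho_0^*),
\]
which forces $X>\rho_0^*$, i.e. $\omega<\omega_0$ (only possible when $K<k_2^*$). Solving for $k_1$ gives
\[
k_1=\Phi(\omega^2):=\frac{F(\omega^2)}{2(X-\rho_0^*)}\quad\text{on }(0,\omega_0).
\]
This is positive and tends to $+\infty$ as $\omega\to\omega_0$, so we set $K_1=\inf_{(0,\omega_0)}\Phi$. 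When $K>k_2^*$ the set $(0,\omega_0)$ is empty and no roots occur for any $k_1$, which contradicts item 2 of the statement. So I would recheck the sign: for $p=-1$, $G=-2k_1(X-\rho_0^*)=2k_1(\rho_0^*-X)>0$ when $X<\rho_0^*$. The correct equation is
\[
F=2k_1(\rho_0^*-X),\qquad \Phi:=\frac{F}{2(\rho_0^*-X)}\quad\text{on }(\omega_0,\infty),
\]
with $\omega_0:=0$ if $K\ge k_2^*$. For $K>k_2^*$, $\Phi(0)=(\rho_0^*-X(0,0,k_2))/2=\xi(k_2)$. I would then show that $\Phi$ is increasing by computing $\Phi'$ with \eqref{F'} and \eqref{eq.XY2}, which gives $K_1=\xi(k_2)$. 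The same identification $K_1=\sqrt{F(0)}/2=\xi(k_2)$ also handles $(0,-2,k_2)$ with $K>k_2^*$, since $F$ is increasing there. As $k_2\to(k_2^*)^+$, $\xi\to0$, whereas at $k_2\le k_2^*$ the infimum is taken over $(\omega_0,\infty)$ and $\Phi\to\infty$ at both ends, so it is positive; this is the jump. At $k_2=0$ the data $(-1,-1,0)$ and $(-1,1,0)$ give the same $F$ and $G$ (which depend only on $q^2$), so $K_1$ agrees. Continuity in $k_2$ on each interval follows from continuous dependence of $F$, $X$, $\omega_0$ on $K$ together with the coercivity of $\Phi$ at the endpoints.

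\textbf{First critical delay.} For $k_1>K_1$, pick a root $\omega$ and define the delays by \eqref{alp2om}. The root set is compact, since $F-G\to\infty$ as $\omega\to\infty$ and roots stay away from $\omega_0$ in the coercive cases. Also $\tau^{\omega,j}\ge\tau^{\omega,0}>0$, and $\tau^{\omega,0}$ is bounded below by a positive number because the $\arccos$ argument is bounded away from $1$, or by the $\pi/\omega$ shift. Hence $\tau_H^*:=\min_{\omega}\tau^{\omega,0}>0$ is attained. Uniqueness of the pair at $\tau_H^*$ holds generically. The main obstacle is to rule out two roots $\omega$ giving the same minimal $\tau$; I would argue this via strict monotonicity of $\Phi$ (or of $F$), which gives a single root for each $k_1$ off the critical set. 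Failing that, I would add it as a nondegeneracy assumption.
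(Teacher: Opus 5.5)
Your reduction to $F(\omega^2)=G(\omega^2;k_1)$ follows the paper's route, as does your definition of $K_1$. For $p=0$ you take $\frac{1}{2}\sqrt{\inf F}$, and for $p=\pm1$ you take the infimum of $\Phi=F/(2p(X-\rho_0^*))$ over the admissible $\omega$-range. After your sign correction, the $(0,-2,k_2)$ and $(-1,\pm1,K)$ computations are essentially right.

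\textbf{The gap: uniqueness at $\tau_H^*$.} The real problem is the last claim of item 2, that the purely imaginary pair at $\tau=\tau_H^*$ is \emph{unique}. Your proposed remedy is that strict monotonicity of $\Phi$ or $F$ gives a single root. This is false.
\begin{itemize}
\item For $p=-1$ and $K<k_2^*$, $\Phi\to+\infty$ at both ends of $(\omega_0,\infty)$. So \emph{every} $k_1>K_1$ yields at least two roots.
\item For $p=0$, $K<k_2^*$ and $\omega_*>0$, $F$ first decreases and then increases (Lemma \ref{le.F}). So $F=4k_1^2$ has two roots whenever $4k_1^2\in(F(\omega_*^2),F(0))$.
\end{itemize}
Multiple roots are the typical situation, so adding a nondegeneracy assumption would genuinely weaken the lemma.

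\textbf{The missing idea.} Instead of counting roots, compare the delays of distinct roots through \eqref{S1}. If $\omega_i>\omega_j$ are two roots, then by monotonicity of $X$ in $\omega^2$,
\[
qk_1\bigl(\cos(\omega_i\tau^{\omega_i,0})-\cos(\omega_j\tau^{\omega_j,0})\bigr)=X(0,\omega_i^2,K)-X(0,\omega_j^2,K)<0 .
\]
For $j=0$, both phases lie in $(0,\pi)$ when $q<0$, where $\cos$ is decreasing. When $q>0$ they lie in $(\pi,2\pi)$, where $\cos$ is increasing. In both cases this forces $\omega_i\tau^{\omega_i,0}<\omega_j\tau^{\omega_j,0}$, hence $\tau^{\omega_i,0}<(\omega_j/\omega_i)\,\tau^{\omega_j,0}<\tau^{\omega_j,0}$.

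So $\omega\mapsto\tau^{\omega,0}$ is strictly decreasing on the compact root set. The largest root is $\omega_H^*$ and $\tau_H^*=\tau^{\omega_H^*,0}$. Every other critical delay, including $\tau^{\omega_H^*,j}$ for $j\ge1$, is strictly larger. No genericity is needed.

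\textbf{Smaller points.}
\begin{itemize}
\item You never label the case $(1,-1,0)$. Your first, mislabelled formula $\Phi=F/(2(X-\rho_0^*))$ on $(0,\omega_0)$ is exactly that case, so relabel it.
\item At $k_2=k_2^*$ one has $\omega_0=0$, and $\Phi$ does not blow up at $0^+$. It is a $0/0$ limit equal to $(\alpha-Y(0,0,k_2^*))^2/\bigl(-2\,\partial_{\omega^2}X(0,0,k_2^*)\bigr)>0$. This is what gives $K_1(-1,-1,k_2^*)>0$ and the jump.
\item For $K>k_2^*$ you do not need monotonicity of $\Phi$, which is not evident from \eqref{F'} and \eqref{eq.XY2}. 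The identity
\[
\Phi=\frac{\rho_0^*-X}{2}+\frac{\omega^2(\alpha-Y)^2}{2(\rho_0^*-X)}
\]
directly gives $\Phi>\xi(k_2)=\lim_{\omega\to0^+}\Phi$, hence $K_1=\xi(k_2)$.
\end{itemize}
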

	\begin{proof}
		For any $(p,q,K)$ given in \eqref{eq:pqk-sym} and \eqref{eq:pqk-asym} with $q\neq0$, we shall determine the threshold $K_1=K_1(p,q,K)$ from \eqref{S2}.
		
		For the case $p=0$, from \eqref{eq:pqk-sym} and \eqref{eq:pqk-asym}, we have $(p,q,K)=(0,-2,k_2)$, such that \eqref{S2} reduces to $F(\omega^2)=4k_1^2$. Define 
		\begin{equation*}
			K_1(0,-2,k_2) := \frac{1}{2}\sqrt{\inf_{\omega\ge0}F(\omega^2)} \ge0.
		\end{equation*}
		By Lemma \ref{le.F}, the minimum is attained at $\omega_*$ for $k_2\in(0,k_2^*)$ and $0$ for $k_2\in[k_2^*,\infty)$. Note that $F(\omega^2)\to\infty$ as $\omega\to\infty$. Therefore, \eqref{S2} has no positive solution if $0<k_1<K_1$, whereas it has at least one positive solution whenever $k_1>K_1$. Moreover, $K_1(0,-2,k_2)=0$ if and only if $k_2=k_2^*$. For $k_2>k_2^*$, $F(\omega^2)$ is increasing for $\omega>0$, leading to $K_1(0,-2,k_2)=\sqrt{F(0)}/2=(\rho_0^*-X(0,0,k_2))/2=\xi(k_2)$. Clearly, $K_1(0,-2,k_2)$ is continuous in $k_2\in(0,\infty)$.
		
		For the case $p\neq0$, we have $(p,q,K)=(-1,1,0)$, $(1,-1,0)$, or $(-1,-1,k_2)$, such that $q^2-p^2=0$. 
		
		In $K\in[0,k_2^*)$, we denote
		\begin{equation*}
			\Omega_+ := \{\omega>0: 2pk_1(X(0,\omega^2,K)-\rho_0^*)>0\}=
			\begin{cases}
				\begin{split}
					&(0,\omega_0),&&\text{for $(p,q,K)=(1,-1,0)$},\\
					&(\omega_0,\infty),&&\text{for $(p,q,K)=(-1,1,0)$, $(-1,-1,k_2)$}.
				\end{split}
			\end{cases}
		\end{equation*}
		
		Then \eqref{S2} is equivalent to 
		\begin{equation*}
			k_1=H_1(\omega):=
			\frac{F(\omega^2)}{2p(X(0,\omega^2,K)-\rho_0^*)},\; \omega\in\Omega_+.
		\end{equation*}
		
		Define $K_1(p,q,K) := \inf\{H_1(\omega): \omega\in\Omega_+\}$. If $p=1$, then $H_1(\omega)$ is positive and continuous on $\Omega_+$, and satisfies $H_1(\omega)\to+\infty$ as $\omega\to\omega_0^-$. Thus, \eqref{S2} admits at least one positive solution for any $k_1>K_1$. If $p=-1$, then $(p,q,K)=(-1,1,0)$ or $(-1,-1,k_2)$ with $k_2<k_2^*$. In this case, $H_1(\omega)\to+\infty$ as $\omega\to\omega_0^+$ or $\omega\to\infty$. Thus, $H_1(\omega)$ attains a positive minimum $K_1(p,q,K)$ in $(\omega_0,\infty)$, such that \eqref{S2} admits at least one positive solution for any $k_1>K_1$.
		
		We next consider $K=k_2^*$ and $(p,q,K)=(-1,-1,k_2^*)$. Then, $F(\omega^2)$ and $2pk_1(X(0,\omega^2,k_2^*)-\rho_0^*)$ are positive and increasing in $\omega\in(0,\infty)$. One can check that
		\begin{equation*}
			\lim_{\omega\to0^+}H_1(\omega)=\frac{\left( \alpha-Y(0,0,k_2^*)\right) ^2}{-2\,\frac{\pa X}{\pa\omega^2}(0,0,k_2^*)}>0,\;H_1(\omega)\to+\infty\;\text{as $\omega\to\infty$},
		\end{equation*}
		which implies that $K_1(-1,-1,k_2^*) := \inf\limits_{\omega>0}H_1(\omega)>0$. 
		
		In particular, since $\omega_0(k_2)\to0$ as $k_2\to(k_2^*)^-$, we obtain
		\begin{equation*}
			\lim\limits_{k_2\to(k_2^*)^-}K_1(-1,-1,k_2) = K_1(-1,-1,k_2^*)>0=\lim\limits_{k_2\to(k_2^*)^+}K_1(-1,-1,k_2).
		\end{equation*}
		Moreover, when $k_2=0$, \eqref{S2} is the same one in cases $(p,q,K)=(-1,-1,k_2)$ with $k_2=0$ and $(-1,1,0)$, indicating that $K_1(-1,-1,k_2)|_{k_2=0}=K_1(-1,1,0)$.
		
		Finally, for the case $(p,q,K)=(-1,-1,k_2)$ and $K>k_2^*$, we have $\rho_0^*-X(0,\omega^2,k_2)>\rho_0^*-X(0,0,k_2)>0$, such that for any $\omega>0$,
		\begin{equation*}
			H_1(\omega)=\frac{\rho_0^*-X(0,\omega^2,k_2)}{2}+\frac{\omega^2\left( \alpha-Y(0,\omega^2,k_2)\right) ^2}{2(\rho_0^*-X(0,\omega^2,k_2))}>\frac{\rho_0^*-X(0,0,k_2)}{2}=\xi(k_2).
		\end{equation*}
		
		Combining $\lim\limits_{\omega\to0^+}H_1(\omega)=\xi(k_2)$, we obtain $K_1(-1,-1,k_2)=\xi(k_2)$ for $k_2\in(k_2^*,\infty)$.
		
		If \eqref{S2} admits a unique solution, then we denote it by $\omega_H^*$. Correspondingly, we can obtain \eqref{alp2om} from \eqref{S1}, where the smallest critical value of $\tau$ is denoted by $\tau_H^*$.
		
		While if \eqref{S2} has multiple positive solutions $\omega_1,\cdots,\omega_m$ ($m\ge2$) which satisfy $\omega_1<\omega_2<\cdots<\omega_m$, then by \eqref{S1}, we can obtain the critical values $\tau^{\omega_i,j}$ associated with $\omega_i$ ($i=1,\cdots,m$), $j\in\mathbb{N}_0$. For any two solution pairs $(\omega_i,\tau^{\omega_i,0})$ and $(\omega_j,\tau^{\omega_j,0})$ of \eqref{eq-g1} which satisfy $\omega_i>\omega_j$, we shall show that $\tau^{\omega_i,0}<\tau^{\omega_j,0}$. Then by \eqref{u1: im-a} and Proposition \ref{prop.AB}, we have
		\begin{equation*}
			qk_1\left( \cos(\omega_i\tau^{\omega_i,0})-\cos(\omega_j\tau^{\omega_j,0})\right) = X(0,\omega_i^2,K)-X(0,\omega_j^2,K)<0.
		\end{equation*}
		
		If $q<0$, then we have $\sin(\omega\tau)>0$ and  $\cos(\omega_j\tau^{\omega_j,0})<\cos(\omega_i\tau^{\omega_i,0})$. From \eqref{u1: im-b}, the solution pair $(\omega,\tau)$ satisfies $\sin(\omega\tau)>0$, such that $\omega\tau\in(2n\pi,\pi+2n\pi)$, $n\in\mathbb{N}_0$. Then we have $\omega_i\tau^{\omega_i,0}<\omega_j\tau^{\omega_j,0}$, such that $\frac{\tau^{\omega_i,0}}{\tau^{\omega_j,0}}<\frac{\omega_j}{\omega_i}<1$, leading to $\tau^{\omega_i,0}<\tau^{\omega_j,0}$. While if $q>0$, $\tau^{\omega_i,0}<\tau^{\omega_j,0}$ can be proved similarly. Thus, $\tau_H^*:=\tau^{\omega_m,0}$ is the smallest one among the critical values of $\tau$, such that there is no purely imaginary eigenvalue of \eqref{eq-g1} for $\tau\in(0,\tau_H^*)$, but a unique pair of purely imaginary eigenvalues $\pm i\omega_H^*$ exists at $\tau_H^*$.
	\end{proof}
	
	\begin{lemma}\label{le.trans1}
		Suppose that $\alpha\in(\alpha_0,\infty)$. For any $(p,q,K)$ given in \eqref{eq:pqk-sym} and \eqref{eq:pqk-asym} satisfying $q\neq0$, we assume that $k_1\in(K_1,\infty)$, where $K_1$ is given by Lemma \ref{le.e1}. Let $(\tau_H^*,\omega_H^*)$ be given by Lemma \ref{le.e1}. Then, there exists a $K_2=K_2(p,q,K)$ which satisfies $K_2\ge K_1$, such that for $k_1\in(K_2,\infty)$, system \eqref{SLEP-g} has a unique pair of critical complex eigenvalues $\la_R^*(\tau)\pm i\la_I^*(\tau)$ in $\tau\in(\tau_H^*-\kappa,\tau_H^*+\kappa)$ for small $\kappa>0$, satisfying
		\begin{equation}\label{eq.trans1}
			\la_R^*(\tau_H^*)=0,\; \la_I^*(\tau_H^*)=\omega_H^*,\; \frac{d}{d\tau}\la_R^*(\tau_H^*)>0.
		\end{equation}
		More precisely, $K_2(p,q,K)$ can be characterized as follows:
		\begin{enumerate}
			\item If $(p,q,K)=(0,-2,k_2)$, then \eqref{eq.trans1} holds for $k_1\in(K_2,\infty)$, where $K_2=K_1$ for $k_2\in(k_2^*,\infty)$ and $K_2=\sqrt{F(\omega_0^2)}/2$ for $k_2\in(0,k_2^*]$.
			\item If $(p,q,K)=(-1,-1,k_2)$ and $k_2\in(0,\infty)$, then \eqref{eq.trans1} holds in  $k_1\in(K_2,\infty)$ for some $K_2=K_2(-1,-1,k_2)$. In particular, if $k_2\in(k_2^*,\infty)$, then \eqref{eq.trans1} also holds in $k_1\in(K_1,2K_1)$.
			\item If $(p,q,K)=(1,-1,0)$, then \eqref{eq.trans1} holds for $k_1\in(K_2,\infty)$, where $K_2=X(0,0,0)-\rho_0^*$.
			\item If $(p,q,K)=(-1,1,0)$, then \eqref{eq.trans1} holds for $k_1\in(K_2,\infty)$ for some $K_2=K_2(-1,1,0)$ satisfying $K_2(-1,1,0)=K_2(-1,-1,k_2)|_{k_2=0}$.
		\end{enumerate}
	\end{lemma}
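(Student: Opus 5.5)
The plan is to compute $\operatorname{Re}\frac{d\la^*}{d\tau}$ at $(\la^*,\tau)=(i\omega_H^*,\tau_H^*)$ by implicit differentiation of \eqref{F*}, and to show that its sign equals the sign of $\frac{d}{d\omega^2}\bigl[F(\omega^2)-G(\omega^2;k_1)\bigr]$ at $\omega=\omega_H^*$. After that, each of the four $(p,q,K)$-configurations is treated separately, using the structure of $F'$ in \eqref{F'} together with Proposition \ref{prop.AB}. Throughout, $\theta_\tau^*=e^{-\la^*\tau}$.

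\textbf{Step 1 (transversality criterion).} Write \eqref{F*} as $P(\la^*)+k_1q e^{-\la^*\tau}=0$, where
\[
P(\la):=\rho_0^*+k_1p-\alpha\la-S(\la),\qquad S(\la):=\sum_{n\ge0}\frac{c_1^*c_2^*\psi_n^2(x^*)}{\gamma_n+\la+2K}.
\]
Differentiating in $\tau$ gives $\bigl(P'(\la)-k_1q\tau e^{-\la\tau}\bigr)\frac{d\la}{d\tau}=k_1q\la e^{-\la\tau}$. Substituting $k_1qe^{-\la\tau}=-P(\la)$ yields
\[
\Bigl(\frac{d\la}{d\tau}\Bigr)^{-1}=-\frac{P'(\la)}{\la P(\la)}-\frac{\tau}{\la}.
\]
At $\la=i\omega$ the term $-\tau/(i\omega)$ is purely imaginary, so
\[
\operatorname{sign}\operatorname{Re}\frac{d\la}{d\tau}=\operatorname{sign}\operatorname{Re}\Bigl[\frac{-P'(i\omega)\overline{P(i\omega)}}{i\omega}\Bigr]=\operatorname{sign}\frac{d}{d\omega}|P(i\omega)|^2 .
\]
By construction, $|P(i\omega)|^2-q^2k_1^2=F(\omega^2)-G(\omega^2;k_1)$, which is exactly \eqref{S2}. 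Hence \eqref{eq.trans1} holds as soon as
\[
\Delta(\omega_H^*):=F'((\omega_H^*)^2)-2pk_1\frac{\pa X}{\pa\omega^2}(0,(\omega_H^*)^2,K)>0 .
\]
The same computation gives $\frac{\pa\mathcal F^*}{\pa\la}(i\omega_H^*,\tau_H^*)\neq0$, so the crossing pair is simple and continues smoothly in $\tau$. Uniqueness of the critical pair near $\tau_H^*$ comes from Lemma \ref{le.e1}.

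\textbf{Step 2 (the cases with $p\ge0$).}

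For $(0,-2,k_2)$ we have $G'\equiv0$, so we need $F'((\omega_H^*)^2)>0$. The proof of Lemma \ref{le.F} gives $F'>0$ on $[\omega_0^2,\infty)$ when $k_2\le k_2^*$, and on all of $(0,\infty)$ when $k_2>k_2^*$. If $k_1>\sqrt{F(\omega_0^2)}/2$, then every root of $F=4k_1^2$ lies in $(\omega_0,\infty)$, because $F$ decreases and then increases with $F<F(\omega_0^2)$ on $(\omega_*,\omega_0)$. This gives $K_2=\sqrt{F(\omega_0^2)}/2$ for $k_2\le k_2^*$ and $K_2=K_1$ for $k_2>k_2^*$.

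For $(1,-1,0)$, combine the $X$-terms of $F'$ with $-G'$:
\[
\Delta=-2\frac{\pa X}{\pa\omega^2}\bigl(\rho_0^*-X+k_1\bigr)+(\alpha-Y)^2-2\omega^2(\alpha-Y)\frac{\pa Y}{\pa\omega^2}.
\]
The last two terms are positive, since $\alpha>\alpha_0=Y(0,0,0)\ge Y$ and $\pa Y/\pa\omega^2<0$. Since $\pa X/\pa\omega^2<0$ and $X(0,\omega^2,0)\le X(0,0,0)$, the first term is positive once $k_1>X(0,0,0)-\rho_0^*$. This gives $K_2=X(0,0,0)-\rho_0^*$.

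\textbf{Step 3 (the cases with $p=-1$; main obstacle).} Here
\[
\Delta=-2\frac{\pa X}{\pa\omega^2}\bigl(\rho_0^*-X-k_1\bigr)+(\text{positive}),
\]
and $\rho_0^*-X-k_1$ may be negative, so this is where the work lies.

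For $k_2>k_2^*$ and $k_1\in(K_1,2K_1)$, use $\rho_0^*-X(0,\omega^2,k_2)\ge\rho_0^*-X(0,0,k_2)=2\xi(k_2)=2K_1>k_1$, which makes $\Delta>0$ directly.

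For general $k_2$ and large $k_1$, I would argue asymptotically. From \eqref{S2} with $p=-1$, every root satisfies $\omega^2(\alpha-Y)^2\le 2k_1(\rho_0^*-X)\le 2k_1\rho_0^*$ and $(\rho_0^*-X)^2\le 2k_1(\rho_0^*-X)$. Moreover, $\omega_H^*\to\infty$ as $k_1\to\infty$, since $F$ is bounded on compact sets while $2k_1(\rho_0^*-X)\to\infty$ on $(\omega_0,\infty)$. Then $\frac{\pa X}{\pa\omega^2}=O(\omega^{-4})$ and $\omega^2\sim 2k_1\rho_0^*/\alpha^2$, so $k_1\,\bigl|\frac{\pa X}{\pa\omega^2}\bigr|=O(\omega^{-2})\to0$. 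Meanwhile $(\alpha-Y)^2\to\alpha^2$. Hence $\Delta>0$ for all roots once $k_1$ exceeds some $K_2(-1,-1,k_2)$. Making this uniform over possibly multiple roots is the delicate point, and the bound \eqref{eq.XY2} is the tool I would use to control $\omega^2\,\pa X/\pa\omega^2$.

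Finally, for $(-1,1,0)$ the equation \eqref{S2} and $\Delta$ depend only on $p$, $q^2$ and $K$. They therefore coincide with those of $(-1,-1,k_2)$ at $k_2=0$, which gives $K_2(-1,1,0)=K_2(-1,-1,k_2)|_{k_2=0}$.
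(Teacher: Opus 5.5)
Your Step 1 is valid and takes a different route from the paper. The paper computes real and imaginary parts of $\partial_\la\mathcal F^*$ and $\partial_\tau\mathcal F^*$, reduces transversality to $I_1+I_2>0$, and then controls the sign of $\cot(\omega_H^*\tau_H^*)$. You instead use the classical identity $\operatorname{sign}\operatorname{Re}\frac{d\la}{d\tau}=\operatorname{sign}\frac{d}{d\omega}|P(i\omega)|^2$. Using \eqref{eq-g1}, the two criteria coincide: $\Delta(\omega_H^*)=\bigl(\alpha-Y(0,(\omega_H^*)^2,K)\bigr)(I_1+I_2)$. Your case splits also mirror the paper's: $\cos(\omega_H^*\tau_H^*)>0$ is the same as $\rho_0^*+k_1p-X>0$, and $I_2\to0$ is the same as $k_1|\partial X/\partial\omega^2|\to0$. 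What your version buys is independence from $\tau$ and from the sign of $q$. It also ties the crossing direction to the direction in which $F-G$ crosses zero in \eqref{S2}.

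The gap is that you never use \emph{which} root $\omega_H^*$ is, and your claims about all roots are false.
\begin{itemize}
\item In Step 2 (case $(0,-2,k_2)$, $k_2<k_2^*$), the condition $4k_1^2>F(\omega_0^2)$ does not put every root in $(\omega_0,\infty)$. Nothing prevents $F(0)>F(\omega_0^2)$, and then for $4k_1^2\in(F(\omega_0^2),F(0))$ there is also a root in $(0,\omega_*)$ where $F'<0$.
\item Step 3 is worse. Whenever $\omega_0>0$ (item 4, and item 2 with $k_2<k_2^*$), $F-G$ equals $F(\omega_0^2)=\omega_0^2(\alpha-Y)^2>0$ at $\omega^2=\omega_0^2$. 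Its slope there is $F'-2k_1|\partial X/\partial\omega^2|$, which tends to $-\infty$ as $k_1\to\infty$.
\item Hence for every large $k_1$ there is a root with $\omega^2=\omega_0^2+O(1/k_1)$. At that root $\rho_0^*-X-k_1\approx-k_1$, so $\Delta<0$, and it is a right-to-left crossing.
\end{itemize}
So ``$\Delta>0$ for all roots once $k_1$ is large'' cannot be proved. Your argument that $\omega_H^*\to\infty$ fails for the same reason: $2k_1(\rho_0^*-X)\to\infty$ only pointwise on $(\omega_0,\infty)$, not uniformly near $\omega_0$.

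The missing ingredient is in the proof of Lemma \ref{le.e1}: the map $\omega_i\mapsto\tau^{\omega_i,0}$ is decreasing. Hence $\omega_H^*$ is the \emph{largest} positive root of \eqref{S2}, and $\Delta>0$ is needed only there. With this, both steps go through.
\begin{itemize}
\item Step 2: the largest root of $F=4k_1^2$ does lie in $(\omega_0,\infty)$.
\item Step 3: the largest root satisfies $(\omega_H^*)^2/k_1\to2\rho_0^*/\alpha^2$. Then \eqref{eq.XY2} gives $k_1|\partial X/\partial\omega^2|\le\bigl(k_1/(\omega_H^*)^2\bigr)X\to0$.
\end{itemize}
The rate $\partial X/\partial\omega^2=O(\omega^{-4})$ you quote is not justified for the infinite series, but it is not needed. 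In your framework the point is even simpler: $G$ is bounded while $F-G\to+\infty$, so $\frac{d}{d\omega^2}(F-G)\ge0$ holds automatically at the largest root. It is strict unless that root is tangential.
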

	\begin{proof}
		The complex SLEP system \eqref{F*} is recast as
		\begin{equation*}
			\mathcal{F}^*(\la^*,\tau):=\rho_0^*-\alpha\la^*+k_1(p+q \theta_{\tau})-\sum_{n=0}^{\infty}\frac{c_1^*c_2^*}{\gamma_n+\la^*+2K}\psi_n^2(x^*)=0,
		\end{equation*}
		where $\theta_{\tau}=e^{-\lambda^*\tau}$.
		
		Differentiating $\mathcal{F}^*(\la^*,\tau)=0$ with respect to $\la^*$ and $\tau$ at $(i\omega_H^*,\tau_H^*)$, respectively, we have
		\begin{equation*}
			\begin{split}
				\frac{ \pa\mathcal{F}^*}{\pa\la^*}(i\omega_H^*,\tau_H^*)=-\alpha-k_1 q\tau_H^*e^{-i\omega_H^*\tau_H^*}+\sum_{n=0}^{\infty}\frac{c_1^*c_2^*\psi_n^2(x^*)}{(\gamma_n+i\omega_H^*+2K)^2},\;\frac{ \pa\mathcal{F}^*}{\pa\tau}(i\omega_H^*,\tau_H^*)=-ik_1 q\omega_H^*e^{-i\omega_H^*\tau_H^*},
			\end{split}		
		\end{equation*}
		such that 
		\begin{equation*}
			\begin{split}
				&\operatorname{Re}\frac{ \pa\mathcal{F}^*}{\pa\la^*}(i\omega_H^*,\tau_H^*)=-\alpha-k_1 q \tau_H^*\cos(\omega_H^*\tau_H^*)+\sum_{n=0}^{\infty} \frac{c_1^*c_2^*[(\gamma_n + 2K)^2-(\omega_H^*)^2]}{[(\gamma_n + 2K)^2 + (\omega_H^*)^2]^2}\,\psi_n^2(x^*),\\
				&\operatorname{Im}\frac{ \pa\mathcal{F}^*}{\pa\la^*}(i\omega_H^*,\tau_H^*)=k_1 q \tau_H^*\sin(\omega_H^* \tau_H^*)-\sum_{n=0}^{\infty} \frac{2c_1^*c_2^*\omega_H^*(\gamma_n + 2K)}{[(\gamma_n + 2K)^2 + (\omega_H^*)^2]^2}\,\psi_n^2(x^*),\\
				&\operatorname{Re}\frac{ \pa\mathcal{F}^*}{\pa\tau}(i\omega_H^*,\tau_H^*)=-k_1 q\omega_H^*\sin(\omega_H^*\tau_H^*),\;
				\operatorname{Im}\frac{ \pa\mathcal{F}^*}{\pa\tau}(i\omega_H^*,\tau_H^*)=-k_1 q\omega_H^*\cos(\omega_H^*\tau_H^*).
			\end{split}
		\end{equation*}
		
		Since $\sin(\omega_H^*\tau_H^*)q<0$, we have $\operatorname{Im}\frac{\pa\mathcal{F}^*}{\pa\la^*}(i\omega_H^*,\tau_H^*)<0$. Then by the implicit function theorem, we can obtain the existence and uniqueness of critical eigenvalues $\la_R^*(\tau)\pm i\la_I^*(\tau)$ in $\tau\in(\tau_H^*-\kappa,\tau_H^*+\kappa)$ for small $\kappa>0$, which satisfy $\la_R^*(\tau_H^*)=0$, $\la_I^*(\tau_H^*)=\omega_H^*$.
		
		Next we shall show that $\frac{d}{d\tau}\la_R^*(\tau_H^*)>0$. By implicit differentiation, 
		\begin{equation}\label{eq.tra1}
			\frac{d\la_R^*}{d\tau}(\tau_H^*)=-\frac{
				\operatorname{Re}\frac{ \pa\mathcal{F}^*}{\pa\la^*}(i\omega_H^*,\tau_H^*)
				\operatorname{Re}\frac{ \pa\mathcal{F}^*}{\pa\tau}(i\omega_H^*,\tau_H^*)+
				\operatorname{Im}\frac{ \pa\mathcal{F}^*}{\pa\la^*}(i\omega_H^*,\tau_H^*)
				\operatorname{Im}\frac{ \pa\mathcal{F}^*}{\pa\tau}(i\omega_H^*,\tau_H^*)}{\left(\operatorname{Re}\frac{ \pa\mathcal{F}^*}{\pa\la^*}(i\omega_H^*,\tau_H^*)\right)^2+\left(\operatorname{Im}\frac{ \pa\mathcal{F}^*}{\pa\la^*}(i\omega_H^*,\tau_H^*)\right)^2}.
		\end{equation}				
		
		After some calculations, it is equivalent to show that $I_1+I_2>0$, where
		\begin{equation*}
			I_1=\alpha-\sum_{n=0}^{\infty} \frac{c_1^*c_2^*[(\gamma_n + 2K)^2-(\omega_H^*)^2]}{[(\gamma_n + 2K)^2 + (\omega_H^*)^2]^2}\,\psi_n^2(x^*),\;
			I_2=\sum_{n=0}^{\infty} \frac{2c_1^*c_2^*\omega_H^*(\gamma_n + 2K)}{[(\gamma_n + 2K)^2 + (\omega_H^*)^2]^2}\,\psi_n^2(x^*) \frac{\cos(\omega_H^*\tau_H^*)}{\sin(\omega_H^*\tau_H^*)}.
		\end{equation*}
		
		On the one hand, we have
		\begin{equation}\label{alp-}
			\begin{split}
				I_1> \alpha-\sum_{n=0}^{\infty} \frac{c_1^*c_2^*[(\gamma_n + 2K)^2+(\omega_H^*)^2]}{[(\gamma_n + 2K)^2 + (\omega_H^*)^2]^2}\,\psi_n^2(x^*)=\alpha-Y(0,(\omega_H^*)^2,K)>\alpha-\alpha_0>0.
			\end{split}   		
		\end{equation}
		
		On the other hand, it is sufficient to show that $\cos(\omega_H^*\tau_H^*)\sin(\omega_H^*\tau_H^*)>0$ or $I_2\to0$ as $k_1\to\infty$.
		
		To begin with, from \eqref{u1: im-b}, it follows that $\sin(\omega_H^*\tau_H^*)>0$ (resp., $<0$) and $\omega_H^*\tau_H^*\in(0,\pi)$ (resp., $(\pi,2\pi)$) if $q<0$ (resp., $q>0$). 
		
		Next we verify the transversality condition for the following cases.
		
		1. $(p,q,K)=(0,-2,k_2)$: By \eqref{S1}, we have $\cos(\omega_H^*\tau_H^*)=\frac{\rho_0^*-X(0,(\omega_H^*)^2,k_2)}{2k_1}$. For $k_2\in[k_2^*,\infty)$, it is clear that $\cos(\omega_H^*\tau_H^*)>0$ holds for any $k_1\in(K_1,\infty)$, such that $K_2=K_1$. While for $k_2\in(0,k_2^*)$, we note that $\omega=\omega_0$ is the unique solution of $\rho_0^*=X(0,\omega^2,k_2)$ and $F(\omega^2)$ is increasing in $\omega\in(\omega_0,\infty)$. Thus, if  $k_1>K_2:=\sqrt{F(\omega_0^2)}/2$, then $\omega_H^*>\omega_0$, such that $\cos(\omega_H^*\tau_H^*)>0$.
		
		2. $(p,q,K)=(-1,-1,k_2)$: By \eqref{S1}, we have $\cos(\omega_H^*\tau_H^*)=\frac{\rho_0^*-k_1-X(0,(\omega_H^*)^2,k_2)}{k_1}$. For $k_2\in(k_2^*,\infty)$, we have
		\begin{equation*}
			\rho_0^*-k_1-X(0,(\omega_H^*)^2,k_2)>\rho_0^*-k_1-X(0,0,k_2)=2K_1-k_1,
		\end{equation*}
		implying that $\cos(\omega_H^*\tau_H^*)>0$ if $k_1\in(K_1,2K_1)$.
		
		3. $(p,q,K)=(1,-1,0)$: By \eqref{S1}, we have $\cos(\omega_H^*\tau_H^*)=\frac{\rho_0^*+k_1-X(0,(\omega_H^*)^2,0)}{k_1}$. Clearly, if $k_1>K_2:=X(0,0,0)-\rho_0^*$, then $\rho_0^*+k_1-X(0,(\omega_H^*)^2,0)>0$, such that $\cos(\omega_H^*\tau_H^*)>0$.
		
		For the remaining cases, the transversality follows from $I_2\to0$ as $k_1\to\infty$, which are verified as follows.
		
		1. $(p,q,K)=(-1,1,0)$: From \eqref{eq-g1}, it follows that
		\begin{equation*}
			\begin{split}
				I_2&=\sum_{n=0}^{\infty} \frac{2c_1^*c_2^*\omega_H^*\gamma_n }{[\gamma_n ^2 + (\omega_H^*)^2]^2}\,\psi_n^2(x^*) \frac{k_1 q \cos(\omega_H^*\tau_H^*)}{k_1 q \sin(\omega_H^*\tau_H^*)}\\
				&=\sum_{n=0}^{\infty} \frac{2c_1^*c_2^*(\omega_H^*)^2\gamma_n }{[\gamma_n^2 + (\omega_H^*)^2]^2}\,\psi_n^2(x^*) \frac{X(0,(\omega_H^*)^2,0)-\rho_0^*+k_1}{(\omega_H^*)^2(Y(0,(\omega_H^*)^2,0)-\alpha)},
			\end{split}		
		\end{equation*}
		where, due to \eqref{S2},
		\begin{equation*}
			k_1=\frac{(\rho_0^*-X(0,(\omega_H^*)^2,0))^2+(\omega_H^*)^2(\alpha-Y(0,(\omega_H^*)^2,0))^2}{2(\rho_0^*-X(0,(\omega_H^*)^2,0))}.
		\end{equation*} 
		
		Then from \eqref{eq.XY1} and \eqref{eq.XY2}, combining $\omega_H^*\to\infty$ as $k_1\to\infty$, we have
		\begin{equation*}
			0\le\lim\limits_{\omega_H^*\to\infty}\vert I_2\vert\le \lim\limits_{\omega_H^*\to\infty}\frac{\alpha}{\rho_0^*} X(0,(\omega_H^*)^2,0)=0.
		\end{equation*}
		
		2. For $(p,q,K)=(-1,-1,k_2)$,  $\lim\limits_{k_1\to\infty} I_2=0$ can be shown similarly to the case of $(p,q,K)=(-1,1,0)$. Clearly, $K_2(-1,-1,k_2)|_{k_2=0}=K_2(-1,1,0)$. The proof is complete.
	\end{proof}
	
	\begin{corollary}\label{cor.K3}
		Let $(\tau_1^*,\omega_1^*)$, $(\tau_2^*,\omega_2^*)$, $(\tau_3^*,\omega_3^*)$ be the critical points $(\tau_H^*,\omega_H^*)$ given by Lemma \ref{le.trans1} for the cases of $(p,q,K)=(-1,1,0)$, $(-1,-1,k_2)$, $(1,-1,0)$, respectively. For any $k_2\in(0,\infty)$, there exists a sufficiently large $\mathcal{K}_3=\mathcal{K}_3(k_2)$, such that when $k_1>\mathcal{K}_3$, we have $\tau_3^*<\tau_2^*<\tau_1^*$.
	\end{corollary}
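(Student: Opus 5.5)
The plan is to compute the leading-order asymptotics of the first critical delays $\tau_1^*,\tau_2^*,\tau_3^*$ as $k_1\to\infty$, using the characterization of $(\omega_H^*,\tau_H^*)$ in Lemma \ref{le.e1}. In all three cases $p\neq0$ and $q^2=p^2$. Hence \eqref{S2} reads $F(\omega^2)=2pk_1\bigl(X(0,\omega^2,K)-\rho_0^*\bigr)$, i.e. $k_1=H_1(\omega)$ on $\Omega_+$, and $\tau_H^*=\tau^{\omega_m,0}$ is given by \eqref{alp2om} at the largest root $\omega_m$. I will determine the size of $\omega_m$ as $k_1\to\infty$ in each case, then read off $\tau_H^*$ from \eqref{alp2om}. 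Finally I compare the results.

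\emph{Cases $(p,q,K)=(-1,1,0)$ and $(-1,-1,k_2)$.} Here $p=-1$ and $\Omega_+=(\omega_0,\infty)$ (or $(0,\infty)$ when $K\ge k_2^*$). By Proposition \ref{prop.AB}, $X,Y\to0$ as $\omega\to\infty$, so
\begin{equation*}
F(\omega^2)=\alpha^2\omega^2\bigl(1+o(1)\bigr),\qquad 2\bigl(\rho_0^*-X\bigr)=2\rho_0^*\bigl(1+o(1)\bigr).
\end{equation*}
Thus $H_1(\omega)=\frac{\alpha^2\omega^2}{2\rho_0^*}(1+o(1))$. Since $H_1$ is bounded on compact subsets away from $\omega_0$, for large $k_1$ the largest root satisfies
\begin{equation*}
\omega_m=\frac{\sqrt{2\rho_0^*k_1}}{\alpha}\bigl(1+o(1)\bigr),
\end{equation*}
independently of $K$ at leading order. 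Possible extra roots near $\omega_0$ are smaller and so irrelevant to $\tau_H^*$. For $(-1,-1,k_2)$ we have $q<0$, and the arccos argument is $-(X-\rho_0^*+k_1)/k_1\to-1$. So $\omega_2^*\tau_2^*\to\pi$ and $\tau_2^*=\pi\alpha(2\rho_0^*k_1)^{-1/2}(1+o(1))$. For $(-1,1,0)$ we have $q>0$, and the argument $-(X-\rho_0^*+k_1)/k_1\to-1$ gives $\arccos\to\pi$. Adding $\pi$ yields $\omega_1^*\tau_1^*\to2\pi$, so $\tau_1^*=2\pi\alpha(2\rho_0^*k_1)^{-1/2}(1+o(1))$. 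Hence $\tau_1^*/\tau_2^*\to2$, and $\tau_2^*<\tau_1^*$ for large $k_1$.

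\emph{Case $(p,q,K)=(1,-1,0)$.} Here $\Omega_+=(0,\omega_0(0))$ is bounded, and $F$ is bounded there. Hence $k_1=H_1(\omega)\to\infty$ forces $X(0,\omega^2,0)-\rho_0^*\to0^+$. Every root, in particular $\omega_3^*$, therefore tends to $\omega_0:=\omega_0(0)>0$, with
\begin{equation*}
X-\rho_0^*=\frac{F(\omega_0^2)}{2k_1}\bigl(1+o(1)\bigr).
\end{equation*}
Since $q<0$, the arccos argument is $(X-\rho_0^*-k_1)/(-k_1)=1-(X-\rho_0^*)/k_1\to1$. This gives $\omega_3^*\tau_3^*\sim\sqrt{2(X-\rho_0^*)/k_1}\sim\sqrt{F(\omega_0^2)}/k_1$. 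Consequently $\tau_3^*=O(k_1^{-1})$, which is $o(k_1^{-1/2})$, so $\tau_3^*<\tau_2^*$ for large $k_1$. Choosing $\mathcal K_3(k_2)$ beyond the thresholds where these asymptotics dominate, and beyond $K_2$ from Lemma \ref{le.trans1} so that the critical points are defined, finishes the proof.

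The main obstacle is making the asymptotics of the largest root rigorous, namely showing $\omega_m\sim\sqrt{2\rho_0^*k_1}/\alpha$ uniformly. This needs quantitative decay $X,Y=O(\omega^{-2})$ and control of $\omega^2Y$, which I would take from \eqref{AB} together with the summability of $c_1^*c_2^*\psi_n^2(x^*)$. A further check is that the $o(1)$ corrections in the arccos expansions do not upset the factor-two gap between $\tau_1^*$ and $\tau_2^*$. Since that gap is a fixed ratio, the leading terms suffice.
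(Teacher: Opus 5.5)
Your proposal is correct and follows essentially the same route as the paper. Both arguments track the largest root as $k_1\to\infty$: $\omega_1^*,\omega_2^*\sim\sqrt{2\rho_0^*k_1}/\alpha$ with phases $\omega_1^*\tau_1^*\to 2\pi$ and $\omega_2^*\tau_2^*\to\pi$, which gives $\tau_1^*/\tau_2^*\to 2$; and $\omega_3^*\to\omega_0(0)$ with $\tau_3^*=O(k_1^{-1})$, against $\tau_2^*\sim C k_1^{-1/2}$.

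The differences are minor. You obtain $\tau_3^*$ from the expansion $\arccos(1-\delta)\sim\sqrt{2\delta}$, whereas the paper uses the $\sin x/x$ limit in the imaginary-part equation; both give $k_1\tau_3^*\to\alpha-Y(0,\omega_0^2,0)$. The quantitative decay $X,Y=O(\omega^{-2})$ that you flag as the main obstacle is not needed, since $X,Y\to 0$ from Proposition \ref{prop.AB} already suffices.
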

	\begin{proof}
		1. From \eqref{S2}, we obtain
		\begin{equation}\label{ho/o}
			\frac{\left[ \rho_0^*-X(0,(\omega_1^*)^2,0)\right] ^2+ (\omega_1^*)^2\left[Y(0,(\omega_1^*)^2,0)-\alpha\right]^2}{\left[ \rho_0^*-X(0,(\omega_2^*)^2,k_2)\right] ^2+ (\omega_2^*)^2\left[Y(0,(\omega_2^*)^2,k_2)-\alpha\right]^2}=\frac{2k_1(\rho_0^*-X(0,(\omega_1^*)^2,0))}{2k_1(\rho_0^*-X(0,(\omega_2^*)^2,k_2))}.
		\end{equation}
		
		As $k_1\to\infty$, by the proof of Lemma \ref{le.e1}, we have $\omega_1^*\to\infty$ and $\omega_2^*\to\infty$, leading to $\lim\limits_{k_1\to\infty}\omega_1^*/\omega_2^*=1$ from \eqref{ho/o}. Then by \eqref{eq-g1}, we have  $\sin(\omega_1^*\tau_1^*)<0$ and $\sin(\omega_2^*\tau_2^*)>0$ (leading to $\omega_1^*\tau_1^*\in(\pi,2\pi)$ and $\omega_2^*\tau_2^*\in(0,\pi)$), as well as $\cos(\omega_1^*\tau_1^*)\to 1$ and $\cos(\omega_2^*\tau_2^*)\to -1$ as $k_1\to\infty$. Thus, we obtain $\omega_1^*\tau_1^*\to 2\pi$ and $\omega_2^*\tau_2^*\to \pi$, such that $\lim\limits_{k_1\to\infty}\tau_1^*/\tau_2^*=2$.
		
		2. By the proof of Lemma \ref{le.e1}, we have $\omega_3^*\to\wh\omega_0$ (as $k_1\to\infty$), where $\wh\omega_0$ satisfies $X(0,\wh\omega_0^2,0)=\rho_0^*$. Further, from \eqref{eq-g1}, we obtain $\omega_3^*\tau_3^*\in(0,\pi)$, $\cos(\omega_3^*\tau_3^*)\to1$, and $\sin(\omega_3^*\tau_3^*)\to0$, such that $\omega_3^*\tau_3^*\to 0$. Then by \eqref{eq-g1}, combining $\lim\limits_{x\to0}\sin(x)/x=1$, we have 
		\begin{equation*}
			1=\lim\limits_{k_1\to\infty}\frac{\sin(\omega_3^*\tau_3^*)}{\omega_3^*\tau_3^*}=\lim\limits_{k_1\to\infty}\frac{k_1\sin(\omega_3^*\tau_3^*)}{k_1\omega_3^*\tau_3^*}=\frac{\alpha-Y(0,\wh\omega_0^2,0)}{\lim\limits_{k_1\to\infty} k_1 \tau_3^*},
		\end{equation*}
		implying that $\tau_3^*\sim C_1/k_1$ for sufficiently large $k_1$ with $C_1>0$.
		
		On the other hand, from \eqref{S2}, we have $\omega_2^*\sim C_2\sqrt{k_1}$. Then by $\omega_2^*\tau_2^*\to\pi$, we derive $\tau_2^*\sim C_3/\sqrt{k_1}$ for sufficiently large $k_1$, where $C_i>0$, $i=1,2,3$. Hence, $\tau_3^*<\tau_2^*<\tau_1^*$ for $k_1>\mathcal{K}_3$ for some sufficiently large $\mathcal{K}_3=\mathcal{K}_3(k_2)$.
	\end{proof}
	
	\subsection{Stability and Hopf bifurcations for Cases 1--3}
	In this subsection, we present the stability and bifurcation results for Cases 1--3 with sufficiently small $\ep>0$, which are given by
	\begin{align}
		\makebox[2.2cm][l]{\text{(Case 1)}}\quad
		&\left\{
		\begin{aligned}
			\alpha\frac{\partial u_i}{\partial t}
			&= \ep\frac{\partial^2 u_i}{\partial x^2}
			+ \frac{1}{\ep}f(u_i,v_i)
			+ k_1\bigl(u_j(x,t-\tau)-u_i(x,t-\tau)\bigr),\\
			\frac{\partial v_i}{\partial t}
			&= d\frac{\partial^2 v_i}{\partial x^2}
			+ g(u_i,v_i)+k_2(v_j-v_i),
		\end{aligned}
		\right.
		\label{eq-d1}\\[1.5ex]
		\makebox[2.2cm][l]{\text{(Case 2)}}\quad
		&\left\{
		\begin{aligned}
			\alpha\frac{\partial u_i}{\partial t}
			&= \ep\frac{\partial^2 u_i}{\partial x^2}
			+ \frac{1}{\ep}f(u_i,v_i)
			+ k_1\bigl(u_j(x,t-\tau)-u_i\bigr),\\
			\frac{\partial v_i}{\partial t}
			&= d\frac{\partial^2 v_i}{\partial x^2}
			+ g(u_i,v_i)+k_2(v_j-v_i),
		\end{aligned}
		\right.
		\label{eq-d2}\\[1.5ex]
		\makebox[2.2cm][l]{\text{(Case 3)}}\quad
		&\left\{
		\begin{aligned}
			\alpha\frac{\partial u_i}{\partial t}
			&= \ep\frac{\partial^2 u_i}{\partial x^2}
			+ \frac{1}{\ep}f(u_i,v_i)
			+ k_1\bigl(u_j-u_i(x,t-\tau)\bigr),\\
			\frac{\partial v_i}{\partial t}
			&= d\frac{\partial^2 v_i}{\partial x^2}
			+ g(u_i,v_i)+k_2(v_j-v_i),
		\end{aligned}
		\right.
		\label{eq-d3}
	\end{align}
	respectively, where $\tau\in(0,\infty)$, $i,j\in\{1,2\}$, $i\neq j$; $x\in(0,1)$, $t>0$.
	
	For convenience, we denote by $\mathcal{K}_i^{j,\sigma}$  the $i$-th threshold of Case $j$ associated with the $\sigma$ sub-problem, where $j\in\{1,2,3\}$, $\sigma\in\{s,a\}$ denotes symmetric or asymmetric sub-problem, $i=1,2$. In particular,  $\mathcal{K}_i^{2,a}(k_2)=\mathcal{K}_i^{3,a}(k_2)$ for any $k_2\in(0,\infty)$, $i=1,2$.
	
	\begin{figure}[htbp]
		\centering
		\begin{tikzpicture}
			\fill [fill=green!20]  (0,4) --  (0,1.5) parabola (2.4,4)--cycle;
			\fill [fill=magenta!15]  (0,0) --  (0,1.5) parabola (2.4,4)--(7.5,4)--(7.5,0)--cycle;
			\fill [red!30]  (0,1.5) parabola (2.4,4)--(7.5,4)--(7.5,0)--(3,0) parabola bend (0,1.5) (0,1.5);
			\fill [orange!30]  (0,1.5) parabola (1.5,0)--(0,0) --cycle;
			\draw[->] (0,0) --(7.55,0) node[right] {$k_1$};
			\draw[->] (0,0) --(0,4.05) node[above] {$k_2$};
			\draw[dashed,thick] (2.5,4)--(2.5,0) node [below] {$\frac{\rho_0^*}{2}$};
			\draw (0,1.5) node[left] {$k_2^*$};
			\draw (0,0) node[left] {$O$};
			\draw  (0,1.5) parabola (2.4,4);		
			\draw (2.4,4.3) node[font=\footnotesize]{$\mathcal{K}_1^{1,a}(k_2)$};
			\draw  (0,1.5) parabola (1.5,0);
			\node[font=\footnotesize,below] at (1.5,0)  {$\mathcal{K}_1^{1,a}(k_2)$};
			\draw  (0,1.5) parabola (3,0);
			\node[font=\footnotesize,below] at (3.3,0)  {$\mathcal{K}_2^{1,a}(k_2)$};
		\end{tikzpicture}
		\caption{Stability diagram of the synchronized layered steady state $(U(x,\ep),U(x,\ep))$ with respect to system \eqref{eq-d1} in the $(k_1,k_2)$ plane. Green: instability region  $\Gamma_1$; the curve $\Gamma$ separates $\Gamma_1$ from $\Gamma_2$ (Theorem \ref{th.2-1}). Orange: stability region $\Gamma_S$. Red: Hopf bifurcation region $\Gamma_H\subset\Gamma_2$ specified by \eqref{H1}.}
		\label{Fig.s-dep1}
	\end{figure}
	
	\begin{theorem}[Case 1]\label{th-1} 
		Suppose that $\alpha\in(\alpha_0,+\infty)$. Let $\Gamma_1,\Gamma_2$ be defined by Theorem \ref{th.2-1}, $K_1(p,q,K)$ and $K_2(p,q,K)$ be given by Lemmas \ref{le.e1} and \ref{le.trans1}, respectively. Define $\mathcal{K}_i^{1,a}(k_2):=K_i(0,-2,k_2)$, $i=1,2$. Then, we have the following results $($see Fig. \ref{Fig.s-dep1}$)$: 
		\begin{enumerate} 
			\item $($Turing instability$)$ If $(k_1,k_2)\in\Gamma_1$ holds, then for any fixed  $\tau\in(0,\infty)$ and sufficiently small $\ep>0$, the synchronized layered steady state $(U(x,\ep),U(x,\ep))$ is unstable with respect to system \eqref{eq-d1}. 
			\item $($Delay-independent stability$)$ If $(k_1,k_2)\in\Gamma_S:=\{(k_1,k_2)\in\Gamma_2: k_2\in(0,k_2^*), k_1\in(0,\mathcal{K}_1^{1,a}(k_2))\}$ holds, then for any fixed  $\tau\in(0,\infty)$ and sufficiently small $\ep>0$, $(U(x,\ep),U(x,\ep))$ is stable with respect to system \eqref{eq-d1}. 
			\item $($Delay-induced Hopf bifurcation$)$ If $(k_1,k_2)\in\Gamma_2$ and satisfies 
			\begin{equation}\label{H1} 
				\begin{split} 
					&k_2\in(0,k_2^*], k_1\in(\mathcal{K}_2^{1,a}(k_2),+\infty),\;\text{or}\\ 
					&k_2\in(k_2^*,\infty), k_1\in(\mathcal{K}_1^{1,a}(k_2),+\infty), 
				\end{split} 
			\end{equation} 
			then there exists a $\tau_H(\ep)>0$ for sufficiently small $\ep>0$ with $\tau_H(0)=\tau_H^*$, where $\tau_H^*$ is determined from \eqref{eq-g1} for the case of $(p,q,K)=(0,-2,k_2)$ by Lemma \ref{le.e1}, such that $(U(x,\ep),U(x,\ep))$ is stable with respect to the system \eqref{eq-d1} when $\tau\in 
			(0,\tau_H(\ep))$ while unstable when $\tau\in(\tau_H(\ep),\tau_H(\ep)+\kappa)$ for some $\kappa>0$. Moreover, system \eqref{eq-d1} undergoes a primary Hopf bifurcation at $\tau=\tau_H(\ep)$, such that a family of anti-phase bifurcating breathers of \eqref{eq-d1} occurs around $(U(x,\ep),U(x,\ep))$.  
		\end{enumerate}	 
	\end{theorem}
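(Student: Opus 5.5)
The proof splits along the symmetric/asymmetric decomposition of Proposition \ref{Pthetasa}. In Case 1, the symmetric sub-problem \eqref{EP-o-1} has $(p,q,K)=(0,0,0)$. So the delay drops out completely and \eqref{SLEP-g-1} coincides with the SLEP equation of the uncoupled system \eqref{eq-s}. Since $\alpha>\alpha_0$, all its roots lie in the open left half-plane for every $\tau$. This follows from the remark after Theorem \ref{th.2-1}, or directly: \eqref{u1: im-b} with $q=0$ has no root $\omega>0$, and $\la^*=0$ is excluded by Proposition \ref{p.pzp}. Consequently, by Theorem \ref{p.con}(3), the symmetric part is stable uniformly in $\tau$, and all three statements reduce to the asymmetric sub-problem with $(p,q,K)=(0,-2,k_2)$. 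Stability of $(U(x,\ep),U(x,\ep))$ for \eqref{eq-d1} is then equivalent to stability of both sub-problems.

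\emph{Item 1.} For $(k_1,k_2)\in\Gamma_1$, Proposition \ref{p.pzp}(2) gives a positive real root of \eqref{SLEP-g-2} for every $\tau>0$. Theorem \ref{p.con}(2) then transfers this root to \eqref{EP-o-2} for small $\ep$, which gives instability.

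\emph{Item 2.} For $\Gamma_S$ I use a continuity-in-$\tau$ argument. As $\tau\to0^+$, all roots of \eqref{SLEP-g-2} have negative real parts, by Proposition \ref{p.pzp}(2) and Theorem \ref{th.2-1}, since $\Gamma_S\subset\Gamma_2$. Roots can cross into the right half-plane only through $0$ or $\pm i\omega$. Crossing through $0$ is excluded by Proposition \ref{p.pzp}. Crossing through $\pm i\omega$ is excluded by Lemma \ref{le.e1}(1), because $k_1<K_1(0,-2,k_2)=\mathcal K_1^{1,a}(k_2)$. Roots also cannot enter from infinity, thanks to the uniform bound $|\la|<C$, independent of $\tau$, from the proof of Theorem \ref{p.con}.

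The delicate point in Item 2 is the region $\operatorname{Re}\la^*\le -1/(2\tau)$, where $|\theta_\tau|$ is no longer bounded. I will handle it by tracking roots on the compact set $\{\operatorname{Re}\la\ge -\eta\}$ with $\eta>0$ small and $\tau$ in a compact interval $[\tau_0,T]$. There $\mathcal F_a^*$ is analytic in $(\la,\tau)$, so Rouché/argument-principle continuity shows that the number of roots with $\operatorname{Re}\la^*\ge0$ stays zero on all of $(0,\infty)$. Theorem \ref{p.con}(3) then finishes Item 2.

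\emph{Item 3.} Under \eqref{H1}, the hypothesis $k_1>\mathcal K_2^{1,a}(k_2)\ge\mathcal K_1^{1,a}(k_2)$ lets me invoke Lemma \ref{le.e1}(2) and Lemma \ref{le.trans1}(1). These give the first critical delay $\tau_H^*$, the unique pair $\pm i\omega_H^*$, nondegeneracy $\pa_\la\mathcal F_a^*\neq0$ (its imaginary part is negative), and transversality. For $k_2>k_2^*$, Lemma \ref{le.trans1}(1) gives $K_2=K_1$, which matches the second line of \eqref{H1}. By the same no-crossing argument as in Item 2, all roots have negative real parts for $\tau\in(0,\tau_H^*)$. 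Since the symmetric problem is never critical, exactly one sub-problem is critical, and Corollary \ref{cor.hopf} applies with the asymmetric mode. It gives $\tau_H(\ep)\to\tau_H^*$, stability before $\tau_H(\ep)$, instability just after, a primary Hopf bifurcation, and anti-phase breathers.

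The main obstacle is the rigorous no-crossing argument on $(0,\tau_H^*)$ (and on $(0,\infty)$ in Item 2). Near $\tau=0$, the discrete delay creates infinitely many new roots coming from $\operatorname{Re}\la=-\infty$. I must check that these stay to the left, which follows from the constraint $\tau\operatorname{Re}\la>-1/2$ built into $\C_{\mu^*}$ together with the uniform bound.
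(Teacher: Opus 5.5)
Your proposal is correct and follows the paper's own proof. The symmetric mode with $(p,q,K)=(0,0,0)$ is $\tau$-independent and stable, everything reduces to the asymmetric mode $(0,-2,k_2)$, and that mode is handled exactly as in the paper via Proposition~\ref{p.pzp}, Lemmas~\ref{le.e1} and \ref{le.trans1}, Theorem~\ref{p.con} and Corollary~\ref{cor.hopf}. Your argument-principle continuation in $\tau$ (from $\tau\to0^+$, using the uniform bound on roots with $\operatorname{Re}\la\ge-\eta$) only spells out the no-crossing step that the paper leaves implicit.

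One small caveat concerns your ``direct'' alternative for the symmetric mode. It only rules out roots on the imaginary axis, so on its own it would still need a homotopy (e.g.\ in $\alpha$ from $\alpha=\infty$) to exclude right-half-plane roots. Your primary citation of the remark after Theorem~\ref{th.2-1} already covers this.
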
 
	\begin{proof}
		We first recall the argument common to all six cases. For $(k_1,k_2)\in\Gamma_1$, Proposition \ref{p.pzp} gives a positive real eigenvalue of the asymmetric SLEP system \eqref{SLEP-g-2} for every $\tau>0$, and Theorem \ref{p.con} gives an eigenvalue of \eqref{EP-o-2} with positive real part, leading to the instability of the synchronized layered steady state with respect to system \eqref{eq-o} for any fixed $\tau>0$ and sufficiently small $\ep>0$. For $(k_1,k_2)\in\Gamma_2$, both the symmetric and the asymmetric SLEP systems are stable in the limiting sense as $\tau\to0^+$ and have no zero eigenvalues for $\tau>0$.
		
		For Case 1, the symmetric and asymmetric SLEP systems are given by \eqref{SLEP-g-1} and \eqref{SLEP-g-2}, which correspond to $(p,q,K)=(0,0,0)$ and $(0,-2,k_2)$, respectively. In particular, system \eqref{SLEP-g-1} is independent of $\tau$ and remains stable by Proposition \ref{p.pzp}. For $(k_1,k_2)\in\Gamma_S$, the existence of purely imaginary eigenvalues of the asymmetric SLEP system is excluded by Lemma \ref{le.e1}. Under \eqref{H1}, Lemmas \ref{le.e1} and \ref{le.trans1} give a unique simple conjugate pair at the first critical delay $\tau_H^*$, crossing the imaginary axis transversely from left to right. Theorem \ref{p.con} further shows the existence, simplicity, and the transversality of critical complex eigenvalues of asymmetric sub-problem \eqref{EP-o-2} for sufficiently small $\ep>0$ near $\tau_H(\ep)$ with $\tau_H(0)=\tau_H^*$. Since the symmetric sub-problem remains stable, Corollary \ref{cor.hopf} provides the stability of the synchronized layered steady state $(U(x,\ep),U(x,\ep))$, the existence of primary Hopf bifurcation through asymmetric sub-problem, and corresponding anti-phase breathers.
	\end{proof}
	
	Cases 2 and 3 share the same asymmetric sub-problem but differ in their symmetric sub-problems. The next two theorems state the sufficient conditions ensuring the existence of delay-induced instability and identifying the first critical mode.
	
	\begin{figure}[htbp]
		\centering
		\begin{subfigure}[t]{0.48\linewidth}
			\centering
			\begin{tikzpicture}[scale=0.85]
				\fill [fill=green!20]  (0,4) --  (0,1.5) parabola (2.4,4)--cycle;
				\fill [fill=magenta!15]  (0,0) --  (0,1.5) parabola (2.4,4)--(7.5,4)--(7.5,0)--cycle;
				\fill [red!30]  (0,1.5) parabola (4.8,4)--(2.4,4) parabola bend (0,1.5) (0,1.5);
				\fill [red!30]  (6,0) parabola (2.6,4)--(7.5,4)--(7.5,0)--cycle;
				\fill [orange!30] (0,1.5)-- (1.2,1.5) parabola (2,0.5)--(2,0)--(0,0)--(0,1.5)--cycle;
				\draw[->] (0,0) --(7.55,0) node[right] {$k_1$};
				\draw[->] (0,0) --(0,4.05) node[above] {$k_2$};
				\draw[dashed,thick] (2.5,4)--(2.5,0) node [below] {$\frac{\rho_0^*}{2}$};
				\draw[dashed,thick] (5,4)--(5,0)  node [below] {$\rho_0^*$};
				\draw (0,1.5) node[left] {$k_2^*$};
				\draw (0,0) node[left] {$O$};
				\draw  (0,1.5) parabola (2.4,4);
				\draw  (0,1.5) parabola (4.9,4);
				\draw  (6,0) parabola (2.6,4);
				\draw  (0,1.5)--(1.2,1.5) parabola (2,0.5)--(2,0);
				\node at (2,0)  [font=\footnotesize,below] {$\mathcal{K}_1^{2,s}$};
				\node[font=\footnotesize,right] at (1.6,1.2) {$\mathcal{K}_1^{2,a}$};
				\node[font=\footnotesize,above] at (2,4) {$\mathcal{K}_1^{2,a}$};
				\node[font=\footnotesize,below] at (6,0) {$\mathcal{K}_2^{2,a}$};
				\node[font=\footnotesize,above] at (5,4) {$2\mathcal{K}_1^{2,a}$};
			\end{tikzpicture}
			\caption{System \eqref{eq-d2} (Case 2)}
			\label{Fig.s-dep2}
		\end{subfigure}\;\;\;\;
		\begin{subfigure}[t]{0.48\linewidth}
			\centering
			\begin{tikzpicture}[scale=0.85]
				\fill [fill=green!20]  (0,4) --  (0,1.5) parabola (2.4,4)--cycle;
				\fill [fill=magenta!15]  (0,0) --  (0,1.5) parabola (2.4,4)--(7.5,4)--(7.5,0)--cycle;
				\fill [red!30]  (0,1.5) parabola (4.8,4)--(2.4,4) parabola bend (0,1.5) (0,1.5);
				\fill [red!30]  (6,0) parabola (2.6,4)--(7.5,4)--(7.5,0)--cycle;
				\fill [red!30]  (3.6,4)--(7.5,4)--(7.5,0)--(3.6,0)--cycle;
				\fill [orange!30] (0,1.5)-- (1.2,1.5) parabola (1.7,0.8)--(1.7,0)--(0,0)--(0,1.5)--cycle;
				\draw[->] (0,0) --(7.55,0) node[right] {$k_1$};
				\draw[->] (0,0) --(0,4.05) node[above] {$k_2$};
				\draw[dashed,thick] (2.5,4)--(2.5,0) node [below] {$\frac{\rho_0^*}{2}$};
				\draw[dashed,thick] (5,4)--(5,0) node [below] {$\rho_0^*$};
				\draw (0,1.5) node[left] {$k_2^*$};
				\draw (0,0) node[left] {$O$};
				\draw (0,1.5) parabola (2.4,4);
				\draw (0,1.5) parabola (4.9,4);
				\draw (6,0) parabola (2.6,4);
				\draw (0,1.5)--(1.2,1.5) parabola (1.7,0.8)--(1.7,0);
				\draw (3.6,4) -- (3.6,0) node [font=\footnotesize,below] {$\mathcal{K}_2^{3,s}$};
				\node at (1.7,0)  [font=\footnotesize,below] {$\mathcal{K}_1^{3,s}$};
				\node[font=\footnotesize,right] at (1.4,1.3) {$\mathcal{K}_1^{3,a}$};
				\node[font=\footnotesize,above] at (2,4) {$\mathcal{K}_1^{3,a}$};
				\node[font=\footnotesize,below] at (6,0) {$\mathcal{K}_2^{3,a}$};
				\node[font=\footnotesize,above] at (5,4) {$2\mathcal{K}_1^{3,a}$};
			\end{tikzpicture}
			\caption{System \eqref{eq-d3} (Case 3)}
			\label{Fig.s-dep3}
		\end{subfigure}
		\caption{Stability diagrams of the synchronized layered steady state $(U(x,\ep),U(x,\ep))$ in the $(k_1,k_2)$ plane. (a) Case 2, system \eqref{eq-d2}; (b) Case 3, system \eqref{eq-d3}. Green: delay-independent Turing instability region $\Gamma_1$; the curve $\Gamma$ separates $\Gamma_1$ from $\Gamma_2$ (Theorem \ref{th.2-1}). Orange: sufficient region $\Gamma_S$ for delay-independent stability. Red: sufficient delay-induced instability regions in $\Gamma_2$ defined by \eqref{H2} and \eqref{H3}, respectively, where at least one pair of eigenvalues crosses the imaginary axis transversally from left to right as the delay increases, in either the symmetric or asymmetric sub-problem. Sufficient conditions for primary Hopf bifurcation and the associated critical mode are given in \eqref{pri2} and \eqref{pri3}, respectively.}
		\label{Fig.s-dep23}
	\end{figure}
	
	\begin{theorem}[Case 2]\label{th-2} 
		Suppose that $\alpha\in(\alpha_0,+\infty)$. Let $\Gamma_1,\Gamma_2$ be defined by Theorem \ref{th.2-1},  $K_1(p,q,K)$ and $K_2(p,q,K)$ be given by Lemmas \ref{le.e1} and \ref{le.trans1}, respectively. Let $\tau_1^*$, $\tau_2^*$, and $\mathcal{K}_3=\mathcal{K}_3(k_2)$ be stated by Corollary \ref{cor.K3}. Define $\mathcal{K}_i^{2,s}:=K_i(-1,1,0)$ and $\mathcal{K}_i^{2,a}(k_2):=K_i(-1,-1,k_2)$, $i=1,2$. Then, we have the following results $($see Figure \ref{Fig.s-dep2}$)$: 
		\begin{enumerate} 
			\item $($Turing instability$)$ If $(k_1,k_2)\in\Gamma_1$, then for any fixed  $\tau\in(0,\infty)$ and sufficiently small $\ep>0$, the synchronized layered steady state $(U(x,\ep),U(x,\ep))$ is unstable with respect to system \eqref{eq-d2}. 
			\item $($Delay-independent stability$)$ If 
			\begin{equation*}
				(k_1,k_2)\in\Gamma_S:=\{(k_1,k_2)\in\Gamma_2: k_2\in(0,k_2^*], k_1\in(0,\min\{\mathcal{K}_1^{2,a}(k_2),\mathcal{K}_1^{2,s}\})\},
			\end{equation*}
			then for any fixed $\tau\in(0,\infty)$ and sufficiently small $\ep>0$, $(U(x,\ep),U(x,\ep))$ is stable with respect to system \eqref{eq-d2}. 
			\item $($Delay-induced Hopf bifurcation$)$ If $(k_1,k_2)\in\Gamma_2$ and satisfies 
			\begin{equation}\label{H2} 
				\begin{split} 
					&k_2\in(0,k_2^*], k_1\in(\min\{\mathcal{K}_2^{2,s},\mathcal{K}_2^{2,a}(k_2)\},\infty),\;\text{or}\\ 
					&k_2\in(k_2^*,\infty), k_1\in(\mathcal{K}_1^{2,a}(k_2),2\mathcal{K}_1^{2,a}(k_2))\cup(\min\{\mathcal{K}_2^{2,s},\mathcal{K}_2^{2,a}(k_2)\},\infty), 
				\end{split} 
			\end{equation} 
			then there exists a $\tau_H(\ep)>0$ for sufficiently small $\ep>0$, such that eigenvalue problem \eqref{D-L-2-a} admits a pair of critical eigenvalues which crosses the imaginary axis from left to right transversally at $\tau=\tau_H(\ep)$. Under the following additional conditions, $\tau_H(\ep)$ is the first critical delay, with
			\begin{equation}\label{pri2} 
				\tau_H(0)= 
				\begin{cases} 
					\begin{split} 
						&\tau_1^*,&&\text{if $k_2\in(0,k_2^*]$ and $k_1\in(\mathcal{K}_2^{2,s},\mathcal{K}_1^{2,a}(k_2))$ $($provided $\mathcal{K}_2^{2,s}<\mathcal{K}_1^{2,a}(k_2)$$)$},\\ 
						&\tau_2^*,&&\text{if $k_2\in(0,\infty)$ and $k_1\in(\mathcal{K}_3(k_2),\infty)$, or}\\
						&\;&&\;\;\;\;\text{$k_2\in(k_2^*,\infty)$ and $k_1\in(\mathcal{K}_1^{2,a}(k_2),2\mathcal{K}_1^{2,a}(k_2))\cap(0,\mathcal{K}_1^{2,s})$}. 
					\end{split} 
				\end{cases} 
			\end{equation} 
			Under one of the conditions in \eqref{pri2}, system \eqref{eq-d2} undergoes a primary Hopf bifurcation at $\tau_H(\ep)$. Namely, the synchronized layered steady state $(U(x,\ep),U(x,\ep))$ is stable for $\tau\in(0,\tau_H(\ep))$ and unstable when $\tau\in(\tau_H(\ep),\tau_H(\ep)+\kappa)$ for some $\kappa>0$. Moreover, if $\tau_H(0)=\tau_1^*$ $($resp., $\tau_2^*$$)$, then system \eqref{eq-d2} admits a family of in-phase $($resp., anti-phase$)$ breathers bifurcating at $\tau=\tau_H(\ep)$.   
		\end{enumerate} 
	\end{theorem}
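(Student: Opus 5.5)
The plan mirrors the proof of Theorem \ref{th-1}, with both sub-problems now depending on $\tau$. By Proposition \ref{Pthetasa}, Case 2 has symmetric SLEP system \eqref{SLEP-g-1} with $(p,q,K)=(-1,1,0)$ and asymmetric system \eqref{SLEP-g-2} with $(p,q,K)=(-1,-1,k_2)$, both with $\theta_\tau^*=e^{-\la^*\tau}$. The spectrum of \eqref{D-L-2-a} is the union of the spectra of \eqref{EP-o-1} and \eqref{EP-o-2}.

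\textbf{Part 1.} This is the common argument. Proposition \ref{p.pzp} gives a positive real eigenvalue of \eqref{SLEP-g-2} for every $\tau>0$. Theorem \ref{p.con}(2) transfers it to \eqref{EP-o-2} for small $\ep$, so $(U,U)$ is unstable.

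\textbf{Part 2.} For $(k_1,k_2)\in\Gamma_2$, Proposition \ref{p.pzp} shows both SLEP systems are stable as $\tau\to0^+$ and have no zero eigenvalue for $\tau>0$. If $k_1<\min\{\mathcal K_1^{2,a}(k_2),\mathcal K_1^{2,s}\}$, Lemma \ref{le.e1}(1) excludes purely imaginary roots of either system for all $\tau>0$. Eigenvalues with $\operatorname{Re}\la^*>\max\{-\mu^*,-1/(2\tau)\}$ are bounded uniformly, and $\mathcal F^*$ is analytic in $(\la^*,\tau)$. Hence no root can reach the closed right half-plane as $\tau$ increases from $0^+$: it would have to cross either at $0$ or at $\pm i\omega$, and both are excluded. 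Theorem \ref{p.con}(3) then gives stability for each fixed $\tau$ and small $\ep$.

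\textbf{Part 3.} There are two steps.
\begin{enumerate}
\item \emph{Existence of a transversal crossing.} Under \eqref{H2}, at least one of the sub-problems satisfies the hypotheses of Lemma \ref{le.trans1}. If $k_1>\mathcal K_2^{2,s}$, Lemma \ref{le.trans1}(4) applies to the symmetric system. If $k_1>\mathcal K_2^{2,a}$, or $k_2>k_2^*$ and $k_1\in(\mathcal K_1^{2,a},2\mathcal K_1^{2,a})$, Lemma \ref{le.trans1}(2) applies to the asymmetric system. In each case this produces a simple conjugate pair crossing transversally at $\tau_H^*$. Theorem \ref{p.con}(1) lifts it to \eqref{D-L-2-a} at $\tau_H(\ep)$.
\item \emph{Identifying the first critical delay.} For $\tau_H(\ep)$ to be the first critical delay, the pair found above must be the first crossing overall, which requires comparing the two sub-problems.
\begin{itemize}
\item Suppose $k_2\le k_2^*$ and $\mathcal K_2^{2,s}<k_1<\mathcal K_1^{2,a}$. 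The asymmetric system has no imaginary roots by Lemma \ref{le.e1}, so it stays stable for all $\tau$ as in Part 2. The first crossing is therefore symmetric, at $\tau_1^*$.
\item Suppose $k_2>k_2^*$ and $k_1\in(\mathcal K_1^{2,a},2\mathcal K_1^{2,a})\cap(0,\mathcal K_1^{2,s})$. Now the symmetric system is delay-independently stable, so the first crossing is asymmetric, at $\tau_2^*$.
\item Suppose $k_1>\mathcal K_3(k_2)$. Corollary \ref{cor.K3} gives $\tau_2^*<\tau_1^*$, so the asymmetric pair crosses first.
\end{itemize}
In all three situations exactly one sub-problem is critical at the first delay, and Lemma \ref{le.e1} makes the pair there unique. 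Corollary \ref{cor.hopf} then yields the primary Hopf bifurcation, stability on $(0,\tau_H(\ep))$, and in-phase, respectively anti-phase, breathers.
\end{enumerate}

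The main obstacle is the comparison of first crossings in step 2 when both sub-problems are active. This is exactly why the statement restricts to regimes where one sub-problem is delay-independently stable, or where the asymptotics of Corollary \ref{cor.K3} apply. A second concern is that $\mathcal K_2^{2,s}>\mathcal K_1^{2,s}$ gives no control of the symmetric system for $k_1\in(\mathcal K_1^{2,s},\mathcal K_2^{2,s})$. That is why the second symmetric case uses $k_1<\mathcal K_1^{2,s}$ rather than $\mathcal K_2^{2,s}$.
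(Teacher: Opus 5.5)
Your proposal is correct and follows essentially the same route as the paper's proof. It uses the identification $(p,q,K)=(-1,1,0)$ and $(-1,-1,k_2)$, Proposition~\ref{p.pzp} with Theorem~\ref{p.con} for Parts 1--2, Lemmas~\ref{le.e1} and \ref{le.trans1} with Theorem~\ref{p.con} for the crossings under \eqref{H2}, and the same three-way choice of the first critical mode (one sub-problem delay-independently stable, or $\tau_2^*<\tau_1^*$ from Corollary~\ref{cor.K3}) feeding into Corollary~\ref{cor.hopf}. You in fact spell out the continuity and uniform-boundedness argument for delay-independent stability, which the paper only invokes by reference to Theorem~\ref{th-1}.
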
 
	
	\begin{proof}
		For Case 2, the symmetric and asymmetric SLEP sub-problems correspond to $(p,q,K)=(-1,1,0)$ and $(-1,-1,k_2)$, respectively. The instability or stability in $\Gamma_1$ or $\Gamma_S$ can be obtained similarly to Theorem \ref{th-1}. Following Lemmas \ref{le.e1} and \ref{le.trans1}, together with Theorem \ref{p.con}, we can directly obtain the sufficient condition \eqref{H2} ensuring the existence of delay-induced instability through at least one of symmetric or asymmetric sub-problems for sufficiently small $\ep>0$. 
		
		Further, we shall identify the first critical mode. If $k_2\in(0,\infty)$ and $k_1\in(\mathcal{K}_3(k_2),\infty)$, then by Corollary \ref{cor.K3}, we have $\tau_2^*<\tau_1^*$. Moreover, if $k_2\in(k_2^*,\infty)$ and $k_1\in(\mathcal{K}_1^{2,a}(k_2),2\mathcal{K}_1^{2,a}(k_2))\cap(0,\mathcal{K}_1^{2,s})$, then SLEP system \eqref{eq-g1} has purely imaginary eigenvalues only in asymmetric mode. Thus, the primary Hopf bifurcation is triggered by the asymmetric sub-problem \eqref{EP-o-2} at some $\tau_H(\ep)>0$ for sufficiently small $\ep>0$ with $\tau_H(0)=\tau_2^*$, which generates a family of anti-phase breathers from Corollary \ref{cor.hopf}. Similarly, if $k_2\in(0,k_2^*]$, $\mathcal{K}_2^{2,s}<\mathcal{K}_1^{2,a}(k_2)$ (see Remark \ref{re.k2k1}), and $k_1\in(\mathcal{K}_2^{2,s},\mathcal{K}_1^{2,a}(k_2))$, then the Hopf bifurcation is only induced by the symmetric sub-problem \eqref{EP-o-1} at some $\tau_H(\ep)>0$ for sufficiently small $\ep>0$ with $\tau_H(0)=\tau_1^*$, which triggers a family of in-phase breathers from Corollary \ref{cor.hopf}.
	\end{proof}
	
	\begin{theorem}[Case 3]\label{th-3} 
		Suppose that $\alpha\in(\alpha_0,+\infty)$. Let $\Gamma_1,\Gamma_2$ be defined by Theorem \ref{th.2-1},  $K_1(p,q,K)$ and $K_2(p,q,K)$ be given by Lemmas \ref{le.e1} and \ref{le.trans1}, respectively. Let $\tau_2^*$, $\tau_3^*$, and $\mathcal{K}_3=\mathcal{K}_3(k_2)$ be stated by Corollary \ref{cor.K3}. Define $\mathcal{K}_i^{3,s}:=K_i(1,-1,0)$ and $\mathcal{K}_i^{3,a}(k_2):=K_i(-1,-1,k_2)$, $i=1,2$. Then, we have the following results $($see Figure \ref{Fig.s-dep3}$)$: 
		\begin{enumerate} 
			\item $($Turing instability$)$ If $(k_1,k_2)\in\Gamma_1$, then for any fixed  $\tau\in(0,\infty)$ and sufficiently small $\ep>0$, the synchronized layered steady state $(U(x,\ep),U(x,\ep))$ is unstable with respect to system \eqref{eq-d3}. 
			\item $($Delay-independent stability$)$ If 
			\begin{equation*}
				(k_1,k_2)\in\Gamma_S:=\{(k_1,k_2)\in\Gamma_2: k_2\in(0,k_2^*], k_1\in(0,\min\{\mathcal{K}_1^{3,a}(k_2),\mathcal{K}_1^{3,s}\})\},
			\end{equation*}
			then for any fixed  $\tau\in(0,\infty)$ and sufficiently small $\ep>0$, $(U(x,\ep),U(x,\ep))$ is stable with respect to system \eqref{eq-d3}. 
			\item $($Delay-induced Hopf bifurcation$)$ If $(k_1,k_2)\in\Gamma_2$ and satisfies 
			\begin{equation}\label{H3} 
				\begin{split} 
					&k_2\in(0,k_2^*], k_1\in(\min\{\mathcal{K}_2^{3,a}(k_2),\mathcal{K}_2^{3,s}\},+\infty),\;\text{or}\\ 
					&k_2\in(k_2^*,\infty), k_1\in(\mathcal{K}_1^{3,a}(k_2),2\mathcal{K}_1^{3,a}(k_2))\cup(\min\{\mathcal{K}_2^{3,a}(k_2),\mathcal{K}_2^{3,s}\},\infty), 
				\end{split}			 
			\end{equation} 
			then there exists a $\tau_H(\ep)>0$ for sufficiently small $\ep>0$, such that eigenvalue problem \eqref{D-L-2-a} admits a pair of critical eigenvalues which crosses the imaginary axis from left to right transversally at $\tau=\tau_H(\ep)$. Under the following additional conditions, $\tau_H(\ep)$ is the first critical delay, with 
			\begin{equation}\label{pri3} 
				\tau_H(0)= 
				\begin{cases} 
					\begin{split} 
						&\tau_3^*,&&\text{if $k_2\in(0,\infty)$ and $k_1\in( \mathcal{K}_3(k_2),\infty)$, or}\\
						&\;&&\;\;\text{ $k_2\in(0,k_2^*]$ and $k_1\in(\mathcal{K}_2^{3,s}, \mathcal{K}_1^{3,a}(k_2))$ $($provided $\mathcal{K}_2^{3,s}< \mathcal{K}_1^{3,a}(k_2)$$)$},\\ 
						&\tau_2^*,&&\text{if $k_2\in(k_2^*,\infty)$ and $k_1\in(\mathcal{K}_1^{3,a}(k_2),2\mathcal{K}_1^{3,a}(k_2))\cap(0,\mathcal{K}_1^{3,s})$}, 
					\end{split} 
				\end{cases} 
			\end{equation} 
			Under one of the conditions in \eqref{pri3}, system \eqref{eq-d3} undergoes a primary Hopf bifurcation at $\tau_H(\ep)$. Namely, the synchronized layered steady state $(U(x,\ep),U(x,\ep))$ is stable for $\tau\in(0,\tau_H(\ep))$ and unstable when $\tau\in(\tau_H(\ep),\tau_H(\ep)+\kappa)$ for some $\kappa>0$. Moreover, if $\tau_H(0)=\tau_3^*$ $($resp., $\tau_2^*$$)$, then system \eqref{eq-d3} admits a family of in-phase $($resp., anti-phase$)$ bifurcating breathers near $\tau=\tau_H(\ep)$. 
		\end{enumerate} 
	\end{theorem}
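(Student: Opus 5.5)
The proof will follow the template of Theorems \ref{th-1} and \ref{th-2}, with the symmetric and asymmetric sub-problems of Case 3 identified through Proposition \ref{Pthetasa}. In Case 3 the symmetric SLEP system \eqref{SLEP-g-1} corresponds to $(p,q,K)=(1,-1,0)$ and the asymmetric system \eqref{SLEP-g-2} to $(-1,-1,k_2)$. The latter coincides with the asymmetric problem of Case 2, which is why $\mathcal{K}_i^{3,a}=\mathcal{K}_i^{2,a}$.

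\textbf{Step 1: Turing instability and delay-independent stability.} For $(k_1,k_2)\in\Gamma_1$, Proposition \ref{p.pzp} provides a positive real root of \eqref{SLEP-g-2} for every $\tau>0$, and Theorem \ref{p.con}(2) transfers it to \eqref{EP-o-2} for small $\ep>0$.

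For $(k_1,k_2)\in\Gamma_S$ the argument is a continuation in $\tau$:
\begin{itemize}
\item by Proposition \ref{p.pzp}, all roots of both SLEP systems lie in the open left half-plane as $\tau\to0^+$, and zero is never a root;
\item by Lemma \ref{le.e1}(1), applied with $k_1<\mathcal{K}_1^{3,s}$ and with $k_1<\mathcal{K}_1^{3,a}(k_2)$, neither system has purely imaginary roots for any $\tau>0$;
\item by the uniform bound $|\la|<C$ from the proof of Theorem \ref{p.con}, roots cannot enter the right half-plane from infinity.
\end{itemize}
Hence all roots stay in the left half-plane for every fixed $\tau$. Theorem \ref{p.con}(3), applied to both sub-problems, then gives stability of \eqref{D-L-2-a}.

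\textbf{Step 2: transversal crossing under \eqref{H3}.} Under \eqref{H3}, $k_1$ exceeds $\mathcal{K}_2^{3,s}$ or $\mathcal{K}_2^{3,a}(k_2)$. For $k_2>k_2^*$ one may instead have $k_1\in(\mathcal{K}_1^{3,a},2\mathcal{K}_1^{3,a})$, which is covered by Lemma \ref{le.trans1}(2). In each case Lemmas \ref{le.e1} and \ref{le.trans1} give a simple pair $\pm i\omega_H^*$ at a critical $\tau$ with $\frac{d}{d\tau}\la_R^*>0$ and $\pa_\la\mathcal{F}^*\neq0$; the latter is guaranteed since $\operatorname{Im}\pa_\la\mathcal{F}^*<0$. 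Theorem \ref{p.con}(1) then yields $\tau_H(\ep)$ and a transversal crossing for \eqref{D-L-2-a}.

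\textbf{Step 3: identifying the first critical delay.} This is the main obstacle, since both sub-problems may become critical and one must compare their first critical delays.
\begin{itemize}
\item If $k_1>\mathcal{K}_3(k_2)$, Corollary \ref{cor.K3} gives $\tau_3^*<\tau_2^*$. Since $\tau_3^*\sim C_1/k_1$ while $\tau_2^*\sim C_3/\sqrt{k_1}$, the symmetric mode crosses first. Each $\tau_i^*$ is already the smallest critical delay of its own sub-problem by Lemma \ref{le.e1}, so no eigenvalue of either sub-problem is critical for $\tau<\tau_3^*$.
\item If $k_2\le k_2^*$ and $k_1\in(\mathcal{K}_2^{3,s},\mathcal{K}_1^{3,a}(k_2))$, the asymmetric system has no imaginary roots for any $\tau$ by Lemma \ref{le.e1}(1), so the symmetric sub-problem alone produces the crossing at $\tau_3^*$.
\item If $k_2>k_2^*$ and $k_1\in(\mathcal{K}_1^{3,a},2\mathcal{K}_1^{3,a})\cap(0,\mathcal{K}_1^{3,s})$, the symmetric system has no imaginary roots, so the first crossing is asymmetric at $\tau_2^*$.
\end{itemize}
In all three cases exactly one sub-problem is critical at the first critical delay, and its roots lie in the left half-plane for $\tau\in(0,\tau_H^*)$ by the continuation argument of Step 1.

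\textbf{Step 4: conclusion.} Corollary \ref{cor.hopf} now gives stability on $(0,\tau_H(\ep))$, instability just beyond, and a primary Hopf bifurcation. The bifurcating breathers are in-phase when $\tau_H(0)=\tau_3^*$ (symmetric mode) and anti-phase when $\tau_H(0)=\tau_2^*$ (asymmetric mode).
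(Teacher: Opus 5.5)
Your proposal is correct and follows essentially the same route as the paper. You identify the Case~3 sub-problems as $(p,q,K)=(1,-1,0)$ and $(-1,-1,k_2)$, get Turing instability and delay-independent stability from Proposition~\ref{p.pzp}, Lemma~\ref{le.e1} and Theorem~\ref{p.con}, the transversal crossings from Lemmas~\ref{le.e1} and~\ref{le.trans1}, the first critical mode from Corollary~\ref{cor.K3} and the nonexistence thresholds, and conclude with Corollary~\ref{cor.hopf}. Your explicit continuation-in-$\tau$ argument (no zero or purely imaginary roots, plus the uniform bound on $|\lambda|$ in the right half-plane) only spells out what the paper's brief ``similar to Case 2'' proof leaves implicit.
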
 
	\begin{proof}
		For Case 3, the asymmetric SLEP sub-problem is identical to that in Case 2, whereas the symmetric sub-problem now corresponds to $(p,q,K)=(1,-1,0)$. Similar to Case 2, one can obtain the sufficient conditions \eqref{H3}, such that system \eqref{eq-d3} undergoes delay-induced instability through at least one of the symmetric and asymmetric sub-problems at some $\tau_H(\ep)>0$. Further, the first critical delay and corresponding modes can be determined under \eqref{pri3}, which is derived by combining Lemma \ref{le.trans1} and Corollary \ref{cor.K3}.
	\end{proof}  
	
	\begin{remark}\label{re.k2k1}
		Whenever the interval $(\mathcal{K}_2^{j,s},\mathcal{K}_1^{j,a}(k_2))$ ($j=2,3$) is nonempty for some $k_2\in(0,k_2^*]$, conditions \eqref{pri2} and \eqref{pri3} guarantee that the first loss of stability occurs through the symmetric mode in the corresponding cases. Numerical examples are presented in Figures \ref{Fig.dia_2} and \ref{Fig.dia_3}.
	\end{remark}
	
	\subsection{Analysis of the SLEP problem with distributed delays (Cases 4--6)}    
	In this subsection, we shall establish the general framework to analyze the spectral distribution of the SLEP system \eqref{SLEP-g} for Cases 4--6.
	
	For $p,q\in\R$, $k_1>0,\tau>0, \omega>0$, we define
	\begin{equation*}
		P(\omega^2):= \rho_0^* + k_1 p + \dfrac{k_1 q}{1+\tau^2\omega^2},\;Q(\omega^2):= \alpha +\dfrac{k_1 q\tau}{1+\tau^2\omega^2}.
	\end{equation*}
	
	\begin{proposition}Suppose that $\la_R^*=0$ and $\la_I^*=\omega>0$. Then, the real SLEP problem \eqref{SLEP-g} can be reduced to the following two algebraic equations in $\omega^2$:
		\begin{subequations}\label{eq-g2}
			\begin{numcases}{}
				X(0,\omega^2,K) = P(\omega^2),\label{u2: im-a}\\
				Y(0,\omega^2,K) =
				Q(\omega^2),\label{u2: im-b}	
			\end{numcases}
		\end{subequations}
		where  $X(0,\omega^2,K)$ and $Y(0,\omega^2,K)$ are defined in terms of \eqref{AB}, $(p,q,K)$ is given by \eqref{eq:pqk-sym} and \eqref{eq:pqk-asym}. Moreover, $P(\omega^2)$ and $Q(\omega^2)$ are strictly increasing $($resp., decreasing$)$ with respect to $\omega>0$ if $q<0$ $($resp., $q>0$$)$.
	\end{proposition}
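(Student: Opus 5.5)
We substitute $\la_R^*=0$, $\la_I^*=\omega>0$ into the real SLEP system \eqref{SLEP-g}, using the distributed-delay factor from \eqref{Thetau*}: $\theta_\tau^*=\frac{1}{1+i\omega\tau}$. Rationalizing gives
\begin{equation*}
\theta_R^*=\frac{1}{1+\tau^2\omega^2},\qquad \theta_I^*=-\frac{\omega\tau}{1+\tau^2\omega^2}.
\end{equation*}

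The first equation of \eqref{SLEP-g} becomes
\begin{equation*}
\rho_0^*+k_1p+\frac{k_1q}{1+\tau^2\omega^2}=X(0,\omega^2,K),
\end{equation*}
which is exactly \eqref{u2: im-a} by the definition of $P(\omega^2)$. The second equation of \eqref{SLEP-g} reads
\begin{equation*}
\alpha\omega+\frac{k_1q\,\omega\tau}{1+\tau^2\omega^2}=\omega\,Y(0,\omega^2,K).
\end{equation*}
Since $\omega>0$, we may divide by $\omega$ and obtain $Y(0,\omega^2,K)=Q(\omega^2)$, which is \eqref{u2: im-b}. Conversely, if $\omega>0$ solves \eqref{eq-g2}, reversing these steps shows that $\pm i\omega$ solve \eqref{SLEP-g}, because the real and imaginary parts of \eqref{F*} are precisely \eqref{SLEP-g}. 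This requires that the point lie in $\C_{\mu^*}$, which holds trivially for $\operatorname{Re}\la=0$.

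For the monotonicity claim, note that $\omega\mapsto\frac{1}{1+\tau^2\omega^2}$ is strictly decreasing on $(0,\infty)$ for fixed $\tau>0$. In $P$ and $Q$ this factor is multiplied by $k_1q$ and by $k_1q\tau$, respectively, and all other terms are constant. Since $k_1>0$ and $\tau>0$, both $P$ and $Q$ are strictly increasing when $q<0$ and strictly decreasing when $q>0$.

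There is no real obstacle here. The only points needing care are the sign of $\theta_I^*$ and the division by $\omega$, which is legitimate because $\omega>0$.
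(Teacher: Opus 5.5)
Your proposal is correct and takes the same route as the paper, whose proof is just the remark that the result follows from Proposition~\ref{Pthetasa}. You substitute $\theta_\tau^*=1/(1+i\omega\tau)$ into \eqref{SLEP-g}, divide the imaginary-part equation by $\omega>0$, and read off the monotonicity from the factor $1/(1+\tau^2\omega^2)$ multiplied by $k_1q$ and $k_1q\tau$; you simply write out the details (including the sign of $\theta_I^*$ and the converse direction) that the paper leaves implicit.
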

	\begin{proof}
		It can be easily obtained from Proposition \ref{Pthetasa}.
	\end{proof}
	
	\begin{lemma}\label{le.no1}
		Suppose that $\alpha\in(\alpha_0,\infty)$. Then, \eqref{eq-g2} admits no positive root $\omega^2$ if one of the following conditions holds:
		\begin{enumerate}
			\item $q\ge 0$;
			\item $q<0$ and $\tau\in(0,\tau_0)$, where $\tau_0:=\frac{\alpha-Y(0,0,K)}{-qk_1}>0$;
			\item $q<0$, $\rho_0^*+k_1(p+q) > X(0,0,K)$;
			\item $q<0$, $\rho_0^*+k_1 p < 0$.
		\end{enumerate}		 		
	\end{lemma}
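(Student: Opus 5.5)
The proof will compare the monotonicity of the two sides of \eqref{u2: im-a} and \eqref{u2: im-b} in $\omega>0$, using Proposition \ref{prop.AB} for $X(0,\omega^2,K)$ and $Y(0,\omega^2,K)$ and the monotonicity of $P,Q$ stated in the preceding proposition. The facts we need are: $X(0,\cdot,K)$ and $Y(0,\cdot,K)$ are strictly decreasing in $\omega^2$ and tend to $0$ as $\omega\to\infty$. Also $Y(0,\omega^2,K)\le Y(0,0,K)\le Y(0,0,0)=\alpha_0<\alpha$, since $\partial Y/\partial K<0$.

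\emph{Condition 1 ($q\ge0$).} Here $Q(\omega^2)=\alpha+\frac{k_1q\tau}{1+\tau^2\omega^2}\ge\alpha$. On the other hand $Y(0,\omega^2,K)<\alpha$, so \eqref{u2: im-b} fails for every $\omega>0$. This is essentially the same observation as in the delay-free case.

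\emph{Condition 2 ($q<0$, $\tau<\tau_0$).} Now $Q$ is increasing, so $Q(\omega^2)>Q(0)=\alpha+k_1q\tau$. The assumption $\tau<\tau_0$ is exactly $\alpha+k_1q\tau>Y(0,0,K)$. Since $Y(0,\omega^2,K)<Y(0,0,K)$, we get $Y<Q$ for all $\omega>0$, and again \eqref{u2: im-b} has no root. Note that $\tau_0>0$ because $\alpha>\alpha_0\ge Y(0,0,K)$.

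\emph{Condition 3 ($q<0$, $\rho_0^*+k_1(p+q)>X(0,0,K)$).} $P$ is increasing with $P(0)=\rho_0^*+k_1(p+q)$. Hence for $\omega>0$,
\[
P(\omega^2)>P(0)>X(0,0,K)>X(0,\omega^2,K),
\]
so \eqref{u2: im-a} fails.

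\emph{Condition 4 ($q<0$, $\rho_0^*+k_1p<0$).} Since $q<0$, $P(\omega^2)<\rho_0^*+k_1p<0$. But $X(0,\omega^2,K)>0$ for $K\ge0$ and $\lambda_R^*=0$, because every term of the series in \eqref{AB} is positive ($c_1^*c_2^*>0$ and $\gamma_n+2K>0$ by Lemma \ref{le.AA1}). So \eqref{u2: im-a} fails.

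The only point needing care is the positivity and strict monotonicity of $X$ and $Y$ in $\omega^2$ for general $K\ge0$. This follows directly from the termwise form of \eqref{AB}, using $c_1^*c_2^*>0$ and the positivity of $\gamma_n+2K$ in Lemma \ref{le.AA1}. I expect no real obstacle beyond citing these facts.
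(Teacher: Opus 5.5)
Your proposal is correct and follows the paper's proof essentially step for step: in each of the four cases you show that one equation of \eqref{eq-g2} fails for every $\omega>0$, by comparing the monotone functions $P,Q$ with the decreasing functions $X(0,\cdot,K)$, $Y(0,\cdot,K)$ and using $Y(0,0,K)\le Y(0,0,0)=\alpha_0<\alpha$. The only cosmetic difference is in condition 4, where you read off $X(0,\omega^2,K)>0$ directly from the series while the paper uses that $X$ decreases to $X(0,\infty,K)=0$; note that the terms are only nonnegative, since $\psi_n(x^*)$ can vanish, but at least one is positive because $X(0,0,0)>\rho_0^*>0$.
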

	\begin{proof}
		1. If $q\ge 0$, then the right-hand side of \eqref{u2: im-b} satisfies $Q(\omega^2) \ge \alpha > \alpha_0 > Y(0,\omega^2,K)$ for any $\omega\neq0$ and $K=0$ or $k_2$. Then, in this case,  \eqref{u2: im-b} has no positive root $\omega^2$.
		
		2. If $q<0$, then $Q(\omega^2)$ is increasing in $\omega\in(0,\infty)$. Since $Y(0,\omega^2,K)$ is decreasing in $\omega\in(0,\infty)$, thus, \eqref{u2: im-b} has no positive root $\omega^2$ when $Y(0,0,K) < Q(0)=\alpha + k_1 q \tau$, or equivalently, $\tau < \tau_0=\frac{\alpha-Y(0,0,K)}{-qk_1}$.
		
		3. If $q<0$, then $P(\omega^2)$ is increasing in $\omega$. Therefore, if $X(0,0,K) < P(0)=\rho_0^*+k_1(p+q)$, then \eqref{u2: im-a} has no positive root $\omega^2$.
		
		4. Similarly, if $q<0$, then \eqref{u2: im-a} has no positive root $\omega^2$ if $X(0,\infty,K) = 0 > P(\infty)=\rho_0^*+k_1 p$.
	\end{proof}
	
	\begin{lemma}\label{le.om1}
		Suppose that $\alpha\in(\alpha_0,\infty)$ and that $q<0$, $\rho_0^*+k_1 p>0$, $\rho_0^*+k_1(p+q) < X(0,0,K)$. Then \eqref{u2: im-a} has a unique solution $\omega=\omega_1(\tau)$ which is decreasing for $\tau\in(0,\infty)$. Moreover,
		\begin{enumerate}
			\item If $\rho_0^*+k_1 p - X(0,0,K) + \frac{qk_1}{2} \ge 0$, then $\tau\omega_1(\tau)<1$ for all $\tau\in(0,\infty)$.
			\item If $\rho_0^*+k_1 p + \frac{qk_1}{2} < 0$, then $\tau\omega_1(\tau)>1$ for all $\tau\in(0,\infty)$.
			\item If $\rho_0^*+k_1 p - X(0,0,K) + \frac{qk_1}{2} < 0$, and $\rho_0^*+k_1 p + \frac{qk_1}{2} > 0$, then there exists a unique $\tau_1=\tau_1(k_1,K)$, such that 
			\begin{equation}\label{om1}
				\tau\omega_1(\tau)
				\begin{cases}
					<1, & \tau\in(0,\tau_1),\\
					=1, & \tau=\tau_1,\\
					>1, & \tau\in(\tau_1,\infty),
				\end{cases}
			\end{equation}
			where $\tau_1$ is decreasing $($resp., increasing$)$ in $k_1$ if $p+ \frac{q}{2}<0$ $($resp., $p+ \frac{q}{2}>0$$)$. In particular, when $K=k_2$ and $0<\rho_0^*+k_1 p + \frac{qk_1}{2}< X(0,0,0)$, there is a unique solution $k_2=\wh\zeta(k_1)>0$ of $\rho_0^*+k_1 p - X(0,0,k_2) + \frac{qk_1}{2}=0$, such that $\tau_1(k_1,k_2)\to\infty$ as $k_2\to\wh\zeta(k_1)^-$.
		\end{enumerate}
	\end{lemma}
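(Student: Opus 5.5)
The plan is to rewrite \eqref{u2: im-a} as the zero set of a function that is strictly monotone in both $\omega$ and $\tau$. Every assertion then follows from Proposition \ref{prop.AB} together with a comparison at the single point $\tau^2\omega^2=1$.

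\textbf{Existence, uniqueness and monotonicity of $\omega_1(\tau)$.} Set
\[
h(\omega,\tau):=X(0,\omega^2,K)-\rho_0^*-k_1p-\frac{k_1q}{1+\tau^2\omega^2}.
\]
By Proposition \ref{prop.AB}, $\partial X/\partial(\omega^2)(0,\omega^2,K)<0$ and $X(0,\omega^2,K)\to0$ as $\omega\to\infty$. Since $q<0$, the term $-k_1q/(1+\tau^2\omega^2)$ is positive and strictly decreasing in $\omega$, so $h$ is strictly decreasing in $\omega>0$. At the endpoints,
\[
h(0^+,\tau)=X(0,0,K)-\rho_0^*-k_1(p+q)>0,\qquad h(\infty,\tau)=-(\rho_0^*+k_1p)<0,
\]
by the two hypotheses. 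Hence $h(\cdot,\tau)$ has a unique zero $\omega_1(\tau)$. For fixed $\omega>0$ we also have $\partial_\tau h=-2k_1q\tau\omega^2/(1+\tau^2\omega^2)^2\cdot(-1)\cdot(-1)$, which simplifies to $\partial_\tau h=2k_1q\tau\omega^2/(1+\tau^2\omega^2)^2<0$. The implicit function theorem then gives $\omega_1\in C^1$ with
\[
\omega_1'(\tau)=-\frac{\partial_\tau h}{\partial_\omega h}<0,
\]
since both partial derivatives are negative. So $\omega_1$ is decreasing in $\tau$.

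\textbf{Position of $\tau\omega_1(\tau)$ relative to $1$.} Put $A:=\rho_0^*+k_1p+\frac{qk_1}{2}$. Because $h(\cdot,\tau)$ is decreasing in $\omega$, we have $\tau\omega_1(\tau)<1$ exactly when $h(1/\tau,\tau)<0$, which reads $X(0,\tau^{-2},K)<A$. Analogous equivalences hold for $=1$ and $>1$. The map $\tau\mapsto X(0,\tau^{-2},K)$ is continuous and strictly increasing, running from $0$ at $\tau\to0^+$ to $X(0,0,K)$ at $\tau\to\infty$. This yields the three cases.
\begin{enumerate}
\item If $A\ge X(0,0,K)$, then $X(0,\tau^{-2},K)<X(0,0,K)\le A$ for every $\tau$, so $\tau\omega_1<1$.
\item If $A<0$, then $X(0,\tau^{-2},K)>0>A$, so $\tau\omega_1>1$.
\item If $0<A<X(0,0,K)$, the intermediate value theorem and strict monotonicity give a unique $\tau_1$ with $X(0,\tau_1^{-2},K)=A$, and this produces \eqref{om1}.
\end{enumerate}
For the $k_1$-dependence, note that $\tau_1$ is a strictly increasing function of $A$, being the inverse of the increasing map above. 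Also $dA/dk_1=p+\frac q2$. Therefore $\tau_1$ decreases in $k_1$ when $p+\frac q2<0$ and increases when $p+\frac q2>0$, as stated.

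\textbf{The limit as $k_2\to\wh\zeta(k_1)^-$.} Take $K=k_2$. By Proposition \ref{prop.AB}, $k_2\mapsto X(0,0,k_2)$ is continuous and strictly decreasing, from $X(0,0,0)$ to $0$ as $k_2\to\infty$. Hence, when $0<A<X(0,0,0)$, there is a unique $\wh\zeta(k_1)>0$ with $X(0,0,\wh\zeta)=A$. For $k_2<\wh\zeta$ we have $X(0,0,k_2)>A>0$, so case 3 applies and $\tau_1(k_1,k_2)$ is defined.

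I expect this step to be the main obstacle: showing $\tau_1\to\infty$, where the only tool is joint continuity of $X$ in $(\omega^2,K)$. I would argue by contradiction. Suppose that along some sequence $k_2^{(n)}\to\wh\zeta^-$ the values $\tau_1^{(n)}$ stay bounded. The $\tau_1^{(n)}$ are also bounded away from $0$, because $X(0,\tau^{-2},K)\to0<A$ as $\tau\to0^+$ uniformly for $K$ in a compact set. Passing to a subsequence, $\tau_1^{(n)}\to\bar\tau\in(0,\infty)$. Joint continuity of $X$ then gives
\[
X(0,\bar\tau^{-2},\wh\zeta)=A=X(0,0,\wh\zeta).
\]
This contradicts $\partial X/\partial(\omega^2)<0$. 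Hence $\tau_1(k_1,k_2)\to\infty$ as $k_2\to\wh\zeta(k_1)^-$.
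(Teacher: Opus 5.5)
Your proof is correct and follows the paper's route. Existence and uniqueness come from the opposite monotonicity of the two sides of \eqref{u2: im-a}, $\omega_1'(\tau)<0$ comes from implicit differentiation, and the three cases and the $k_1$-dependence of $\tau_1$ come from evaluating the defining function at $\tau\omega=1$, which is the paper's $F_1(1,\tau,k_1,K)$ written in the variable $\eta=\tau^2\omega^2$.

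Your compactness argument for $\tau_1(k_1,k_2)\to\infty$ as $k_2\to\wh\zeta(k_1)^-$ is a genuine improvement, since the paper only asserts this limit. The one blemish is the intermediate expression for $\partial_\tau h$: the product with $(-1)\cdot(-1)$ has the opposite sign to what you say it simplifies to, although your final formula $2k_1q\tau\omega^2/(1+\tau^2\omega^2)^2<0$ is right.
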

	\begin{proof}
		By the assumptions, we have $P(0)<X(0,0,K)$, $\lim\limits_{\omega\to\infty}P(\omega^2)=\rho_0^*+k_1p>0=\lim\limits_{\omega\to\infty}X(0,\omega^2,K)$. Since $P(\omega^2)$ is strictly increasing while $X(0,\omega^2,K)$ is strictly decreasing with respect to $\omega>0$, equation \eqref{u2: im-a} has a unique positive solution $\omega=\omega_1(\tau)$.
		
		That is,
		\begin{equation*}
			\rho_0^*+k_1 p + k_1 q \frac{1}{1+\tau^2\omega_1^2}-X(0,\omega_1^2,K)=0,
		\end{equation*}
		where $\omega_1=\omega_1(\tau)$. Differentiating it with respect to $\tau$, we have 
		\begin{equation*}
			\frac{\partial \omega_1(\tau)}{\partial \tau}=
			\frac{ (-q) k_1 \tau \omega_1}{(\tau^2\omega_1^2+1)^2 \frac{\partial X(0,\omega_1^2, K)}{\partial (\lambda_I^*)^2} +qk_1 \tau^2}<0,
		\end{equation*} 
		where $\frac{\partial X(0,(\lambda_I^*)^2, K)}{\partial (\lambda_I^*)^2}<0$ and $q<0$ are used. Thus, $\omega_1(\tau)$ is decreasing with respect to $\tau\in(0,+\infty)$.
		
		For $\tau\in(0,+\infty)$, we define
		\begin{equation*}
			F_1(\eta,\tau,k_1,K):=\rho_0^*+k_1 p + \frac{q k_1}{1+\eta} - X(0,\eta/\tau^2,K),\quad
			\eta_1(\tau):=\big(\tau\omega_1(\tau)\big)^2,
		\end{equation*}
		where $\eta>0$. Then, the relation $F_1(\eta_1(\tau),\tau,k_1,K)=0$ holds.
		
		By Proposition \ref{prop.AB}, we have
		\begin{equation*}
			\frac{\partial F_1}{\partial \eta}= -\frac{qk_1}{(1+\eta)^2} - \frac{1}{\tau^2}\frac{\partial X}{\partial(\lambda_I^*)^2}(0,\eta/\tau^2,K)>0,
		\end{equation*}
		which implies that
		\begin{equation}\label{eta1}
			F_1(\eta,\tau,k_1,K)
			\begin{cases}
				<0, &\text{if } \eta<\eta_1(\tau),\\
				=0, &\text{if } \eta=\eta_1(\tau),\\
				>0, &\text{if } \eta>\eta_1(\tau).
			\end{cases}
		\end{equation}
		
		Setting $\eta=1$, we have
		\begin{equation*}
			F_1(1,\tau,k_1,K)=\rho_0^*+k_1 p + \frac{qk_1}{2} - X(0,1/\tau^2,K),
		\end{equation*}
		which is decreasing with respect to $\tau\in(0,+\infty)$ due to Proposition \ref{prop.AB}. Moreover,
		\begin{equation*}
			F_1(1,0^+,k_1,K)=\rho_0^*+k_1 p + \frac{qk_1}{2},\;
			F_1(1,\infty,k_1,K)=\rho_0^*+k_1 p + \frac{qk_1}{2} - X(0,0,K).
		\end{equation*}
		
		1. If $F_1(1,\infty,k_1,K) \ge 0$, i.e., $\rho_0^*+k_1 p + \frac{qk_1}{2} - X(0,0,K)\ge 0$, then for any $\tau\in(0,\infty)$, we have $F_1(1,\tau,k_1,K) > 0$, implying that $1 > \eta_1(\tau)$, i.e., $\tau\omega_1(\tau)<1$. 
		
		2. If $F_1(1,0^+,k_1,K) < 0$, i.e., $\rho_0^*+k_1 p + \frac{qk_1}{2} < 0$, then for any $\tau\in(0,\infty)$, we have $F_1(1,\tau,k_1,K) < 0$, suggesting that $1 < \eta_1(\tau)$, i.e., $\tau\omega_1(\tau)>1$.
		
		3. If $F_1(1,0^+,k_1,K) > 0$ and $F_1(1,\infty,k_1,K) < 0$, i.e.,
		\begin{equation*}
			\rho_0^*+k_1 p + \frac{qk_1}{2} > 0 > \rho_0^*+k_1 p + \frac{qk_1}{2} - X(0,0,K),
		\end{equation*}
		then there exists a unique $\tau_1=\tau_1(k_1,K)$ such that
		\begin{equation*}
			F_1(1,\tau_1,k_1,K)=\rho_0^*+k_1 p + \frac{qk_1}{2} - X(0,1/\tau_1^2,K)=0.
		\end{equation*}
		Thus, by \eqref{eta1}, we can derive \eqref{om1}.
		
		Clearly, by implicit differentiation of $F_1 (1,\tau_1,k_1,K)=0$, we can obtain the monotonicity of $\tau_1$ with respect to $k_1$. Moreover, when $K=k_2$, if $F_1(1,\infty,k_1,0)=\rho_0^*+k_1 p + \frac{qk_1}{2} - X(0,0,0)<0$ and $F_1(1,\infty,k_1,\infty)=\rho_0^*+k_1 p + \frac{qk_1}{2}>0$, then by the fact that $F_1(1,\infty,k_1,k_2)$ is increasing in $k_2$, there exists a unique solution $k_2=\widehat{\zeta}(k_1)$ of $\rho_0^*+k_1 p - X(0,0,k_2) + \frac{qk_1}{2}=0$, such that $\tau_1(k_1,k_2)\to\infty$ as $k_2\to(\widehat\zeta(k_1))^-$.
	\end{proof}
	
	\begin{lemma}\label{le.om2}
		Suppose that $\alpha\in(\alpha_0,\infty)$ and that $q<0$, $\tau\in(\tau_0,\infty)$, where $\tau_0:=\frac{\alpha-Y(0,0,K)}{-qk_1}$ is defined in Lemma \ref{le.no1}. Then \eqref{u2: im-b} admits a unique solution $\omega=\omega_2(\tau)$ for $\tau\in(\tau_0,\infty)$. Moreover,
		\begin{enumerate}
			\item $\lim\limits_{\tau\to\tau_0^+}\omega_2(\tau) = \lim\limits_{\tau\to\infty}\omega_2(\tau) = 0$.
			\item There exists a unique $\tau_2=\tau_2(k_1,K)>\tau_0$, such that $\omega_2(\tau)$ is increasing in $\tau\in(\tau_0,\tau_2)$ and decreasing in $\tau\in(\tau_2,\infty)$. In particular,
			\begin{equation}\label{om2}
				\tau\omega_2(\tau)
				\begin{cases}
					<1, & \tau\in(\tau_0,\tau_2),\\
					=1, & \tau=\tau_2,\\
					>1, & \tau\in(\tau_2,\infty).
				\end{cases}
			\end{equation}
			\item $\tau_2$ is decreasing in $k_1$. Moreover, if $K=k_2$, then $\tau_2$ is increasing in $k_2$ and satisfies $\tau_2\to \frac{2\alpha}{-qk_1}>0$ as $k_2\to+\infty$.
		\end{enumerate}
	\end{lemma}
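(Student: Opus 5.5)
The idea is to work in the variable $s:=\omega^2$ and rewrite \eqref{u2: im-b} as
\begin{equation*}
Y(0,s,K)-\alpha=\frac{k_1 q\tau}{1+\tau^2 s},\qquad\text{i.e.}\qquad (\alpha-Y(0,s,K))(1+\tau^2 s)=-qk_1\tau .
\end{equation*}
Write $h(s):=(\alpha-Y(0,s,K))(1+\tau^2 s)$. By Proposition \ref{prop.AB}, $\alpha-Y(0,s,K)>\alpha-\alpha_0>0$ and it is increasing in $s$, since $\partial Y/\partial(\la_I^*)^2<0$. Hence $h$ is strictly increasing in $s\ge0$ and $h(s)\to\infty$ as $s\to\infty$. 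Moreover $h(0)=\alpha-Y(0,0,K)=-qk_1\tau_0$, which is less than $-qk_1\tau$ exactly when $\tau>\tau_0$.

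So for $\tau>\tau_0$ there is a unique root $s=\omega_2(\tau)^2>0$, and the implicit function theorem gives smoothness in $\tau$. As $\tau\to\tau_0^+$ the right-hand side tends to $h(0)$, so $\omega_2\to0$. As $\tau\to\infty$, divide by $\tau$: $(\alpha-Y)(1/\tau+\tau s)=-qk_1$. This forces $\tau s$ to stay bounded, so $s=O(1/\tau)\to0$, which proves item 1.

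For item 2, set $\eta:=\tau^2\omega^2$, as in the proof of Lemma \ref{le.om1}. The equation becomes
\begin{equation*}
F_2(\eta,\tau):=(\alpha-Y(0,\eta/\tau^2,K))(1+\eta)+qk_1\tau=0 .
\end{equation*}
$F_2$ is increasing in $\eta$. At $\eta=1$ we get $F_2(1,\tau)=2(\alpha-Y(0,1/\tau^2,K))+qk_1\tau$. Because $Y(0,1/\tau^2,K)$ increases in $\tau$ and $q<0$, this expression is strictly decreasing in $\tau$. Its limit as $\tau\to\tau_0^+$ is positive, since $2(\alpha-Y(0,1/\tau_0^2,K))>\alpha-Y(0,0,K)=-qk_1\tau_0$, and it tends to $-\infty$ as $\tau\to\infty$. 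So there is a unique zero $\tau_2>\tau_0$, and the monotonicity of $F_2$ in $\eta$ yields the sign pattern \eqref{om2}.

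The monotonicity of $\omega_2$ comes from implicit differentiation of $(\alpha-Y(0,s,K))(1+\tau^2 s)+qk_1\tau=0$. The $s$-derivative is positive. The $\tau$-derivative is $2\tau s(\alpha-Y)+qk_1$, and using the equation this equals $\frac{-qk_1}{1+\tau^2 s}(\tau^2 s-1)$. Hence $ds/d\tau$ has the sign of $1-\tau^2\omega_2^2$. Combined with \eqref{om2}, $\omega_2$ increases on $(\tau_0,\tau_2)$ and decreases on $(\tau_2,\infty)$.

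For item 3, $\tau_2$ is defined by $G(\tau_2,k_1,K):=2(\alpha-Y(0,1/\tau_2^2,K))+qk_1\tau_2=0$ with $\partial_\tau G<0$. We have $\partial_{k_1}G=q\tau_2<0$, so $\tau_2$ decreases in $k_1$. For $K=k_2$, $\partial_K G=-2\,\partial Y/\partial K>0$ by Proposition \ref{prop.AB}, so $\tau_2$ increases in $k_2$. As $k_2\to\infty$, $Y\to0$ uniformly for bounded $\omega^2$, so the limit satisfies $2\alpha+qk_1\tau_2=0$, i.e. $\tau_2\to 2\alpha/(-qk_1)$.

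The main obstacle is the strict inequality $F_2(1,\tau_0^+)>0$ together with the strict sign of $\partial Y/\partial K$ at $\omega^2\neq0$. Proposition \ref{prop.AB} states the latter only in the form given there, but it is immediate from the series \eqref{AB}.
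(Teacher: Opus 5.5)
Your proposal is correct and follows essentially the same route as the paper. Both arguments use monotonicity in $\omega^2$ for existence and uniqueness of $\omega_2$, and the bound coming from $\alpha-Y(0,\omega_2^2,K)\ge\alpha-Y(0,0,K)$ for the two limits. Both then locate $\tau_2$ via the auxiliary function in $\eta=\tau^2\omega^2$ evaluated at $\eta=1$, and use implicit differentiation for the monotonicity of $\omega_2$ and of $\tau_2$ in $k_1,k_2$. The only difference is cosmetic: you multiply through by $1+\tau^2\omega^2$, so your $F_2$ is $(1+\eta)$ times the paper's, which changes nothing in the argument.
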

	\begin{proof}
		By the assumptions, we have $Q(0)<Y(0,0,K)$, $\lim\limits_{\omega\to\infty}Q(\omega^2)=\alpha>0=\lim\limits_{\omega\to\infty}Y(0,\omega^2,K)$. Since $Q(\omega^2)$ is strictly increasing while $Y(0,\omega^2,K)$ is strictly decreasing with respect to $\omega>0$, equation \eqref{u2: im-b} has a unique positive solution $\omega_2=\omega_2(\tau)$.
		
		That is,
		\begin{equation}\label{eq:ome2}
			Y(0, \omega_2^2, K) = \alpha + \frac{q k_1 \tau}{1 + \tau^2 \omega_2^2},
		\end{equation}
		where $\omega_2=\omega_2(\tau)$.
		
		1. Substituting $\tau_0$ into \eqref{eq:ome2} gives $\lim\limits_{\tau\to\tau_0^+}\omega_2(\tau)=0$, where the monotonicity properties of $Q(\omega^2)$ and $Y(0,\omega^2,K)$ with respect to $\omega>0$ are used. 
		
		We next show that $\omega_2(\tau)\to 0$ as $\tau\to\infty$. From \eqref{eq:ome2}, we have
		\begin{equation*}
			\frac{(-q) k_1 \tau}{1 + \tau^2 \omega_2^2} = \alpha - Y(0, \omega_2^2, K)\ge \alpha - Y(0, 0, K)>0.
		\end{equation*}
		
		It leads to
		\begin{equation*}
			1 + \tau^2 \omega_2^2 \le \frac{(-q) k_1 \tau}{\alpha - Y(0, 0, K)},
		\end{equation*}
		such that
		\begin{equation*}
			0< \omega_2^2 \le \frac{(-q) k_1 }{\alpha - Y(0, 0, K)}\frac{1}{\tau}-\frac{1}{\tau^2}\to0\;\;\text{(as $\tau\to\infty$)},
		\end{equation*}
		implying that $\omega_2(\tau)\to 0$ as $\tau\to\infty$.
		
		2. For $\tau\in(\tau_0,\infty)$, define
		\begin{equation*}
			F_2(\eta,\tau,k_1,K):=\alpha + \frac{qk_1\tau}{1+\eta} - Y\left( 0,\frac{\eta}{\tau^2},K\right) ,\quad
			\eta_2(\tau):=\big(\tau\omega_2(\tau)\big)^2,
		\end{equation*}
		where $\eta>0$. Then by \eqref{eq:ome2}, $F_2(\eta_2(\tau),\tau,k_1,K)=0$. Moreover,
		\begin{equation*}
			\frac{\partial F_2}{\partial \eta}= -\frac{qk_1\tau}{(1+\eta)^2} - \frac{1}{\tau^2}\frac{\partial Y}{\partial(\lambda_I^*)^2}\left( 0,\frac{\eta}{\tau^2},K\right)>0,
		\end{equation*}
		so that
		\begin{equation}\label{the02}
			F_2(\eta,\tau,k_1,K)
			\begin{cases}
				<0, &\eta<\eta_2(\tau),\\
				=0, &\eta=\eta_2(\tau),\\
				>0, &\eta>\eta_2(\tau).
			\end{cases}
		\end{equation}
		
		Setting $\eta=1$ gives
		\begin{equation}\label{F_22}
			F_2(1,\tau,k_1,K)=\alpha + \frac{qk_1\tau}{2} - Y\left( 0,\frac{1}{\tau^2},K\right).
		\end{equation}
		
		By Proposition \ref{prop.AB}, $F_2(1,\tau,k_1,K)$ is decreasing in $\tau\in(\tau_0,\infty)$. Since
		\begin{equation*}
			\lim_{\tau\to\tau_0^+}F_2(1,\tau,k_1,K)=\frac{\alpha+Y(0,0,K)-2Y(0,1/\tau_0^2,K)}{2}>0,\;
			\lim_{\tau\to\infty}F_2(1,\tau,k_1,K)=-\infty,
		\end{equation*}
		there exists a unique $\tau_2=\tau_2(k_1,K)\in(\tau_0,\infty)$ such that
		\begin{equation}\label{sig_22}
			F_2(1,\tau,k_1,K)
			\begin{cases}
				>0, &\tau<\tau_2,\\
				=0, &\tau=\tau_2,\\
				<0, &\tau>\tau_2.
			\end{cases}
		\end{equation}
		Combining \eqref{sig_22} and \eqref{the02}, we obtain \eqref{om2}.
		
		Now we show the monotonicity of $\omega_2(\tau)$. Differentiating \eqref{eq:ome2} with respect to $\tau$ gives
		\begin{equation*}
			\frac{d\omega_2(\tau)}{d\tau}=
			\frac{(-q)k_1}{2\omega_2(\tau)}\,
			\frac{(1+\tau\omega_2(\tau))(1-\tau\omega_2(\tau))}
			{(-q)k_1\tau^3 - (1+\tau^2\omega_2^2)^2\frac{\partial Y}{\partial(\lambda_I^*)^2}}
			\begin{cases}
				>0, &\tau\in(\tau_0,\tau_2),\\
				=0, &\tau=\tau_2,\\
				<0, &\tau\in(\tau_2,\infty),
			\end{cases}
		\end{equation*}
		where $\frac{\partial Y}{\partial(\lambda_I^*)^2}<0$ is used.
		
		3. When $K=k_2$, differentiating $F_2(1,\tau_2,k_1,k_2)=0$ with respect to $k_2$ (resp. $k_1$) shows that $\tau_2$ is increasing in $k_2$ (resp. decreasing in $k_1$). In particular, from \eqref{F_22} and \eqref{sig_22}, as $k_2\to\infty$,  $Y(0,1/\tau_2^2,k_2)\to0$ and $\alpha + \frac{qk_1\tau_2}{2}\to0$ gives $\tau_2\to \frac{2\alpha}{-qk_1}$.
	\end{proof}
	
	\begin{figure}[htbp]
		\centering
		\subfloat[]{
			\begin{tikzpicture}[font=\footnotesize]
				\draw[->] (0,0) --(3,0) node[right] {$\tau$};
				\draw[->] (0,0) --(0,3);
				\draw (0,0) node[left] {$O$};
				\draw[thick] (0.2, 3) .. controls (0.7, 0.6) and (1.5, 0.4) .. (3, 0.2);
				\node at (0.9,2) {$\ds{\frac{1}{\tau}}$};	
				\draw[blue, thick] (0.3,0) .. controls (0.7, 0.7) and (0.9, 0.9) .. (1.1, 0.9) .. controls (1.2, 0.9) and (2, 0.6)  .. (3, 0.4);
				\draw[red,thick] (0.2, 2) .. controls (0.6, 0.3) and (0.9, 0.2) .. (3, 0.1);
				\draw (0.3,0) node[below]{$\tau_0$};
				\draw[densely dashed] (1.08,0.9)--(1.08,0) node[below]{$\tau_2$};
				\draw[densely dashed] (0.73,0.68)--(0.73,0.1) node[below]{$\tau_H^*$};
				\draw[densely dashed] (0.73,0.66)--(0,0.66) node[left]{$\omega_H^*$};
		\end{tikzpicture}}
		\hspace{10pt}
		\subfloat[]{
			\begin{tikzpicture}[font=\footnotesize]
				\draw[->] (0,0) --(3,0) node[right] {$\tau$};
				\draw[->] (0,0) --(0,3);
				\draw (0,0) node[left] {$O$};
				\draw[thick] (0.2, 3) .. controls (0.7, 0.6) and (1.5, 0.4) .. (3, 0.2);
				\node at (0.9,2) {$\ds{\frac{1}{\tau}}$};	
				\draw[blue, thick] (0.3,0) .. controls (0.7, 0.7) and (0.9, 0.9) .. (1.1, 0.9) .. controls (1.2, 0.9) and (2, 0.6)  .. (3, 0.4);
				\draw[red,thick] (0.1, 2) .. controls (0.36, 0.45) and (0.9, 0.4) .. (3, 0.3);
				\draw (0.3,0) node[below]{$\tau_0$};
				\draw[densely dashed] (1.08,0.9)--(1.08,0) node[below]{$\tau_2$};
				\draw[densely dashed] (0.75,0.68)--(0.75,0.1) node[below]{$\tau_H^*$};
				\draw[densely dashed] (0.75,0.66)--(0,0.66) node[left]{$\omega_H^*$};
				\draw[densely dashed] (2.15,0.3)--(2.15,0) node[below]{$\tau_1$};
		\end{tikzpicture}}
		\hspace{10pt}
		\subfloat[]{
			\begin{tikzpicture}[font=\footnotesize]
				\draw[->] (0,0) --(3,0) node[right] {$\tau$};
				\draw[->] (0,0) --(0,3);
				\draw (0,0) node[left] {$O$};
				\draw[thick] (0.2, 3) .. controls (0.7, 0.6) and (1.5, 0.4) .. (3, 0.2);
				\node at (0.3,1.5) {$\ds{\frac{1}{\tau}}$};	
				\draw[blue, thick] (0.3,0) .. controls (0.7, 0.7) and (0.9, 0.9) .. (1.1, 0.9) .. controls (1.2, 0.9) and (2, 0.6)  .. (3, 0.4);
				\draw[red,thick] (0.5, 3) .. controls (1.36, 0.45) and (1.9, 0.4) .. (3, 0.3);
				\draw (0.3,0) node[below]{$\tau_0$};
				\draw[densely dashed] (1.08,0.9)--(1.08,0) node[below]{$\tau_2$};
				\draw[densely dashed] (1.65,0.7)--(1.65,0.1) node[below]{$\tau_H^*$};
				\draw[densely dashed] (1.65,0.7)--(0,0.7) node[left]{$\omega_H^*$};
		\end{tikzpicture}}
		\caption{Graphs of $\omega_1(\tau)$ (red) and $\omega_2(\tau)$ (blue) under $q<0$, $\rho_0^*+k_1 p>0$, and $\rho_0^*+k_1(p+q) < X(0,0,K)$, where $\omega_1(\tau)$ is decreasing in $\tau\in(0,\infty)$; $\omega_2(\tau)$ is increasing in $\tau\in(\tau_0,\tau_2)$ and decreasing in $\tau\in(\tau_2,\infty)$ with $\lim_{\tau\to\tau_0^+}\omega_2(\tau)=\lim_{\tau\to\infty}\omega_2(\tau)=0$. (a) If $\rho_0^*+k_1 p - X(0,0,K) + \frac{qk_1}{2} \ge 0$, then $\omega_1(\tau)<1/\tau$ for $\tau\in(0,\infty)$, and there exists a unique positive root $\tau_H^*\in(\tau_0,\tau_2)$ such that $\omega_1(\tau_H^*)=\omega_2(\tau_H^*)<1/\tau_H^*$. (b) If $\rho_0^*+k_1 p - X(0,0,K) + \frac{qk_1}{2} < 0$, and $\rho_0^*+k_1 p + \frac{qk_1}{2} > 0$, then there exists a unique $\tau_1=\tau_1(k_1,K)\in(0,\infty)$ such that $\omega_1(\tau_1)=1/\tau_1$. Further, if $\tau_1>\tau_2$ (the sufficient conditions are given by the second and third parts of Lemma \ref{le.e2}), then there is a root $\tau_H^*$ of $\omega_1(\tau)=\omega_2(\tau)$ which is unique in $(\tau_0,\tau_2)$ and satisfies $\omega_1(\tau_H^*)=\omega_2(\tau_H^*)<1/\tau_H^*$. (c) When $\rho_0^*+k_1 p + \frac{qk_1}{2} < 0$, then $\omega_1(\tau)>1/\tau$ for $\tau\in(0,\infty)$. Further, if $\rho_0^*+k_1 p>0$, $\rho_0^*+k_1p+k_1q<0$, $K=k_2$, and $k_2>M(k_1)$ for some sufficiently large $M(k_1)$, then $\omega_1(\tau)=\omega_2(\tau)$ at some $\tau_H^*$ with asymptotic behavior \eqref{omsi}.}
		\label{Fig. tau_2}
	\end{figure}
	
	\begin{lemma}\label{le.e2}
		Suppose that $\alpha\in(\alpha_0,\infty)$ and that $q<0$, $\rho_0^*+k_1 p>0$, $\rho_0^*+k_1(p+q) < X(0,0,K)$, $\tau\in(\tau_0,\infty)$. Let $\omega_1=\omega_1(\tau)$ and $\omega_2=\omega_2(\tau)$ be defined by Lemmas \ref{le.om1} and \ref{le.om2}, respectively. 
		\begin{enumerate}
			\item If $\rho_0^*+k_1 p - X(0,0,K) + \frac{q k_1}{2} \ge 0$, then system \eqref{eq-g2} has a critical value $\tau_H^*$ which is unique in $(\tau_0,\tau_2)$, such that $\omega_1(\tau_H^*)=\omega_2(\tau_H^*)=:\omega_H^*$ and $\tau_H^*\omega_H^*<1$.
			\item If $K=k_2$, $p+\frac{q}{2}<0$, $\rho_0^*+k_1 p + \frac{q k_1}{2} > 0$ and $\rho_0^*+k_1 p - X(0,0,k_2) + \frac{q k_1}{2} < 0$, then there exists a $\wt\zeta(k_1)\in[0,\wh\zeta(k_1))$, where $\wh\zeta(k_1)$ is given by Lemma \ref{le.om1}, such that $\tau_2<\tau_1$ for $k_2\in(\wt\zeta(k_1),\wh\zeta(k_1))$. Then for $k_2\in(\wt\zeta(k_1),\wh\zeta(k_1))$, system \eqref{eq-g2} has a critical value $\tau_H^*$ which is unique in $(\tau_0,\tau_2)$ and satisfies $\omega_H^*:=\omega_1(\tau_H^*)=\omega_2(\tau_H^*)$, $\tau_H^*\omega_H^*<1$. In particular, $\wt\zeta(k_1)\to k_2^*$ as $k_1\to 0^+$, $\wt\zeta(k_1)\to+\infty$ as $k_1\to \left( \frac{-2\rho_0^*}{2p+q} \right)^-$.
			\item If $K=0$ and $p+\frac{q}{2}>0$, then there exist unique $k_1^*:=2(X(0,0,0)-\rho_0^*)/(2p+q)>0$ and $\wh k_1^*\in(0,k_1^*)$, which satisfy $\tau_1(\wh k_1^*,0)-\tau_2(\wh k_1^*,0)=0$ and $\tau_1(k_1,0)>\tau_2(k_1,0)$ in $k_1\in(\wh k_1^*,k_1^*)$. Consequently, for $k_1\in(\wh k_1^*,k_1^*)$, system \eqref{eq-g2} has a critical value $\tau_H^*$ which is unique in $(\tau_0,\tau_2)$ and satisfies $\omega_H^*:=\omega_1(\tau_H^*)=\omega_2(\tau_H^*)$ and $\tau_H^*\omega_H^*<1$.
			\item If $K=k_2$, $\rho_0^*+k_1 p>0$, and $\rho_0^*+k_1p+k_1q<0$, then there exists a $M(k_1)>0$, such that for $k_2>M(k_1)$, system \eqref{eq-g2} has no solution in $\tau\in(0,\tau_H^*)$ but admits a unique solution $(\omega_H^*)^2$ at $\tau_H^*$, which satisfies
			\begin{equation}\label{omsi}
				\omega_H^*\to \frac{\sqrt{-(\rho_0^*+k_1 p)(\rho_0^*+k_1p+k_1q)}}{\alpha},\; \tau_H^*\to \frac{\alpha}{\rho_0^*+k_1 p},\;\text{as $k_2\to\infty$}.
			\end{equation}
		\end{enumerate}
	\end{lemma}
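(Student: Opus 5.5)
The overall strategy is an intermediate-value argument on the difference $D(\tau):=\omega_1(\tau)-\omega_2(\tau)$ over $(\tau_0,\tau_2]$. Lemma \ref{le.om1} gives $\omega_1$ as a positive, decreasing curve on $(0,\infty)$. Lemma \ref{le.om2} gives $\omega_2$ on $(\tau_0,\infty)$, increasing on $(\tau_0,\tau_2)$, decreasing afterwards, with $\omega_2(\tau_0^+)=0$. Since $\omega_1(\tau_0)>0$, we get $D(\tau_0^+)>0$. On $(\tau_0,\tau_2)$, $D$ is strictly decreasing because it is (decreasing) minus (increasing), so there is at most one zero there. A zero exists as soon as $D(\tau_2)\le 0$.

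By \eqref{om2}, $\omega_2(\tau_2)=1/\tau_2$. By \eqref{eta1}, $\tau\omega_1(\tau)<1$ holds at $\tau=\tau_2$ precisely when $F_1(1,\tau_2,k_1,K)>0$. In that case $\omega_1(\tau_2)<1/\tau_2=\omega_2(\tau_2)$, so $D(\tau_2)<0$, and we obtain a unique $\tau_H^*\in(\tau_0,\tau_2)$ with $\omega_H^*:=\omega_1(\tau_H^*)=\omega_2(\tau_H^*)<1/\tau_H^*$ (the last inequality from \eqref{om2}). Each item therefore reduces to verifying $\tau_1>\tau_2$, or $\tau\omega_1<1$ for all $\tau$:
\begin{enumerate}
\item Item 1 is immediate from part 1 of Lemma \ref{le.om1}.
\item For items 2 and 3 we are in case 3 of Lemma \ref{le.om1}, so we must compare $\tau_1(k_1,K)$ with $\tau_2(k_1,K)$.
\end{enumerate}

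For item 2 ($K=k_2$, $p+q/2<0$), Lemma \ref{le.om1} gives $\tau_1\to\infty$ as $k_2\to\wh\zeta(k_1)^-$, while $\tau_2$ is increasing in $k_2$ and bounded by $2\alpha/(-qk_1)$ (Lemma \ref{le.om2}). Hence $\tau_1>\tau_2$ near $\wh\zeta(k_1)$. I would define $\wt\zeta(k_1)$ as the infimum of those $k_2$ for which $\tau_1>\tau_2$ on $(k_2,\wh\zeta(k_1))$.

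The limits come next. As $k_1\to0^+$, $\wh\zeta(k_1)\to k_2^*$ because $X(0,0,k_2)\to\rho_0^*$, which forces $\wt\zeta\to k_2^*$. As $k_1\to(-2\rho_0^*/(2p+q))^-$, the quantity $\rho_0^*+k_1(p+q/2)\to0^+$. Then $\tau_1$, defined by $X(0,1/\tau_1^2,k_2)=\rho_0^*+k_1(p+q/2)$, must blow up unless $k_2\to\infty$, and I expect this to force $\wt\zeta\to\infty$.

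For item 3 ($K=0$, $p+q/2>0$), $k_1^*$ is exactly where $F_1(1,\infty,k_1,0)=0$. So $\tau_1\to\infty$ as $k_1\to k_1^{*-}$, and $\tau_1$ is increasing in $k_1$, while $\tau_2$ is decreasing in $k_1$. The difference $\tau_1-\tau_2$ is therefore strictly increasing and positive near $k_1^*$. For small $k_1$ the left condition of case 3 fails, since $\rho_0^*<X(0,0,0)$, and I would argue $\tau_1$ becomes small there. This yields a unique crossing $\wh k_1^*$.

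Item 4 is an asymptotic argument as $k_2\to\infty$. There $X(0,\cdot,k_2)$ and $Y(0,\cdot,k_2)$ tend to $0$ uniformly, so \eqref{eq-g2} tends to $P(\omega^2)=0$ and $Q(\omega^2)=0$. Writing $s=1+\tau^2\omega^2$, these read
\[
\rho_0^*+k_1p+\frac{k_1q}{s}=0,\qquad \alpha+\frac{k_1q\tau}{s}=0.
\]
The first gives $s=-k_1q/(\rho_0^*+k_1p)$, which is greater than $1$ by the hypothesis $\rho_0^*+k_1(p+q)<0$. The second then gives $\tau=\alpha/(\rho_0^*+k_1p)$, and the formula \eqref{omsi} follows from $\omega^2=(s-1)/\tau^2$.

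This limiting solution is a transversal intersection of the decreasing curve $\omega_1$ with the decreasing branch of $\omega_2$, since $\tau\omega>1$ there. Persistence for large $k_2$ then follows from the implicit function theorem. Uniqueness of the first crossing and the absence of solutions on $(0,\tau_H^*)$ need $D\neq0$ on $(\tau_0,\tau_H^*)$. I would try to get this from the limiting explicit curves: as $k_2\to\infty$, $\omega_2^2\to -qk_1/(\alpha\tau)-1/\tau^2$ and $\omega_1^2\to(s-1)/\tau^2$, and these meet exactly once.

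The main obstacle is the comparison of $\tau_1$ and $\tau_2$ in items 2 and 3, in particular the endpoint limits of $\wt\zeta$ and the existence of $\wh k_1^*$. These need the monotonicity in $k_1$ and $k_2$ of both thresholds together with careful boundary behaviour. The uniform-in-$\tau$ convergence needed in item 4 is a secondary difficulty.
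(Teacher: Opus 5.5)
Your reduction matches the paper's: on $(\tau_0,\tau_2)$ the difference $\omega_1-\omega_2$ is strictly decreasing and positive at $\tau_0^+$. It is negative at $\tau_2$ once $\tau_2\omega_1(\tau_2)<1$, i.e.\ once $\tau_2<\tau_1$, so item 1 is immediate. The gap is in items 2 and 3. It comes from one ingredient you never use, the two-sided bound that $F_2(1,\tau_2,k_1,K)=0$ gives:
\[
\frac{2(\alpha-\alpha_0)}{-qk_1}<\tau_2=\frac{2\bigl(\alpha-Y(0,1/\tau_2^2,K)\bigr)}{-qk_1}<\frac{2\alpha}{-qk_1}.
\]
In item 3 your argument for $\tau_1-\tau_2<0$ at small $k_1$ is wrong. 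Your claim that the case-3 condition fails for small $k_1$ is backwards: since $\rho_0^*<X(0,0,0)$ and $p+q/2>0$, both conditions of case 3 of Lemma \ref{le.om1} hold for every $k_1\in(0,k_1^*)$. Nor does $\tau_1(k_1,0)$ become small; it tends to $1/\wh\omega_0$, where $X(0,\wh\omega_0^2,0)=\rho_0^*$. The sign change comes instead from $\tau_2(k_1,0)>2(\alpha-\alpha_0)/(-qk_1)\to\infty$ as $k_1\to0^+$. Without this you have no negative left endpoint, hence no $\wh k_1^*$.

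In item 2, $\wt\zeta<\wh\zeta\to k_2^*$ only gives $\limsup\wt\zeta\le k_2^*$. You must evaluate at $k_2=\wt\zeta(k_1)$. There $X(0,1/\tau_1^2,\wt\zeta)=\rho_0^*+k_1(p+q/2)$ by definition of $\tau_1$. Also $\tau_1\ge\tau_2$ by continuity, provided $\wt\zeta(k_1)>0$, which the same bounds give in both limits.
\begin{itemize}
\item As $k_1\to0^+$, this gives $\tau_1\ge\tau_2\to\infty$. Hence $X(0,0,\wt\zeta)\to\rho_0^*$, and so $\wt\zeta\to k_2^*$.
\item As $k_1\to(-2\rho_0^*/(2p+q))^-$, you have the direction reversed. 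A right-hand side tending to $0^+$ forces $1/\tau_1^2\to\infty$, i.e.\ $\tau_1\to0$ (not $\tau_1\to\infty$), unless $k_2\to\infty$. At $\wt\zeta$ this is excluded, because $\tau_1\ge\tau_2>2(\alpha-\alpha_0)/(-qk_1)$ is bounded away from $0$. Hence $\wt\zeta\to\infty$.
\end{itemize}

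Item 4 follows the paper's limiting-curve argument, with two corrections.
\begin{itemize}
\item Your claim that $\tau\omega>1$ at the limit point is not implied by the hypotheses. There $(\tau^*\omega^*)^2=-qk_1/(\rho_0^*+k_1p)-1$, which exceeds $1$ only when $\rho_0^*+k_1p+qk_1/2<0$. It fails, for example, for $(p,q)=(0,-2)$ with $k_1\in(\rho_0^*/2,\rho_0^*)$. Transversality of the limiting crossing still holds, since $\tau\omega_{1,\infty}$ is constant and $\tau\omega_{2,\infty}$ is strictly increasing.
\item The uniformity near the moving endpoint $\tau_0$ comes from bounds valid for every $k_2$. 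These are $(\tau\omega_1)^2>-qk_1/(\rho_0^*+k_1p)-1$, from $X>0$, and $(\tau\omega_2)^2<\tau/\tau_0-1$, from $Y(0,\omega^2,k_2)<Y(0,0,k_2)$. Combine them with $\tau_0\to\alpha/(-qk_1)$.
\end{itemize}
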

	\begin{proof}
		1. Suppose that $\rho_0^*+k_1 p - X(0,0,K) + \frac{q k_1}{2} \ge 0$. Then by Lemma \ref{le.om1}, we conclude that $\omega_1(\tau)$ is decreasing for all $\tau\in(0,\infty)$ with $\omega_1(\tau)<1/\tau$. By Lemma \ref{le.om2}, $\omega_2(\tau)$ is increasing in $(\tau_0,\tau_2)$ and decreasing in $(\tau_2,\infty)$ with $\omega_2(\tau_2)=1/\tau_2$, and $\omega_2(\tau)\to0$ as $\tau\to\tau_0^+$ and $\tau\to\infty$. Thus, in $(\tau_0,\tau_2)$, there is a unique root $\tau_H^*$ satisfying $\omega_H^*:=\omega_1(\tau_H^*)=\omega_2(\tau_H^*)$ and $\tau_H^*\omega_H^*<1$.
		
		2. Suppose that $K=k_2$, $p+\frac{q}{2}<0$, $\rho_0^*+k_1 p + \frac{q k_1}{2} > 0$ and $\rho_0^*+k_1 p - X(0,0,k_2) + \frac{q k_1}{2} < 0$. Then by Lemmas \ref{le.om1} and \ref{le.om2}, we have 
		\begin{equation*}
			\tau_2(k_1,\wh\zeta(k_1)) < \lim\limits_{k_2\to\infty}\tau_2(k_1,k_2)=\frac{2\alpha}{-qk_1}<\infty=\lim\limits_{k_2\to(\wh\zeta(k_1))^-}\tau_1(k_1,k_2).
		\end{equation*}	
		
		Define $\wt{\zeta}(k_1) := \inf \{ \xi \in [0, \wh{\zeta}(k_1)) : \tau_2(k_1, k_2) < \tau_1(k_1, k_2) \text{ for all } k_2 \in (\xi, \wh{\zeta}(k_1)) \}$. Then, we have $\tau_2(k_1,k_2)<\tau_1(k_1,k_2)$ for $k_2\in(\wt\zeta(k_1),\wh\zeta(k_1))$. Since $\omega_1(\tau_0)>0=\omega_2(\tau_0)$ and $\omega_1(\tau_2)<1/\tau_2=\omega_2(\tau_2)$, combining the monotonicity of $\omega_1(\tau)$ and $\omega_2(\tau)$ in $\tau\in(\tau_0,\tau_2)$, then there exists a critical value $\tau_H^*$ which is unique in $(\tau_0,\tau_2)$ and satisfies $\omega_H^*:=\omega_1(\tau_H^*)=\omega_2(\tau_H^*)$ and $\omega_H^*\tau_H^*<1$.
		
		In particular, we shall show that $\wt\zeta(k_1)\to k_2^*$ as $k_1\to 0^+$ and $\wt\zeta(k_1)\to+\infty$ as $k_1\to \left( \frac{-2\rho_0^*}{2p+q} \right)^-$. Recall that $\omega_1(\tau)$ and $\omega_2(\tau)$ are the solutions of \eqref{u2: im-a} and \eqref{u2: im-b}, and $\tau_1=\tau_1(k_1,k_2)$ and $\tau_2=\tau_2(k_1,k_2)$ are determined by $F_1(1,\tau_1,k_1,K)=0$ and $F_2(1,\tau_2,k_1,k_2)=0$, respectively. Moreover, for $i=1,2$, $\tau_i\omega_i(\tau_i)=1$ at $\tau=\tau_i$ (since $\eta=1$). Thus, we have
		\begin{subequations}
			\begin{numcases}{}
				X(0,1/\tau_1^2(k_1,\wt\zeta(k_1)),\wt\zeta(k_1))=\rho_0^*+k_1 p + \frac{qk_1}{2},\label{3.29a}\\
				Y(0,1/\tau_2^2(k_1,\wt\zeta(k_1)),\wt\zeta(k_1))=\alpha + \frac{qk_1}{2}\tau_2(k_1,\wt\zeta(k_1)),\label{3.29b}\\
				\tau_1(k_1,\wt\zeta(k_1))\ge\tau_2(k_1,\wt\zeta(k_1)).\label{3.29c}
			\end{numcases}
		\end{subequations}
		
		Firstly, we show that $\lim\limits_{k_1\to0^+}\wt\zeta(k_1)=k_2^*$. By \eqref{3.29b}, we have
		\begin{equation}\label{tau2>}
			\tau_2(k_1,\wt\zeta(k_1))=\dfrac{2(\alpha-Y(0,1/\tau_2^2(k_1,\wt\zeta(k_1)),\wt\zeta(k_1)))}{(-q)k_1}>\dfrac{2(\alpha-\alpha_0)}{(-q)k_1}\to\infty\;\;\text{(as $k_1\to0^+$)},
		\end{equation}
		such that $\lim\limits_{k_1\to0^+}\tau_1(k_1,\wt\zeta(k_1))=\infty$ due to \eqref{3.29c}. Then from \eqref{3.29a}, combining $X(0,0,k_2^*)=\rho_0^*$, one can obtain that $\lim\limits_{k_1\to0^+}\wt\zeta(k_1)=k_2^*$.
		
		Secondly, as $k_1\to\left( \frac{-2\rho_0^*}{2p+q} \right)^-$, it follows from \eqref{3.29a} that $X(0,1/\tau_1^2(k_1,\wt\zeta(k_1)),\wt\zeta(k_1))\to0$. By \eqref{3.29c} and \eqref{tau2>}, $\tau_1(k_1,\wt\zeta(k_1))$ is bounded away from zero. Consequently, $\wt\zeta(k_1)\to+\infty$ as $k_1\to\left( \frac{-2\rho_0^*}{2p+q} \right)^-$.
		
		3. Suppose that $K=0$ and $p+\frac{q}{2}>0$. By Lemma \ref{le.om1}, $\tau_1$ is increasing in $k_1$. Denote the unique solution of $\rho_0^*+k_1p-X(0,0,0)+\frac{qk_1}{2}=0$ by $k_1^*:=2(X(0,0,0)-\rho_0^*)/(2p+q)>0$. Then as $k_1\to (k_1^*)^-$, we have $\tau_1\to \infty$. On the other hand, from Lemma \ref{le.om2}, $\tau_2$ is decreasing in $k_1$ with $\tau_2\to\infty$ as $k_1\to0^+$. Then $\wh\tau(k_1):=\tau_1(k_1,0)-\tau_2(k_1,0)$ is increasing in $k_1\in(0,k_1^*)$, which satisfies $\wh\tau(k_1)\to-\infty$ as $k_1\to0^+$ and $\wh\tau(k_1)\to+\infty$ as $k_1\to(k_1^*)^-$. Thus, there exists a unique $\wh k_1^*\in(0,k_1^*)$, such that $\tau_2=\tau_1$ at $k_1=\wh k_1^*$ and $\tau_2<\tau_1$ for $k_1\in(\wh k_1^*,k_1^*)$. It leads to the conclusions for $k_1\in(\wh k_1^*,k_1^*)$.
		
		4. Suppose that $\rho_0^*+k_1 p>0$, $\rho_0^*+k_1p+k_1q<0$, and $K=k_2$, then $\omega_1(\tau)$ and $\omega_2(\tau)$ exist.  As $k_2\to\infty$, system \eqref{eq-g2} reduces to
		\begin{equation*}
			0 = \rho_0^* + k_1 p + k_1 q\,\dfrac{1}{1+\tau^2\omega^2},\qquad
			0 = \alpha + k_1 q\,\dfrac{\tau}{1+\tau^2\omega^2},
		\end{equation*}
		
		Define $\omega_{1,\infty}(\tau):=\lim\limits_{k_2\to\infty}\omega_1(\tau)$, $\omega_{2,\infty}(\tau):=\lim\limits_{k_2\to\infty}\omega_2(\tau)$, and $\tau^*:=\dfrac{\alpha}{\rho_0^*+k_1 p}>0$. Then, 
		\begin{equation*}
			\omega_{1,\infty}(\tau)=\frac{1}{\tau}\sqrt{\frac{-qk_1}{\rho_0^*+k_1 p}-1},\;\;
			\omega_{2,\infty}(\tau)=\frac{1}{\tau}\sqrt{\frac{-qk_1\tau}{\alpha}-1},
		\end{equation*}
		which satisfies
		\begin{equation*}
			\omega_{1,\infty}(\tau)
			\begin{cases}
				\begin{split}
					&>\omega_{2,\infty}(\tau),&&\tau\in\left( \frac{\alpha}{-qk_1},\tau^*\right), \\
					&=\omega_{2,\infty}(\tau),&&\tau=\tau^*,\\
					&<\omega_{2,\infty}(\tau),&&\tau\in\left(\tau^*,\infty\right).
				\end{split}
			\end{cases}
		\end{equation*}
		
		Thus, for any fixed $\tau_-$ and $\tau_+$ satisfying $\frac{\alpha}{-qk_1}<\tau_-<\tau^*<\tau_+$, as long as $k_2>M(k_1)$ for some sufficiently large $M(k_1)$, we have $\omega_1(\tau_-)>\omega_2(\tau_-)$ and $\omega_1(\tau_+)<\omega_2(\tau_+)$, such that there exists at least one solution of $\omega_1(\tau)=\omega_2(\tau)$, where the smallest one is denoted by $\tau_H^*$. 
		
		On the other hand, in $\tau\in(0,\tau_0)$, there is no solution of \eqref{eq-g2} due to Lemma \ref{le.no1}. 
		
		Finally, in $\tau\in(\tau_0,\tau_-)$, from \eqref{eq-g2}, we have
		\begin{equation*}
			\rho_0^*+k_1 p+\frac{qk_1}{1+(\omega_1\tau)^2}=X(0,\omega_1^2,k_2)>0,\;\alpha+\frac{q k_1\tau}{1+(\omega_2\tau)^2}=Y(0,\omega_2^2,k_2)<Y(0,0,k_2),
		\end{equation*}
		such that
		\begin{equation*}
			(\omega_1\tau)^2>\frac{-qk_1}{\rho_0^*+k_1 p}-1,\;\;(\omega_2\tau)^2<\frac{-qk_1\tau}{\alpha-Y(0,0,k_2)}-1=\frac{\tau}{\tau_0}-1<\frac{\tau_-}{\tau_0}-1,
		\end{equation*}
		
		Note that $\frac{-q k_1}{\rho_0^*+k_1 p}\tau_0 \to \frac{\alpha}{\rho_0^*+k_1 p}=\tau^*>\tau_-$ as $k_2\to\infty$. Thus, for sufficiently large $k_2$, we have $\frac{-q k_1}{\rho_0^*+k_1 p}>\frac{\tau_-}{\tau_0}$, indicating that $\omega_1(\tau)>\omega_2(\tau)$ in $\tau\in(\tau_0,\tau_-)$. 
		
		Hence, $\tau_H^*\in(\tau_-,\tau_+)$ is the first critical value. Correspondingly, we obtain the asymptotic behavior $\tau_H^*\to\tau^*$ and $\omega_H^*\to\omega_{1,\infty}(\tau^*)$ as $k_2\to\infty$.
	\end{proof}
	
	\begin{lemma}\label{le.trans2}
		Let the critical point $(\tau_H^*,\omega_H^*)$ be stated by Lemma \ref{le.e2}. In particular, there exists a small $\kappa>0$, such that the SLEP system \eqref{F*} admits a unique pair of critical complex eigenvalues $\la_R^*(\tau)\pm i\la_I^*(\tau)$ in $\tau\in(\tau_H^*-\kappa,\tau_H^*+\kappa)$ which satisfies $\la_R^*(\tau_H^*)=0$, $\la_I^*(\tau_H^*)=\omega_H^*$, and $\frac{d}{d\tau}\la_R^*(\tau_H^*)>0$.
	\end{lemma}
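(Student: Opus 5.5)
We follow the template of Lemma \ref{le.trans1}, now with $\theta_\tau=1/(1+\la^*\tau)$. The plan is to first secure existence and local uniqueness of the critical branch from the implicit function theorem, and then to obtain the sign of $d\la_R^*/d\tau$ from the geometry of the curves $\omega_1(\tau)$, $\omega_2(\tau)$ at their intersection.

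At the critical point we set $z:=1+i\omega_H^*\tau_H^*$. Differentiating $\mathcal F^*$ in \eqref{F*} gives
\begin{equation*}
\frac{\pa\mathcal F^*}{\pa\la^*}=-\alpha-\frac{k_1q\tau_H^*}{z^2}+\sum_{n\ge0}\frac{c_1^*c_2^*\psi_n^2(x^*)}{(\gamma_n+i\omega_H^*+2K)^2},\qquad \frac{\pa\mathcal F^*}{\pa\tau}=-\frac{ik_1q\omega_H^*}{z^2}.
\end{equation*}
We will not invert this expression directly. Instead, we use that the real SLEP system \eqref{SLEP-g} at $\la_R^*=0$ is exactly the pair \eqref{eq-g2}, with real parts $G_1(\omega,\tau):=X(0,\omega^2,K)-P(\omega^2)$ and $G_2(\omega,\tau):=Y(0,\omega^2,K)-Q(\omega^2)$. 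Writing \eqref{SLEP-g} as $\Phi(\la_R,\omega,\tau)=0$ with two real components, the Cauchy--Riemann relations for the analytic function $\mathcal F^*$ give $\pa\mathcal F^*/\pa\la^*\neq0$ if and only if the Jacobian $\det\pa(\Phi_1,\Phi_2)/\pa(\la_R,\omega)\neq0$. Because $\mathcal F^*$ is analytic, this Jacobian equals $|\pa\mathcal F^*/\pa\la^*|^2$. Cramer's rule then gives
\begin{equation*}
\operatorname{sign}\frac{d\la_R^*}{d\tau}(\tau_H^*)=\operatorname{sign}\Bigl(-\operatorname{Re}\bigl(\overline{\pa_{\la}\mathcal F^*}\,\pa_\tau\mathcal F^*\bigr)\Bigr).
\end{equation*}

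This sign should be read off from the graphs in Figure \ref{Fig. tau_2}. The curve $\omega_1(\tau)$ is the zero set of the real-part equation \eqref{u2: im-a}, and $\omega_2(\tau)$ is the zero set of the imaginary-part equation \eqref{u2: im-b}. On $(\tau_0,\tau_2)$ the curve $\omega_1$ is strictly decreasing and $\omega_2$ is strictly increasing (Lemmas \ref{le.om1}, \ref{le.om2}), so the two curves cross transversally at $\tau_H^*$ with $\omega_2-\omega_1$ changing sign from $-$ to $+$.

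The key step is to translate this crossing into the sign of $d\la_R^*/d\tau$. The route is as follows.
\begin{enumerate}
\item Express $\pa_\omega$ and $\pa_\tau$ of the real and imaginary parts of $\mathcal F^*(i\omega,\tau)$ through $\pa X/\pa\omega^2$, $\pa Y/\pa\omega^2$ and the explicit derivatives of $P$ and $Q$.
\item Use $\tau_H^*\omega_H^*<1$, which all four parts of Lemma \ref{le.e2} provide on the relevant branch, to fix the signs of $\pa_\tau P$, $\pa_\tau Q$ and of the $\omega$-derivatives. For example, $\pa_\tau Q=k_1q(1-\tau^2\omega^2)/(1+\tau^2\omega^2)^2<0$.
\item Combine these signs with the ordering of the slopes $\omega_1'(\tau_H^*)<0<\omega_2'(\tau_H^*)$, obtained from the implicit differentiation formulas in the proofs of Lemmas \ref{le.om1}, \ref{le.om2}, to show that the numerator of the implicit-differentiation formula (the analogue of \eqref{eq.tra1}) is positive.
\end{enumerate}
The same nonvanishing Jacobian gives $\pa_\la\mathcal F^*\neq0$. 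The implicit function theorem then yields the unique smooth local branch $\la_R^*(\tau)\pm i\la_I^*(\tau)$, and the conjugate root follows by symmetry.

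For part 4 of Lemma \ref{le.e2}, where $\tau_H^*\omega_H^*<1$ may fail, we instead pass to the limit $k_2\to\infty$. In that limit the equations become explicit: by \eqref{omsi}, $\omega_{1,\infty}$ and $\omega_{2,\infty}$ cross at $\tau^*$ with $\omega_{1,\infty}-\omega_{2,\infty}$ going from $+$ to $-$. The crossing speed is computed exactly there and is nonzero, so transversality persists for large $k_2$ by continuity.

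The main obstacle is step 3. We must check that the transversal intersection of the two zero-curves, taken with this orientation, really corresponds to $\la_R^*$ increasing rather than decreasing; the orientation bookkeeping between the $(\omega,\tau)$-plane and the complex $\la$-plane is delicate. To verify it, we would first carry out the computation in the limit $k_2\to\infty$, where everything is explicit, fix the sign there, and then propagate it by continuity along the connected parameter region. On that region the crossing stays transversal, since $\omega_1$ is decreasing and $\omega_2$ is increasing throughout, so the sign cannot change.
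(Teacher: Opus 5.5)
\textbf{Verdict: genuine gap.} The sign of $d\la_R^*/d\tau$, which is the whole content of the lemma, is not established.

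Your Cramer's-rule reduction is correct. Pushed one step further, it gives the exact identity
\[
\frac{d\la_R^*}{d\tau}(\tau_H^*)=-\frac{\pa_\omega\Phi_1\,\pa_\omega\Phi_2\,\bigl(\omega_2'(\tau_H^*)-\omega_1'(\tau_H^*)\bigr)}{\bigl|\pa_\la\mathcal F^*(i\omega_H^*,\tau_H^*)\bigr|^2},
\]
where $\Phi_1(\omega,\tau):=\operatorname{Re}\mathcal F^*(i\omega,\tau)=P(\omega^2)-X(0,\omega^2,K)$ and $\Phi_2(\omega,\tau):=\operatorname{Im}\mathcal F^*(i\omega,\tau)=\omega\bigl(Y(0,\omega^2,K)-Q(\omega^2)\bigr)$.

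Your step 3, which you call the main obstacle, is exactly the missing part. The slope ordering $\omega_1'<0<\omega_2'$ cannot decide the sign by itself, because the two zero curves do not see the orientation factor $\pa_\omega\Phi_1\,\pa_\omega\Phi_2$. For example, $\mathcal F^*(\la,\tau)$ and $\overline{\mathcal F^*(-\bar\la,\tau)}$ have the same two zero curves on $\operatorname{Re}\la=0$. Yet their roots are mirror images across the imaginary axis, so they cross in opposite directions.

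Your remedy, fixing the sign at $k_2\to\infty$ and propagating it by continuity, does not work as stated:
\begin{enumerate}
\item Parts 1 and 3 of Lemma~\ref{le.e2} are also used for $(p,q,K)=(1,-1,0)$, the symmetric mode of Case 6. There is no $k_2$ there to send to infinity.
\item At your base point, $\tau_H^*\omega_H^*>1$ whenever $\rho_0^*+k_1p+qk_1/2<0$. Then $\omega_2$ is decreasing at the crossing, so the invariant ``$\omega_1$ decreasing, $\omega_2$ increasing'' that you want to propagate fails there.
\item You do not exhibit a connected path along which the crossing stays nondegenerate.
\end{enumerate}
Your step 2 also wrongly claims that all four parts of Lemma~\ref{le.e2} give $\tau_H^*\omega_H^*<1$.

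The missing idea is that no continuity argument is needed: for $q<0$ the orientation is fixed at every purely imaginary root.
\begin{itemize}
\item $P$ increases and $X$ decreases in $\omega$, so $\pa_\omega\Phi_1>0$.
\item At a root, $\pa_\omega\Phi_2=\omega\,\pa_\omega(Y-Q)<0$, because $Y$ decreases and $Q$ increases.
\end{itemize}
Equivalently, $\operatorname{Im}\pa_\la\mathcal F^*<0$ and $\operatorname{Re}\pa_\la\mathcal F^*<0$. The second inequality is not visible from the raw formula: there the term $k_1(-q)\tau(1-\tau^2\omega^2)/(1+\tau^2\omega^2)^2$ is positive when $\tau\omega<1$, and one needs the constraint $Y=Q$. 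Hence
\[
\operatorname{sign}\,\frac{d\la_R^*}{d\tau}(\tau_H^*)=\operatorname{sign}\bigl(\omega_2'(\tau_H^*)-\omega_1'(\tau_H^*)\bigr).
\]
This settles parts 1--3 at once: $\tau_H^*\in(\tau_0,\tau_2)$ gives $\omega_1'<0<\omega_2'$ by Lemmas~\ref{le.om1} and \ref{le.om2}. It settles part 4 via the limiting curves, since $\omega_{2,\infty}'(\tau^*)-\omega_{1,\infty}'(\tau^*)=-qk_1/\bigl(2\alpha(\tau^*)^2\omega_{1,\infty}(\tau^*)\bigr)>0$, and this persists for large $k_2$ by $C^1$ convergence.

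Repaired this way, your route is a genuine alternative to the paper's. The paper expands the numerator of \eqref{eq.tra2} directly. The $k_1^2q^2$ cross terms cancel, leaving $I_3+I_4$ in \eqref{I34}, with $I_3>0$ by \eqref{alp-} and $I_4>0$ exactly when $\tau_H^*\omega_H^*<1$. Part 4 follows from $I_4\to0$ and $I_3\to c>0$ as $k_2\to\infty$. The paper's computation makes the role of $\tau_H^*\omega_H^*<1$ explicit. Yours explains transversality geometrically, and shows $d\la_R^*/d\tau\ge0$ at any first crossing where $\omega_2-\omega_1$ changes sign from $-$ to $+$, whether or not $\tau_H^*\omega_H^*<1$.
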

	\begin{proof}
		The complex SLEP system \eqref{F*} is recast as
		\begin{equation*}
			\mathcal{F}^*(\la^*,\tau):=\rho_0^*-\alpha\la^*+k_1(p+q \theta_{\tau}^*)-\sum_{n=0}^{\infty}\frac{c_1^*c_2^*}{\gamma_n+\la^*+2K}\psi_n^2(x^*)=0,
		\end{equation*}
		where $\theta_{\tau}^*=\frac{1}{1+\lambda^*\tau}$.
		
		It suffices to prove that
		\begin{equation}\label{eq.tra2}
			\frac{d\la_R^*}{d\tau}(\tau_H^*)=-\frac{
				\operatorname{Re}\frac{ \pa\mathcal{F}^*}{\pa\la^*}(i\omega_H^*,\tau_H^*)
				\operatorname{Re}\frac{ \pa\mathcal{F}^*}{\pa\tau}(i\omega_H^*,\tau_H^*)+
				\operatorname{Im}\frac{ \pa\mathcal{F}^*}{\pa\la^*}(i\omega_H^*,\tau_H^*)
				\operatorname{Im}\frac{ \pa\mathcal{F}^*}{\pa\tau}(i\omega_H^*,\tau_H^*)}{\left(\operatorname{Re}\frac{ \pa\mathcal{F}^*}{\pa\la^*}(i\omega_H^*,\tau_H^*)\right)^2+\left(\operatorname{Im}\frac{ \pa\mathcal{F}^*}{\pa\la^*}(i\omega_H^*,\tau_H^*)\right)^2}>0,
		\end{equation}
		where 
		\begin{equation*}
			\begin{split}
				&\operatorname{Re}\frac{ \pa\mathcal{F}^*}{\pa\la^*}(i\omega_H^*,\tau_H^*)=-\alpha+\frac{k_1 q \tau_H^*[(\tau_H^*\omega_H^*)^2-1]}{[1+(\tau_H^*\omega_H^*)^2]^2}+\sum_{n=0}^{\infty} \frac{c_1^*c_2^*[(\gamma_n + 2K)^2-(\omega_H^*)^2]}{[(\gamma_n + 2K)^2 + (\omega_H^*)^2]^2}\,\psi_n^2(x^*),\\
				&\operatorname{Im}\frac{\pa\mathcal{F}^*}{\pa\la^*}(i\omega_H^*,\tau_H^*)=\frac{2k_1 q \tau_H^{*2} \omega_H^*}{[1+(\tau_H^*\omega_H^*)^2]^2}-\sum_{n=0}^{\infty} \frac{2c_1^*c_2^*(\gamma_n + 2K)\omega_H^*}{[(\gamma_n + 2K)^2 + (\omega_H^*)^2]^2}\,\psi_n^2(x^*),\\
				&\operatorname{Re}\frac{ \pa\mathcal{F}^*}{\pa\tau}(i\omega_H^*,\tau_H^*)=-\frac{2 k_1 q \tau_H^* \omega_H^{*2}}{[1+(\tau_H^*\omega_H^*)^2]^2},\;
				\operatorname{Im}\frac{ \pa\mathcal{F}^*}{\pa\tau}(i\omega_H^*,\tau_H^*)=-\frac{k_1 q \omega_H^*[1-(\tau_H^*\omega_H^*)^2]}{[1+(\tau_H^*\omega_H^*)^2]^2}.
			\end{split}
		\end{equation*}
		
		Moreover, we have $\operatorname{Im}\frac{\pa\mathcal{F}^*}{\pa\la^*}(i\omega_H^*,\tau_H^*)<0$ due to $q<0$.
		
		One can check that $\frac{d\la_R^*}{d\tau}(\tau_H^*)>0$ is equivalent to $I_3+I_4>0$, where 
		\begin{equation}\label{I34}
			\begin{split}
				&I_3=\left[ \alpha-\sum_{n=0}^{\infty} \frac{c_1^*c_2^*[(\gamma_n + 2K)^2-(\omega_H^*)^2]}{[(\gamma_n + 2K)^2 + (\omega_H^*)^2]^2}\,\psi_n^2(x^*)\right] \frac{2 k_1 (-q)\tau_H^*\omega_H^{*2}}{[1+(\tau_H^*\omega_H^*)^2]^2},\\
				&I_4=\left[ \sum_{n=0}^{\infty} \frac{2c_1^*c_2^*(\gamma_n + 2K)\omega_H^*}{[(\gamma_n + 2K)^2 + (\omega_H^*)^2]^2}\,\psi_n^2(x^*)\right] \frac{k_1 (-q)\omega_H^*[1-(\tau_H^*\omega_H^*)^2]}{[1+(\tau_H^*\omega_H^*)^2]^2}.
			\end{split}
		\end{equation}
		where $I_3>0$ can be obtained by \eqref{alp-}.

		On the other hand, under the first three conditions of Lemma \ref{le.e2}, we have $\omega_H^*\tau_H^*<1$, such that $I_4>0$. Under the last condition of Lemma \ref{le.e2}, as $k_2\to\infty$, we have $I_4\to 0$ ($I_3\to c$ for some $c>0$), such that $\frac{d\la_R^*}{d\tau}(\tau_H^*)>0$ holds for $k_2>M(k_1)$, where $M(k_1)$ can be a larger one if necessary. The proof is complete.
	\end{proof}
	
	We next characterize the stability regions for Cases 4–6. For convenience, we denote by $\mathcal{K}_i^{j,\sigma}$ the $i$-th threshold of Case $j$ in the $\sigma$ sub-problem, where $j\in\{4,5,6\}$, $\sigma\in\{s,a\}$ denotes the symmetric or asymmetric sub-problem, and $i=1,2$ when applicable. In particular, $(j,\sigma)$ corresponds to a specific $(p,q,K)$ (see \eqref{eq:pqk-sym} and \eqref{eq:pqk-asym}). 
	
	\begin{lemma}\label{le.e3} 
		Suppose that $\alpha\in(\alpha_0,\infty)$, $q<0$. For any $k_2\in(0,k_2^*]$, let $F(\omega^2)$ be given by \eqref{S2} and let $\omega_0=\omega_0(k_2)$ be the unique nonnegative solution of $X(0,\omega_0^2,k_2)=\rho_0^*$. Define $\Delta:=9(\rho_0^*)^2-8F(\omega_0^2)$, and 
		\begin{enumerate} 
			\item $\mathcal{K}_1^{4,a}(k_2):=\mathcal{K}_1^{2,a}(k_2)=\mathcal{K}_1^{3,a}(k_2)$. 
			\item $\mathcal{K}_1^{6,s}:=2\mathcal{K}_1^{3,s}$. 
			\item $\mathcal{K}_1^{j,a}(k_2):= 
			\begin{cases} 
				\frac{3\rho_0^*-\sqrt{\Delta}}{4}, & \Delta>0,\\[6pt] 
				+\infty, & \Delta\le 0, 
			\end{cases}$,\;
			$\mathcal{K}_2^{j,a}(k_2):= 
			\begin{cases} 
				\dfrac{3\rho_0^*+\sqrt{\Delta}}{4}, & \Delta>0,\\[6pt] 
				+\infty, & \Delta\le 0, 
			\end{cases}$ $(j=5,6)$. 
		\end{enumerate} 
		Then for the mode $\sigma$ of Case $j$ $($$(j,\sigma)=(4,a),(5,a),(6,s),(6,a)$$)$, if $k_1<\mathcal{K}_1^{j,\sigma}$, there are no purely imaginary eigenvalues for any $\tau\in(0,\infty)$. Moreover, for $(j,\sigma)=(5,a),(6,a)$, if $k_1>\mathcal{K}_2^{j,\sigma}$, the nonexistence of purely imaginary eigenvalues for any $\tau\in(0,\infty)$ also holds. In particular, as $k_2\to (k_2^*)^-$, we have $\mathcal{K}_1^{4,a}(k_2)\to \mathcal{K}_1^{4,a}(k_2^*)>0$, and $\mathcal{K}_1^{j,a}(k_2)\to 0,\;\mathcal{K}_2^{j,a}(k_2)\to \frac{3\rho_0^*}{2}\; (j=5,6)$.
	\end{lemma}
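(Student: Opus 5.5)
The plan is to eliminate $\tau$ from \eqref{eq-g2}. This turns the distributed-delay problem into a single equation in $\omega^2$ that is directly comparable with \eqref{S2}. Set $s:=\frac{1}{1+\tau^2\omega^2}\in(0,1)$, so that $\tau\omega=\sqrt{(1-s)/s}$ and $\tau s=\sqrt{s(1-s)}/\omega$. Then \eqref{u2: im-a} reads $A:=X(0,\omega^2,K)-\rho_0^*-k_1p=k_1qs$, and \eqref{u2: im-b} multiplied by $\omega$ reads $\omega(Y(0,\omega^2,K)-\alpha)=k_1q\sqrt{s(1-s)}$. Squaring and adding gives $A^2+\omega^2(\alpha-Y)^2=k_1^2q^2s=k_1qA$. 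With $B:=-A=\rho_0^*+k_1p-X$ and $q<0$, this is the necessary condition
\begin{equation*}
\Bigl(\rho_0^*+k_1\bigl(p+\tfrac{q}{2}\bigr)-X(0,\omega^2,K)\Bigr)^2+\omega^2\bigl(\alpha-Y(0,\omega^2,K)\bigr)^2=\frac{k_1^2q^2}{4}.
\end{equation*}
The additional constraint $s\in(0,1)$ only shrinks the solution set. Hence if this equation has no root $\omega>0$, then \eqref{eq-g2} has no purely imaginary eigenvalue for any $\tau>0$. Expanding the equation with $F(\omega^2)$ from \eqref{S2} yields
\begin{equation*}
F(\omega^2)+(2p+q)k_1\bigl(\rho_0^*-X\bigr)+k_1^2\bigl(p^2+pq\bigr)=0.
\end{equation*}
This identity is used case by case.

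\emph{Case $(4,a)$}, $(p,q)=(0,-2)$. The identity becomes $F(\omega^2)=2k_1(\rho_0^*-X)=2pk_1(X-\rho_0^*)$ with $p=-1$. This is exactly \eqref{S2} for $(p,q,K)=(-1,-1,k_2)$, since there $q^2-p^2=0$. So the argument of Lemma \ref{le.e1} applies verbatim: no root for $k_1<K_1(-1,-1,k_2)=\mathcal{K}_1^{2,a}(k_2)=\mathcal{K}_1^{3,a}(k_2)$. The limit as $k_2\to(k_2^*)^-$ is the left limit already established in Lemma \ref{le.e1}, item 2.

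\emph{Case $(6,s)$}, $(p,q,K)=(1,-1,0)$. The identity becomes $F(\omega^2)=k_1(X-\rho_0^*)=2\cdot\frac{k_1}{2}(X-\rho_0^*)$. This is \eqref{S2} for $(1,-1,0)$ with $k_1$ replaced by $k_1/2$, so Lemma \ref{le.e1} gives nonexistence for $k_1/2<\mathcal{K}_1^{3,s}$, i.e. $k_1<2\mathcal{K}_1^{3,s}$.

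\emph{Cases $(5,a),(6,a)$}, $(p,q,K)=(-1,-1,k_2)$. The identity becomes $\Phi(\omega,k_1):=2k_1^2-3k_1(\rho_0^*-X)+F(\omega^2)=0$. For $\omega\in(0,\omega_0)$ we have $\rho_0^*-X<0$ by \eqref{om0}, so $\Phi>0$ because $F>0$ (Lemma \ref{le.F}). For $\omega\ge\omega_0$ we have $0\le\rho_0^*-X<\rho_0^*$, since $X>0$. By Lemma \ref{le.F}, $F$ is increasing on $(\omega_*,\infty)\supset[\omega_0,\infty)$, so $F(\omega^2)\ge F(\omega_0^2)$. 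Hence, for every $\omega\ge\omega_0$,
\begin{equation*}
\Phi(\omega,k_1)\ge 2k_1^2-3\rho_0^*k_1+F(\omega_0^2).
\end{equation*}
The right-hand side is a quadratic in $k_1$ with discriminant $\Delta$. If $\Delta\le0$ it is nonnegative, and the first inequality is strict because $\rho_0^*-X<\rho_0^*$ and $k_1>0$, so $\Phi>0$ for all $\omega>0$ and every $k_1>0$. If $\Delta>0$ it is positive exactly for $k_1<(3\rho_0^*-\sqrt\Delta)/4$ or $k_1>(3\rho_0^*+\sqrt\Delta)/4$, which are the claimed thresholds $\mathcal{K}_1^{j,a}$ and $\mathcal{K}_2^{j,a}$. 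Finally, as $k_2\to(k_2^*)^-$ we have $\omega_0(k_2)\to0$, so $F(\omega_0^2)\to F(0)=(\rho_0^*-X(0,0,k_2^*))^2=0$. Then $\Delta\to9(\rho_0^*)^2$, giving the limits $0$ and $3\rho_0^*/2$.

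The main obstacle is the uniformity in $\omega$ in the last case, namely replacing the $\omega$-dependent coefficient $\rho_0^*-X$ and the value $F(\omega^2)$ by the single pair $(\rho_0^*,F(\omega_0^2))$. This is handled by splitting at $\omega_0$ and using the monotonicity of $F$ on $[\omega_0,\infty)$ from Lemma \ref{le.F}. One must also check that $\omega_*<\omega_0$ holds, which Lemma \ref{le.F} provides, including the degenerate endpoint $k_2=k_2^*$ where $\omega_0=0$.
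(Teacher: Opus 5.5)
Your proposal is correct and follows essentially the same route as the paper. Both arguments eliminate $\tau$ to obtain $F(\omega^2)=(q+2p)k_1(X-\rho_0^*)-p(p+q)k_1^2$. Both identify the $(4,a)$ equation with \eqref{S2} for $(-1,-1,k_2)$, and the $(6,s)$ equation with \eqref{S2} for $(1,-1,0)$ under $k_1\mapsto k_1/2$. Both treat $(5,a),(6,a)$ by splitting at $\omega_0$ and comparing with the quadratic $2k_1^2-3\rho_0^*k_1+F(\omega_0^2)$.

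The differences are cosmetic. You eliminate $\tau$ by squaring through $s=1/(1+\tau^2\omega^2)$, which gives a necessary condition and suffices for nonexistence; the paper instead solves $\tau=(Y-\alpha)/(X-\rho_0^*-k_1p)$ and substitutes. One small slip: at $k_2=k_2^*$ one has $\omega_*=\omega_0=0$ rather than $\omega_*<\omega_0$, but monotonicity of $F$ on $(0,\infty)$ is all you need there.
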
 
	
	\begin{proof} 
		From \eqref{eq-g2}, we have 
		\begin{equation}\label{tau} 
			\tau = \frac{Y(0,\omega^2,K)-\alpha}{X(0,\omega^2,K)-\rho_0^*-k_1 p}>0. 
		\end{equation} 
		
		Substituting \eqref{tau} into \eqref{u2: im-a}, one can obtain 
		\begin{equation}\label{F-eq_tri} 
			F(\omega^2)=(q+2p)k_1(X(0,\omega^2,K)-\rho_0^*)-p(p+q)k_1^2, 
		\end{equation} 
		where $F(\omega^2)=(X(0,\omega^2,K)-\rho_0^*)^2+\omega^2 (Y(0,\omega^2,K)-\alpha)^2$ is given by \eqref{S2} and is analyzed in Lemma \ref{le.F}. 
		
		Thus, the SLEP system \eqref{eq-g2} is equivalent to \eqref{tau} and \eqref{F-eq_tri}.  
		
		1. For the asymmetric mode of Case 4 ($(p,q,K)=(0,-2,k_2)$), \eqref{F-eq_tri} reduces to 
		\begin{equation}\label{tM-4} 
			F(\omega^2)=2k_1(\rho_0^*-X(0,\omega^2,k_2)), 
		\end{equation} 
		which coincides with the asymmetric modes of Cases 2 and 3 (\eqref{S2} with $(p,q,K)=(-1,-1,k_2)$). Thus, we have $\mathcal{K}_1^{4,a}(k_2)
		=\mathcal{K}_1^{2,a}(k_2)
		=\mathcal{K}_1^{3,a}(k_2)$.  
		
		2. For the symmetric mode of Case 6 ($(p,q,K)=(1,-1,0)$), \eqref{F-eq_tri} reduces to 
		\begin{equation*} 
			F(\omega^2)=k_1(X(0,\omega^2,0)-\rho_0^*). 
		\end{equation*} 
		
		It is similar to the symmetric mode of Case 3 (self-discrete-delayed coupling), that is, $F(\omega^2)=2k_1(X(0,\omega^2,0)-\rho_0^*)$, which has the critical value $\mathcal{K}_1^{3,s}$, leading to the factor $2$ of $\mathcal{K}_1^{6,s}=2\mathcal{K}_1^{3,s}$. 
		
		3. For the asymmetric modes of Cases 5 and 6 ($(p,q,K)=(-1,-1,k_2)$), \eqref{F-eq_tri} reduces to 
		\begin{equation}\label{F=G_a2} 
			F(\omega^2)=3k_1(\rho_0^*-X(0,\omega^2,k_2))-2k_1^2. 
		\end{equation} 
		
		For $\omega\in(0,\omega_0)$, we have $\rho_0^*-X(0,\omega^2,k_2)<0$. Thus, $F(\omega^2)>0>3k_1(\rho_0^*-X(0,\omega^2,k_2))-2k_1^2$, such that \eqref{F=G_a2} has no solutions. For $\omega\in(\omega_0,\infty)$, $F(\omega^2)$ is increasing such that $F(\omega^2)>F(\omega_0^2)$. On the other hand, we have $3k_1(\rho_0^*-X(0,\omega^2,k_2))-2k_1^2<3k_1\rho_0^*-2k_1^2$. Thus, if $3k_1\rho_0^*-2k_1^2<F(\omega_0^2)$, or equivalently, $2k_1^2-3k_1\rho_0^*+F(\omega_0^2)>0$, then \eqref{F=G_a2} has no solution with respect to $\omega\in(\omega_0,\infty)$. As a quadratic polynomial in $k_1$, if $\Delta=9(\rho_0^*)^2-8F(\omega_0^2)>0$, one can immediately derive nonnegative
		\begin{equation*}
			\mathcal{K}_1^{j,a}
			=\frac{3\rho_0^*-\sqrt{\Delta}}{4},
			\;\;
			\mathcal{K}_2^{j,a}
			=\frac{3\rho_0^*+\sqrt{\Delta}}{4}
			\;\; (j=5,6),
		\end{equation*} 
		such that for $k_1<\mathcal{K}_1^{j,a}$ or $k_1>\mathcal{K}_2^{j,a}$, \eqref{F=G_a2} has no solution. While if $\Delta\le 0$, then \eqref{F=G_a2} has no solution for any $k_1>0$. Moreover, as $k_2\to(k_2^*)^-$, we have $\omega_0\to 0$, which leads to $F(\omega_0^2)\to 0$ and $\sqrt{\Delta}\to 3\rho_0^*>0$. Thus, the proof is complete. 
	\end{proof} 
	
	\begin{remark}\label{re.K234}
		\begin{enumerate} 
			\item For all Cases 1–6, sufficiently small $k_1$ and $k_2$ cannot activate the delay-induced instability mechanism, regardless of the type or placement of delay.
			\item From the proof, we can conclude that $\mathcal{K}_1^{4,a}(k_2)=\mathcal{K}_1^{2,a}(k_2)=\mathcal{K}_1^{3,a}(k_2)$ for all $k_2\in(0,\infty)$. Thus, if $k_1<\mathcal{K}_1^{4,a}$, then the asymmetric SLEP system of Case 4 has no purely imaginary eigenvalues for any $\tau\in(0,\infty)$. 
			\item For the asymmetric modes of Cases 5 and 6 ($(p,q,K)=(-1,-1,k_2)$), the preceding nonexistence conclusion for $k_1>\rho_0^*$ in Lemma \ref{le.no1} can also be verified directly by \eqref{tau}. Indeed, if $k_1>\rho_0^*$, from \eqref{tau}, we have 
			\begin{equation*} 
				\tau =\frac{Y(0,\omega^2,k_2)-\alpha}{X(0,\omega^2,k_2)+k_1-\rho_0^*}<0, 
			\end{equation*} 
			due to $\alpha\in(\alpha_0,\infty)$. Thus, there are no purely imaginary eigenvalues for any  $\tau\in(0,\infty)$. 
		\end{enumerate} 
	\end{remark}
	
	\begin{corollary}\label{cor.tM}
		For any $k_2\in(0,\infty)$, in Case 4, there exists a sufficiently large 
		$\mathcal{K}_2^{4,a}(k_2)\ge\mathcal{K}_1^{4,a}(k_2)$, such that if 
		$k_1\in(\mathcal{K}_2^{4,a}(k_2),\infty)$, then system \eqref{eq-g2} 
		has no solution in $\tau\in(0,\tau_H^*)$ and admits a unique solution 
		$(\omega_H^*)^2$ at $\tau_H^*$ satisfying
		\begin{equation*}
			\tau_H^*\to\frac{\alpha}{\rho_0^*},
			\quad
			\omega_H^*\to\infty
			\quad\text{as }k_1\to\infty.
		\end{equation*}
		Moreover, the conclusions of Lemma \ref{le.trans2} hold.
	\end{corollary}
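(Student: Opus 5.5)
Case 4, asymmetric mode, has $(p,q,K)=(0,-2,k_2)$, so $\rho_0^*+k_1p=\rho_0^*>0$. Also $\rho_0^*+k_1(p+q)=\rho_0^*-2k_1<0<X(0,0,k_2)$ once $k_1>\rho_0^*/2$. Hence for large $k_1$ the hypotheses of Lemmas \ref{le.om1} and \ref{le.om2} hold, and both curves $\omega_1(\tau)$ and $\omega_2(\tau)$ exist on $(\tau_0,\infty)$, with $\tau_0=\frac{\alpha-Y(0,0,k_2)}{2k_1}\to0$. Since $\rho_0^*+\frac{qk_1}{2}=\rho_0^*-k_1<0$ for $k_1>\rho_0^*$, part 2 of Lemma \ref{le.om1} gives $\tau\omega_1(\tau)>1$ for all $\tau$. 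We are therefore in the configuration of Figure \ref{Fig. tau_2}(c). The plan mirrors part 4 of Lemma \ref{le.e2}, with $k_1\to\infty$ in place of $k_2\to\infty$, and uses a rescaling to handle $\omega\to\infty$.

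\textbf{Asymptotics.} Write $s=\tau\omega$. Dividing \eqref{u2: im-a} and \eqref{u2: im-b} by $k_1$ gives
\[
\frac{\rho_0^*-X(0,\omega^2,k_2)}{k_1}=\frac{2}{1+s^2},\qquad \frac{\alpha-Y(0,\omega^2,k_2)}{k_1}=\frac{2\tau}{1+s^2}.
\]
Dividing the second relation by the first eliminates $k_1$ and yields exactly \eqref{tau}:
\[
\tau=\frac{\alpha-Y(0,\omega^2,k_2)}{\rho_0^*-X(0,\omega^2,k_2)}.
\]
Equivalently, \eqref{tM-4} gives $F(\omega^2)=2k_1(\rho_0^*-X(0,\omega^2,k_2))$. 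By Lemma \ref{le.F}, $F$ is eventually increasing and $F(\omega^2)\sim\alpha^2\omega^2\to\infty$. Hence every root $\omega$ of \eqref{tM-4} with $\omega$ in the region where $\rho_0^*>X$ tends to infinity as $k_1\to\infty$. The bounded-$\omega$ region contributes nothing for large $k_1$, since the right-hand side exceeds $F$ there only when $\omega$ is large.

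Given $\omega_H^*\to\infty$, Proposition \ref{prop.AB} gives $X,Y\to0$. Then \eqref{tau} yields $\tau_H^*\to\alpha/\rho_0^*$ and $\omega_H^*\sim\sqrt{2k_1\rho_0^*}/\alpha$.

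\textbf{Uniqueness and first crossing.} First I will show that \eqref{tM-4} has a unique root for large $k_1$. On $[\omega_0,\infty)$ (or $[0,\infty)$ when $k_2\ge k_2^*$), consider $H(\omega)=F(\omega^2)/(2(\rho_0^*-X(0,\omega^2,k_2)))$. For large $\omega$, $H$ is strictly increasing, because $F$ grows like $\alpha^2\omega^2$ while the denominator tends monotonically to $2\rho_0^*$. I still need to check that the derivative sign holds past some $\bar\omega(k_2)$; I would do this via the estimates \eqref{eq.XY2}. Then for $k_1>\mathcal{K}_2^{4,a}(k_2):=\max\{\mathcal{K}_1^{4,a},\sup_{\omega\le\bar\omega}H\}$, the level set $H=k_1$ is a single point.

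Each root $\omega$ determines a unique $\tau$ through \eqref{tau}, so exactly one pair $(\omega_H^*,\tau_H^*)$ exists. For $\tau\in(0,\tau_H^*)$ there is then no solution, since no other root exists at all. The main obstacle is this monotonicity of $H$, specifically making the threshold explicit in $k_2$ near $k_2^*$. If a direct derivative estimate fails, I would instead argue by contradiction: a second root would have to be bounded, but bounded roots cannot satisfy $F=2k_1(\rho_0^*-X)$ for $k_1$ large.

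\textbf{Transversality.} I use $I_3+I_4>0$ from \eqref{I34}. Here $(-q)k_1=2k_1$ and $s_H=\tau_H^*\omega_H^*\to\infty$, with $s_H^2\sim 2k_1/\rho_0^*$. First, $I_3$ has the sign of $\alpha-(\cdots)\ge\alpha-Y>0$ and is of order $k_1\omega^2/s^4\sim\rho_0^{*2}/k_1\cdot(1/\tau)$. Second, $I_4$ is negative, but its bracket is $O(\omega^{-3})$, so $|I_4|=O(k_1\omega^{-2}s^{-2})=O(1/k_1)\cdot O(\omega^{-2})$. Since $\omega^{-2}\to0$, we get $|I_4|=o(I_3)$.

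Enlarging $\mathcal{K}_2^{4,a}$ if needed gives $\frac{d}{d\tau}\la_R^*(\tau_H^*)>0$. Also $\operatorname{Im}\partial_\la\mathcal{F}^*<0$ (from $q<0$), so the pair is simple by the implicit function theorem. This establishes the conclusions of Lemma \ref{le.trans2}.
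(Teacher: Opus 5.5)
\textbf{Gap in the uniqueness step.} Your argument relies on \eqref{tM-4} having a unique root for large $k_1$, and this is false for every $k_2\in(0,k_2^*)$, not only for $k_2$ near $k_2^*$.

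At $\omega=\omega_0(k_2)$ the right-hand side $2k_1(\rho_0^*-X(0,\omega^2,k_2))$ vanishes, while $F(\omega_0^2)=\omega_0^2(\alpha-Y(0,\omega_0^2,k_2))^2>0$. At any fixed $\omega>\omega_0$, however, the right-hand side grows linearly in $k_1$ and eventually exceeds $F$. So for all large $k_1$ there is a second root in $(\omega_0,\omega_0+1)$, and it tends to $\omega_0$ as $k_1\to\infty$.

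Your sentence that the right-hand side exceeds $F$ in the bounded region only when $\omega$ is large has it backwards. For large $k_1$ it exceeds $F$ on every compact subset of $\{\rho_0^*>X\}$. Crossings occur only where the right-hand side degenerates (near $\omega_0$) and where $F\sim\alpha^2\omega^2$ takes over.

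Equivalently, $H(\omega)=F(\omega^2)/(2(\rho_0^*-X(0,\omega^2,k_2)))\to+\infty$ as $\omega\to\omega_0^+$. Hence your threshold $\sup_{\omega\le\bar\omega}H$ is $+\infty$. The claim that no other root exists fails, and so does your fallback that bounded roots cannot satisfy \eqref{tM-4}. For $k_2\ge k_2^*$ your argument does work, since $H$ is then bounded on compacts; but the statement covers all $k_2\in(0,\infty)$.

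\textbf{What is needed instead.} The missing idea is to compare delays rather than count roots, which is how the paper proceeds.
\begin{itemize}
\item Isolate the large root by setting $s=\omega^2$ and looking in $(c_-k_1,c_+k_1)$ with $c_-<2\rho_0^*/\alpha^2<c_+$. There $H_2(s;k_1)=F(s)-2k_1(\rho_0^*-X(0,s,k_2))$ changes sign, and $\partial_s H_2\to\alpha^2$ uniformly on $s\ge c_-k_1$. This gives a unique root in $(c_-k_1,\infty)$.
\item Any other root must stay bounded as $k_1\to\infty$, because an unbounded sequence of roots forces $\omega^2/k_1\to2\rho_0^*/\alpha^2$.
\item For a bounded root, $\rho_0^*-X=F/(2k_1)\to0$. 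By \eqref{tau} its delay then satisfies $\tau\ge(\alpha-\alpha_0)/(\rho_0^*-X)\to\infty$, which is far beyond $\tau_H^*\to\alpha/\rho_0^*$.
\end{itemize}
This is what yields ``no solution in $(0,\tau_H^*)$'' and uniqueness at $\tau_H^*$. The spurious root near $\omega_0$ is a genuine solution of \eqref{eq-g2}, only at a delay of order $k_1$.

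\textbf{Secondary: transversality orders.} Your orders of magnitude are wrong, although the conclusion survives.
\begin{itemize}
\item $I_3\to2(\rho_0^*)^2$; it is not $O(1/k_1)$.
\item The bracket in $I_4$ is not $O(\omega^{-3})$ in general, because $\sum_n(\gamma_n+2K)\psi_n^2(x^*)$ need not converge (it diverges in the FitzHugh--Nagumo example).
\item The correct bound is that the bracket equals $-2\omega\,\partial X/\partial(\omega^2)$, which is at most $2X/\omega$ by \eqref{eq.XY2}.
\item Combining this with $2k_1/(1+(\tau_H^*\omega_H^*)^2)=\rho_0^*-X$ gives $|I_4|\le2\rho_0^*X(0,(\omega_H^*)^2,k_2)\to0$.
\end{itemize}
So $I_3+I_4>0$ for large $k_1$.
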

	\begin{proof}
		Rewrite \eqref{tM-4} as $H_2(s;k_1):=
		F(s)-2k_1\big(\rho_0^*-X(0,s,k_2)\big)=0$, where $s=\omega^2$. 
		
		For any fixed $c_-$ and $c_+$ satisfying $0<c_-<2\rho_0^*/\alpha^2<c_+$, as $k_1\to\infty$, we have
		\begin{equation*}
			\frac{H_2(c_- k_1;k_1)}{k_1}
			\to
			\alpha^2c_-  -2\rho_0^*<0,\;\frac{H_2(c_+ k_1;k_1)}{k_1}
			\to
			\alpha^2c_+ -2\rho_0^*>0.
		\end{equation*}
		Thus, for sufficiently large $k_1$, we have $H_2(c_-k_1;k_1)<0<H_2(c_+k_1;k_1)$, such that  $H_2(s;k_1)=0$ admits at least one solution in $(c_-k_1,c_+k_1)$. Further, from \eqref{eq.XY1} and \eqref{eq.XY2}, one can check that $\pa H_2(s;k_1)/\pa s\to\alpha^2>0$ as $k_1\to\infty$ uniformly for $s\ge c_-k_1$. Hence, for sufficiently large $k_1$, $H_2(s;k_1)=0$ admits a unique solution in $(c_-k_1,\infty)$, which is denoted by $(\omega_H^*)^2$. 
		
		Let $\omega$ be any other positive solution of \eqref{tM-4}. If $\omega$ were unbounded as $k_1\to\infty$, then \eqref{tM-4} implies that $\frac{\omega^2}{k_1} \to\frac{2\rho_0^*}{\alpha^2}$, and $\omega^2>c_-k_1$ for sufficiently large $k_1$, which contradicts the uniqueness of $(\omega_H^*)^2\in(c_-k_1,\infty)$. While if $\omega$ is bounded, from \eqref{tM-4} and \eqref{tau}, we have
		\begin{equation*}
			\rho_0^*-X(0,\omega^2,k_2)
			=
			\frac{F(\omega^2)}{2k_1}\to0,\;\;\tau
			=
			\frac{\alpha-Y(0,\omega^2,k_2)}
			{\rho_0^*-X(0,\omega^2,k_2)}
			\to\infty.
		\end{equation*}
		
		Therefore, it follows from \eqref{tM-4} and \eqref{tau} that
		\begin{equation*}
			\frac{(\omega_H^*)^2}{k_1}
			\to\frac{2\rho_0^*}{\alpha^2},
			\;
			\omega_H^*\to\infty,\;\tau_H^*
			:=
			\frac{\alpha-Y(0,(\omega_H^*)^2,k_2)}
			{\rho_0^*-X(0,(\omega_H^*)^2,k_2)}
			\to\frac{\alpha}{\rho_0^*}\;\text{as $k_1\to\infty$},
		\end{equation*}
		where $\tau_H^*$ is the smallest critical value for sufficiently large $k_1$, because all the other roots correspond to critical values tending to $\infty$. Thus, system \eqref{eq-g2} has no solution in $\tau\in(0,\tau_H^*)$ and admits the unique solution $(\omega_H^*)^2$ at $\tau=\tau_H^*$.
		
		Finally, from \eqref{I34}, we obtain $I_3\to2(\rho_0^*)^2>0$, $I_4\to0$.  Hence, for any $k_2\in(0,\infty)$, when $k_1>\mathcal{K}_2^{4,a}(k_2)$ for some sufficiently large $\mathcal{K}_2^{4,a}(k_2)\ge\mathcal{K}_1^{4,a}(k_2)$, the conclusions of Lemma \ref{le.trans2} hold.
	\end{proof}
	
	\subsection{Stability and Hopf bifurcations for Cases 4--6}
	In this subsection, we present the stability and bifurcation results for Cases 4--6 with sufficiently small $\ep>0$, which are given by
	\begin{align}
		\makebox[2.2cm][l]{\text{(Case 4)}}\quad
		&\left\{
		\begin{aligned}
			\alpha\frac{\partial u_i}{\partial t}
			&= \ep\dfrac{\partial^2 u_i}{\partial x^2}
			+ \dfrac{1}{\ep}f(u_i,v_i)
			+ k_1\int_{-\infty}^{t}\dfrac{1}{\tau}
			e^{-\frac{t-s}{\tau}}
			\bigl(u_j(x,s)-u_i(x,s)\bigr)\,ds,\\
			\dfrac{\partial v_i}{\partial t}
			&= d\dfrac{\partial^2 v_i}{\partial x^2}
			+ g(u_i,v_i)+k_2(v_j-v_i),
		\end{aligned}
		\right.
		\label{eq-d4}\\
		\makebox[2.2cm][l]{\text{(Case 5)}}\quad
		&\left\{
		\begin{aligned}
			\alpha\frac{\partial u_i}{\partial t}
			&= \ep\dfrac{\partial^2 u_i}{\partial x^2}
			+ \dfrac{1}{\ep}f(u_i,v_i)
			+ k_1\left(
			\int_{-\infty}^{t}\dfrac{1}{\tau}
			e^{-\frac{t-s}{\tau}}u_j(x,s)\,ds-u_i
			\right),\\
			\dfrac{\partial v_i}{\partial t}
			&= d\dfrac{\partial^2 v_i}{\partial x^2}
			+ g(u_i,v_i)+k_2(v_j-v_i),
		\end{aligned}
		\right.
		\label{eq-d5}\\
		\makebox[2.2cm][l]{\text{(Case 6)}}\quad
		&\left\{
		\begin{aligned}
			\alpha\frac{\partial u_i}{\partial t}
			&= \ep\dfrac{\partial^2 u_i}{\partial x^2}
			+ \dfrac{1}{\ep}f(u_i,v_i)
			+ k_1\left(
			u_j-\int_{-\infty}^{t}\dfrac{1}{\tau}
			e^{-\frac{t-s}{\tau}}u_i(x,s)\,ds
			\right),\\
			\dfrac{\partial v_i}{\partial t}
			&= d\dfrac{\partial^2 v_i}{\partial x^2}
			+ g(u_i,v_i)+k_2(v_j-v_i),
		\end{aligned}
		\right.
		\label{eq-d6}
	\end{align}
	where $\tau\in(0,\infty)$, $i,j\in\{1,2\}$, $i\neq j$; $x\in(0,1)$, $t>0$.
	
	\begin{figure}[!t]
		\centering
		\begin{tikzpicture}
			\fill [fill=green!20]  (0,4) --  (0,1) parabola (2.4,4)--cycle;
			\fill [fill=magenta!15]  (0,0) --  (0,1) parabola (2.4,4)--(2.5,4)--(2.5,0)--cycle;
			\fill [fill=magenta!15]  (2.5,0) rectangle (5,4);
			\fill [fill=magenta!15]  (5,0) rectangle (7.5,4);
			
			\fill [red!30]  (0,1) parabola (2.4,4)--(4.9,4) parabola bend (0,1) (0,1);
			
			\fill [red!30]  (3,2.1) parabola bend (5.25,2.5) (5.25,2.5) parabola (7.5,2) -- (7.5,4) -- (4.9,4);
			\draw (0,1) parabola (2.4,4);
			\draw[densely dashed] (2.5,4)--(2.5,0) node [below] {$\frac{\rho_0^*}{2}$};
			\fill [orange!30]
			(0,1) -- (1,1)
			.. controls (1.6,0.5) and (1.6,0.2) .. (1.2,0)-- (0,0) -- (0,1) -- cycle;
			
			\draw
			(0,1) -- (1,1)
			.. controls (1.6,0.5) and (1.6,0.2) .. (1.2,0);
			\node[font=\footnotesize,below] at (1.5,0) {$\mathcal{K}_1^{4,a}(k_2)$};
			\draw (0,1) node[left] {$k_2^*$};
			\node[font=\footnotesize,above] at (4.9,4)  {$\wt\zeta_4(k_1)$};
			
			\fill [red!30]  (6,0) parabola  (4.6,2.48) -- (4.6,4) -- (7.5,4)--(7.5,0)--cycle;
			\draw (6,0) parabola  (4.6,2.48);
			\node[font=\footnotesize,below] at (6,0) {$\mathcal{K}_2^{4,a}(k_2)$};
			\node[font=\footnotesize,right] at (7.5,2)  {$M_4(k_1)$};
			\draw[densely dashed] (5,4)--(5,0) node [below] {$\rho_0^*$};
			\draw (0,1) parabola (4.9,4);
			\draw (3,2.1) parabola bend (5.25,2.5) (5.25,2.5) parabola (7.5,2);
			\node[font=\footnotesize,above] at (2.4,4)  {$\zeta(k_1)$};
			\draw[->] (0,0) --(7.55,0) node[right] {$k_1$};
			\draw[->] (0,0) --(0,4.05) node[above] {$k_2$};
			\draw  (0,0) node[left] {$O$};
		\end{tikzpicture}
		\caption{Stability diagram of the synchronized layered steady state $(U(x,\ep),U(x,\ep))$ with respect to \eqref{eq-d4} in the $(k_1,k_2)$ plane. Green: instability region  $\Gamma_1$; the curve $\Gamma$ separates $\Gamma_1$ from $\Gamma_2$ (Theorem \ref{th.2-1}). Orange: sufficient region $\Gamma_S$ for delay-independent stability. Red: Hopf bifurcation region $\Gamma_H$. See Theorem \ref{th-4} for precise definitions.}
		\label{Fig.s-dep4}
	\end{figure}
	
	\begin{theorem}[Case 4]\label{th-4}
		Let $\alpha\in(\alpha_0,\infty)$ and $\Gamma_1$ and $\Gamma_2$ be defined by Theorem \ref{th.2-1}. Let $\wt\zeta_4(k_1)$ and $M_4(k_1)$ be $\wt\zeta(k_1)$ and $M(k_1)$ determined by Lemma \ref{le.e2} for $(p,q,K)=(0,-2,k_2)$, respectively. For $k_2\in(0,k_2^*]$, let $\mathcal{K}_1^{4,a}(k_2)$ be given by Lemma \ref{le.e3}. Then, we have the following conclusions $($see Fig. \ref{Fig.s-dep4}$)$:
		\begin{enumerate}
			\item $($Turing instability$)$ If $(k_1,k_2)\in\Gamma_1$, then for any fixed  $\tau\in(0,\infty)$ and sufficiently small $\ep>0$, the synchronized layered steady state $(U(x,\ep),U(x,\ep))$ is unstable with respect to \eqref{eq-d4}.
			\item $($Delay-independent Stability$)$ If $(k_1,k_2)\in\Gamma_S:=\{(k_1,k_2)\in\Gamma_2: k_2\in(0,k_2^*], k_1<\mathcal{K}_1^{4,a}(k_2)\}$, then for any fixed  $\tau\in(0,\infty)$ and sufficiently small $\ep>0$, the synchronized layered steady state $(U(x,\ep),U(x,\ep))$ is stable with respect to \eqref{eq-d4}. 
			\item $($Delay-induced Hopf bifurcation$)$ If $(k_1,k_2)\in\Gamma_2$ and satisfies
			\begin{equation}\label{H4}
				\begin{split}
					&k_1\in(0,\rho_0^*/2),\;k_2\in(\wt\zeta_4(k_1),\zeta(k_1)),\;\text{or}\\
					&k_1\in[\rho_0^*/2,\rho_0^*),\;k_2\in(\wt\zeta_4(k_1),+\infty),\;\text{or}\\
					&k_1\in(\rho_0^*/2,+\infty),\;k_2\in(M_4(k_1),+\infty),\;\text{or}\\
					&k_2\in(0,\infty),\;k_1\in(\mathcal{K}_2^{4,a}(k_2),\infty),
				\end{split}
			\end{equation}
			then for sufficiently small $\epsilon>0$, there exists a critical value $\tau_H(\ep)>0$ with $\tau_H(0)=\tau_H^*>0$, where  $\mathcal{K}_2^{4,a}(k_2)$ is given by Corollary \ref{cor.tM}, $\tau_H^*$ is determined from \eqref{eq-g2} for the case of $(p,q,K)=(0,-2,k_2)$ by Lemma \ref{le.e2} and  Corollary \ref{cor.tM}, such that the synchronized layered steady state $(U(x,\ep),U(x,\ep))$ of \eqref{eq-d4} is asymptotically stable for all $\tau<\tau_H(\ep)$, while unstable when $\tau\in(\tau_H(\ep),\tau_H(\ep)+\kappa)$ for some $\kappa>0$. In particular, system \eqref{eq-d4} undergoes a primary Hopf bifurcation at $\tau=\tau_H(\ep)$, such that a family of bifurcating anti-phase breathers of \eqref{eq-d4} occurs around $(U(x,\ep),U(x,\ep))$.
		\end{enumerate}
	\end{theorem}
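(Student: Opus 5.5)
The argument will follow the template of Theorem \ref{th-1}, now using the distributed-delay machinery of Lemmas \ref{le.no1}--\ref{le.trans2}, Lemma \ref{le.e3} and Corollary \ref{cor.tM}. In Case 4 the symmetric sub-problem has $(p,q,K)=(0,0,0)$. Hence \eqref{SLEP-g-1} is independent of $\tau$, and by Proposition \ref{p.pzp} all its eigenvalues stay in the open left half-plane. Lemma \ref{le.no1}(1) also gives the absence of purely imaginary eigenvalues, because $q=0$ forces \eqref{u2: im-b} to read $Y=\alpha>\alpha_0$. Consequently every statement reduces to the asymmetric SLEP system \eqref{SLEP-g-2} with $(p,q,K)=(0,-2,k_2)$, followed by Theorem \ref{p.con} and Corollary \ref{cor.hopf} to pass to small $\ep>0$.

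\emph{Part 1.} For $(k_1,k_2)\in\Gamma_1$, Proposition \ref{p.pzp}(2) supplies a positive real root of \eqref{SLEP-g-2} for every $\tau>0$. Theorem \ref{p.con}(2) then yields an unstable eigenvalue of \eqref{EP-o-2}.

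\emph{Part 2.} For $(k_1,k_2)\in\Gamma_S$ we have $k_1<\mathcal{K}_1^{4,a}(k_2)$, so Lemma \ref{le.e3} excludes purely imaginary roots for every $\tau>0$. Proposition \ref{p.pzp} excludes zero roots and gives stability as $\tau\to0^+$. Since roots in $\C_{\mu^*}$ are uniformly bounded (proof of Theorem \ref{p.con}) and depend continuously on $\tau$, no root can reach the right half-plane. I will argue by continuation in $\tau$: roots cannot enter from infinity, and they cannot cross the axis. The one delicate issue is the constraint $\tau\operatorname{Re}\la>-1/2$, but a root with $\operatorname{Re}\la\ge 0$ automatically satisfies it, so the argument is unaffected. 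Theorem \ref{p.con}(3) then gives stability.

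\emph{Part 3.} I will check that each line of \eqref{H4} places us under one of the cases of Lemma \ref{le.e2} or Corollary \ref{cor.tM}, always with $p=0$ and $q=-2$.
\begin{enumerate}
\item The hypotheses $\rho_0^*+k_1p>0$ and $\rho_0^*+k_1(p+q)<X(0,0,k_2)$ become $\rho_0^*-2k_1<X(0,0,k_2)$. This says $k_1>\xi(k_2)$, equivalently $k_2<\zeta(k_1)$ when $k_1<\rho_0^*/2$, and it holds automatically when $k_1\ge\rho_0^*/2$.
\item The condition $p+q/2=-1<0$ matches Lemma \ref{le.e2}(2).
\item The condition $\rho_0^*+k_1p+qk_1/2=\rho_0^*-k_1>0$ gives $k_1<\rho_0^*$, and the remaining inequality defines $\wh\zeta(k_1)$. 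Where $k_2\ge\wh\zeta(k_1)$ I fall back on Lemma \ref{le.e2}(1). Together these cover $k_2\in(\wt\zeta_4(k_1),\zeta(k_1))$ or $(\wt\zeta_4(k_1),\infty)$, which gives the first two lines of \eqref{H4}.
\item The third line, with $\rho_0^*-2k_1<0$, is exactly Lemma \ref{le.e2}(4), with $M_4=M$.
\item The fourth line is Corollary \ref{cor.tM}.
\end{enumerate}
In each case these results furnish a first critical delay $\tau_H^*$ with a unique pair $\pm i\omega_H^*$ and no imaginary roots for $\tau<\tau_H^*$ (Lemma \ref{le.no1}(2) covers $\tau<\tau_0$). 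Lemma \ref{le.trans2} supplies simplicity, namely $\operatorname{Im}\pa_\la\mathcal F^*\neq0$, together with transversality. Since stability holds as $\tau\to0^+$, the continuation argument of Part 2 gives stability on $(0,\tau_H^*)$.

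The asymmetric sub-problem is the only critical mode. Theorem \ref{p.con}(1) and Corollary \ref{cor.hopf} therefore give $\tau_H(\ep)\to\tau_H^*$, stability before $\tau_H(\ep)$, a primary Hopf bifurcation at $\tau_H(\ep)$, and anti-phase breathers. I expect the main obstacle to be the bookkeeping in Part 3: showing that the parameter regions in \eqref{H4} really sit inside the hypotheses of Lemma \ref{le.e2}, especially near the boundary curves $\wh\zeta(k_1)$ and $k_1=\rho_0^*/2$, and that $\tau_H^*$ is genuinely the first crossing in each regime.
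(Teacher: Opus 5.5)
Your proposal is correct and follows essentially the same route as the paper. You reduce to the asymmetric sub-problem with $(p,q,K)=(0,-2,k_2)$, since the symmetric one is $\tau$-independent and stable. You obtain Parts 1--2 from Proposition~\ref{p.pzp}, Lemma~\ref{le.e3} and Theorem~\ref{p.con}. You then assemble \eqref{H4} from parts 1, 2 and 4 of Lemma~\ref{le.e2} together with Corollary~\ref{cor.tM}, and conclude with Theorem~\ref{p.con} and Corollary~\ref{cor.hopf}. Your explicit continuation-in-$\tau$ argument, and your remark that Lemma~\ref{le.no1}(2) rules out crossings for $\tau<\tau_0$, spell out steps the paper leaves implicit.
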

	\begin{proof}
		For Case 4, the symmetric and asymmetric SLEP systems are given by \eqref{SLEP-g-1} and \eqref{SLEP-g-2}, which correspond to $(p,q,K)=(0,0,0)$ and $(0,-2,k_2)$, respectively. It suffices to consider the asymmetric sub-problem \eqref{EP-o-2}. Following Theorem \ref{p.con}, Turing instability condition for $\Gamma_1$ and sufficient stability condition for $\Gamma_S$ are given by Proposition \ref{p.pzp} and  Lemma \ref{le.e3}, respectively.
		
		For $(k_1,k_2)\in\Gamma_2$, we shall show how the alternatives in \eqref{H4} are assembled.
		\begin{enumerate}
			\item The first part in Lemma \ref{le.e2} reduces to $\rho_0^*-k_1-X(0,0,k_2)\ge0$, that is, $k_1<\rho_0^*$ and $k_2\ge\wh \zeta(k_1)$.
			\item The second part in Lemma \ref{le.e2} is $\rho_0^*-k_1>0$, $\rho_0^*-X(0,0,k_2)-k_1<0$, and $k_2\in(\wt\zeta_4(k_1),\wh\zeta(k_1))$, which is equivalent to $k_1<\rho_0^*$ and $k_2\in(\wt\zeta_4(k_1),\wh \zeta(k_1))$.
			\item The fourth part in Lemma \ref{le.e2} is $\rho_0^*-2k_1<0$ and $k_2>M_4(k_1)$, which is equivalent to $k_1>\rho_0^*/2$ and $k_2>M_4(k_1)$.
			\item Corollary \ref{cor.tM} provides the last condition of \eqref{H4}.
		\end{enumerate}
		
		Note that the symmetric sub-problem \eqref{EP-o-1} remains stable. Under each alternative, there exists a unique pair of purely imaginary eigenvalues at the critical delay $\tau_H^*$. By Theorem \ref{p.con} and Corollary \ref{cor.hopf}, we can obtain the existence of primary Hopf bifurcation through asymmetric sub-problem \eqref{EP-o-2} at some critical delay $\tau_H(\ep)>0$ with $\tau_H(0)=\tau_H^*$, which generates a family of anti-phase breathers.
	\end{proof}
	
	Cases 5 and 6 share the same asymmetric sub-problem but differ in their symmetric sub-problems, where the symmetric SLEP system in Case 5 remains stable for every $\tau>0$.
	
	\begin{theorem}[Case 5]\label{th-5} 
		Let $\alpha\in(\alpha_0,\infty)$ and $\Gamma_1$ and $\Gamma_2$ be defined by Theorem \ref{th.2-1}. Let $\wt\zeta_5(k_1)$ and $M_5(k_1)$ be $\wt\zeta(k_1)$ and $M(k_1)$ determined by Lemma \ref{le.e2} for $(p,q,K)=(-1,-1,k_2)$, respectively. For $k_2\in(0,k_2^*]$, let $\mathcal{K}_1^{5,a}(k_2)$ and $\mathcal{K}_2^{5,a}(k_2)$ be given by Lemma \ref{le.e3}. Then we have the following conclusions 
		$($see Fig. \ref{Fig.s-dep5}$)$: 
		\begin{enumerate}
			\item $($Turing instability$)$ If $(k_1,k_2)\in\Gamma_1$, then for fixed  $\tau\in(0,\infty)$ and sufficiently small $\ep>0$, the synchronized layered steady state $(U(x,\ep),U(x,\ep))$ is unstable with respect to \eqref{eq-d5}.
			
			\item $($Delay-independent stability$)$ If $(k_1,k_2)\in\Gamma_S:=\{(k_1,k_2)\in\Gamma_2: k_1\in (\rho_0^*,\infty)\; \text{and}\; k_2\in(0,\infty),\; \text{or},\; k_2\in(0,k_2^*]\; \text{and}\; k_1\in (0,\mathcal{K}_1^{5,a}(k_2))\cup(\mathcal{K}_2^{5,a}(k_2),\infty)\}$, then for any fixed  $\tau\in(0,\infty)$ and sufficiently small $\ep>0$, $(U(x,\ep),U(x,\ep))$ is stable with respect to \eqref{eq-d5}.
			
			\item $($Delay-induced Hopf bifurcation$)$ If $(k_1,k_2)\in\Gamma_2$ and satisfies
			\begin{equation}\label{H5}
				\begin{split}
					&k_1\in(0,\rho_0^*/2),\;\;k_2\in(\wt\zeta_5(k_1),\zeta(k_1)),\;\text{or}\\
					&k_1\in\left[\rho_0^*/2,2\rho_0^*/3\right),\;  k_2\in(\wt\zeta_5(k_1),\infty),\;\text{or}\\
					&k_1\in\left(\rho_0^*/2,\rho_0^*\right),\;  k_2\in(M_5(k_1),+\infty),
				\end{split}
			\end{equation}
			then for sufficiently small $\epsilon>0$, there exists a critical value $\tau_H(\ep)>0$ with $\tau_H(0)=\tau_H^*$, where $\tau_H^*$ is determined from \eqref{eq-g2} for the case of $(p,q,K)=(-1,-1,k_2)$ by Lemma \ref{le.e2}, such that $(U(x,\ep),U(x,\ep))$ is stable with respect to system \eqref{eq-d5} when $\tau<\tau_H(\ep)$, while unstable for $\tau\in(\tau_H(\ep),\tau_H(\ep)+\kappa)$ for some $\kappa>0$. In particular, system \eqref{eq-d5} undergoes a primary Hopf bifurcation at $\tau=\tau_H(\ep)$, such that a family of bifurcating anti-phase breathers of \eqref{eq-d5} occurs around $(U(x,\ep),U(x,\ep))$.
		\end{enumerate}	
	\end{theorem}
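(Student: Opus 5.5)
The proof mirrors that of Theorem \ref{th-4}. We treat the two SLEP sub-systems of Case 5 separately and then transfer the conclusions to \eqref{EP-g} for small $\ep>0$ via Theorem \ref{p.con} and Corollary \ref{cor.hopf}. By Proposition \ref{Pthetasa}, the symmetric sub-problem corresponds to $(p,q,K)=(-1,1,0)$ with $\theta_\tau^*=(1+\la^*\tau)^{-1}$. Since $q>0$, Lemma \ref{le.no1}(1) excludes purely imaginary eigenvalues for every $\tau>0$. Proposition \ref{p.pzp} excludes zero eigenvalues and gives stability as $\tau\to0^+$. Theorem \ref{p.con} bounds the eigenvalues uniformly in $\tau$. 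Together these give, by continuity in $\tau$, that all symmetric eigenvalues stay in the open left half-plane for every $\tau\in(0,\infty)$. Hence only the asymmetric sub-problem $(p,q,K)=(-1,-1,k_2)$ can destabilize, and any primary Hopf bifurcation yields anti-phase breathers.

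Part 1 follows verbatim from Proposition \ref{p.pzp} and Theorem \ref{p.con}, as in Theorem \ref{th-1}. For Part 2 we exhibit, in each piece of $\Gamma_S$, the absence of purely imaginary asymmetric eigenvalues for all $\tau$. When $k_1>\rho_0^*$ we have $\rho_0^*+k_1p<0$, so Lemma \ref{le.no1}(4) applies; equivalently, \eqref{tau} gives $\tau<0$, as in Remark \ref{re.K234}. When $k_2\in(0,k_2^*]$ and $k_1<\mathcal{K}_1^{5,a}(k_2)$ or $k_1>\mathcal{K}_2^{5,a}(k_2)$, Lemma \ref{le.e3} applies. In either case the roots cannot cross the imaginary axis as $\tau$ varies. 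Zero is excluded by Proposition \ref{p.pzp} in $\Gamma_2$, and roots are stable as $\tau\to0^+$. The uniform bound keeps roots from entering from infinity. Therefore stability persists for every $\tau$, and Theorem \ref{p.con}(3) finishes the argument.

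For Part 3 we translate the hypotheses of Lemma \ref{le.e2} with $p=q=-1$, $K=k_2$. The standing assumptions $\rho_0^*+k_1p>0$ and $\rho_0^*+k_1(p+q)<X(0,0,k_2)$ read $k_1<\rho_0^*$ and $\rho_0^*-2k_1<X(0,0,k_2)$. In $\Gamma_2$ the latter holds automatically, since there either $k_1>\rho_0^*/2$ or $k_2<\zeta(k_1)$ with $\xi$ increasing. Part 1 of Lemma \ref{le.e2} requires $\rho_0^*-\tfrac32k_1\ge X(0,0,k_2)$, i.e. $k_1<2\rho_0^*/3$ and $k_2\ge\wh\zeta(k_1)$. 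Part 2 requires $p+q/2<0$ (true) and $\rho_0^*-\tfrac32 k_1>0$, with $k_2\in(\wt\zeta_5(k_1),\wh\zeta(k_1))$. Since $\wt\zeta_5<\wh\zeta$, the union of these two parts is $k_1<2\rho_0^*/3$, $k_2>\wt\zeta_5(k_1)$, intersected with $\Gamma_2$. This is the first two lines of \eqref{H5}, where the upper cut $\zeta(k_1)$ for $k_1<\rho_0^*/2$ just keeps us in $\Gamma_2$. Part 4 of Lemma \ref{le.e2} needs $\rho_0^*-k_1>0>\rho_0^*-2k_1$ and $k_2>M_5(k_1)$, which is the third line of \eqref{H5}.

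In each case Lemma \ref{le.e2} gives a first critical delay $\tau_H^*$ with a unique pair $\pm i\omega_H^*$, and no imaginary roots for $\tau<\tau_H^*$. Lemma \ref{le.trans2} gives simplicity and transversality. Combined with the stable symmetric mode, Theorem \ref{p.con}(1) and Corollary \ref{cor.hopf} yield $\tau_H(\ep)$ with $\tau_H(0)=\tau_H^*$, the stability/instability dichotomy, and anti-phase breathers.

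The main obstacle is the bookkeeping in Part 3. One must check that the union of the regions from parts 1–2 of Lemma \ref{le.e2} is exactly $\{k_2>\wt\zeta_5(k_1)\}$ for $k_1<2\rho_0^*/3$. One must also ensure that in part 1, where $\tau_H^*$ is only claimed unique in $(\tau_0,\tau_2)$, no earlier crossing occurs in $(0,\tau_0]$. The latter is settled by Lemma \ref{le.no1}(2) together with the monotonicity of $\omega_1$ and the shape of $\omega_2$.
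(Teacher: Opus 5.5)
Your proposal is correct and follows essentially the same route as the paper. The symmetric mode $(p,q,K)=(-1,1,0)$ is ruled out by Lemma \ref{le.no1}(1) and Theorem \ref{p.con}; the asymmetric stability region comes from Lemma \ref{le.no1}(4) and Lemma \ref{le.e3}; and \eqref{H5} is assembled from parts 1, 2 and 4 of Lemma \ref{le.e2}, followed by Lemma \ref{le.trans2}, Theorem \ref{p.con} and Corollary \ref{cor.hopf}. Your extra bookkeeping (that $\rho_0^*-2k_1<X(0,0,k_2)$ holds automatically in $\Gamma_2$, and that no crossing occurs in $(0,\tau_0]$) makes explicit what the paper's proof leaves implicit.
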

	\begin{proof}
		For Case 5, the symmetric and asymmetric SLEP systems are given by \eqref{SLEP-g-1} and \eqref{SLEP-g-2}, which correspond to $(p,q,K)=(-1,1,0)$ and $(-1,-1,k_2)$, respectively. The Turing instability conclusion follows as before.
		
		For the symmetric sub-problem, from Lemma \ref{le.no1} and Theorem \ref{p.con}, it remains stable for every $\tau>0$ and sufficiently small $\ep>0$. For the asymmetric sub-problem, the fourth part of Lemma \ref{le.no1} becomes $\rho_0^*+k_1p=\rho_0^*-k_1<0$. That is, if $k_1>\rho_0^*$, then asymmetric SLEP system admits no purely imaginary eigenvalue for any $\tau>0$. Combining this observation with Lemma \ref{le.e3}, we can obtain the stability region $\Gamma_S$, where the synchronized layered steady state $(U(x,\ep),U(x,\ep))$ is stable for any fixed $\tau>0$ and sufficiently small $\ep>0$ due to Theorem \ref{p.con}. Analogous to that of \eqref{H4}, the condition \eqref{H5} can be derived directly from the first, second, and fourth parts of Lemma \ref{le.e2}. By Theorem \ref{p.con} and Corollary \ref{cor.hopf}, there exists a primary Hopf bifurcation through asymmetric sub-problem at some $\tau_H(\ep)>0$ which generates a family of anti-phase breathers.
	\end{proof}

	\begin{figure}[htbp]
		\centering
		\begin{subfigure}[t]{0.48\linewidth}
			\centering
			\begin{tikzpicture}[scale=0.85]
				\fill [fill=green!20]  (0,4) --  (0,1) parabola (2.4,4)--cycle;
				\fill [fill=magenta!15]  (0,0) --  (0,1) parabola (2.4,4)--(7.5,4)--(7.5,0)--cycle;
				\fill [orange!30]  (5,0) rectangle (7.5,4);
				\fill [red!30]  (0,1) ..  controls (2,1.05) and (2.8,1.2) .. (3.3,4) -- (2.4,4) parabola bend (0,1) (0,1);
				\fill [red!30]  (2.9,2.5) ..controls (3.6,2.55) and (4.6,2.8) .. (4.95,4) -- (3.3,4);
				\fill [orange!30]
				(0,1)
				.. controls (1.25,1) and (3,{23/30}) .. (3,0)
				-- (0,0) -- cycle;
				
				\draw
				(0,1)
				.. controls (1.25,1) and (3,{23/30}) .. (3,0);
				\fill [orange!30]
				(5,0.6)
				.. controls ({169/36},{61/135}) and (4.5,{23/90}) .. (4.5,0)
				-- (5,0) -- cycle;
				
				\draw
				(5,0.6)
				.. controls ({169/36},{61/135}) and (4.5,{23/90}) .. (4.5,0);
				\node[font=\footnotesize,below] at (1.5,0.9)  {$\mathcal{K}_1^{5,a}$};
				\node[font=\footnotesize,below] at (4.5,0)  {$\mathcal{K}_2^{5,a}$};
				\draw (5,4)--(5,0.6);
				\draw[densely dashed] (5,0.6)--(5,0) node [below] {$\rho_0^*$};
				\draw (0,1) node[left] {$k_2^*$};
				\draw[->] (0,0) --(7.55,0) node[right] {$k_1$};
				\draw[->] (0,0) --(0,4.05) node[above] {$k_2$};
				\draw (0,1) parabola (2.4,4);
				\draw  (0,1) ..  controls (2,1.05) and (2.8,1.2) .. (3.3,4);
				\draw (2.9,2.5) ..controls (3.6,2.55) and (4.6,2.8) .. (4.95,4);
				\draw[densely dashed] (2.5,4)--(2.5,0) node [below] {$\frac{\rho_0^*}{2}$};
				\draw[densely dashed] (3.33,4) -- (3.33,0) node[below]{$\frac{2\rho_0^*}{3}$};
				\draw  (0,0) node[left] {$O$};
				\node[font=\footnotesize,above] at (4.9,4)  {$M_5(k_1)$};
				\node[font=\footnotesize,above] at (2.4,4)  {$\zeta(k_1)$};
				\node[font=\footnotesize,above] at (3.4,4)  {$\wt\zeta_5(k_1)$};
			\end{tikzpicture}
			\caption{System \eqref{eq-d5} (Case 5)}
			\label{Fig.s-dep5}
		\end{subfigure}\;\;\;\;
		\begin{subfigure}[t]{0.48\linewidth}
			\centering
			\begin{tikzpicture}[scale=0.85]
				\fill [fill=green!20]  (0,4) --  (0,1) parabola (2.4,4)--cycle;
				\fill [fill=magenta!15]  (0,0) --  (0,1) parabola (2.4,4)--(7.5,4)--(7.5,0)--cycle;
				\fill [red!30]  (0,1) ..  controls (2,1.05) and (2.8,1.2) .. (3.3,4) -- (2.4,4) parabola bend (0,1) (0,1);
				\fill [red!30]  (2.9,2.5) ..controls (3.6,2.55) and (4.6,2.8) .. (4.95,4) -- (3.3,4);
				\fill [red!30]  (6.2,0) -- (7.5,0) -- (7.5,4) -- (6.2,4);
				\draw (6.2,4)--(6.2,0);
				\node [font=\footnotesize] at (6.3,-0.3) {$\wh k_1^*$};
				\fill [orange!30]
				(0,1)
				.. controls (1.25,1) and (3,{23/30}) .. (3,0)
				-- (0,0) -- cycle;
				
				\draw
				(0,1)
				.. controls (1.25,1) and (3,{23/30}) .. (3,0);
				\fill [orange!30]
				(5,0.6)
				.. controls ({169/36},{61/135}) and (4.5,{23/90}) .. (4.5,0)
				-- (5,0) -- cycle;
				
				\draw
				(5,0.6)
				.. controls ({169/36},{61/135}) and (4.5,{23/90}) .. (4.5,0);
				\fill [orange!30]  (5,0) -- (5.7,0) --(5.7,4) --(5,4)--cycle;
				\draw (5.7,4) -- (5.7,0) node [font=\footnotesize,below] {$\mathcal{K}_1^{6,s}$};
				\node[font=\footnotesize,below] at (1.5,0.9)  {$\mathcal{K}_1^{6,a}$};
				\node[font=\footnotesize,below] at (4.5,0) {$\mathcal{K}_2^{6,a}$};
				\draw (0,1) node[left] {$k_2^*$};
				\draw[->] (0,0) --(7.55,0) node[right] {$k_1$};
				\draw[->] (0,0) --(0,4.05) node[above] {$k_2$};
				\draw  (0,0) node[left] {$O$};
				\draw (5,4)--(5,0.6);
				\draw[densely dashed] (5,0.6)--(5,0) node [below] {$\rho_0^*$};
				\draw (0,1) parabola (2.4,4);
				\node[font=\footnotesize,above] at (4.9,4)  {$M_6(k_1)$};
				\node[font=\footnotesize,above] at (2.4,4)  {$\zeta(k_1)$};
				\node[font=\footnotesize,above] at (3.4,4)  {$\wt\zeta_6(k_1)$};
				\draw  (0,1) ..  controls (2,1.05) and (2.8,1.2) .. (3.3,4);
				\draw (2.9,2.5) ..controls (3.6,2.55) and (4.6,2.8) .. (4.95,4);
				\draw[densely dashed] (2.5,4)--(2.5,0) node [below] {$\frac{\rho_0^*}{2}$};
				\draw[densely dashed] (3.33,4) -- (3.33,0) node[below]{$\frac{2\rho_0^*}{3}$};
			\end{tikzpicture}
			\caption{System \eqref{eq-d6} (Case 6)}
			\label{Fig.s-dep6}
		\end{subfigure}
		\caption{Stability diagrams of the synchronized layered steady state $(U(x,\ep),U(x,\ep))$ in the $(k_1,k_2)$ plane. (a) Case 5, system \eqref{eq-d5}; (b) Case 6, system \eqref{eq-d6}. Green: delay-independent Turing instability region $\Gamma_1$; the curve $\Gamma$ separates $\Gamma_1$ from $\Gamma_2$ (Theorem \ref{th.2-1}). Orange: sufficient region $\Gamma_S$ for delay-independent stability. Red: sufficient delay-induced instability regions in $\Gamma_2$, defined by \eqref{H5} and \eqref{H6}, respectively, where $\wt\zeta_5=\wt\zeta_6$ and $M_5=M_6$. In Case 5, the instability is only induced by primary asymmetric Hopf bifurcation, whereas in Case 6 the first destabilizing mode is determined from \eqref{pri6}. See Theorems \ref{th-5} and \ref{th-6} for details. In particular, we shall remark that the relative positions of the thresholds in (b) are just chosen for illustration.}
		\label{Fig.s-dep56}
	\end{figure}
	
	\begin{theorem}[Case 6]\label{th-6}
		Let $\alpha\in(\alpha_0,\infty)$ and $\Gamma_1$ and $\Gamma_2$ be defined by Theorem \ref{th.2-1}. Let $\wt\zeta_6(k_1):=\wt\zeta_5(k_1)$ and $M_6(k_1):=M_5(k_1)$, where $\wt\zeta_5(k_1)$ and $M_5(k_1)$ are given by Theorem \ref{th-5}. For $k_2\in(0,k_2^*]$, let $\mathcal{K}_1^{6,s}$, $\mathcal{K}_1^{6,a}(k_2)$ and $\mathcal{K}_2^{6,a}(k_2)$ be given by Lemma \ref{le.e3}. Then we have the following conclusions
		$($see Fig. \ref{Fig.s-dep6}$)$:
		\begin{enumerate}
			\item $($Turing instability$)$ If $(k_1,k_2)\in\Gamma_1$, then for any fixed  $\tau\in(0,\infty)$ and sufficiently small $\ep>0$, the synchronized layered steady state $(U(x,\ep),U(x,\ep))$ is unstable with respect to \eqref{eq-d6}.
			\item $($Delay-independent stability$)$ If $(k_1,k_2)\in\Gamma_S:=\Gamma_S^s \cap \Gamma_S^a$, where $\Gamma_S^s:=(0,\mathcal{K}_1^{6,s})\times(0,\infty)$, $\Gamma_S^a:=\{(k_1,k_2)\in\Gamma_2: k_1\in (\rho_0^*,\infty)\; \text{and}\; k_2\in(0,\infty),\; \text{or},\; k_2\in(0,k_2^*]\; \text{and}\; k_1\in (0,\mathcal{K}_1^{6,a}(k_2))\cup(\mathcal{K}_2^{6,a}(k_2),\infty)\}$, then for any fixed  $\tau\in(0,\infty)$ and sufficiently small $\ep>0$, $(U(x,\ep),U(x,\ep))$ is stable with respect to \eqref{eq-d6}.
			\item $($Delay-induced Hopf bifurcation$)$ If $(k_1,k_2)\in\Gamma_2$ and satisfies
			\begin{equation}\label{H6}
				\begin{split}
					&k_1\in(0,\rho_0^*/2),\;\;k_2\in(\wt\zeta_6(k_1),\zeta(k_1)),\;\text{or}\\
					&k_1\in\left[\rho_0^*/2,2\rho_0^*/3\right),\;  k_2\in(\wt\zeta_6(k_1),\infty),\;\text{or}\\
					&k_1\in\left(\rho_0^*/2,\rho_0^*\right),\;  k_2\in(M_6(k_1),+\infty),\;\text{or}\\
					&k_1\in(\wh k_1^*,\infty),\;k_2\in(0,+\infty),
				\end{split}
			\end{equation}
			then there is a $\tau_H(\ep)>0$ for sufficiently small $\ep>0$, where $\wh k_1^*$ is given by Lemma \ref{le.e2}, such that eigenvalue problem \eqref{D-L-2-a} admits a pair of critical eigenvalues which crosses the imaginary axis transversely from left to right at $\tau=\tau_H(\ep)$. Under the following additional conditions, $\tau_H(\ep)$ is the first critical delay, with
			\begin{equation}\label{pri6}
				\tau_H(0)=
				\begin{cases}
					\begin{split}
						&\tau_s^*,&&\text{if $k_1\in(\max\{\wh k_1^*,\rho_0^*\},\infty)$ and $k_2\in(0,\infty)$},\\
						&\tau_a^*,&&\text{if $k_1<\mathcal{K}_1^{6,s}$ and satisfies at least one of the first three alternatives in  \eqref{H6}},
					\end{split}
				\end{cases}
			\end{equation}
			where $\tau_s^*$ and $\tau_a^*$ are determined from \eqref{eq-g2} in the case of $(p,q,K)=(1,-1,0)$ and $(-1,-1,k_2)$ under the corresponding conditions, respectively. Under one of the conditions in \eqref{pri6}, system \eqref{eq-d6} undergoes a primary Hopf bifurcation at $\tau_H(\ep)$. Namely, $(U(x,\ep),U(x,\ep))$ is stable for $\tau\in(0,\tau_H(\ep))$ and unstable for $\tau\in(\tau_H(\ep),\tau_H(\ep)+\kappa)$ for some $\kappa>0$. Moreover, if $\tau_H(0)=\tau_s^*$ $($resp., $\tau_a^*$$)$, then system \eqref{eq-d6} admits a family of in-phase $($resp., anti-phase$)$ breathers bifurcating at $\tau=\tau_H(\ep)$.
		\end{enumerate}
	\end{theorem}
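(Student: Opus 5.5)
The proof will follow the template of Theorems \ref{th-4} and \ref{th-5}, now tracking both sub-problems. For Case 6 the symmetric SLEP system \eqref{SLEP-g-1} has $(p,q,K)=(1,-1,0)$ and the asymmetric one \eqref{SLEP-g-2} has $(p,q,K)=(-1,-1,k_2)$, with $\theta_\tau^*=(1+\la^*\tau)^{-1}$. The asymmetric system coincides with that of Case 5. Hence everything proved for the asymmetric mode in Theorem \ref{th-5} transfers verbatim: the thresholds $\mathcal{K}_{1,2}^{6,a}$, the curves $\wt\zeta_6=\wt\zeta_5$, $M_6=M_5$, and the asymmetric half $\Gamma_S^a$ of the stability region. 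Part 1 follows as before, from Proposition \ref{p.pzp} together with Theorem \ref{p.con}(2).

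For part 2, the plan is as follows. Take $k_1<\mathcal{K}_1^{6,s}$. By Lemma \ref{le.e3}(2) the symmetric system has no purely imaginary eigenvalue for any $\tau>0$. Proposition \ref{p.pzp} gives two further facts: there is no zero eigenvalue, and the system is stable as $\tau\to0^+$. Roots can only enter the right half-plane through the imaginary axis, and Theorem \ref{p.con} gives a $\tau$-uniform bound on eigenvalues. So by continuity in $\tau$ the symmetric mode is stable for every $\tau$. The same reasoning on $\Gamma_S^a$ (Lemma \ref{le.e3}(3) and Lemma \ref{le.no1}(4)) handles the asymmetric mode. Theorem \ref{p.con}(3) then lifts stability of both modes to small $\ep>0$.

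For part 3, the first three alternatives of \eqref{H6} are exactly \eqref{H5}. They yield an asymmetric transversal crossing via Lemma \ref{le.e2} (parts 1, 2, 4) and Lemma \ref{le.trans2}. For the fourth alternative, use Lemma \ref{le.e2}(3) with $K=0$ and $p+q/2=1/2>0$. For $k_1\in(\wh k_1^*,k_1^*)$ it gives a symmetric crossing with $\tau_H^*\omega_H^*<1$, and Lemma \ref{le.trans2} gives transversality. For $k_1\ge k_1^*$ we are in Lemma \ref{le.e2}(1), since $\rho_0^*+k_1/2-X(0,0,0)\ge0$, so a unique crossing in $(\tau_0,\tau_2)$ again exists. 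One must also check that the remaining hypotheses of Lemma \ref{le.e2} hold: $\rho_0^*+k_1>0$ is automatic, and $\rho_0^*<X(0,0,0)$ holds by \eqref{eq.XY1}. Theorem \ref{p.con}(1) then transfers each crossing to $\ep>0$.

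It remains to identify the first critical delay in \eqref{pri6}. If $k_1>\max\{\wh k_1^*,\rho_0^*\}$, the asymmetric system has no imaginary roots by Lemma \ref{le.no1}(4), since $\rho_0^*-k_1<0$. So the first crossing is the symmetric one at $\tau_s^*$, and Corollary \ref{cor.hopf} gives in-phase breathers. If $k_1<\mathcal{K}_1^{6,s}$, the symmetric mode never crosses by the argument of part 2, so the first crossing is the asymmetric one at $\tau_a^*$, giving anti-phase breathers.

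\textbf{Main obstacle.} The delicate step is the symmetric case $k_1\ge k_1^*$. There I must make sure that the unique crossing in $(\tau_0,\tau_2)$ is really the first one, i.e.\ that no imaginary root appears for $\tau\in(\tau_0,\tau_H^*)$. This I would get from the monotonicity of $\omega_1$ (decreasing) and $\omega_2$ (increasing on $(\tau_0,\tau_2)$) in Lemmas \ref{le.om1}--\ref{le.om2}, since imaginary roots are exactly the intersections of these two curves. I would also need to confirm, for Corollary \ref{cor.hopf}, that the crossing pair is simple.
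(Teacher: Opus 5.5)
Your proposal is correct and follows essentially the same route as the paper's proof. The asymmetric mode is inherited from Case~5, $\Gamma_S^s$ comes from Lemma~\ref{le.e3}, and the symmetric crossing for $k_1>\wh k_1^*$ comes from Lemma~\ref{le.e2}(3) on $(\wh k_1^*,k_1^*)$ and Lemma~\ref{le.e2}(1) for $k_1\ge k_1^*$. The primary mode in \eqref{pri6} is then selected because the competing mode has no imaginary roots, and Theorem~\ref{p.con} with Corollary~\ref{cor.hopf} finishes the argument.

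The two points you flag are already settled by the paper's lemmas. The monotonicity of $\omega_1,\omega_2$ (Lemmas~\ref{le.om1}--\ref{le.om2}), together with Lemma~\ref{le.no1} on $(0,\tau_0)$, excludes imaginary roots for $\tau<\tau_H^*$. The inequality $\operatorname{Im}\frac{\partial\mathcal{F}^*}{\partial\la^*}(i\omega_H^*,\tau_H^*)<0$ from Lemma~\ref{le.trans2} gives simplicity, since under \eqref{pri6} the other mode has no imaginary roots at all.
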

	\begin{proof}
		For Case 6, the asymmetric SLEP sub-problem is the same as in Case 5, while the symmetric sub-problem corresponds to $(p,q,K)=(1,-1,0)$. Thus, the Turing instability conclusion, the asymmetric stability region $\Gamma_S^a$, and the first three alternatives in \eqref{H6} follow from Theorem \ref{th-5}.
		
		For the symmetric SLEP system, the stability region $\Gamma_S^s$ can be obtained from Lemma \ref{le.e3}. Moreover, by the third part of Lemma \ref{le.e2}, the existence condition of delay-induced instability is $k_1\in(\wh k_1^*,k_1^*)$, where $k_1^*:=2(X(0,0,0)-\rho_0^*)$. While if $k_1\ge k_1^*$, it exactly falls into the first part of Lemma \ref{le.e2}. This leads to the fourth condition of \eqref{H6}. Analogous to Theorems \ref{th-2} and \ref{th-3}, the first critical delay and corresponding mode can be determined by \eqref{pri6}. By Theorem \ref{p.con} and Corollary \ref{cor.hopf}, one can obtain the primary Hopf bifurcations, stability changes, and in-phase or anti-phase breathers.
	\end{proof}
	
	The sufficient stability and Hopf bifurcation conditions for all six coupling configurations are given by Theorems \ref{th-1}--\ref{th-3} and \ref{th-4}--\ref{th-6}. We first collect the results for the symmetric and asymmetric SLEP sub-problems, and then combine them to determine stability and primary Hopf bifurcations for sufficiently small $\ep>0$.
	
	\begin{theorem}\label{th.mode}
		Suppose that $\alpha\in(\alpha_0,\infty)$. Let $\Gamma_1$ and $\Gamma_2$ be given by Theorem \ref{th.2-1}. Denote by $\Gamma_S^{j,\sigma}$ and $\Gamma_H^{j,\sigma}$ the sufficient parameter regions for delay-independent stability and delay-induced instability of the $\sigma$-SLEP system in Case $j$, respectively, where $j\in\{1,\ldots,6\}$,  $\sigma\in\{s,a\}$. Then, $\Gamma_S^{j,\sigma}$ and $\Gamma_H^{j,\sigma}$ are characterized as follows.
		
		\noindent
		{1. Cases 1--3: discrete delays.}
		
		For Case 1,
		\begin{equation*}
			\begin{aligned}
				&\Gamma_S^{1,s}=\{k_1>0\},\quad \Gamma_H^{1,s}=\varnothing,\quad \Gamma_S^{1,a} = \Gamma_2\cap \left\{ 0<k_2\le k_2^*,\; 0<k_1<\mathcal{K}_1^{1,a}(k_2) \right\},\\ &\Gamma_H^{1,a} = \Gamma_2\cap \left( \left\{ 0<k_2\le k_2^*,\; k_1>\mathcal{K}_2^{1,a}(k_2) \right\} \cup \left\{ k_2>k_2^*,\; k_1>\mathcal{K}_1^{1,a}(k_2) \right\} \right).
			\end{aligned}
		\end{equation*}
		
		For Case $j$ $($$j=2,3$$)$,
		\begin{equation*}
			\begin{aligned}
				&\Gamma_S^{j,s}=\left\{0<k_1<\mathcal{K}_1^{j,s}\right\},\quad \Gamma_H^{j,s}=\left\{k_1>\mathcal{K}_2^{j,s}\right\}, \\
				&\Gamma_S^{2,a}=\Gamma_S^{3,a}=\Gamma_2\cap\left\{0<k_2\le k_2^*,\;0<k_1<\mathcal{K}_1^{2,a}(k_2)\right\},\\
				&\Gamma_H^{2,a}=\Gamma_H^{3,a}=\Gamma_2\cap\Big(\left\{0<k_2\le k_2^*,\;k_1>\mathcal{K}_2^{2,a}(k_2)\right\}\\
				&\qquad \qquad \qquad \qquad \quad\cup \left\{k_2>k_2^*,\;k_1\in(\mathcal{K}_1^{2,a}(k_2),2\mathcal{K}_1^{2,a}(k_2)) \cup (\mathcal{K}_2^{2,a}(k_2),\infty)\right\}\Big),
			\end{aligned}
		\end{equation*}
		where $\mathcal K_i^{2,a}(k_2)=\mathcal K_i^{3,a}(k_2)$, $i=1,2$.
		
		\medskip
		\noindent
		{2. Cases 4--6: distributed delays.}
		
		For the symmetric sub-problem of Cases 4--6,
		\begin{equation*}
			\Gamma_S^{4,s}=\Gamma_S^{5,s} = \{k_1>0\}, \quad \Gamma_H^{4,s}=\Gamma_H^{5,s} = \varnothing, \quad \Gamma_S^{6,s}=\left\{0<k_1<\mathcal{K}_1^{6,s}\right\}, \quad \Gamma_H^{6,s}=\left\{k_1>\wh k_1^*\right\}.
		\end{equation*}
		
		For the asymmetric sub-problem of Case 4,
		\begin{equation*}
			\begin{aligned}
				\Gamma_S^{4,a}
				&= \Gamma_2\cap\bigl\{\,0<k_2\le k_2^*,\;0<k_1<\mathcal{K}_1^{4,a}(k_2)\,\bigr\},\\[2pt]
				\Gamma_H^{4,a}
				&= \Gamma_2\cap\Bigl(
				\{\,0<k_1<\rho_0^*/2,\;\wt\zeta_4(k_1)<k_2<\zeta(k_1)\,\}\cup
				\{\,\rho_0^*/2\le k_1<\rho_0^*,\;k_2>\wt\zeta_4(k_1)\,\}\\
				&\qquad\qquad\qquad\cup
				\{\,k_1>\rho_0^*/2,\;k_2>M_4(k_1)\,\}\cup
				\{\,k_2>0,\;k_1>\mathcal{K}_2^{4,a}(k_2)\,\}
				\Bigr).
			\end{aligned}
		\end{equation*}
		where $\wt\zeta_4(k_1)$ and $M_4(k_1)$ are $\wt\zeta(k_1)$ and $M(k_1)$ determined by Lemma \ref{le.e2} for $(p,q,K)=(0,-2,k_2)$, respectively.
		
		For the asymmetric sub-problems of Cases 5 and 6,
		\begin{equation*}
			\begin{aligned}
				&\Gamma_S^{5,a}=\Gamma_S^{6,a}=\Gamma_2\cap\left(\{k_1>\rho_0^*,\;k_2>0\}\cup\left\{0<k_2\le k_2^*,\;k_1\in(0,\mathcal{K}_1^{5,a}(k_2))\cup(\mathcal{K}_2^{5,a}(k_2),\infty)\right\}\right),\\
				&\Gamma_H^{5,a} = \Gamma_H^{6,a}=\Gamma_2\cap\Big(\{0<k_1<\rho_0^*/2,\; \wt\zeta_5(k_1)<k_2<\zeta(k_1)\}\cup \{\rho_0^*/2\le k_1<2\rho_0^*/3,\; k_2>\wt\zeta_5(k_1)\}\\
				&\qquad\qquad\qquad\qquad\qquad \cup \{\rho_0^*/2<k_1<\rho_0^*,\; k_2>M_5(k_1)\}\Big),
			\end{aligned}
		\end{equation*}
		where $\mathcal K_i^{5,a}(k_2) = \mathcal K_i^{6,a}(k_2)$, $i=1,2$; $\wt\zeta_5(k_1)=\wt\zeta_6(k_1)$ and $M_5(k_1)=M_6(k_1)$ are $\wt\zeta(k_1)$ and $M(k_1)$ determined by Lemma \ref{le.e2} for $(p,q,K)=(-1,-1,k_2)$, respectively.
	\end{theorem}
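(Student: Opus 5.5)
The plan is to treat Theorem \ref{th.mode} as a bookkeeping statement. Each of the twelve pairs $(j,\sigma)$ is handled separately. For each pair I read off $(p,q,K)$ from Proposition \ref{Pthetasa}, namely \eqref{eq:pqk-sym}--\eqref{eq:pqk-asym}. I then invoke the matching lemma for the reduced SLEP system: \eqref{eq-g1} for Cases 1--3 and \eqref{eq-g2} for Cases 4--6. Throughout, membership in $\Gamma_S^{j,\sigma}$ means two things. First, the $\sigma$-SLEP system has no zero eigenvalue and is stable as $\tau\to0^+$, which holds in $\Gamma_2$ by Proposition \ref{p.pzp}. Second, it has no purely imaginary roots for any $\tau>0$, so by continuity of the roots in $\tau$, together with the uniform bound from Theorem \ref{p.con}, no root can ever cross the imaginary axis. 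Membership in $\Gamma_H^{j,\sigma}$ means the existence of a transversal crossing pair, as in \eqref{eq.eH}.

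\textbf{Symmetric modes.} In Cases 1 and 4 we have $q=0$, so the symmetric system is $\tau$-independent. It coincides with the delay-free symmetric problem and is stable by Proposition \ref{p.pzp}. This gives $\Gamma_S=\{k_1>0\}$ and $\Gamma_H=\varnothing$. For Case 5 we have $q=1>0$, and part 1 of Lemma \ref{le.no1} excludes imaginary roots, giving the same conclusion. For Cases 2 and 3, where $(p,q,K)=(-1,1,0)$ or $(1,-1,0)$, Lemma \ref{le.e1} gives the stability threshold $\mathcal K_1^{j,s}$. Lemma \ref{le.trans1}, items 3 and 4, gives transversality above $\mathcal K_2^{j,s}$. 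For Case 6, Lemma \ref{le.e3} gives $\Gamma_S^{6,s}$ with $\mathcal K_1^{6,s}=2\mathcal K_1^{3,s}$. The third part of Lemma \ref{le.e2}, combined with Lemma \ref{le.trans2}, gives crossing for $k_1\in(\wh k_1^*,k_1^*)$. For $k_1\ge k_1^*$ I use the first part of Lemma \ref{le.e2}: with $(p,q)=(1,-1)$, the condition $\rho_0^*+k_1/2-X(0,0,0)\ge0$ is exactly $k_1\ge k_1^*$, and the hypotheses $\rho_0^*+k_1p>0$ and $\rho_0^*+k_1(p+q)<X(0,0,0)$ hold automatically. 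Together these cover $\{k_1>\wh k_1^*\}$.

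\textbf{Asymmetric modes, discrete delays.} For Case 1, where $(0,-2,k_2)$, Lemma \ref{le.e1} gives no imaginary roots for $k_1<\mathcal K_1^{1,a}$, which matters only for $k_2\le k_2^*$ inside $\Gamma_2$. Lemma \ref{le.trans1}, item 1, gives transversality for $k_1>\mathcal K_2^{1,a}$ when $k_2\le k_2^*$, and for $k_1>\mathcal K_1^{1,a}=\xi(k_2)$ when $k_2>k_2^*$. For Cases 2 and 3, where $(-1,-1,k_2)$, I proceed identically using item 2 of Lemma \ref{le.trans1}. That item supplies both the interval $(\mathcal K_1^{2,a},2\mathcal K_1^{2,a})$ for $k_2>k_2^*$ and the tail $(\mathcal K_2^{2,a},\infty)$.

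\textbf{Asymmetric modes, distributed delays.} For Case 4, the stability part is Lemma \ref{le.e3} (see Remark \ref{re.K234}). The Hopf part is assembled exactly as in the proof of Theorem \ref{th-4}. The first and second parts of Lemma \ref{le.e2} are translated into conditions on $(k_1,k_2)$ via $\wh\zeta$, $\wt\zeta_4$ and $\zeta$. The fourth part gives $k_1>\rho_0^*/2$ with $k_2>M_4$, and Corollary \ref{cor.tM} gives the large-$k_1$ branch; transversality in every branch comes from Lemma \ref{le.trans2}. For Cases 5 and 6 the stability set combines two sources. The first is Lemma \ref{le.no1}, part 4, with $\rho_0^*-k_1<0$, which gives $k_1>\rho_0^*$. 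The second is Lemma \ref{le.e3}, which supplies $(0,\mathcal K_1^{5,a})\cup(\mathcal K_2^{5,a},\infty)$ for $k_2\le k_2^*$. The Hopf set again comes from Lemma \ref{le.e2}. Here the second part requires $p+q/2<0$ and $k_1<-2\rho_0^*/(2p+q)=2\rho_0^*/3$, which accounts for the bound $2\rho_0^*/3$. The fourth part requires $\rho_0^*-k_1>0$ and $\rho_0^*-2k_1<0$, which gives $\rho_0^*/2<k_1<\rho_0^*$.

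\textbf{Main obstacle.} The hardest step is the careful translation of the inequalities in the hypotheses of Lemma \ref{le.e2} into the stated $(k_1,k_2)$ regions. In particular, I must check that the first-part region $k_2\ge\wh\zeta(k_1)$ and the second-part region $(\wt\zeta(k_1),\wh\zeta(k_1))$ glue into $k_2>\wt\zeta(k_1)$. I must also check that intersecting with $\Gamma_2$ caps this at $\zeta(k_1)$ when $k_1<\rho_0^*/2$. I would verify this using the monotonicity of $X(0,0,k_2)$ in $k_2$ and the definitions of $\xi$ and $\zeta$ from Theorem \ref{th.2-1}.
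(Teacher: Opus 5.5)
Your proposal is correct and takes essentially the same route as the paper. The paper obtains Theorem \ref{th.mode} by collecting, mode by mode, the conclusions of Proposition \ref{p.pzp}, Lemmas \ref{le.e1}, \ref{le.trans1}, \ref{le.no1}, \ref{le.e2}, \ref{le.trans2}, \ref{le.e3} and Corollary \ref{cor.tM}, exactly as they are assembled in the proofs of Theorems \ref{th-1}--\ref{th-6}; your translation of the hypotheses of Lemma \ref{le.e2} into $(k_1,k_2)$-regions matches the paper's derivation of \eqref{H4}--\eqref{H6}, including the gluing of its first and second parts into $k_2>\wt\zeta(k_1)$, the cap at $\zeta(k_1)$ for $k_1<\rho_0^*/2$, and the bounds $2\rho_0^*/3$ and $(\rho_0^*/2,\rho_0^*)$ for $(p,q)=(-1,-1)$.
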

	
	\begin{theorem}\label{th.assembly}
		Suppose that $\alpha\in(\alpha_0,\infty)$. Let $\Gamma_S^{j,\sigma}$ and $\Gamma_H^{j,\sigma}$ be stated by Theorem \ref{th.mode}, where $j\in\{1,\cdots,6\}$ and $\sigma\in\{s,a\}$. Denote the first critical values of symmetric SLEP system \eqref{SLEP-g-1} and asymmetric SLEP system \eqref{SLEP-g-2} by $\tau_s^{j,*}$ and $\tau_a^{j,*}$ for $k_1\in\Gamma_H^{j,s}$ and $(k_1,k_2)\in\Gamma_H^{j,a}$, respectively. Define
		\begin{equation*}
			\wh\Gamma_S^{j,s}:=(\Gamma_S^{j,s}\times\R_+)\cap\Gamma_2,\qquad \wh\Gamma_H^{j,s}:=(\Gamma_H^{j,s}\times\R_+)\cap\Gamma_2.
		\end{equation*}
		Then in Case $j$ $($$j=1,\cdots,6$$)$, we have the following conclusions.
		\begin{enumerate}
			\item {\rm(Turing instability)}
			If $(k_1,k_2)\in\Gamma_1$, then for any fixed $\tau\in(0,\infty)$, the synchronized layered steady state $(U(x,\ep),U(x,\ep))$ is unstable for sufficiently small $\ep>0$.
			
			\item {\rm(Delay-independent stability)}
			If $(k_1,k_2)\in\Gamma_S^j:=\wh\Gamma_S^{j,s}\cap\Gamma_S^{j,a}$, then for any fixed $\tau\in(0,\infty)$, $(U(x,\ep),U(x,\ep))$ is stable for sufficiently small $\ep>0$.
			
			\item {\rm(Delay-induced Hopf bifurcation)}  If $(k_1,k_2)\in\widehat\Gamma_H^{j,s}\cup\Gamma_H^{j,a}$, then for sufficiently small $\ep>0$, at least one of the symmetric and asymmetric sub-problems admits a simple conjugate pair crossing the imaginary axis transversely from left to right at a positive delay. In addition, suppose that one of the following conditions holds:
			\begin{equation}\label{taus}
				(k_1,k_2)\in\wh\Gamma_H^{j,s}\cap\Gamma_S^{j,a},\;\text{or}\; (k_1,k_2)\in\wh\Gamma_H^{j,s}\cap\Gamma_H^{j,a}\;\text{with}\; \tau_s^{j,*}<\tau_a^{j,*},
			\end{equation}
			\begin{equation}\label{taua}
				(k_1,k_2)\in\wh\Gamma_S^{j,s}\cap\Gamma_H^{j,a}\;\text{or}\; (k_1,k_2)\in\wh\Gamma_H^{j,s}\cap\Gamma_H^{j,a}\;\text{with}\; \tau_s^{j,*}>\tau_a^{j,*}.
			\end{equation}
			Then there exists a $\tau_H^j(\ep)>0$ for sufficiently small $\ep>0$, such that $(U(x,\ep),U(x,\ep))$ is stable in $\tau\in(0,\tau_H^j(\ep))$, but unstable in $\tau\in(\tau_H^j(\ep),\tau_H^j(\ep)+\kappa)$ for some small $\kappa>0$. Moreover, system \eqref{eq-o} undergoes a primary Hopf bifurcation at $\tau=\tau_H^j(\ep)$, such that system \eqref{eq-o} admits a family of periodic solutions $(U_1(x,t,s),U_2(x,t,s))$ with minimal positive period $T_s$ near $\tau=\tau_H^{j}(\ep)$. In particular, $\tau_H^j(0)=\tau_s^{j,*}$ or $\tau_H^j(0)=\tau_a^{j,*}$ if $(k_1,k_2)$ satisfies \eqref{taus} or \eqref{taua}, respectively. Moreover, under \eqref{taus}, the bifurcating breathers are in-phase satisfying $U_1(x,t,s)=U_2(x,t,s)$ for any $x\in\Omega$ and $t>0$. While under \eqref{taua}, the bifurcating breathers are anti-phase satisfying $U_1(x,t,s)=U_2(x,t+T_s/2,s)$ for any $x\in\Omega$ and $t>0$.
		\end{enumerate}
	\end{theorem}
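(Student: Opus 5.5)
The plan is to assemble Theorem~\ref{th.assembly} from the mode-by-mode characterization in Theorem~\ref{th.mode}, using the block structure of \eqref{D-L-2-a}. The change of variables $(w_s,z_s,w_a,z_a)$ is invertible, so the spectrum of \eqref{D-L-2-a} is exactly the union of the spectra of \eqref{EP-o-1} and \eqref{EP-o-2}. Every stability or instability statement for the full problem will therefore come from the corresponding statements for the two sub-problems. For each sub-problem, these will be obtained by applying Theorem~\ref{p.con} to the SLEP equations \eqref{SLEP-g-1} and \eqref{SLEP-g-2}, with $(p,q,K)$ given by Proposition~\ref{Pthetasa}.

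\textbf{Step 1: Turing instability.} For $(k_1,k_2)\in\Gamma_1$, Proposition~\ref{p.pzp} gives, for each fixed $\tau>0$, a positive real root of \eqref{SLEP-g-2}. Part 2 of Theorem~\ref{p.con} then gives an eigenvalue of \eqref{EP-o-2} with positive real part for small $\ep$.

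\textbf{Step 2: delay-independent stability.} Take $(k_1,k_2)\in\Gamma_S^j$. Both SLEP systems have all roots in the open left half-plane as $\tau\to0^+$, and they have no zero root for any $\tau>0$, by Proposition~\ref{p.pzp}. By the definition of $\Gamma_S^{j,\sigma}$, established case by case through Lemmas~\ref{le.e1}, \ref{le.no1}, \ref{le.e3} and Remark~\ref{re.K234}, they also have no purely imaginary root for any $\tau>0$.

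The roots depend continuously on $\tau$, since $\mathcal F^*$ is analytic on $\C_{\mu^*}$. The roots with $\operatorname{Re}\la^*>-\mu^*$ are uniformly bounded, by the a priori bound in the proof of Theorem~\ref{p.con}. Hence no root can enter the closed right half-plane as $\tau$ increases. Part 3 of Theorem~\ref{p.con} then gives stability for every fixed $\tau$ and small $\ep$.

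\textbf{Step 3: delay-induced instability.} Take $(k_1,k_2)\in\wh\Gamma_H^{j,s}\cup\Gamma_H^{j,a}$. The relevant sub-problem has a first critical delay with a simple purely imaginary pair $\pm i\omega_H^*$. Its existence comes from Lemmas~\ref{le.e1} and \ref{le.e2} and Corollary~\ref{cor.tM}; transversality and $\partial_\la\mathcal F^*\neq0$ (recall $\operatorname{Im}\partial_\la\mathcal F^*<0$) come from Lemmas~\ref{le.trans1} and \ref{le.trans2}. Part 1 of Theorem~\ref{p.con} then transfers the transversal crossing to \eqref{EP-o-1} or \eqref{EP-o-2} for small $\ep$.

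\textbf{Step 4: primary Hopf bifurcation.} Here I would reduce to Corollary~\ref{cor.hopf}. Under \eqref{taus}, either the asymmetric mode lies in $\Gamma_S^{j,a}$, which is stable for every $\tau$ by Step 2, or $\tau_s^{j,*}<\tau_a^{j,*}$. In both situations only the symmetric pair reaches the axis first, so on $(0,\tau_s^{j,*})$ both sub-problems are stable by the continuity argument of Step 2. Corollary~\ref{cor.hopf} then yields $\tau_H^j(\ep)\to\tau_s^{j,*}$ and in-phase breathers. Condition \eqref{taua} is handled symmetrically, giving anti-phase breathers.

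\textbf{Main obstacle.} The delicate point is the $\ep$-perturbation when both modes are critical. I need the first critical delay of the full problem to be near $\min\{\tau_s^{j,*},\tau_a^{j,*}\}$ and to belong to the corresponding mode. The strict inequality of the limiting delays makes this work: for small $\ep$, the convergence \eqref{F->F*1}, uniform on compact sets, keeps the perturbed crossings $\tau_s(\ep)$ and $\tau_a(\ep)$ ordered. The uniform eigenvalue bound excludes spurious roots appearing from infinity, and the argument principle applied on a compact region excludes new crossings for $\tau$ below the first critical value.

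A secondary subtlety is $\tau$-uniformity, since Theorem~\ref{p.con}(3) is stated only for fixed $\tau$. I would handle it by covering a compact $\tau$-interval $[\delta,\tau_H^*-\delta]$ by finitely many neighbourhoods, and treating small $\tau$ via the limit $\tau\to0^+$ from Proposition~\ref{p.pzp}.
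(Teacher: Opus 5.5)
Your proposal is correct and follows essentially the paper's route. The paper gives no separate proof of Theorem \ref{th.assembly}; it assembles it from the mode-wise results of Theorems \ref{th-1}--\ref{th-6}, using the same ingredients you use: the symmetric/asymmetric splitting, Proposition \ref{p.pzp}, the case lemmas, Theorem \ref{p.con} and Corollary \ref{cor.hopf}. Your explicit continuity/Hurwitz argument for uniformity in $\tau$ and for keeping the perturbed crossings ordered spells out what the paper leaves implicit in Theorem \ref{p.con} and Corollary \ref{cor.hopf}.
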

	
	\begin{remark}\label{re.pri}
		We summarize the parameter regions of \eqref{taus} and \eqref{taua} where the primary mode can be identified explicitly.
		\begin{enumerate}
			\item For Case $j=1,4,5$, since $\wh\Gamma_H^{j,s}=\varnothing$, we have $\tau_H^j(0)=\tau_a^{j,*}$ if $(k_1,k_2)\in\Gamma_H^{j,a}$. That is, only the asymmetric mode can trigger a Hopf bifurcation in Cases 1, 4, and 5.
			
			\item For Case $j=2,3,6$, by \eqref{pri2}, \eqref{pri3}, and \eqref{pri6}, we have
			\begin{align*}
				\makebox[2cm][l]{\text{(\(j=2\))}}\quad \tau_H^j(0)=
				&\left\{
				\begin{aligned}
					&\tau_s^{2,*}, &&\text{if $k_2\in(0,k_2^*]$ and $k_1\in(\mathcal{K}_2^{2,s},\mathcal{K}_1^{2,a}(k_2))$ $($provided $\mathcal{K}_2^{2,s}<\mathcal{K}_1^{2,a}(k_2)$$)$},\\
					&\tau_a^{2,*}, &&\text{if $k_2\in(0,\infty)$ and $k_1\in(\mathcal{K}_3(k_2),\infty)$, or,}\\
					&\; &&\;\;\;\;\text{$k_2\in(k_2^*,\infty)$ and $k_1\in(\mathcal{K}_1^{2,a}(k_2),2\mathcal{K}_1^{2,a}(k_2))\cap(0,\mathcal{K}_1^{2,s})$},
				\end{aligned}
				\right.\\
				\makebox[2cm][l]{\text{(\(j=3\))}}\quad \tau_H^j(0)=
				&\left\{
				\begin{aligned}
					&\tau_s^{3,*}, &&\text{if $k_2\in(0,\infty)$ and $k_1\in(\mathcal{K}_3(k_2),\infty)$, or,}\\
					&\; &&\;\;\text{$k_2\in(0,k_2^*]$ and $k_1\in(\mathcal{K}_2^{3,s},\mathcal{K}_1^{3,a}(k_2))$ $($provided $\mathcal{K}_2^{3,s}<\mathcal{K}_1^{3,a}(k_2)$$)$},\\
					&\tau_a^{3,*}, &&\text{if $k_2\in(k_2^*,\infty)$ and $k_1\in(\mathcal{K}_1^{3,a}(k_2),2\mathcal{K}_1^{3,a}(k_2))\cap(0,\mathcal{K}_1^{3,s})$},
				\end{aligned}
				\right.\\
				\makebox[2cm][l]{\text{(\(j=6\))}}\quad \tau_H^j(0)=
				&\left\{
				\begin{aligned}
					&\tau_s^{6,*}, &&\text{if $k_1\in(\max\{\wh k_1^*,\rho_0^*\},\infty)$ and $k_2\in(0,\infty)$},\\
					&\tau_a^{6,*}, &&\text{if $k_1<\mathcal{K}_1^{6,s}$ and $(k_1,k_2)\in\Gamma_H^{6,a}$},
				\end{aligned}
				\right.
			\end{align*}
			where   $\mathcal{K}_i^{2,a}(k_2) =\mathcal{K}_i^{3,a}(k_2)$, $i=1,2$; $\mathcal{K}_3(k_2)$ is given by Corollary \ref{cor.K3}. In particular, the interval $(\mathcal{K}_2^{j,s},\mathcal{K}_1^{j,a}(k_2))$ ($j=2,3$) may be nonempty (see Figures \ref{Fig.dia_2} and \ref{Fig.dia_3}).
		\end{enumerate}
	\end{remark}
	
	\section{Numerical Simulations}\label{NS}
	In this section, we shall perform the numerical simulations to support and complement the theoretical results on the stability/instability and Hopf bifurcation through a specific example. Moreover, we can dig out more information on the similarities and differences among Cases 1--6.
	
	The delayed-coupled FitzHugh-Nagumo (FHN) RD model and the uncoupled subsystem are respectively given by
	\begin{equation}\label{BVDP}
		\begin{cases}
			\begin{split}
				\alpha\dfrac{\partial u_1}{\partial t}&=\ep\Delta u_1+\dfrac{1}{\ep}(u_1(u_1+a) (1-u_1)-v_1)+ k_1 \left(L_1[u_2]-L_2[u_1]\right),&&\text{in}\;\Omega\times(0,\infty),\\
				\dfrac{\partial v_1}{\partial t}&=d\Delta v_1+u_1-\gamma v_1+\theta+ k_2(v_2-v_1),&&\text{in}\;\Omega\times(0,\infty),\\
				\alpha\dfrac{\partial u_2}{\partial t}&=\ep\Delta u_2+\dfrac{1}{\ep}(u_2(u_2+a) (1-u_2)-v_2)+ k_1 \left(L_1[u_1]-L_2[u_2]\right),&&\text{in}\;\Omega\times(0,\infty),\\
				\dfrac{\partial v_2}{\partial t}&=d\Delta v_2+u_2-\gamma v_2+\theta+ k_2(v_1-v_2),&&\text{in}\;\Omega\times(0,\infty),\\
				\partial_\nu u_1&=\partial_\nu v_1=\partial_\nu u_2=\partial_\nu v_2=0,&&\text{on}\;\Po\times(0,\infty),
			\end{split}
		\end{cases}
	\end{equation}
	and
	\begin{equation}\label{BVDP-s}
		\begin{cases}
			\begin{split}
				\alpha\dfrac{\partial u}{\partial t}&=\ep\Delta u+\dfrac{1}{\ep}(u(u+a) (1-u)-v),&&\text{in}\;\Omega\times(0,\infty),\\
				\dfrac{\partial v}{\partial t}&=d\Delta v+u-\gamma v+\theta,&&\text{in}\;\Omega\times(0,\infty),\\
				\partial_\nu u&=\partial_\nu v=0,&&\text{on}\;\Po\times(0,\infty),
			\end{split}
		\end{cases}
	\end{equation}
	where $\Omega=(0,1)$. 
	
	Moreover, for numerical simulations, we set
	\begin{equation*}
		a=1,\; \gamma=8,\; \theta=0,\; \ep=0.05,\; d=1,\; \alpha=0.065.
	\end{equation*}
	See \cite{TMN} and the references therein for more details about the FitzHugh-Nagumo model.
	
	As $\ep\to0^+$, the reduced solution $(u^\star(x),v^\star(x))$ of \eqref{BVDP-s} stated in Theorem \ref{th.1} satisfies
	\begin{equation}\label{BVDP-s-r}
		\begin{cases}
			\begin{split}
				0&=u(1+u)(1-u)-v,\\
				0&=\Delta v+u-8 v.
			\end{split}
		\end{cases}
	\end{equation}
	
	To compute the SLEP characteristic equations, we shall provide the key terms stated in Lemma \ref{le.AA1} with respect to reduced system \eqref{BVDP-s-r}. See Table \ref{tab.1} (\cite{WY2}), where we only show the smallest five eigenvalues $\gamma_i$ ($0\leq i\leq4$) for simplicity. In \cite{WY2}, the numerical computations were performed with $c_1^*=c_2^*=2$. Under the normalization in \eqref{c*}, these constants should instead be $c_1^*=c_2^*=2\kappa^*=2.0598$. All numerical results reported below are computed using the corrected values.
	
	\begin{table}[htbp]
		\centering
		\caption{The table for system parameters \cite{WY2}, where $\gamma_i$ ($0\leq i\leq4$) are the smallest five eigenvalues of system \eqref{gamman}, and $\psi_i(x)$ are the corresponding normalized eigenfunctions; $x^*$ is the layer position.}
		\label{tab.1}
		\begin{tabular}{lc@{\hskip 1.2cm}lc@{\hskip 1.2cm}lc}
			\toprule
			\textbf{Term} & \textbf{Value} & \textbf{Term} & \textbf{Value} &
			\textbf{Term} & \textbf{Value} \\
			\midrule
			$v^*$         & 0           & $\gamma_0$    & 8.5379  &    $\psi_0(x^*)$ &  1.0006    \\ 
			$x^*$      & 0.5           & $\gamma_1$    & 18.4181   &  $\psi_1(x^*)$ &  $-5.6483\times10^{-12}$      \\
			$\rho_0^*$       & 0.6522           & $\gamma_2$    & 48.0094  &  $\psi_2(x^*)$ &  $1.4140$         \\
			$\kappa^*$      & 1.0299           & $\gamma_3$    & 97.3569 &  $\psi_3(x^*)$ &  $1.4951\times10^{-10}$          \\
			$c_1^*$      & 2.0598           & $\gamma_4$    & 166.4350    &  $\psi_4(x^*)$ &  $-1.4142$       \\
			$c_2^*$    & 2.0598           &               &       &   &          \\
			\bottomrule
		\end{tabular}
	\end{table}
	
	\subsection{Computations on SLEP systems}\label{sec.NS-SLEP}
	In this subsection, we shall calculate the SLEP system \eqref{SLEP-g} for the coupled FHN model \eqref{BVDP} in Cases 1--6. 
	Through Table \ref{tab.1}, we can derive $ k_2^*= 1.6684$, $\alpha_0=0.0624$ from Theorem \ref{th.2-1}. Thus, we choose $\alpha=0.065>\alpha_0$ so that the layered steady state $U(x,\ep)$ is stable with respect to the uncoupled subsystem \eqref{BVDP-s} for small $\ep=0.05$.
	
	Next we plot the stability diagram of \eqref{BVDP} in Cases 1--6. Firstly, by Theorem \ref{th.2-1}, $\Gamma_1$ is the Turing instability region. Secondly, we recall that $F(\omega^2)=\left( X(0,\omega^2,K)- \rho_0^*\right) ^2+\omega^2 \left(\alpha-Y(0,\omega^2,K) \right)^2$. The SLEP systems \eqref{SLEP-g} admit purely imaginary eigenvalues $\pm i\omega_H^*$ at $\tau_H^*$ if and only if $(\omega_H^*,\tau_H^*)$ satisfies \eqref{eq-g1} or \eqref{eq-g2}, which are respectively equivalent to 
	\begin{align}
		\makebox[2.8cm][l]{\text{Cases 1--3}}\quad
		&\left\{
		\begin{aligned}
			&F(\omega^2)
			= (q^2-p^2) k_1^2+2pk_1\bigl(X(0,\omega^2,K)-\rho_0^*\bigr),\\
			&k_1 q\cos(\omega\tau)
			= X(0,\omega^2,K)- \rho_0^* - k_1 p,\\
			&\sin(\omega\tau)q
			<0,
		\end{aligned}
		\right.
		\label{eq.NS13}\\[1.5ex]
		\intertext{or}
		\makebox[2.8cm][l]{\text{Cases 4--6}}\quad
		&\left\{
		\begin{aligned}
			&F(\omega^2)
			= -p(p+q)k_1^2+(q+2p)k_1\bigl(X(0,\omega^2,K)-\rho_0^*\bigr),\\
			&\tau
			= \frac{Y(0,\omega^2,K)-\alpha}{X(0,\omega^2,K)-\rho_0^*-k_1 p},
		\end{aligned}
		\right.
		\label{eq.NS46}
	\end{align}
	where $\omega>0$ and $\tau>0$; $(p,q,K)$ is given by \eqref{eq:pqk-sym} and \eqref{eq:pqk-asym}.
	
	\begin{figure}[htbp]
		\centering
		\begin{subfigure}[b]{0.32\textwidth}
			\centering
			\includegraphics[width=\linewidth]{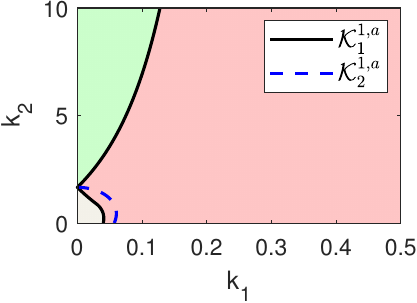}
			\caption{Case 1}	
			\label{Fig.dia_1}
		\end{subfigure}
		\begin{subfigure}[b]{0.32\textwidth}
			\centering
			\includegraphics[width=\linewidth]{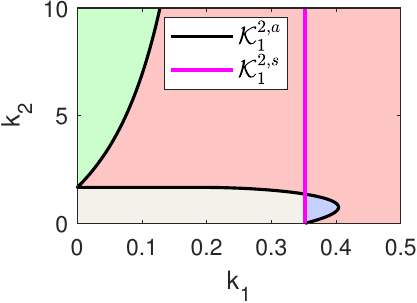}
			\caption{Case 2}	
			\label{Fig.dia_2}
		\end{subfigure}
		\begin{subfigure}[b]{0.32\textwidth}
			\centering
			\includegraphics[width=\linewidth]{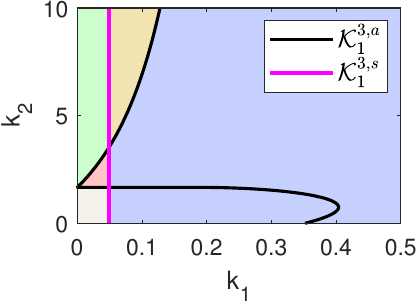}
			\caption{Case 3}	
			\label{Fig.dia_3}
		\end{subfigure}
		\begin{subfigure}[b]{0.32\textwidth}
			\centering
			\includegraphics[width=\linewidth]{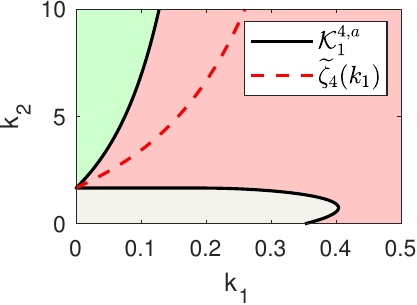}
			\caption{Case 4}	
			\label{Fig.dia_4}
		\end{subfigure}
		\begin{subfigure}[b]{0.32\textwidth}
			\centering
			\includegraphics[width=\linewidth]{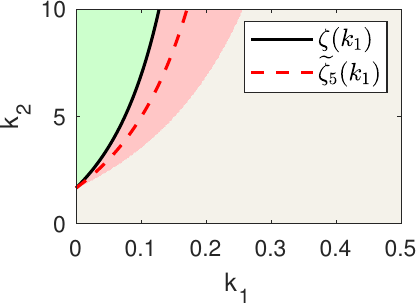}
			\caption{Case 5}	
			\label{Fig.dia_5}
		\end{subfigure}
		\begin{subfigure}[b]{0.32\textwidth}
			\centering
			\includegraphics[width=\linewidth]{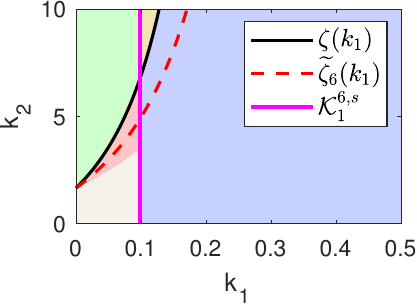}
			\caption{Case 6}	
			\label{Fig.dia_6}
		\end{subfigure}
		\caption{Numerically identified SLEP stability diagram of \eqref{BVDP} in Cases 1--6, where $\mathcal{K}_i^{j,a}=\mathcal{K}_i^{j,a}(k_2)$, $i=1,2$, $j=1,\cdots,4$. Horizontal axis: $k_1$; Vertical axis: $k_2$.  Green: Turing instability region $\Gamma_1$; Grey: stability region $\Gamma_S$; Red: Hopf bifurcation region where the first loss of stability occurs through the asymmetric mode; Blue: Hopf bifurcation region where the first loss of stability occurs through the symmetric mode; Yellow: the overlap of Turing instability region and Hopf bifurcation region in symmetric mode. In particular, $\mathcal{K}_1^{2,a}(k_2)=\mathcal{K}_1^{3,a}(k_2)=\mathcal{K}_1^{4,a}(k_2)$ for $k_2\in(0,\infty)$ in (b), (c), and (d) (see Remark \ref{re.K234}). The purple lines in (b), (c) and (f) are the thresholds of symmetric subsystems, where $\mathcal{K}_1^{2,s}=0.3521>\rho_0^*/2=0.3261$,  $\mathcal{K}_1^{6,s}=2\mathcal{K}_1^{3,s}=0.0987$. In (e) and (f), $\wt\zeta_5(k_1)=\wt\zeta_6(k_1)$.}
		\label{Fig.SLEP_dia}
	\end{figure}
	
	The stability diagrams for Cases 1--6 are presented in Figure \ref{Fig.SLEP_dia} numerically. More precisely, if $(k_1,k_2)\in\Gamma_1$, then it falls into Turing instability region. In $\Gamma_2$, we shall calculate $\omega_H^*$ from the first equation of \eqref{eq.NS13} and \eqref{eq.NS46}, and then substitute $\omega_H^*$ into the other equation (or inequality) to solve the critical values of $\tau>0$. If there are multiple critical values, then the minimum one is denoted by $\tau_H^*$, where the transversality condition $\frac{d\la_R^*}{d\tau}(\tau_H^*)>0$ can be calculated through \eqref{eq.tra1} and \eqref{eq.tra2}. Numerical results indicate that $\frac{d\la_R^*}{d\tau}(\tau_H^*)>0$ holds whenever a pair of purely imaginary eigenvalues $\pm i\omega_H^*$ exists at the first critical value $\tau_H^*$, thereby complementing the analytical sufficient conditions obtained above. Moreover, if $\tau_H^*$ arises from symmetric (resp., asymmetric) SLEP system, then $(k_1,k_2)$ belongs to the symmetric (resp., asymmetric) Hopf bifurcation region. If \eqref{eq.NS13} or \eqref{eq.NS46} has no purely imaginary eigenvalues for any $\tau\in(0,\infty)$, then it falls into stability region. Finally, within the parameter ranges examined, we remark that there is no first critical value $\tau_H^*$ which arises from the symmetric and asymmetric SLEP system simultaneously. 
	
	From Figure \ref{Fig.SLEP_dia}, we have the following observations:
	\begin{enumerate}
		\item {\bf Same delay placement, different delay types.} Although the delay parameter $\tau$ has different physical meanings in the discrete and distributed cases, we can qualitatively compare the structures of their stability diagrams under the same delay placement. The stability diagrams of Cases 1 and 4 (fully delayed coupling) are similar, and the same is true for Cases 3 and 6 (self-delayed coupling). It indicates that, under each of these two delay placements, discrete delays and distributed delays with weak kernels have similar effects on the dynamics of the synchronized layered steady state. However, the stability diagrams of Cases 2 and 5 (cross-delayed coupling) are quite different, where Case 2 exhibits Hopf instability in a large parameter region, but Case 5 maintains stability in a larger region and even exhibits delay-independent stability when the activator coupling strength is sufficiently large.
		
		\item {\bf Same delay type, different delay placements.} For both discrete and distributed delays, the stability diagrams of the fully delayed couplings have relatively simple structures, where Hopf bifurcations can only emerge through the asymmetric mode. For the cross-delayed and self-delayed couplings, they share the same asymmetric sub-problems (see \eqref{eq:pqk-asym}), so the differences are mainly determined by their symmetric sub-problems. More precisely, from Figure \ref{Fig.SLEP_dia}, we observe that for both delay types, fully delayed coupling only admits asymmetric Hopf bifurcation regions. For cross-delayed coupling, the asymmetric mode dominates in most of the Hopf bifurcation regions  (although a small symmetric Hopf bifurcation region exists in Case 2). In contrast, for self-delayed coupling, the symmetric mode is dominant in most of the Hopf bifurcation regions, while small asymmetric Hopf bifurcation regions also exist. In particular, in Cases 3 and 6, the Turing instability region overlaps with the parameter region admitting a symmetric Hopf crossing. This suggests a possible interaction between stationary symmetry-breaking and delay-induced oscillatory instabilities.
		
		\item {\bf Delayed coupling versus non-delayed coupling.} Comparing Figure \ref{Fig.SLEP_dia} with Figure \ref{Fig.1}, we observe that delays have significant effects on the stability of the synchronized layered steady state. More precisely, by Theorem \ref{th.2-1}, the activator coupling strength $k_1$ has a stabilizing effect in the non-delayed coupled system, where the synchronized layered steady state becomes stable when $k_1$ exceeds the Turing instability threshold. However, once delay is introduced into the coupling, large $k_1$ can promote delay-induced oscillatory instability (except for the cross-distributed-delayed coupling). On the other hand, although delays cannot eliminate the Turing instability induced by inter-layer diffusion, they can interact with inter-layer diffusion to produce more complex dynamical behaviors (see Figure \ref{Fig.TH_3} in the next subsection).
		
	\end{enumerate}
	
	\begin{figure}[htbp]
		\centering
		\begin{subfigure}[b]{0.4\textwidth}
			\centering
			\includegraphics[width=\linewidth]{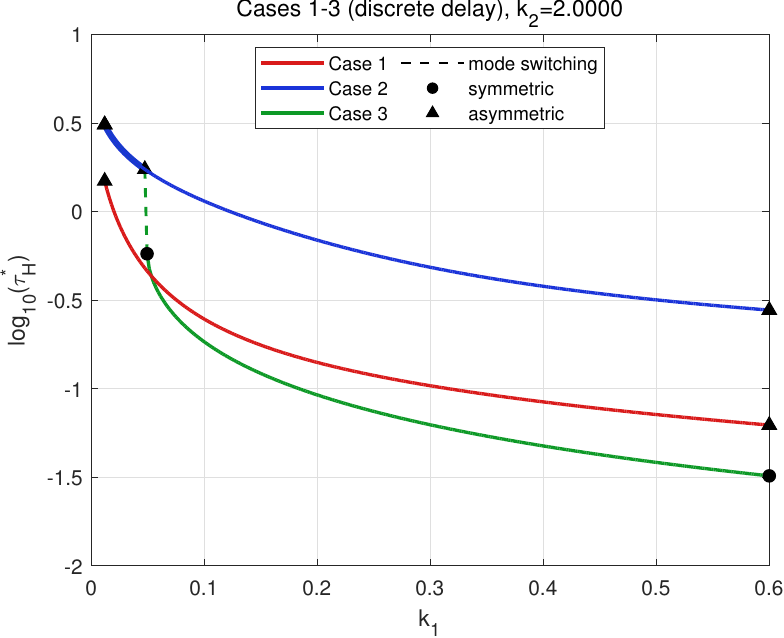}
			\caption{Cases 1--3 with $k_2=2$}	
			\label{Fig.tau_1}
		\end{subfigure}
		\begin{subfigure}[b]{0.4\textwidth}
			\centering
			\includegraphics[width=\linewidth]{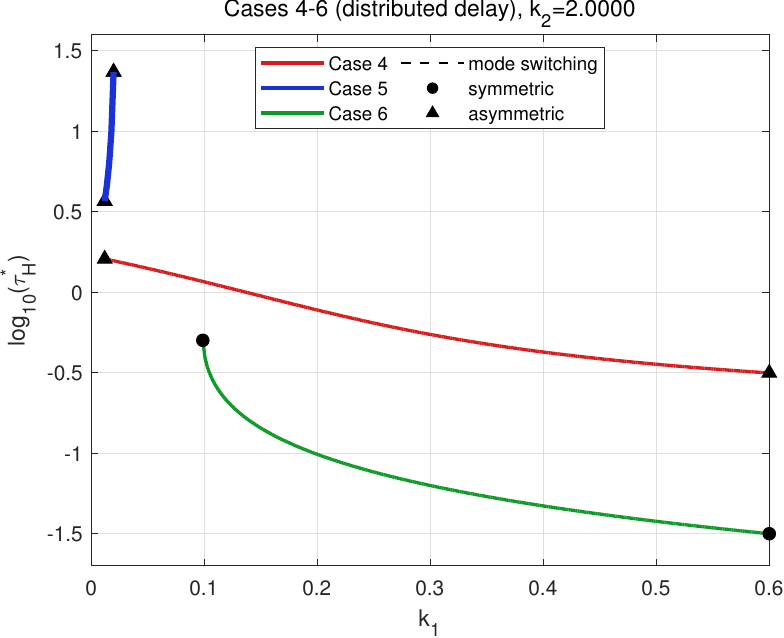}
			\caption{Cases 4--6 with $k_2=2$}	
			\label{Fig.tau_2}
		\end{subfigure}
		\begin{subfigure}[b]{0.4\textwidth}
			\centering
			\includegraphics[width=\linewidth]{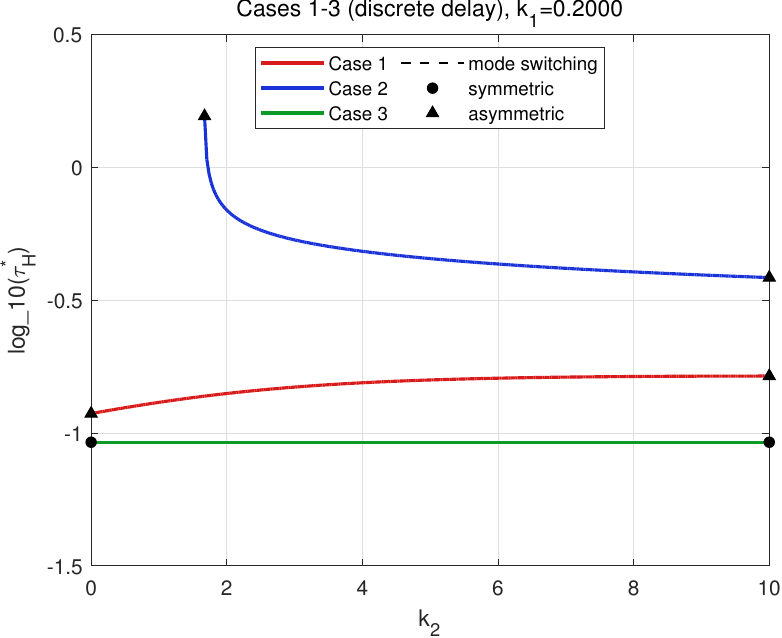}
			\caption{Cases 1--3 with $k_1=0.2$}	
			\label{Fig.tau_3}
		\end{subfigure}
		\begin{subfigure}[b]{0.4\textwidth}
			\centering
			\includegraphics[width=\linewidth]{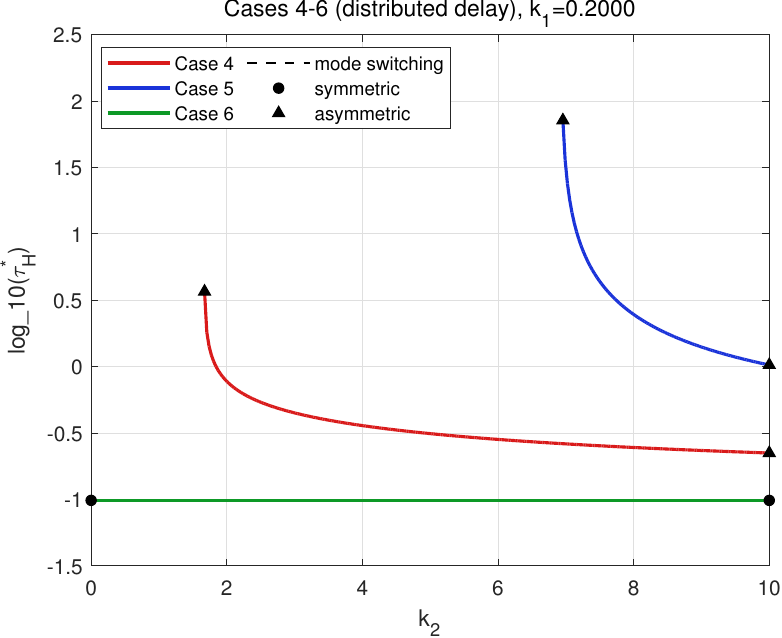}
			\caption{Cases 4--6 with $k_1=0.2$}	
			\label{Fig.tau_4}
		\end{subfigure}
		\caption{Graphs of $\tau_H^*$ in Cases 1--6. Vertical axis: $\log_{10}(\tau_H^*)$; Horizontal axis: $k_1$ ((a), (b)) or $k_2$ ((c), (d)). Red: Case 1 or 4; Blue: Case 2 or 5; Green: Case 3 or 6. In (a), the dashed curves indicate that the mode determining the first critical delay $\tau_H^*$ switches between the symmetric and asymmetric modes in Case 3. In (b), the Case 6 curve consists of two disconnected branches, separated by a parameter interval of delay-independent stability.  Moreover, in (a) (resp., (b)) the thick blue curve indicates an overlapping asymmetric critical-delay branch shared by Cases 2 and 3 (resp., 5 and 6).}
		\label{Fig.tau}
	\end{figure}
	
	The graphs of $\tau_H^*$ (more precisely, $\log_{10}(\tau_H^*)$) with respect to $k_1$ or $k_2$ are shown in Figure \ref{Fig.tau} through \eqref{eq.NS13} and \eqref{eq.NS46}, aiming to numerically analyze the effect of inter-layer coupling strengths on the critical value $\tau_H^*$. In Figures \ref{Fig.tau_1} and \ref{Fig.tau_2}, we set $k_2=2>k_2^*=1.6684$ and $k_1>\xi(k_2)=0.0106$ to explore the relationship between $\tau_H^*$ and $k_1$. From Figure \ref{Fig.SLEP_dia}, as $k_1$ increases, all six cases first pass through the Turing instability region and then enter the Hopf bifurcation region. As $k_1$ further increases, Case 5 enters the stability region, while Case 6 first enters the stability region and then re-enters the Hopf bifurcation region. In Figures \ref{Fig.tau_3} and \ref{Fig.tau_4}, we set $k_1=0.2$ and observe the relationship between $\tau_H^*$ and $k_2$. From Figure \ref{Fig.SLEP_dia}, since $k_1=0.2<\rho_0^*/2=0.3261$, all six cases eventually enter the Turing instability region as $k_2$ increases. Before entering this region, Cases 1, 3, and 6 lie in the Hopf bifurcation regions, while Cases 2, 4, and 5 pass from the stability regions into the Hopf bifurcation regions. According to the modes of primary Hopf bifurcation in different cases, we show them by the corresponding markers in Figure \ref{Fig.tau}.
	
	One can conclude the following facts from Figure \ref{Fig.tau}:
	\begin{enumerate}
		\item {\bf Pattern selection.}
		In Cases 1, 4, and 5, Hopf bifurcations can only occur through the asymmetric mode. Within the parameter ranges examined, when both the symmetric and asymmetric modes can induce Hopf bifurcations, the asymmetric mode dominates in Case 2, while the symmetric mode dominates in Cases 3 and 6. Mode switching may also occur in these three cases under certain parameter conditions. Taking Case 3 as an example, when $k_1\in(\mathcal{K}_1^{3,a}(k_2),\mathcal{K}_1^{3,s})$, the asymmetric mode undergoes a Hopf bifurcation while the symmetric mode does not, leading to the generation of anti-phase breathers (see Figures \ref{Fig.ab-j}--\ref{Fig.ab-l}). As $k_1$ increases, both modes can induce Hopf bifurcations, but the symmetric mode becomes dominant since it reaches its critical delay earlier. Thus, the primary Hopf bifurcation switches to the symmetric mode, leading to in-phase breathers bifurcating from synchronized layered steady state (see Figures \ref{Fig.sb-a}--\ref{Fig.sb-c}).
		
		\item {\bf Sensitivity to time delays.}
		A larger $\tau_H^*$ indicates that the system is more robust against delay-induced instability. In Figures \ref{Fig.tau_1} and \ref{Fig.tau_2}, we observe that the asymmetric critical delays of Cases 3 and 6 coincide with those of Cases 2 and 5, because they share the same asymmetric SLEP system respectively. In most of the parameter ranges examined in Figure \ref{Fig.tau}, the relative robustness can be summarized as
		\begin{equation*}
			\text{Discrete: Case 2 $>$ Case 1 $>$ Case 3;
				Distributed: Case 5 $>$ Case 4 $>$ Case 6.}
		\end{equation*}
		Namely, cross-delayed couplings (Cases 2 and 5) are relatively the most robust against time delays, followed by fully delayed couplings (Cases 1 and 4), while self-delayed couplings (Cases 3 and 6) are the most sensitive to time delays. In particular, an intuitive explanation for these systematic differences is discussed in Section 5.
		
		\item {\bf Dependence of the first critical delay on the inter-layer coupling strengths.}
		When $k_2=2$, from Figures \ref{Fig.tau_1} and \ref{Fig.tau_2}, $\tau_H^*$ is decreasing with respect to $k_1$ in Cases 1--4 and in the symmetric mode of Case 6. However, the asymmetric mode of Case 6 coincides with that of Case 5, where the corresponding critical delay is increasing with respect to $k_1$. In particular, Figure \ref{Fig.tau_2} shows that $\tau_H^*$ increases and eventually disappears as $k_1$ increases in Case 5, reflecting the stabilizing effect of $k_1$. When $k_1=0.2$, by Figure \ref{Fig.SLEP_dia}, the first loss of stability in Cases 3 and 6 is induced by the symmetric mode within the examined range of $k_2$. In Figures \ref{Fig.tau_3} and \ref{Fig.tau_4}, since the corresponding symmetric SLEP systems are independent of $k_2$, the first critical delays $\tau_H^*$ in Cases 3 and 6 remain unchanged as $k_2$ varies. In Cases 2 and 5, $\tau_H^*$ is decreasing with respect to $k_2$, while it is increasing in Case 1 and is decreasing in Case 4. 
	\end{enumerate}
	
	\subsection{Numerical simulations on \eqref{BVDP} in Cases 1--6 for small $\ep>0$}
	
	In this subsection, we shall show the simulations of \eqref{BVDP} in Cases 1--6 to support Theorem \ref{th.assembly} and some numerical observations from SLEP systems in Subsection \ref{sec.NS-SLEP}. 
	
	For numerical purposes, we note that system \eqref{BVDP} in Case 4 can be rewritten as
	\begin{equation*}
		\begin{cases}
			\begin{split}
				\alpha\dfrac{\partial u_1}{\partial t}&=\ep\Delta u_1+\dfrac{1}{\ep}(u_1(u_1+a) (1-u_1)-v_1)+ k_1 \left(w_2-w_1\right),\\
				\dfrac{\partial v_1}{\partial t}&=d\Delta v_1+u_1-\gamma v_1+\theta+ k_2(v_2-v_1),\\
				\alpha\dfrac{\partial u_2}{\partial t}&=\ep\Delta u_2+\dfrac{1}{\ep}(u_2(u_2+a) (1-u_2)-v_2)+ k_1 \left(w_1-w_2\right),\\
				\dfrac{\partial v_2}{\partial t}&=d\Delta v_2+u_2-\gamma v_2+\theta+ k_2(v_1-v_2),\\
				\frac{\pa w_1}{\pa t}&=\frac{1}{\tau}(u_1-w_1),\; \frac{\pa w_2}{\pa t}=\frac{1}{\tau}(u_2-w_2),
			\end{split}
		\end{cases}
	\end{equation*}
	where $w_i(x,t):=\int_{-\infty}^{t}\frac{1}{\tau}e^{-\frac{t-s}{\tau}}u_i(x,s)ds$, $i=1,2$. System \eqref{BVDP} in Cases 5--6 can be represented similarly.
	
	Then, setting $(k_1,k_2)=(0.6,2)$, we shall choose the representative values of $\tau>\tau_H^\ep:=\tau_H(\ep)$, where $\tau_H^\ep$ can be numerically derived by finite difference and Newton iteration with space step $\Delta x=2\times 10^{-3}$. Then the values and expected phenomena (from Figures \ref{Fig.SLEP_dia} and \ref{Fig.tau}) are summarized in Table \ref{tab.tau}. In particular, at $(k_1,k_2)=(0.6,2)$, synchronized layered steady state $(U(x,\ep),U(x,\ep))$ is stable in Case 5.
	
	\begin{table}[htbp]
		\centering
		\caption{Values of the Hopf bifurcation parameter of \eqref{BVDP} in Cases 1--6, where $(k_1,k_2,\ep)=(0.6,2,0.05)$. There is no Hopf critical delay in Case 5.}
		\label{tab.tau}
		\begin{tabular}{lcccc}
			\toprule
			\textbf{Case} & $\tau_H^*$ & $\tau_H^\ep$ & $\tau$ & Observed phenomenon\\
			\midrule
			\textbf{1 (fully discrete delay)} & 0.0624 & 0.0629 & 0.1 & anti-phase breathers  \\
			\textbf{2 (cross-discrete delay)} & 0.2782 & 0.2909 & 0.5 & anti-phase breathers  \\
			\textbf{3 (self-discrete delay)} & 0.0322 & 0.0319 & 0.04 & in-phase breathers \\
			\textbf{4 (fully distributed delay)} & 0.3151 & 0.3432 & 0.4 & anti-phase breathers  \\
			\textbf{5 (cross-distributed delay)} & -- & -- & 10 & synchronized layered steady state   \\
			\textbf{6 (self-distributed delay)} & 0.0316 & 0.0312 & 0.04 & in-phase breathers   \\
			\bottomrule
		\end{tabular}
	\end{table}
	
	\begin{figure}[htbp]	
		\centering
		\begin{subfigure}[b]{0.3\textwidth}
			\centering
			\includegraphics[width=\linewidth]{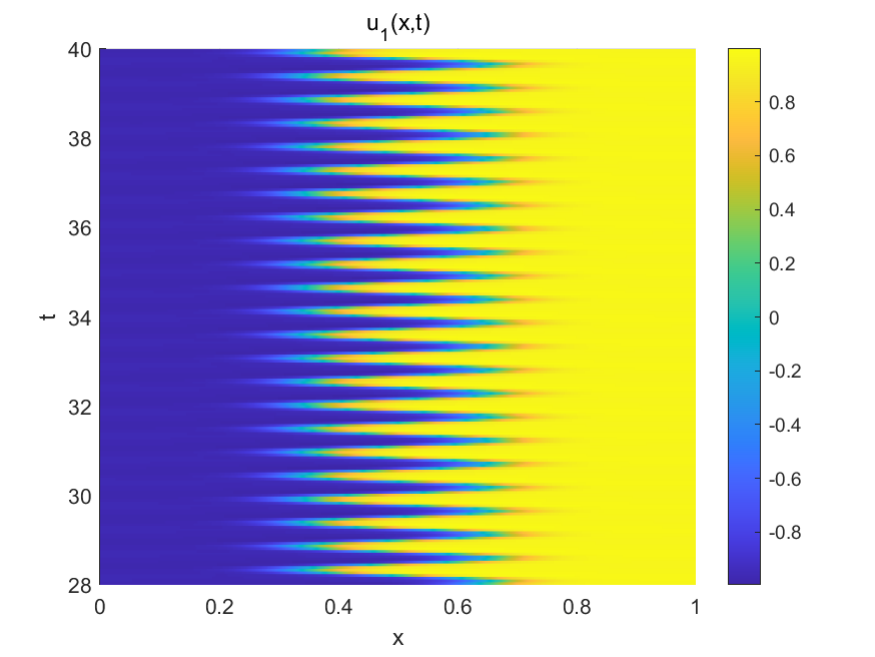}
			\caption{$u_1=u_{1,p}(x,t,\ep)$}	\label{Fig.ab-a}
		\end{subfigure}
		\begin{subfigure}[b]{0.3\textwidth}
			\centering
			\includegraphics[width=\linewidth]{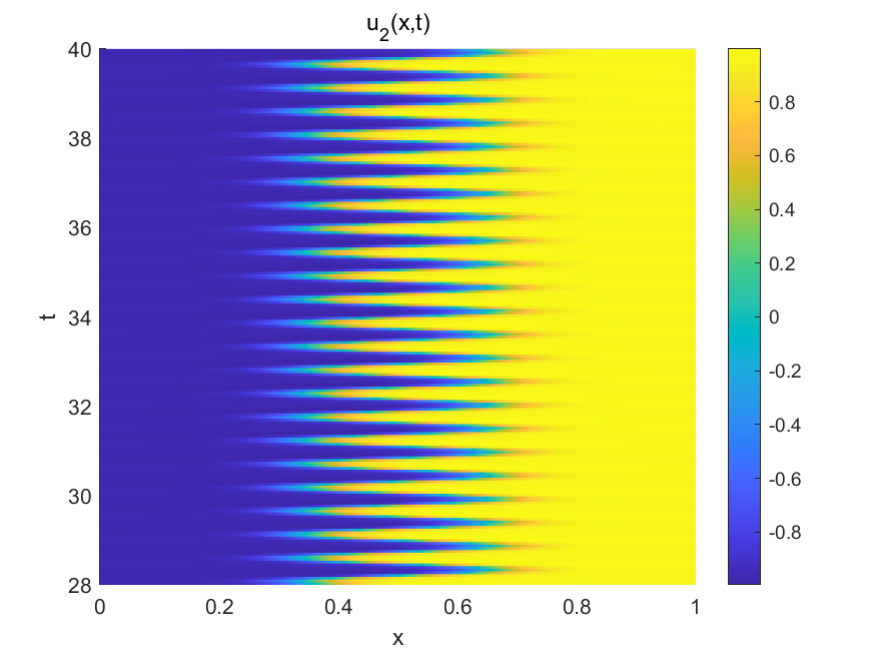}
			\caption{$u_2=u_{2,p}(x,t,\ep)$}
		\end{subfigure}
		\begin{subfigure}[b]{0.3\textwidth}
			\centering
			\includegraphics[width=\linewidth]{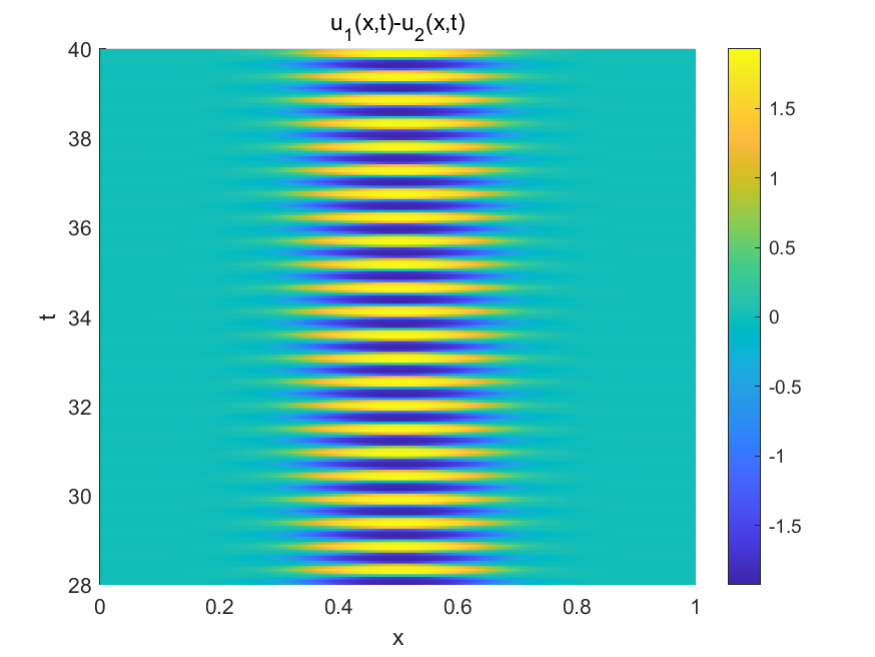}
			\caption{$u_{1,p}(x,t,\ep)-u_{2,p}(x,t,\ep)$}
			\label{Fig.ab-c}
		\end{subfigure}
		\begin{subfigure}[b]{0.3\textwidth}
			\centering
			\includegraphics[width=\linewidth]{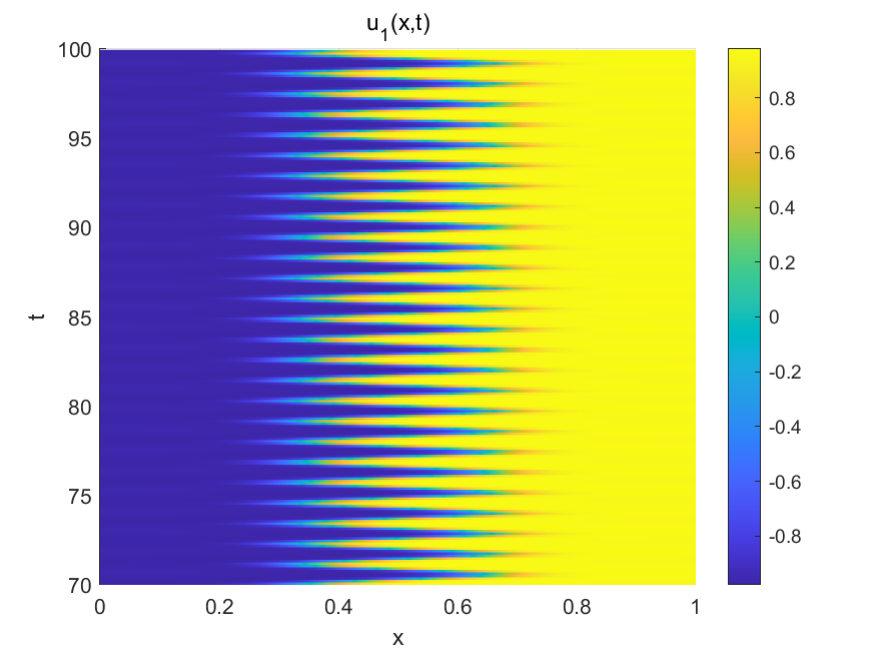}
			\caption{$u_1=u_{1,p}(x,t,\ep)$}
			\label{Fig.ab-d}
		\end{subfigure}
		\begin{subfigure}[b]{0.3\textwidth}
			\centering
			\includegraphics[width=\linewidth]{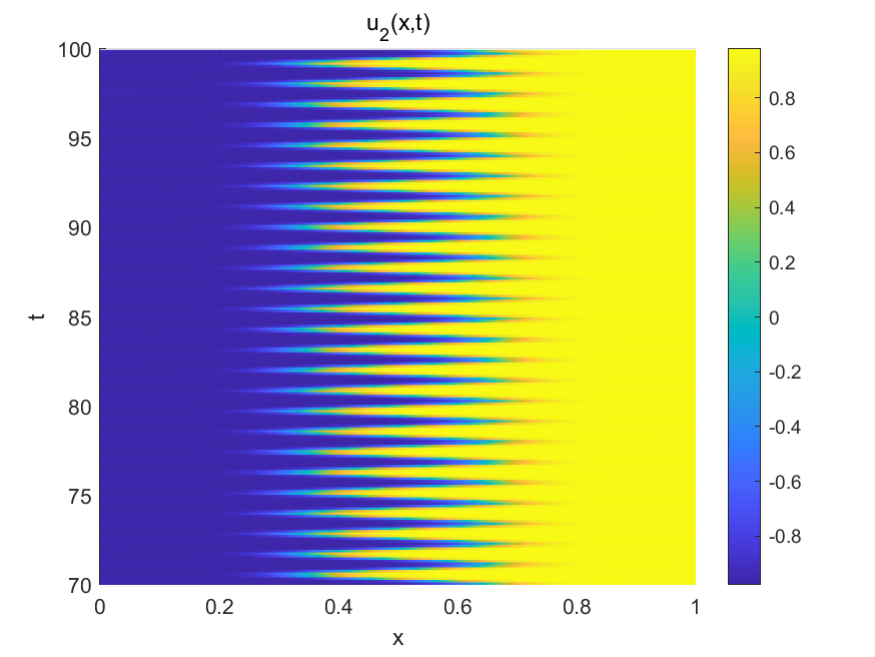}
			\caption{$u_2=u_{2,p}(x,t,\ep)$}
		\end{subfigure}
		\begin{subfigure}[b]{0.3\textwidth}
			\centering
			\includegraphics[width=\linewidth]{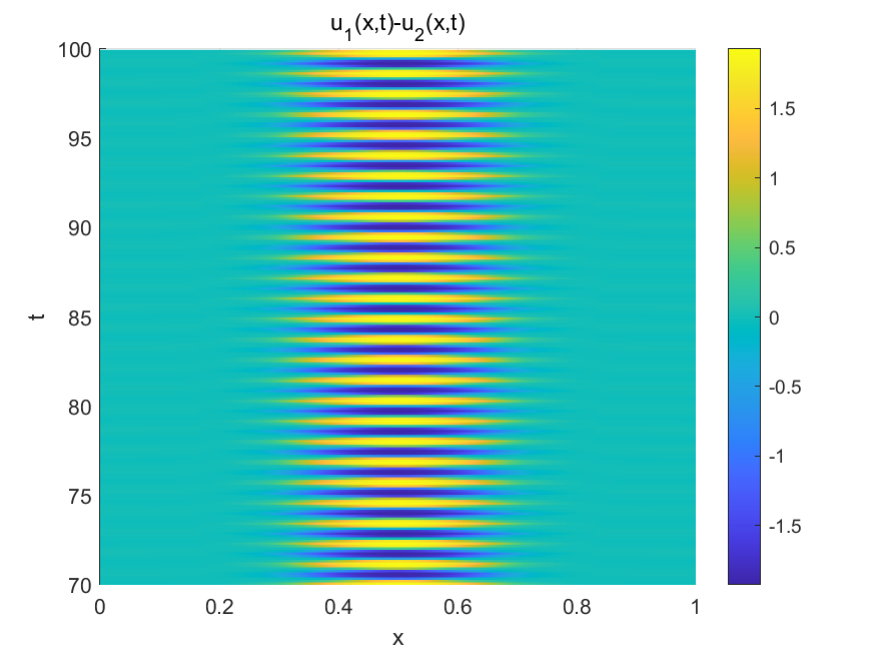}
			\caption{$u_{1,p}(x,t,\ep)-u_{2,p}(x,t,\ep)$}
			\label{Fig.ab-f}
		\end{subfigure}
		\begin{subfigure}[b]{0.3\textwidth}
			\centering
			\includegraphics[width=\linewidth]{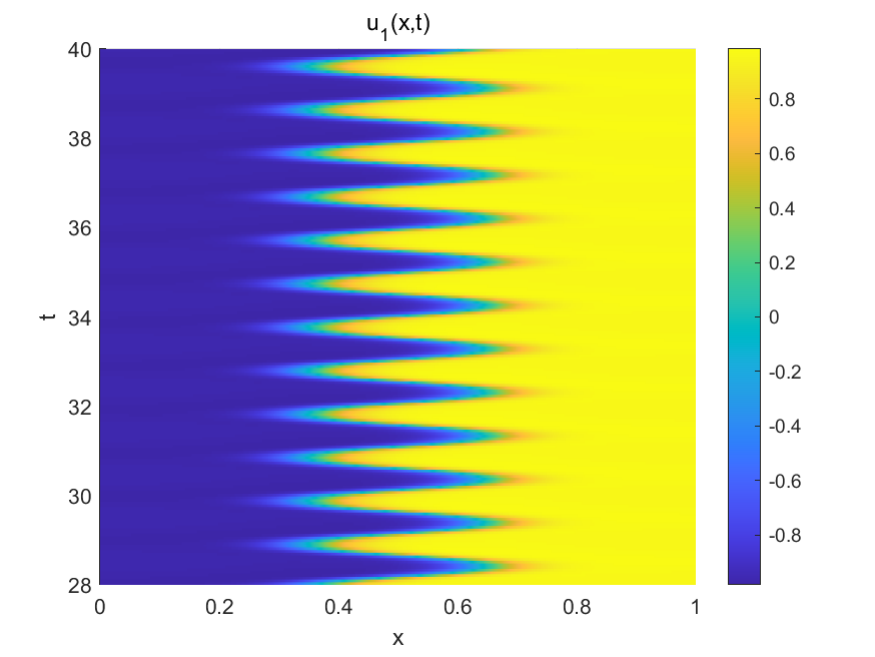}
			\caption{$u_1=u_{1,p}(x,t,\ep)$}	
			\label{Fig.ab-g}
		\end{subfigure}
		\begin{subfigure}[b]{0.3\textwidth}
			\centering
			\includegraphics[width=\linewidth]{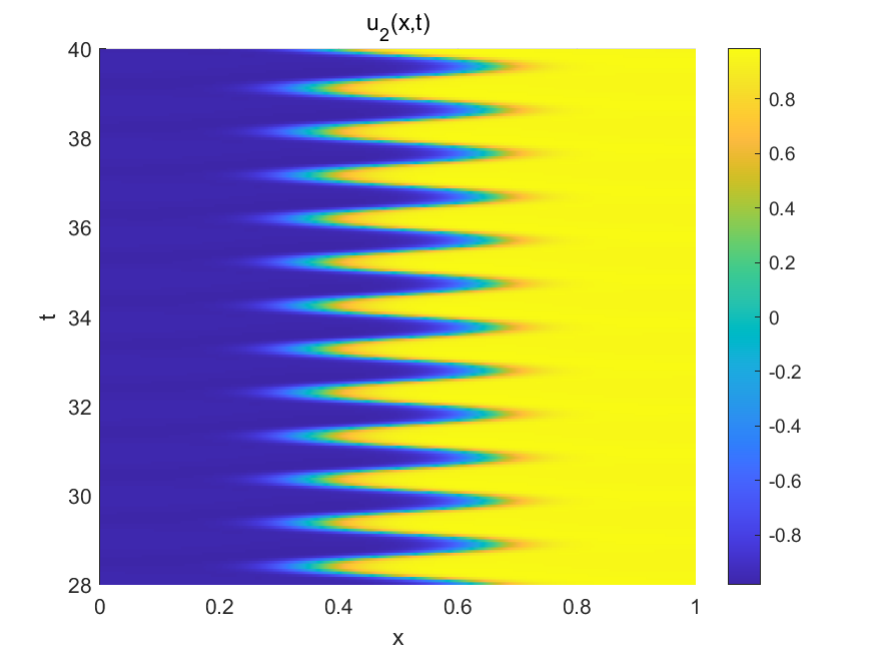}
			\caption{$u_2=u_{2,p}(x,t,\ep)$}
		\end{subfigure}
		\begin{subfigure}[b]{0.3\textwidth}
			\centering
			\includegraphics[width=\linewidth]{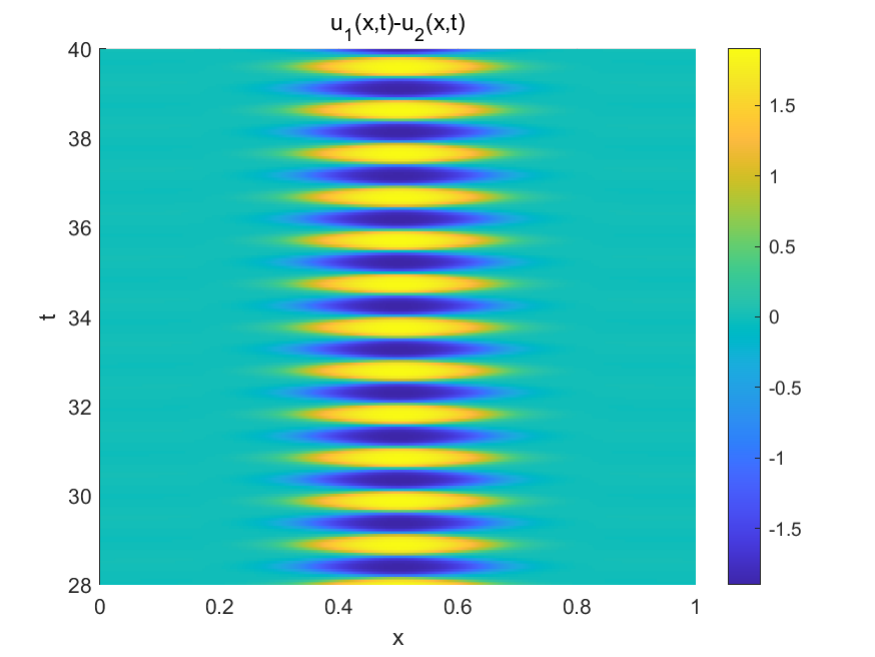}
			\caption{$u_{1,p}(x,t,\ep)-u_{2,p}(x,t,\ep)$}
			\label{Fig.ab-i}
		\end{subfigure}
		\begin{subfigure}[b]{0.3\textwidth}
			\centering
			\includegraphics[width=\linewidth]{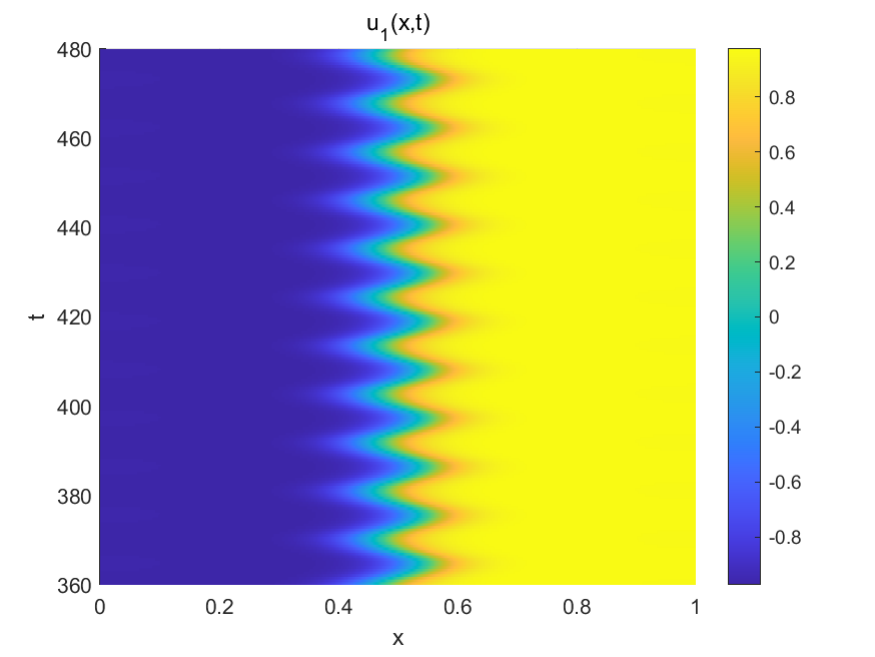}
			\caption{$u_1=u_{1,p}(x,t,\ep)$}
			\label{Fig.ab-j}
		\end{subfigure}
		\begin{subfigure}[b]{0.3\textwidth}
			\centering
			\includegraphics[width=\linewidth]{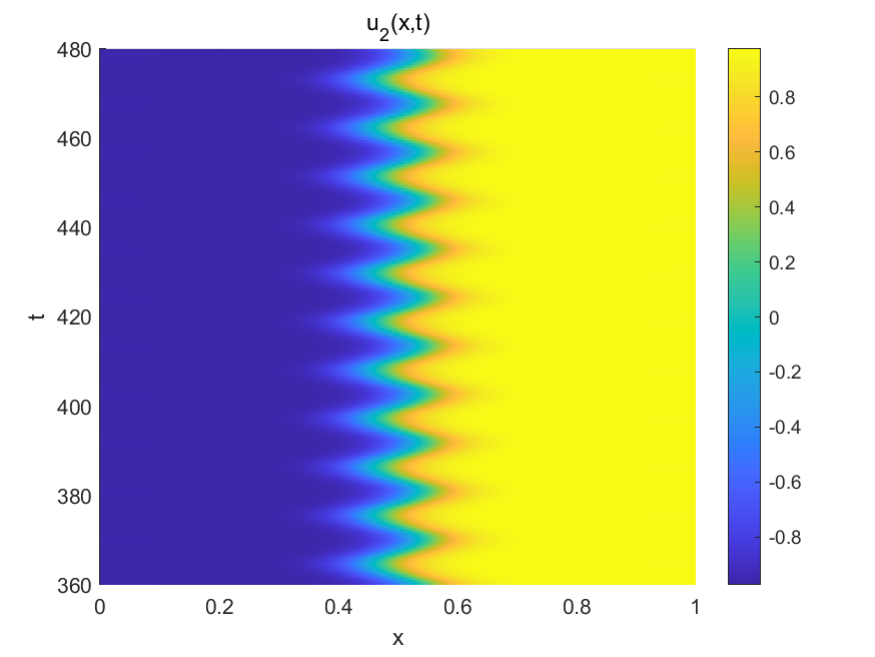}
			\caption{$u_2=u_{2,p}(x,t,\ep)$}
		\end{subfigure}
		\begin{subfigure}[b]{0.3\textwidth}
			\centering
			\includegraphics[width=\linewidth]{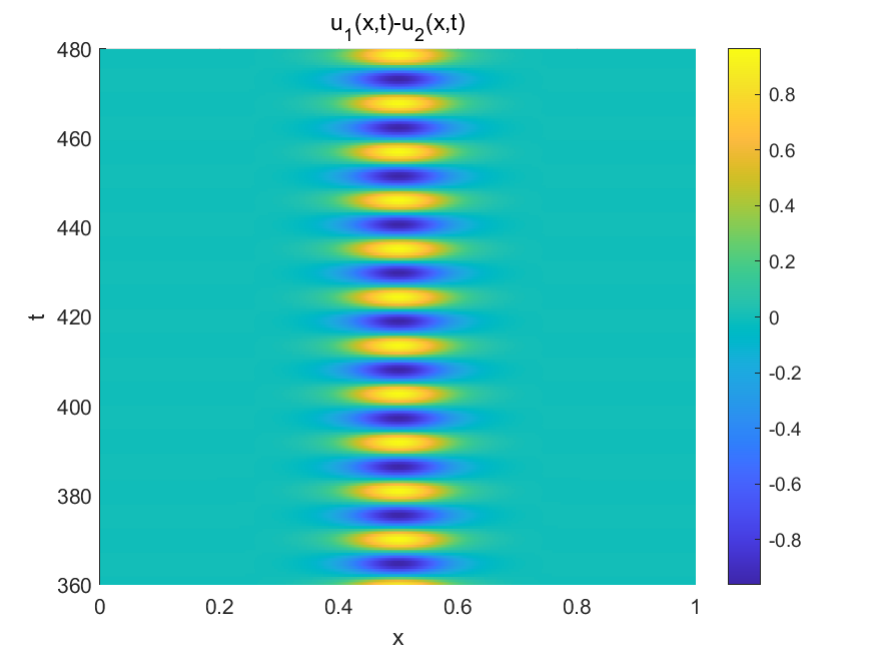}
			\caption{$u_{1,p}(x,t,\ep)-u_{2,p}(x,t,\ep)$}
			\label{Fig.ab-l}
		\end{subfigure}
		\caption{Anti-phase Hopf bifurcating breathers with respect to system \eqref{BVDP} in Cases 1--4.  (a)-(c): Case 1, $(k_1,k_2)=(0.6,2)$,  $\tau=0.1>\tau_H^\ep=0.0629$. (d)-(f): Case 2, $(k_1,k_2)=(0.6,2)$, $\tau=0.5>\tau_H^\ep=0.2909$. (g)-(i): Case 4,  $(k_1,k_2)=(0.6,2)$, $\tau=0.4>\tau_H^\ep=0.3432$. (j)-(l): Anti-phase Hopf-bifurcating breathers appear in Case 3 by mode switching, where $(k_1,k_2)=(0.03,2)$, $\tau=4>\tau_H^\ep=3.5569$, $\Delta t=1\times 10^{-4}$.}
		\label{Fig.ab}
	\end{figure}
	
	\begin{figure}[htbp]	
		\centering
		\begin{subfigure}[b]{0.3\textwidth}
			\centering
			\includegraphics[width=\linewidth]{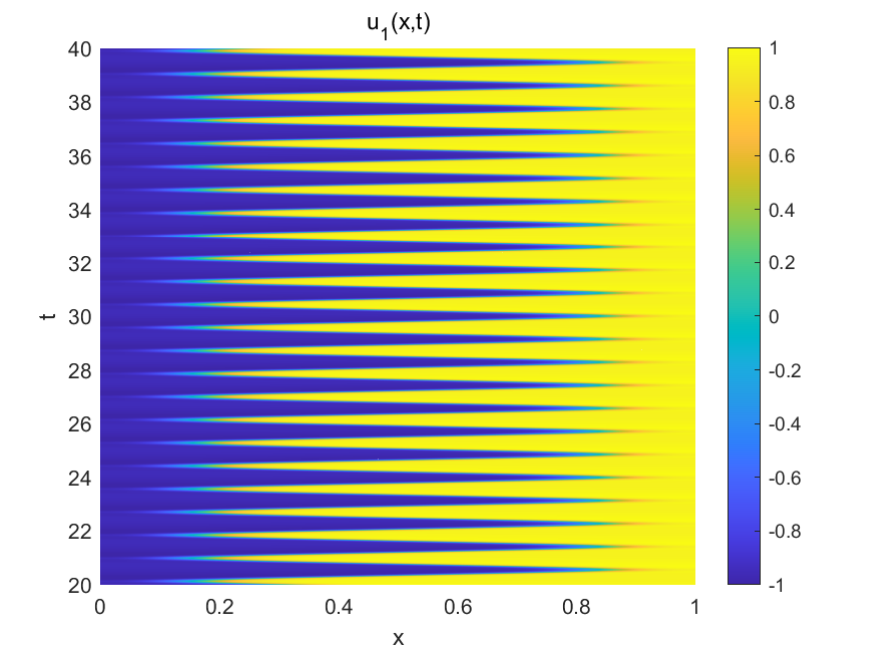}
			\caption{$u_1=u_{1,p}(x,t,\ep)$}
			\label{Fig.sb-a}
		\end{subfigure}
		\begin{subfigure}[b]{0.3\textwidth}
			\centering
			\includegraphics[width=\linewidth]{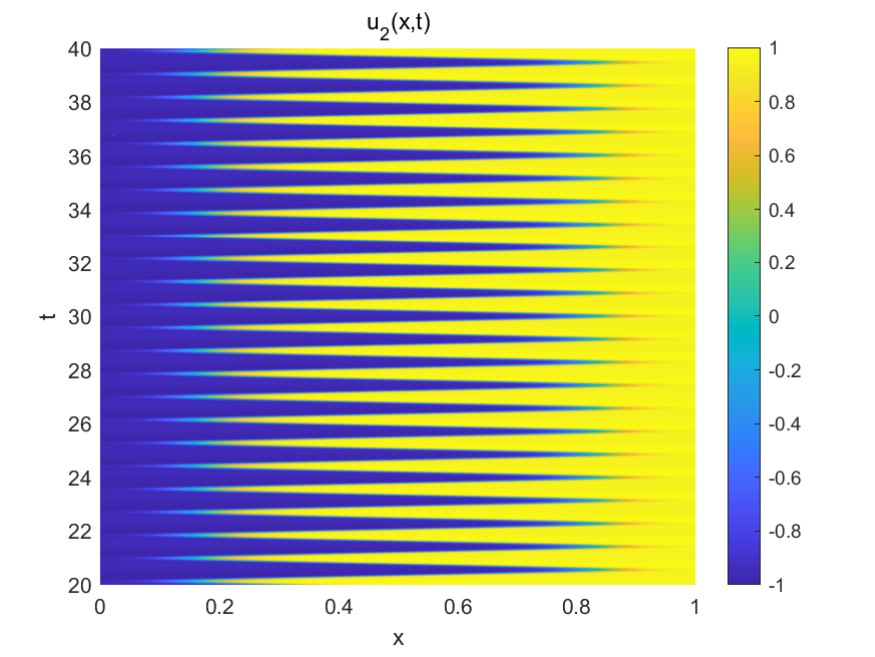}
			\caption{$u_2=u_{2,p}(x,t,\ep)$}
		\end{subfigure}
		\begin{subfigure}[b]{0.3\textwidth}
			\centering
			\includegraphics[width=\linewidth]{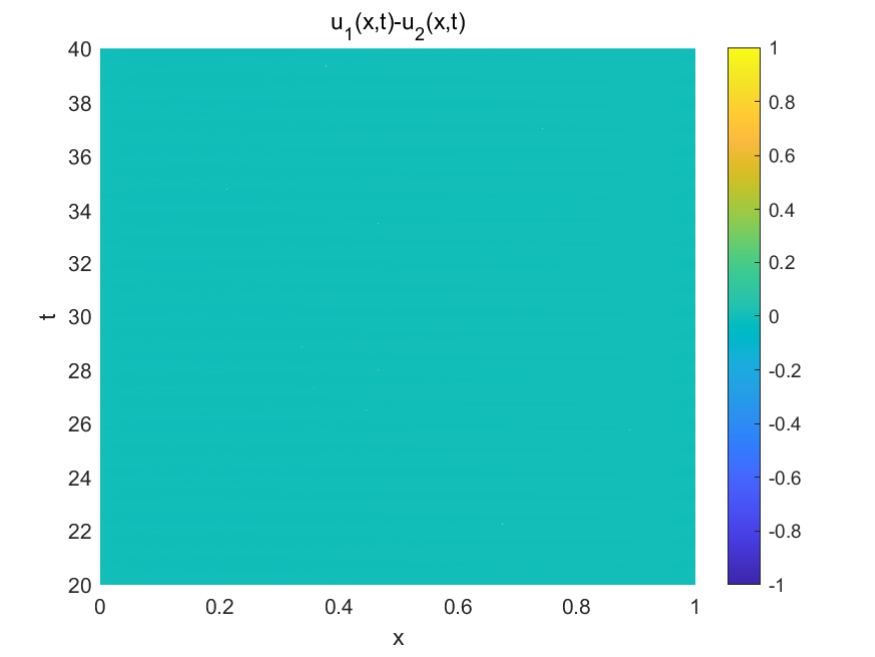}
			\caption{$u_{1,p}(x,t,\ep)-u_{2,p}(x,t,\ep)$}
			\label{Fig.sb-c}
		\end{subfigure}
		\begin{subfigure}[b]{0.3\textwidth}
			\centering
			\includegraphics[width=\linewidth]{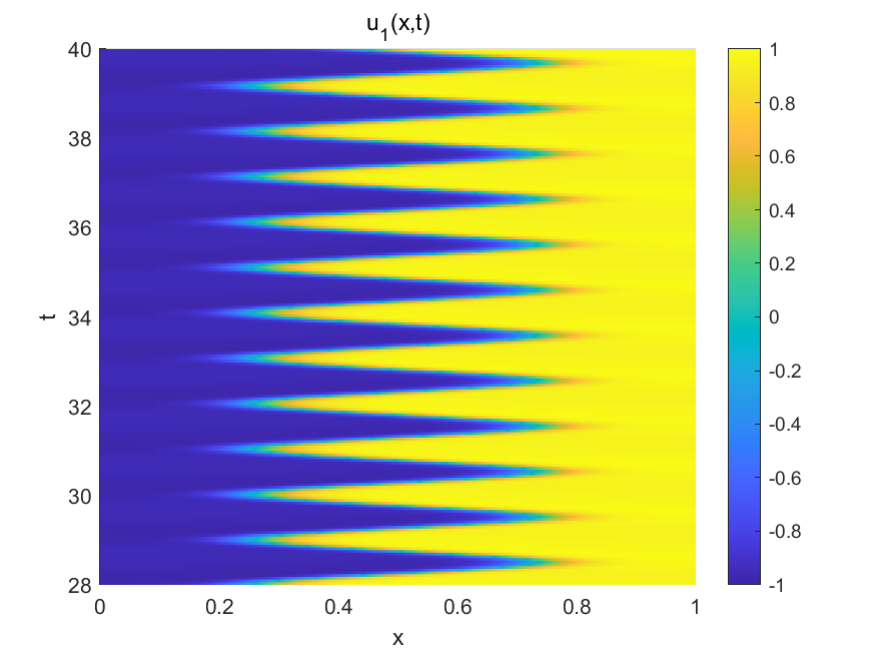}
			\caption{$u_1=u_{1,p}(x,t,\ep)$}	
			\label{Fig.sb-d}
		\end{subfigure}
		\begin{subfigure}[b]{0.3\textwidth}
			\centering
			\includegraphics[width=\linewidth]{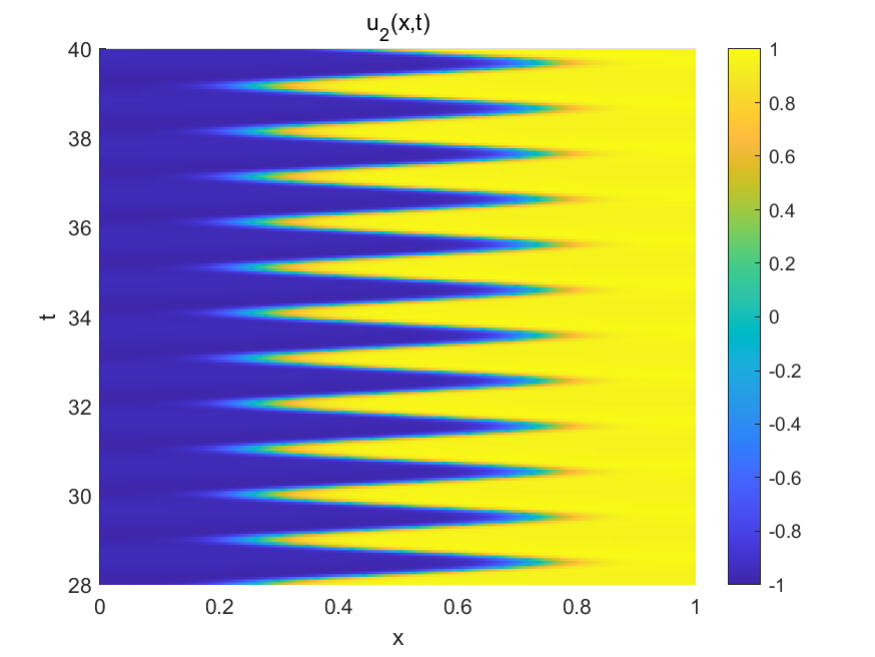}
			\caption{$u_2=u_{2,p}(x,t,\ep)$}
		\end{subfigure}
		\begin{subfigure}[b]{0.3\textwidth}
			\centering
			\includegraphics[width=\linewidth]{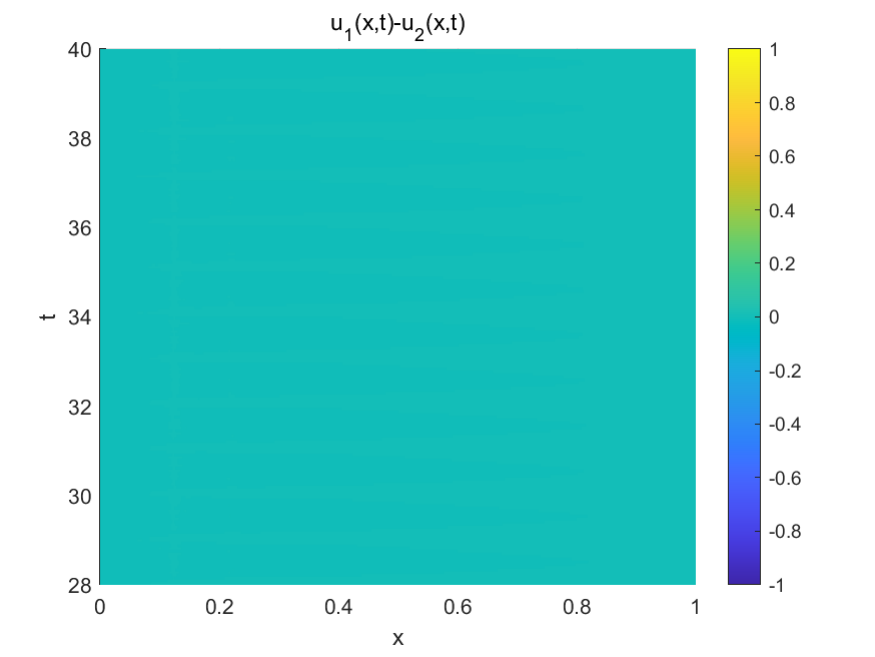}
			\caption{$u_{1,p}(x,t,\ep)-u_{2,p}(x,t,\ep)$}
			\label{Fig.sb-f}
		\end{subfigure}
		\caption{In-phase Hopf bifurcating breathers with respect to system \eqref{BVDP} in Cases 3 and 6, where $(k_1,k_2)=(0.6,2)$.  (a)-(c): Case 3, $\tau=0.04>\tau_H^\ep=0.0319$. (d)-(f): Case 6, $\tau=0.04>\tau_H^\ep=0.0312$.}
		\label{Fig.sb}
	\end{figure}
	
	\begin{figure}
		\centering
		\begin{subfigure}[b]{0.3\textwidth}
			\centering
			\includegraphics[width=\linewidth]{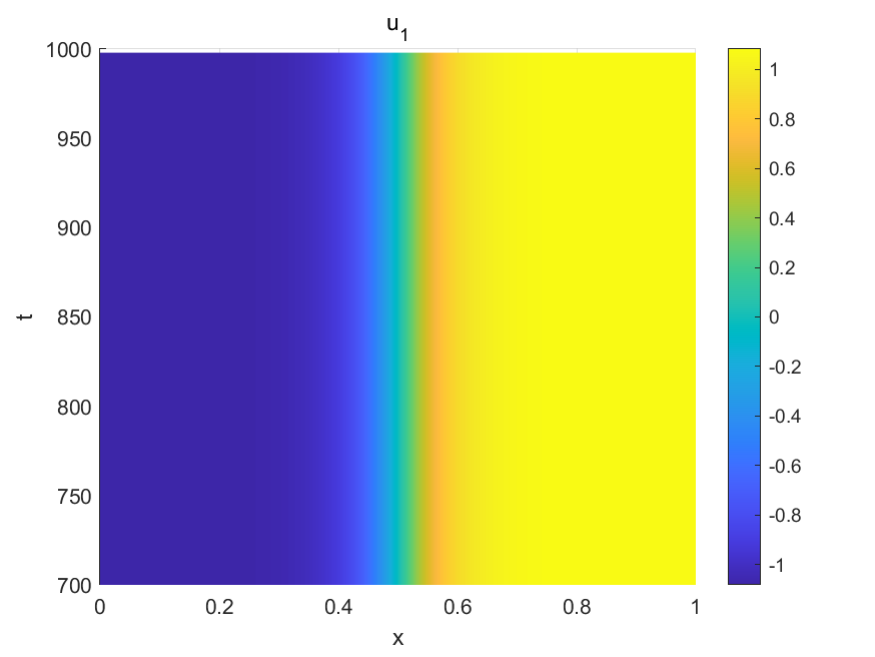}
			\caption{$u_1=u(x,\ep)$}
			\label{Fig.S-a}
		\end{subfigure}
		\begin{subfigure}[b]{0.3\textwidth}
			\centering
			\includegraphics[width=\linewidth]{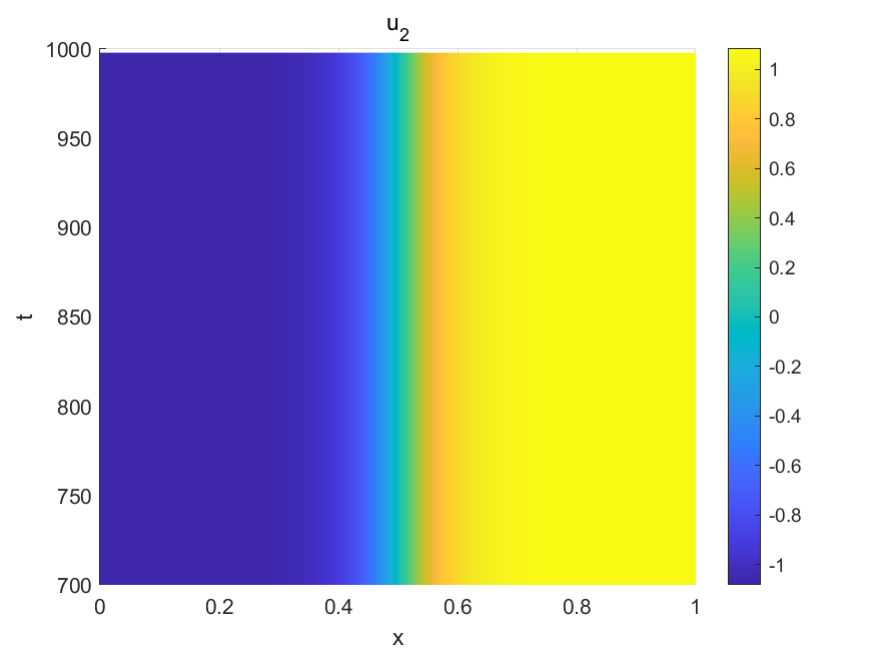}
			\caption{$u_2=u(x,\ep)$}
		\end{subfigure}
		\begin{subfigure}[b]{0.3\textwidth}
			\centering
			\includegraphics[width=\linewidth]{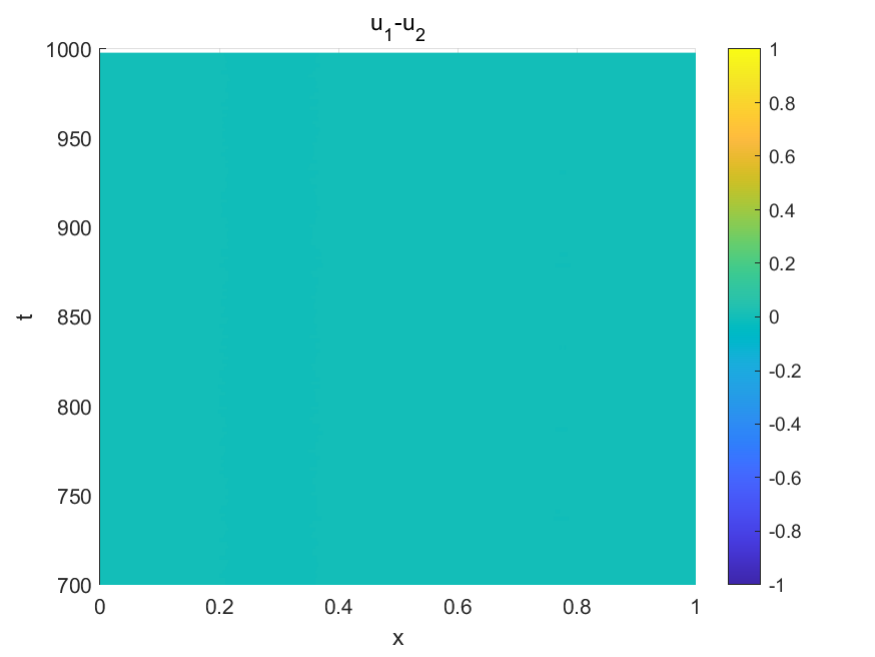}
			\caption{$u_1-u_2$}
			\label{Fig.S-c}
		\end{subfigure}
		\caption{Stable synchronized layered steady state of \eqref{BVDP} in Case 5, where $(k_1,k_2)=(0.6,2)$, $\tau=10$, $\Delta t=1\times 10^{-3}$.}
		\label{Fig.S}
	\end{figure}
	
	Using the parameter values in Table \ref{tab.tau}, we obtain the anti-phase breathers in Cases 1, 2, and 4 (see Figures \ref{Fig.ab-a}--\ref{Fig.ab-i}), the in-phase breathers in Cases 3 and 6 (see Figure \ref{Fig.sb}), and the stable synchronized layered steady state in Case 5 (see Figure \ref{Fig.S}), where the default time step is $\Delta t=2\times 10^{-4}$ unless otherwise stated. The numerical simulations suggest that the Hopf bifurcations shown in Figures \ref{Fig.ab} and \ref{Fig.sb} are supercritical.
	
	\begin{figure}[htbp]	
		\centering
		\begin{subfigure}[b]{0.3\textwidth}
			\centering
			\includegraphics[width=\linewidth]{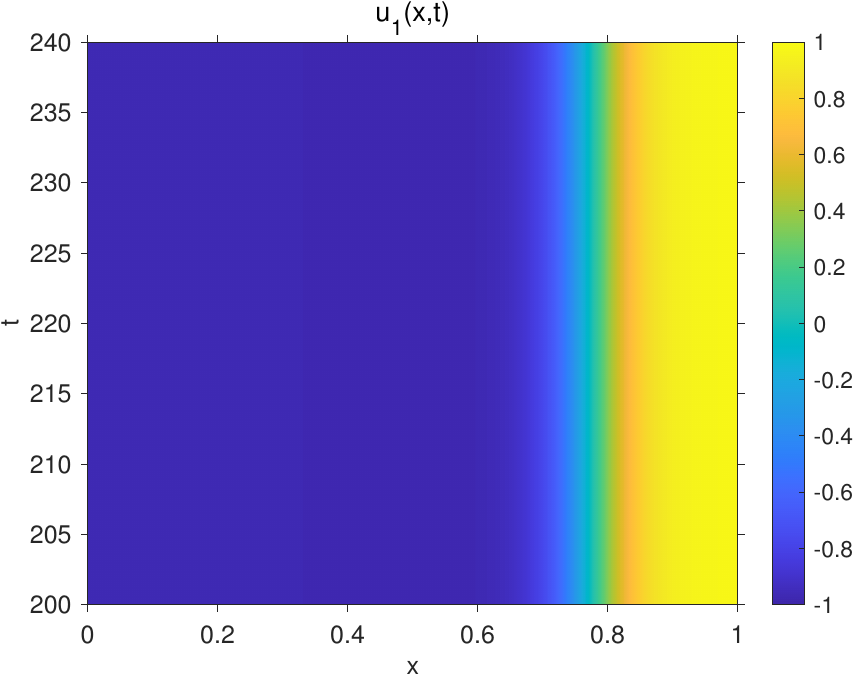}
			\caption{$u_1=\ol u(x)$}	
			\label{Fig.TH-a}
		\end{subfigure}
		\begin{subfigure}[b]{0.3\textwidth}
			\centering
			\includegraphics[width=\linewidth]{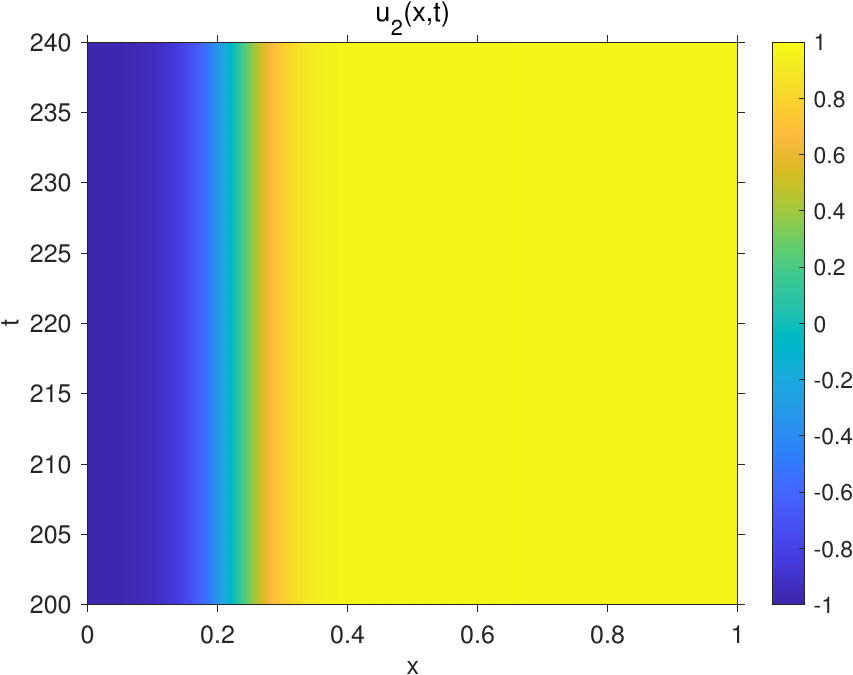}
			\caption{$u_2=\ul u(x)$}	
			\label{Fig.TH-b}
		\end{subfigure}
		\begin{subfigure}[b]{0.3\textwidth}
			\centering
			\includegraphics[width=\linewidth]{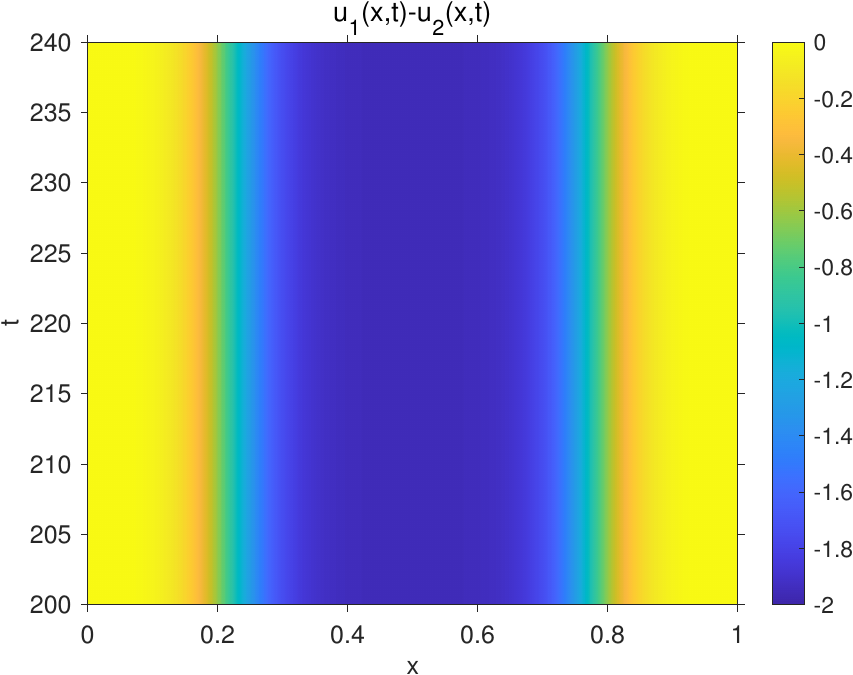}
			\caption{$u_1-u_2$}	
			\label{Fig.TH-c}
		\end{subfigure}
		\centering
		\begin{subfigure}[b]{0.3\textwidth}
			\centering
			\includegraphics[width=\linewidth]{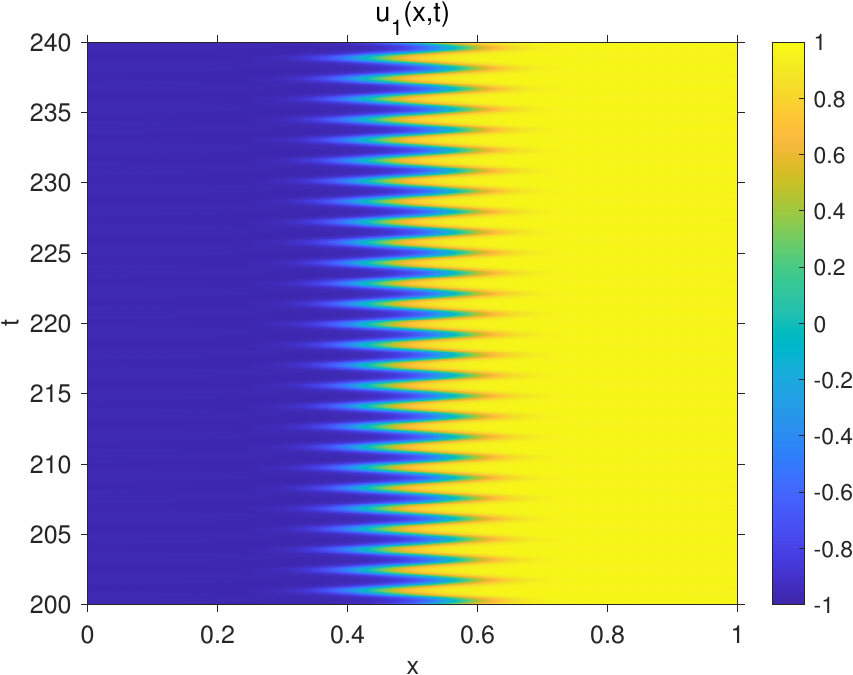}
			\caption{$u_1=u_p(x,t)$}	
			\label{Fig.TH-d}
		\end{subfigure}
		\begin{subfigure}[b]{0.3\textwidth}
			\centering
			\includegraphics[width=\linewidth]{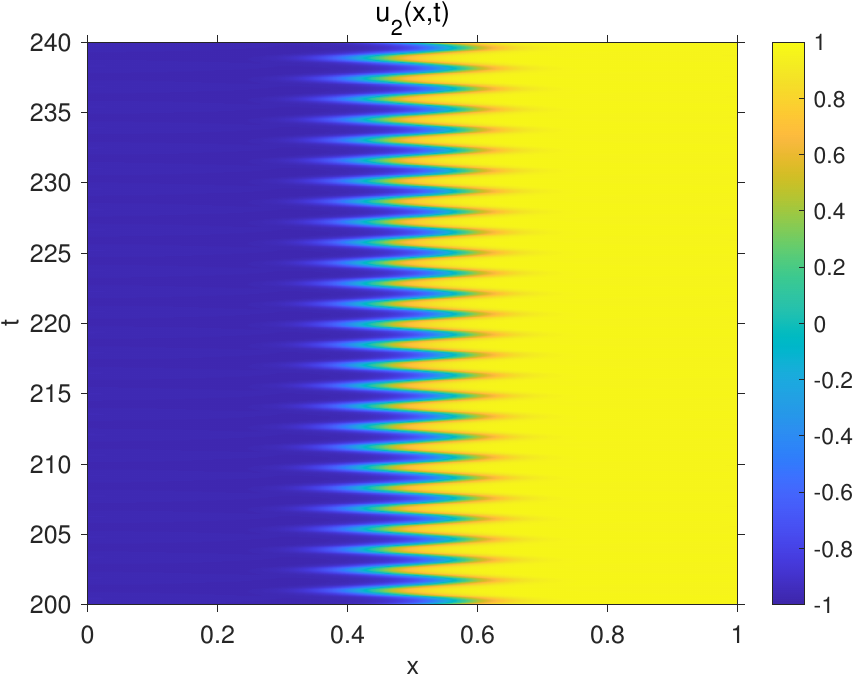}
			\caption{$u_2=u_p(x,t)$}	
			\label{Fig.TH-e}
		\end{subfigure}
		\centering
		\begin{subfigure}[b]{0.3\textwidth}
			\centering
			\includegraphics[width=\linewidth]{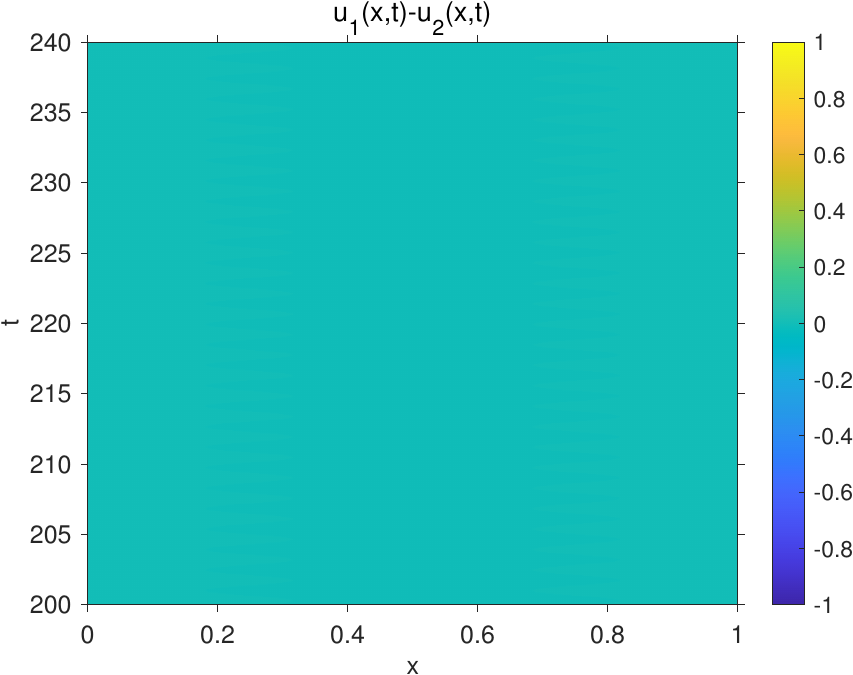}
			\caption{$u_1-u_2$}	
			\label{Fig.TH-f}
		\end{subfigure}
		\caption{Coexistence of stationary and oscillatory patterns of system \eqref{BVDP} in Case 3, where $k_2=5$, $k_1=0.068$, $\tau=0.4>\tau_H(\ep)=0.2843$. (a)--(c): a symmetry-breaking layered steady state arising from Turing instability. (d)--(f): an in-phase breather via Hopf bifurcation.}
		\label{Fig.TH_3}
	\end{figure} 
	
	In particular, we use Case 3 to illustrate the mode switching and interaction between Turing instability and delay-induced oscillations, which we refer to as Turing-Hopf interaction. (1) For small $k_1$, anti-phase breathers may arise through Hopf bifurcation. See Figures \ref{Fig.ab-j}-\ref{Fig.ab-l}, where we set $(k_1,k_2)=(0.03,2)$, $\tau=4>\tau_H^\ep=3.5569$. (2) The coexistence of Turing instability and symmetric Hopf crossings in Cases 3 and 6 (see Figure \ref{Fig.SLEP_dia}) motivates a numerical exploration of their possible interaction. We only explore the dynamics of Case 3 as a numerical example (see Figure \ref{Fig.TH_3}). Setting $k_2=5$ and $k_1=0.068\in(\mathcal{K}_1^{3,s},\xi(k_2))$ for $\mathcal{K}_1^{3,s}=0.0493$ and $\xi(k_2)=0.0751$, $\tau=0.4>\tau_H(\ep)=0.2843$, numerical simulations reveal the coexistence of two types of patterns: a symmetry-breaking layered steady state arising from Turing instability (Figures \ref{Fig.TH-a}--\ref{Fig.TH-c}) and an in-phase breather via  Hopf bifurcation in the symmetric mode (Figures \ref{Fig.TH-d}--\ref{Fig.TH-f}), which are obtained from initial histories with nearly opposite and nearly identical perturbations in the two layers around the synchronized layered steady state, respectively. These observations suggest a nonlinear interaction between stationary symmetry breaking and delay-induced oscillations, and demonstrate the joint effects of time delay and inter-layer coupling on pattern formation.
	
	\section{Concluding remarks}\label{S.CM}
	In this paper, we investigate the stability and bifurcation of the synchronized layered steady state $(U(x,\ep),U(x,\ep))$ in a class of two-layer membrane-coupled RD systems with delayed inter-layer coupling. We consider both discrete and distributed delays, with the delay acting on the cross-layer transmission, the local self-feedback, or both, leading to six representative coupling configurations. Our main objective is to understand how the temporal nature and placement of the delays, together with the inter-layer coupling strengths, affect the stability of the synchronized layered steady state, and in particular how increasing the delay can destabilize an otherwise stable synchronized layered steady state through Hopf bifurcation and generate in-phase or anti-phase breathers. Finally, the analytical results are illustrated and complemented by numerical simulations for a two-layer FitzHugh–Nagumo system.
	
	The main contributions of this work are as follows. (i) We extend the SLEP method to membrane-coupled RD systems with time delays, and develop a unified SLEP framework which incorporates the symmetric and asymmetric sub-problems arising from all six delayed coupling configurations. (ii) Within this framework, we establish sufficient conditions for delay-independent stability, persistence of Turing instability, and delay-induced Hopf bifurcation. In particular, for every fixed $k_2>k_2^*$, all six configurations admit a nonempty interval of $k_1$ immediately above the delay-free Turing threshold 	$\xi(k_2)$, where the synchronized layered steady state is stable in the absence of delay, but a simple conjugate pair of eigenvalues in the asymmetric mode crosses the imaginary axis transversely from left to right as the delay increases. Additional sufficient conditions are also derived to identify the first destabilizing mode and establish the occurrence of primary Hopf bifurcations. (iii) By comparing the six coupling configurations, we identify the common spectral structures and distinct stability and bifurcation mechanisms determined by the temporal nature and placement of delays. These include the generation of in-phase or anti-phase breathers through symmetric or asymmetric Hopf bifurcations, respectively, and stabilization by sufficiently strong activator coupling in the cross-distributed-delayed configuration.
	
	In particular, we would like to make the following concluding remarks.
	
	\begin{enumerate}
		\item One general conclusion emerging from the six coupling configurations is that the temporal nature and the placement of the delay play different but closely related roles in determining the instability mechanism. The symmetric–asymmetric decomposition provides a natural way to see this difference. For the fully delayed couplings (Cases 1 and 4), the symmetric mode is not affected by the delay, and the Hopf instability can only be generated by the asymmetric mode, leading to anti-phase breathers. For the self-delayed couplings (Cases 3 and 6), in contrast, the symmetric mode becomes  important and may dominate the primary Hopf bifurcation, especially for strong activator coupling, thereby tending to trigger in-phase breathers. The cross-delayed couplings reveal a clearer difference between discrete and distributed delays: although Cases 2 and 5 have the same delay placement, their stability properties can be significantly different. In this sense, the placement of the delay largely determines which mode plays the dominant role, while the temporal nature of the delay influences whether and when that mode destabilizes the synchronized layered steady state.
		\item The strong-coupling-induced stability in Case 5 suggests that, under certain coupling architectures, strengthening the activator coupling can suppress (rather than enhance) delay-induced oscillatory instability of the synchronized layered steady state. This may provide a useful design principle for coupled reactors, synthetic biological circuits or neural networks with cross-delayed distributed coupling. An intuitive explanation of this phenomenon can be obtained directly from the coupling structure. Recall that the activator coupling terms are
		\begin{equation*}
			T_1:=k_1\left( L_\tau[u_2]-u_1\right),\;\; 
			T_2:=k_1\left( L_\tau[u_1]-u_2\right),\;\;
			L_\tau[u]:=\int_{-\infty}^{t}\frac{1}{\tau}e^{-\frac{t-s}{\tau}}u(x,s)ds.
		\end{equation*}
		Suppose that a small perturbation $\eta_1$ is introduced into $u_1$ in the first layer. When $k_1$ is large, a strong restoring effect in $T_1$ is produced by the instantaneous term $-k_1\eta_1$, which acts immediately against this perturbation. Meanwhile, the perturbation is transmitted to $T_2$ through $k_1 L_{\tau}[\eta_1]$ but is dispersed by the averaging effect of $L_\tau$ over the whole past history, such that this transmitted perturbation is spread out in time before it acts on the second layer. Moreover, any perturbation $\eta_2$ in $u_2$ (caused by $u_1$) is again subject to the strong instantaneous restoring term $-k_1\eta_2$ in $T_2$. When the perturbation is transmitted back to the first layer, it is once more averaged by the distributed delay. Thus, for sufficiently strong activator coupling, the local restoring effects remain instantaneous and strong, whereas the cross-layer propagation of perturbations is repeatedly dispersed by the memory kernel. This makes it difficult for small perturbations to accumulate and may explain the delay-independent stability obtained in Case 5.  
		
		The situation is quite different in Case 2, because the cross-layer signal is transmitted through a discrete delay, which shifts a perturbation in time but does not average it over a range of past states. Hence, although the instantaneous terms $-k_1u_i$ still provide strong restoring effects, the perturbation can be preserved without being averaged over its past history, such that it may accumulate and eventually lead to  instability of $(U(x,\ep),U(x,\ep))$. This helps explain why strong activator coupling does not cause the same stability-locking effect in Case 2. Similar arguments help explain the differences between Cases 4--6. In Case 4, both the cross-layer transmission and the self-feedback are distributed in time, so the restoring response itself is also delayed and dispersed. In Case 6, the perturbation in cross-layer transmission is instantaneous, while the stabilizing self-feedback is dispersed. These differences may also help explain the different robustness against delay-induced destabilization observed among the fully, cross-, and self-delayed coupling structures.
		\item Higher-codimensional bifurcations. The present analysis is mainly restricted to primary Hopf bifurcations, while higher-codimensional bifurcations are left open. However, the numerical results in Case 3 reveal the coexistence of symmetry-breaking layered steady states and in-phase breathers associated with Turing and Hopf instabilities, respectively, and suggest the initial-history-dependent selection between them. In the singular limit $\ep\to0^+$, we have shown in Figure \ref{Fig.SLEP_dia} that, for some $(k_1,k_2)\in\Gamma$ and at critical delay $\tau=\tau_H^*$, the asymmetric sub-problem has a zero eigenvalue, while the symmetric sub-problem has a pair of purely imaginary eigenvalues. Thus, it is natural to further investigate whether a Turing-Hopf point persists for system \eqref{eq-o} when $\ep>0$ is sufficiently small. On the other hand, the present theory does not rule out the possibility of a Hopf–Hopf bifurcation caused by the simultaneous criticality of the symmetric and asymmetric sub-problems. These analyses would be important for understanding pattern selection and mode competition in the multi-layer systems with delayed couplings.
		\item Effects of the Gamma shape parameter. The present paper considers the exponential weak kernel $g_1(t)$ in \eqref{gn}. A natural and challenging extension is to investigate higher-order Gamma kernels $g_n(t)$ ($n=2,3,\ldots$). Recently, Shi's survey \cite{S} provides a systematic framework for reaction--diffusion models with Gamma-type distributed memory, including the comparisons between weak and strong kernels. Thus, we may determine how $n$ changes the size and geometry of the Hopf bifurcation regions and the delay-independent stability regions in the $(k_1,k_2)$ plane, and how it affects the primary destabilizing mode. Finally, the effects of highly dispersed Gamma delays studied by Omi and Shinomoto \cite{OS} also motivate further analysis of the regime $0<n<1$.
		\item Delayed reaction kinetics and further extensions of the SLEP method. A related issue was studied by Fadai, Ward and Wei \cite{FWW} for a singularly perturbed Gierer--Meinhardt system with a discrete delay entering the nonlinear reaction kinetics. Using the nonlocal eigenvalue problem (NLEP) method, they investigated the stability and delay-induced Hopf bifurcation of spike solutions. Their work shows that singular spectral reduction can still be effectively used when the delay enters directly into the reaction terms. However, the present SLEP framework does not directly cover the corresponding stability and bifurcation problems for layered steady states when delays are introduced into the nonlinear reaction terms. Extending the SLEP method to such problems would require further development of the singular-limit spectral theory. This may provide an interesting direction for future study.
	\end{enumerate} 
	
	\section{Appendix}
	\begin{lemma}\label{le.AA1} 
		Suppose that $0<\ep<\ep_0$, and that $(u(x,\ep),v(x,\ep))$ and $(u^\star(x),v^\star(x))$ are the steady state solutions of \eqref{eq-s} and \eqref{eq.reduced}, respectively. Define
		\begin{equation*}
			\begin{split}
				f_u^\ep:=f_u(u(x,\ep),v(x,\ep)),\;f_v^\ep:=f_v(u(x,\ep),v(x,\ep)),\\
				g_u^\ep:=g_u(u(x,\ep),v(x,\ep)),\;g_v^\ep:=g_v(u(x,\ep),v(x,\ep)).
			\end{split}
		\end{equation*}
		Then, the eigenvalue problem
		\begin{equation*}
			\begin{cases}
				\begin{split}
					\ep^2\dfrac{d^2\phi}{dx^2}+f_u^\ep\phi&=\mu\phi,\;x\in(0,1),\\
					\phi'(0)=\phi'(1)&=0,
				\end{split}
			\end{cases}
		\end{equation*}
		admits a complete orthonormal set of eigenpairs $\{(\mu_n^{\ep}, \varphi_n^{\ep}(x))\}$ in $L^2(0,1)$ sense with $\Vert\varphi_n^{\ep}\Vert_{L^2}=1$, where $n\in\mathbb{N}_0:=\{0,1,\cdots\}$. Moreover,
		\begin{enumerate}[label=$\arabic*$.]
			\item There exists a $\mu_*<0$ independent of $\ep$, such that $\mu_0^{\ep}>0>\mu_*>\mu_1^{\ep}>\cdots$.
			In particular, $\mu_0^{\ep}=\rho(\ep)\ep$, where $\rho(\ep)$ is a positive continuous function up to $\ep\to 0^+$ and
			\begin{equation*}
				\rho_0^{*}=\lim_{\ep\to 0^+}\rho(\ep)>0.
			\end{equation*}
			\item Let $F(u, v)$ be a smooth function of $u$ and $v$. For any $ h\in L^2(0,1) \cap L^{\infty}(0,1)$ and $p(\la)\in\C$, define
			\begin{equation}\label{eq.dag}
				(\ep^2\dfrac{d^2}{dx^2}+f_u^\ep+\ep p(\la) -\ep\alpha\la)^{\dag} h:=\sum_{n\geq1}\dfrac{\langle  h, \varphi_n^\ep\rangle}{\mu_n^\ep+\ep p(\la)-\ep\alpha\la}\varphi_n^\ep.
			\end{equation}  
			Then, for any $\alpha\in \R$ and $\la\in\C$, as $\ep\to 0^+$, 
			\begin{equation*}
				(\ep^2\dfrac{d^2}{dx^2}+f_u^\ep+\ep p(\la)- \ep\alpha\la)^{\dag}(F(u(x,\ep), v(x,\ep)) h) \to \ds\frac{F(u^\star(x), v^\star(x))h}{f_u(u^\star(x), v^\star(x))}, 
			\end{equation*}
			in strongly $L^2$-sense.
			\item Define $\wt\varphi_0^\ep(y):=\varphi_0^\ep(x^*+\ep y)$, then as $\ep\to0^+$, it holds that
			\begin{equation*}\label{tends1}
				\sqrt{\ep}\wt\varphi_0^\ep(y)\to \kappa^* \frac{d}{dy}\wt u^*(y)
			\end{equation*}
			uniformly for any compact subset of $\R$, where $\kappa^*=\Vert \frac{d}{dy}\wt u^*(y)\Vert_{L^2(\R)}^{-1}$ and $\wt u^*(y)$ is the solution of
			\begin{equation*}
				\begin{cases}
					\begin{split}
						&\frac{d^2 u}{dy^2}+f(u,v^*)=0, && y\in(-\infty,\infty),\\
						&u(\pm\infty)=h_\pm(v^*),\\
						&u(0)=h_0(v^*).
					\end{split}
				\end{cases}
			\end{equation*}
			\item Let $\delta(x-x^*)$ be the Dirac's delta function at $x = x^*$, where $x^*\in(0,1)$ is the layer position of $(u^\star(x), v^\star(x))$ stated in Theorem \ref{th.1}. Then, in the $H^{-1}$ sense, it holds true that
			\begin{equation}\label{fvgu}
				\begin{split}
					\lim\limits_{\ep\to 0^+}\ds\frac{-f_v^{\ep}\varphi_0^\ep}{\sqrt{\ep}}=c_1^*\delta(x - x^*),\;
					\lim\limits_{\ep\to 0^+}\ds\frac{g_u^{\ep}\varphi_0^\ep}{\sqrt{\ep}}=c_2^*\delta(x - x^*),
				\end{split}
			\end{equation}
			where 
			\begin{equation}\label{c*}
				\begin{split}
					c_1^*=-\kappa^*\frac{dJ(v^*)}{dv}>0,\; c_2^*=\kappa^*\left\lbrace g(h_+(v^*),v^*)-g(h_-(v^*),v^*)\right\rbrace >0,
				\end{split}
			\end{equation}	
		\end{enumerate}
		
		Finally, there exists a complete orthonormal set of eigenpairs $\{(\gamma_n, \psi_n)\}_{n=0}^{\infty}$ for the eigenvalue problem
		\begin{equation}\label{gamman}
			\begin{cases}
				\begin{split}
					&-d\frac{d^2}{dx^2}\psi_n+\frac{ f_u^\star g_v^\star-f_v^\star g_u^\star}{- f_u^\star}\psi_n=\gamma_n\psi_n,\;x\in(0,1),\\
					&\psi_n'(0)=\psi_n'(1)=0.
				\end{split}
			\end{cases}		
		\end{equation}
		in the $L^2(0,1)$ sense, satisfying $\gamma_n>0$ for all $n\ge0$, where the partial derivatives are evaluated at $(u^\star(x),v^\star(x))$.
	\end{lemma}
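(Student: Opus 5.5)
The plan is to follow the standard SLEP construction of Nishiura--Fujii \cite{NF} (see also \cite{NM}). Everything is reduced to the self-adjoint Sturm--Liouville operator $L^\ep:=\ep^2\frac{d^2}{dx^2}+f_u^\ep$ with Neumann conditions on $(0,1)$, together with its inner (stretched) limit at the layer $x^*$.

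\emph{Completeness and the spectral gap (part 1).} Since $f_u^\ep$ is bounded and smooth, $L^\ep$ is self-adjoint with compact resolvent on $L^2(0,1)$. Classical Sturm--Liouville theory therefore gives a complete orthonormal family $\{(\mu_n^\ep,\varphi_n^\ep)\}$ with simple eigenvalues $\mu_0^\ep>\mu_1^\ep>\cdots$.

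For the gap I will work in the stretched variable $y=(x-x^*)/\ep$. There $L^\ep$ converges to $\mathcal L:=\frac{d^2}{dy^2}+f_u(\wt u^*(y),v^*)$ on $\mathbb R$. By translation invariance, $\mathcal L\wt u^{*\prime}=0$, and $\wt u^{*\prime}>0$ by monotonicity of the heteroclinic orbit. Hence $0$ is the simple principal eigenvalue of $\mathcal L$. The essential spectrum of $\mathcal L$ lies in $(-\infty,\max_\pm f_u(h_\pm(v^*),v^*)]\subset(-\infty,0)$ by $(H_5)$.

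Away from the layer, $f_u^\ep\le -c<0$ holds uniformly by $(H_5)$. A min--max argument combining the outer bound with the inner spectral gap then yields $\mu_*<0$ independent of $\ep$ with $\mu_1^\ep<\mu_*$.

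For $\mu_0^\ep$ I use $u_x(x,\ep)$ as an approximate eigenfunction. Differentiating the steady equation gives $L^\ep u_x=-f_v^\ep v_x$, so the Rayleigh quotient with the normalized $\ep u_x$ is $O(\ep)$. A more careful expansion, using the fact that $v_x$ is $O(1)$, that $\int f_v\wt u^{*\prime}\,dy=J'(v^*)$, and the outer $v$-problem, identifies $\mu_0^\ep=\ep\rho(\ep)$. Here $\rho$ is continuous up to $\ep=0$, and $\rho_0^*>0$ is given by a solvability condition. I expect this step to be the main obstacle: the sign and the exact limit of $\rho$ require the Fredholm alternative for $\mathcal L$ at order $\ep$, with the correction $\varphi_0^\ep-\kappa^*\wt u^{*\prime}/\sqrt\ep$ controlled in $H^1$. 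I would try first to reproduce Lemma 3.1 and Lemma 3.2 of \cite{NF} verbatim.

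\emph{Part 3.} This follows from the same inner analysis. After scaling, $\sqrt\ep\,\wt\varphi_0^\ep$ is an $L^2(\mathbb R)$-normalized principal eigenfunction of a perturbation of $\mathcal L$. Simplicity of the zero eigenvalue and local $C^1$ bounds then give convergence to $\kappa^*\wt u^{*\prime}$ uniformly on compacts, with the positive sign fixed.

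\emph{Part 2.} The operator $(L^\ep+\ep p-\ep\alpha\la)^\dagger$ acts on the complement of $\varphi_0^\ep$, where the spectrum is $\le\mu_*+O(\ep)<0$. It is therefore uniformly bounded in $L^2$. I will split $h$ into outer and inner parts. On the outer part, the operator is a perturbation of multiplication by $1/f_u^\star$, and the $\ep^2$-derivative term vanishes in the limit since $\ep^2\partial_x^2$ applied to smooth data is $O(\ep^2)$. The inner region has measure $O(\ep)$ and contributes $o(1)$ in $L^2$ by uniform boundedness. Strong convergence then follows by density of smooth $h$.

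\emph{Part 4 and the final claim.} For a smooth test function $\chi$, write $\int\frac{-f_v^\ep\varphi_0^\ep}{\sqrt\ep}\chi\,dx=-\int f_v\,\sqrt\ep\wt\varphi_0^\ep\,\chi(x^*+\ep y)\,dy$. By part 3 and exponential decay of $\varphi_0^\ep$ away from $x^*$, this tends to $-\kappa^*\chi(x^*)\int_{\mathbb R}f_v(\wt u^*,v^*)\wt u^{*\prime}dy=-\kappa^*J'(v^*)\chi(x^*)$. Here $\int_{h_-}^{h_+}f_v\,du=J'(v^*)$, because the boundary terms vanish since $f(h_\pm,v^*)=0$. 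Similarly, $\int g_u\wt u^{*\prime}dy=g(h_+,v^*)-g(h_-,v^*)$. The signs $c_1^*>0$ and $c_2^*>0$ follow from $(H_3)$ and $(H_6)$. Uniform $H^{-1}$ bounds upgrade the convergence to the $H^{-1}$ sense. Finally, the potential $(f_u^\star g_v^\star-f_v^\star g_u^\star)/(-f_u^\star)$ is piecewise smooth and positive by $(H_6)$. Neumann Sturm--Liouville theory therefore gives a complete orthonormal set $\{(\gamma_n,\psi_n)\}$, and the quadratic form shows $\gamma_n>0$.
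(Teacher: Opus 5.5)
Your outline is correct and takes the same approach as the paper, whose proof consists only of citing the standard SLEP lemmas (Lemmas 2.1--2.3 of \cite{NM}, Lemmas 2.2--2.3 of \cite{NF}); your inner/outer sketch reconstructs exactly that analysis. For the step you flag as the main obstacle, do not rely on the Rayleigh quotient of $u_x$: its $O(1)$ outer part $-f_v^\ep v_x/f_u^\ep$ changes the $O(\ep)$ constant. Instead pair $L^\ep u_x=-f_v^\ep v_x$ with $\varphi_0^\ep$ (the boundary terms are exponentially small); together with parts 3--4 this gives $\rho_0^*=-(\kappa^*)^2J'(v^*)\,v^\star_x(x^*)$, which is positive by $(H_3)$ and by the monotonicity of $v^\star$ across the layer implied by $(H_6)$.
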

	\begin{proof}
		For detailed proofs of Lemma \ref{le.AA1}, we refer to Lemma 2.1, Lemma 2.2, Lemma 2.3 of \cite{NM}, or Lemma 2.2 and Lemma 2.3 of \cite{NF}.
	\end{proof}
	
	\vskip1cm
	
	{\small
		\bibliographystyle{plain}

\begin{thebibliography}{MM}
			
			\bibitem{Atay}\href{https://doi.org/10.1103/PhysRevLett.91.094101}{F. Atay, Distributed delays facilitate amplitude death of coupled oscillators, Phys. Rev. Lett. 91 (9) (2003) 094101.}
			
			\bibitem{AEKV}\href{https://doi.org/10.1007/s00332-024-10033-7}{N. Awal, I. Epstein, T. Kaper, T. Vo, Strong symmetry breaking in coupled, identical Lengyel--Epstein oscillators via folded singularities, J. Nonlinear Sci. 34  (2024) 53.}
			
			\bibitem{Bar}
			\href{https://doi.org/10.1021/j150660a048}
			{K. Bar-Eli, Coupling of chemical oscillators, J. Phys. Chem. 88 (16) (1984) 3616--3622.}
			
			\bibitem{CDBD}\href{https://doi.org/10.1103/PhysRevLett.64.2953}{V. Castets, E. Dulos, J. Boissonade, P. De Kepper, Experimental evidence of a sustained standing Turing-type nonequilibrium chemical pattern, Phys. Rev. Lett. 64 (24) (1990) 2953--2956.}
			
			\bibitem{CZL}
			\href{https://doi.org/10.1016/j.jfranklin.2015.02.022}
			{S. Cai, P. Zhou, Z. Liu, Synchronization analysis of hybrid-coupled delayed dynamical networks with impulsive effects: A unified synchronization criterion, J. Franklin Inst. 352 (5) (2015) 2065--2089.}
			
			\bibitem{FWW}
			\href{https://doi.org/10.1137/16M1063460}
			{N. Fadai, M. Ward, J. Wei, Delayed reaction kinetics and the stability of spikes in the Gierer--Meinhardt model, SIAM J. Appl. Math. 77 (2) (2017) 664--696.}
			
			\bibitem{JL}
			\href{https://doi.org/10.1063/1.2041427}
			{L. Ji, Q. Li, Turing pattern formation in coupled reaction-diffusion system with distributed delays, J. Chem. Phys. 123 (9) (2005) 094509.}
			
			\bibitem{KS}
			\href{https://doi.org/10.1016/0022-5193(80)90352-5}
			{M. Kawato, R. Suzuki, Two coupled neural oscillators as a model of the circadian pacemaker, J. Theor. Biol. 86 (3) (1980) 547--575.}
			
			\bibitem{LE1}
			\href{https://doi.org/10.1126/science.251.4994.650}
			{I. Lengyel, I. Epstein, Modeling of Turing structures in the chlorite--iodide--malonic acid--starch reaction system, Science 251 (4994) (1991) 650--652.}
			
			\bibitem{LE2}
			\href{https://doi.org/10.1063/1.165819}
			{I. Lengyel, I. Epstein, Diffusion-induced instability in chemically reacting systems: Steady-state multiplicity, oscillation, and chaos, Chaos 1 (1) (1991) 69--76.}
			
			\bibitem{LWO}
			\href{https://doi.org/10.1063/1.4875262}
			{J. Li, H. Wang, Q. Ouyang, Square Turing patterns in reaction-diffusion systems with coupled layers, Chaos 24 (2) (2014) 023115.}
			
			\bibitem{LYGYW}
			\href{https://doi.org/10.1016/j.physd.2007.09.014}
			{P. Liu, F. Yi, Q. Guo, J. Yang, W. Wu, Analysis on global exponential robust stability of reaction-diffusion neural networks with S-type distributed delays, Physica D 237 (4) (2008) 475--485.}
			
			\bibitem{Ma}
			\href{https://doi.org/10.1016/0022-0396(79)90156-6}
			{K. Maginu, Stability of spatially homogeneous periodic solutions of reaction-diffusion equations, J. Differ. Equ. 31 (1) (1979) 130--138.}
			
			\bibitem{MTH}
			\href{https://doi.org/10.1137/0511057}
			{M. Mimura, M. Tabata, Y. Hosono, Multiple solutions of two-point boundary value problems of Neumann type with a small parameter, SIAM J. Math. Anal. 11 (4) (1980) 613--631.}
			
			\bibitem{NF}
			\href{https://doi.org/10.1137/0518124}
			{Y. Nishiura, H. Fujii, Stability of singularly perturbed solutions to systems of reaction-diffusion equations, SIAM J. Math. Anal. 18 (6) (1987) 1726--1770.}
			
			\bibitem{NM}
			\href{https://doi.org/10.1137/0149029}
			{Y. Nishiura, M. Mimura, Layer oscillations in reaction-diffusion systems, SIAM J. Appl. Math. 49 (2) (1989) 481--514.}
			
			\bibitem{NT}
			\href{https://doi.org/10.1090/S0002-9947-05-04010-9}
			{W. Ni, M. Tang, Turing patterns in the Lengyel--Epstein system for the CIMA reaction, Trans. Amer. Math. Soc. 357 (10) (2005) 3953--3969.}
			
			\bibitem{OS}
			\href{https://doi.org/10.1103/PhysRevE.77.046214}
			{T. Omi, S. Shinomoto, Can distributed delays perfectly stabilize dynamical networks?, Phys. Rev. E 77 (4) (2008) 046214.}
			
			\bibitem{PPR}
			\href{https://doi.org/10.1103/PhysRevE.101.052203}
			{K. Pal, S. Paul, D. Ray, Spatiotemporal antiresonance in coupled reaction-diffusion systems, Phys. Rev. E 101 (5) (2020) 052203.}
			
			\bibitem{RF}
			\href{https://doi.org/10.1016/j.physd.2003.12.004}
			{S. Ruan, R. Filfil, Dynamics of a two-neuron system with discrete and distributed delays, Physica D 191 (3--4) (2004) 323--342.}
			
			\bibitem{RSJ}\href{https://doi.org/10.1103/PhysRevLett.80.5109}{D. Ramana Reddy, A. Sen, G. Johnston, Time delay induced death in coupled limit cycle oscillators, Phys. Rev. Lett. 80 (23) (1998) 5109--5112.}
			
			
			\bibitem{S}
			\href{https://doi.org/10.3934/dcds.2026134}
			{J. Shi, Bifurcation and pattern formation in reaction-diffusion models with nonlocal advection and time delays, Discrete Contin. Dyn. Syst. 57 (2026) 496--533.}
			
			\bibitem{SSS}
			\href{https://doi.org/10.1016/j.jde.2017.07.024}
			{Q. Shi, J. Shi, Y. Song, Hopf bifurcation in a reaction-diffusion equation with distributed delay and Dirichlet boundary condition, J. Differ. Equ. 263 (10) (2017) 6537--6575.}
			
			\bibitem{ST}
			\href{https://doi.org/10.1137/10080885X}
			{C.-W. Shih, J.-P. Tseng, Global synchronization and asymptotic phases for a ring of identical cells with delayed coupling, SIAM J. Math. Anal. 43 (4) (2011) 1667--1697.}
			
			\bibitem{SWS}
			\href{https://doi.org/10.1007/s10884-012-9268-z}
			{Y. Su, J. Wei, J. Shi, Hopf bifurcation in a diffusive logistic equation with mixed delayed and instantaneous density dependence, J. Dynam. Differential Equations 24 (4) (2012) 897--925.}
			
			\bibitem{SY0}
			\href{https://doi.org/10.1137/23M159055X}
			{G. She, F. Yi, Dynamics and bifurcations in a nondegenerate homogeneous diffusive SIR rabies model, SIAM J. Appl. Math. 84 (2) (2024) 632--660.}
			
			\bibitem{SY1}
			\href{https://doi.org/10.1007/s00332-024-10081-z}
			{G. She, F. Yi, Stability and bifurcation analysis of a reaction-diffusion SIRS epidemic model with the general saturated incidence rate, J. Nonlinear Sci. 34 (2024) 101.}
			
			\bibitem{SY2}
			\href{https://doi.org/10.1016/j.jde.2024.01.003}
			{Q. Song, F. Yi, Spatiotemporal patterns and bifurcations of a delayed diffusive predator-prey system with fear effects, J. Differ. Equ. 388 (2024) 151--187.}
			
			\bibitem{SY3}
			\href{https://doi.org/10.1137/24M1687893}
			{Q. Song, F. Yi, Turing instability of the periodic solutions for the reaction-diffusion system involving nonlocal delays, SIAM J. Appl. Dyn. Syst. 24 (4) (2025) 2767--2803.}
			
			\bibitem{Tu}
			\href{https://doi.org/10.1098/rstb.1952.0012}
			{A. Turing, The chemical basis of morphogenesis, Philos. Trans. R. Soc. Lond. B 237 (1952) 37--72.}
			
			\bibitem{TMN}
			\href{https://doi.org/10.1007/BF03167236}
			{T. Takaishi, M. Mimura, Y. Nishiura, Pattern formation in coupled reaction-diffusion systems, Japan J. Indust. Appl. Math. 12 (3) (1995) 385--424.}
			
			\bibitem{WY1}
			\href{https://doi.org/10.1016/j.jde.2022.02.031}
			{M. Wang, F. Yi, On the dynamics of the diffusive Field--Noyes model for the Belousov--Zhabotinskii reaction, J. Differ. Equ. 318 (2022) 443--479.}
			
			\bibitem{WY2}
			\href{https://doi.org/10.1007/s00208-026-03483-9}
			{Q. Wu, F. Yi, Turing instability of synchronized layered steady states and ``breathing'' periodic solutions in membrane-coupled reaction-diffusion systems, Math. Ann. 395 (3) (2026) 58.}
			
			\bibitem{Yi}
			\href{https://doi.org/10.1016/j.jde.2021.02.006}
			{F. Yi, Turing instability of the periodic solutions for reaction-diffusion systems with cross-diffusion and the patch model with cross-diffusion-like coupling, J. Differ. Equ. 281 (2021) 379--410.}
			
			\bibitem{YDZE}
			\href{https://doi.org/10.1103/PhysRevLett.88.208303}
			{L. Yang, M. Dolnik, A. Zhabotinsky, I. Epstein, Spatial resonances and superposition patterns in a reaction-diffusion model with interacting Turing modes, Phys. Rev. Lett. 88 (20) (2002) 208303.}
			
			\bibitem{YE}
			\href{https://doi.org/10.1103/PhysRevLett.90.178303}
			{L. Yang, I. Epstein, Oscillatory Turing patterns in reaction-diffusion systems with two coupled layers, Phys. Rev. Lett. 90 (17) (2003) 178303.}
			
			\bibitem{ZLLX}
			\href{https://doi.org/10.1109/TNNLS.2022.3224393}
			{X. Zhang, C. Li, H. Li, J. Xu, Synchronization of neural networks involving distributed-delay coupling: A distributed-delay differential inequalities approach, IEEE Trans. Neural Netw. Learn. Syst. 35 (6) (2024) 8086--8096.}
			
			\bibitem{ZSKK}
			\href{https://doi.org/10.1063/1.4984927}
			{W. Zou, M. \v{S}ebek, I. Z. Kiss, J. Kurths, Revival of oscillations from deaths in diffusively coupled nonlinear systems: Theory and experiment, Chaos 27 (6) (2017) 061101.}
			
	\end{thebibliography}
		}
	
\end{document}